%% file: main.tex
\documentclass[10pt,a4paper]{article}

\usepackage[utf8]{inputenc}
\usepackage[british]{babel}

\usepackage[left=2cm,right=2cm,top=2cm,bottom=2cm]{geometry}

\usepackage{cancel}
\usepackage{amstext}
\usepackage{amssymb}
\usepackage{amsmath}
\usepackage{amsfonts}
\usepackage{mathrsfs}
\usepackage{bm}
\usepackage{nicefrac, xfrac}
\usepackage{upgreek}
\usepackage{booktabs}
\usepackage{array}
\usepackage{xfrac}

\usepackage{xcolor}
\usepackage{color}

\usepackage{graphicx}
\usepackage{pgf,tikz,pgfplots}
\pgfplotsset{compat=1.15}
\usetikzlibrary{arrows, arrows.meta, calc, math, shapes, positioning, fit, backgrounds}

\usepackage{wrapfig}

\usepackage[colorlinks=true]{hyperref}

\usepackage{ifpdf}

\usepackage[babel=true]{csquotes}
\MakeOuterQuote{"}
\usepackage{textcomp}

\usepackage{graphicx}
\usepackage{longtable}
\usepackage{caption}
\usepackage{subcaption}

\usepackage{svg}

\usepackage{url}
\usepackage{float}
\usepackage{multicol}

\usepackage[framed, amsthm, amsmath, thmmarks]{ntheorem}

\definecolor{mygray}{gray}{0.95}

\newcommand{\theoremcolorbox}[2]{\def\theoremframecommand{\fcolorbox{#1}{#2}}}
\theoremcolorbox{black}{mygray}
\newtheorem{theorem}{Theorem}[section]
\newtheorem{definition}[theorem]{Definition}
\newtheorem{corollary}[theorem]{Corollary}
\newtheorem{lemma}[theorem]{Lemma}
\newtheorem{proposition}[theorem]{Prop.}

\newtheorem{property}[theorem]{Property}

\theoremstyle{remark}
\newtheorem{remark}{Remark}
\newtheorem{example}{Example}

\makeatletter
\NewDocumentEnvironment{assum}{m O{}}{%
  \par\addvspace{\topsep}%
  \noindent
  \def\@currentlabel{#1}%
  {\normalfont\bfseries Assumption #1%
    \if\relax\detokenize{#2}\relax\else
      \ \normalfont(#2)%
    \fi
    \normalfont\bfseries.}%
  \nobreak\hspace{.5em}\ignorespaces
}{%
  \par\addvspace{\topsep}%
}
\NewDocumentEnvironment{cond}{m O{}}{%
  \par\addvspace{\topsep}%
  \noindent
  \def\@currentlabel{#1}%
  {\normalfont\bfseries Condition #1%
    \if\relax\detokenize{#2}\relax\else
      \ \normalfont(#2)%
    \fi
    \normalfont\bfseries.}%
  \nobreak\hspace{.5em}\ignorespaces
}{%
  \par\addvspace{\topsep}%
}
\makeatother

\makeatletter
\NewDocumentEnvironment{update}{m O{}}{%
  \par\addvspace{\topsep}%
  \noindent
  \def\@currentlabel{#1}%
  {\normalfont\bfseries Update #1%
    \if\relax\detokenize{#2}\relax\else
      \ \normalfont(#2)%
    \fi
    \normalfont\bfseries.}%
  \nobreak\hspace{.5em}\ignorespaces
}{%
  \par\addvspace{\topsep}%
}
\makeatother

\usepackage{enumerate}
\usepackage[shortlabels]{enumitem}

\numberwithin{equation}{section}

\usepackage[ruled,vlined]{algorithm2e}
\allowdisplaybreaks
\usepackage{algorithmicx, algpseudocode}
\usepackage[capitalize]{cleveref}

\input{macros.tex}

\newcommand*\Rp{\reals^p}

\newcommand*\mykappa[1]{ \kappa_{\scriptscriptstyle \text{#1}} }

\providecommand{\defeq}{\mathrel{:=}}
\providecommand{\kbar}{\overline{\kappa}}
\providecommand{\calI}{\mathcal{I}}

\providecommand{\calA}{\mathcal{A}}

\newcommand{\Rdelta}{\textsf{R-delta}}
\newcommand{\Rstep}{\textsf{R-step}}
\newcommand{\Rdfo}{\textsf{R-DFO}}
\newcommand{\Rgrad}{\textsf{R-grad}}
\newcommand{\Rrtr}{\textsf{R-RTR}}
\newcommand{\Rgrtr}{\textsf{R-gRTR}}

\newcommand{\PartI}{Part~I~\cite{Jeremy2026-I}}
\newcommand{\PartIII}{Part~III~\cite{Jeremy2026-III}}

\newcommand{\gone}{\gamma_1}
\newcommand{\gtwo}{\gamma_2}
\newcommand{\gthree}{\gamma_3}

\newif\ifnotes\notestrue
\notesfalse             %%  Uncomment this line to hide footnotes.  <----

\def\hfabian#1{}

\def\hjeremy#1{}

\usepackage[square,numbers]{natbib}
\setcitestyle{semicolon, aysep={,}}

\title{{\bf Structural conditions on trust-region radius updates.  \\
            Part II: second-order convergence and asymptotic radius behaviour} }

\author{{\bf J\'{e}r\'{e}my Rieussec} \quad {\bf Fabian Bastin}\\
	University of Montreal, Dept. Computing Science and Operational Research\\
	Email: jeremy.rieussec@umontreal.ca, bastin@iro.umontreal.ca \\
}

\date{\today}

\notesfalse  % review markup off for the public build
\begin{document}

\maketitle

\begin{abstract}
    Continuing the first-order framework of Part~I~\cite{Jeremy2026-I}, we ask not which structural
    conditions on $\{\Delta_k\}$ suffice for global convergence but what the radius does, and the
    answers separate the mechanisms according to whether the radius is \emph{tied to a criticality
    measure}.

    We first identify three asymptotic regimes: the criticality-anchored rules give
    $\sum_k \Delta_k^2/M_k < +\infty$, hence $\liminf_k \Delta_k = 0$, and
    $\Delta_k \to 0$ once the model Hessians are uniformly bounded, the step-driven rules inherit the
    stronger summability of the step series in the local regime, hence
    $\sum_k \Delta_k < +\infty$, and the fixed-factor rules keep
    $\Delta_k$ bounded away from zero. Near a first-order critical point with positive definite
    Hessian, convergence of a subsequence forces convergence of the full sequence, and for the
    criticality-anchored rules this holds with no hypothesis on the model Hessian.

    We then extend the framework to second-order criticality through
    $\tau_k = \max\{\norm{g_k}, -\lambda_k^-\}$, obtaining convergence to second-order critical
    points together with the worst-case complexity
    $\calO(\max\{\varepsilon_g^{-2}\varepsilon_H^{-1},\, \varepsilon_H^{-3}\})$, which reduces to
    $\calO(\varepsilon^{-3})$ for a common tolerance and is sharp for trust-region methods with
    quadratic models. 

    Finally we establish eventual inactivity of the trust-region constraint and superlinear
    convergence, isolating two properties of the radius rule through which alone the mechanism
    enters, the rate itself being mechanism-independent.
    The criticality-anchored and capped gradient-scaled rules reach inactivity only under
    a side condition, $\zeta > \kbar$ or $\overline{\mu} > \kbar$, in which $\kbar$ is a constant of
    the solution and cannot be checked in advance. We show that these conditions are necessary
    rather than artefacts of the analysis: on the single function $f(x) = \sin(x)/x$, a
    $C^\infty$ function with bounded Hessian and Lipschitz gradient, $\kbar$ is unbounded over the
    minimisers, so no parameter value meets the condition at all of them, and below the threshold
    the constraint is active at every sufficiently large iteration. Among the rules tied to a
    criticality measure, only those with an unbounded multiplier, the gradient-scaled rule and its
    retrospective variant, escape such a constant.
\end{abstract}

\noindent\textbf{Keywords}\quad
Unconstrained optimisation \textperiodcentered\ Trust-region methods \textperiodcentered\
Radius update rules \textperiodcentered\ Convergence analysis \textperiodcentered\
Second-order criticality \textperiodcentered\ Superlinear convergence \textperiodcentered\
Worst-case complexity

%%%%%%%%%%%%%%%%%%%%%%%%%%%%%%%%%%%%%%%%%%%%%%%%%%%%%%%%%%%%%%%%%%%%%%%%%%%%%%%
%%  Replacement for \section{Introduction}, down to and excluding
%%  \noindent\textbf{Notation.}  The two notation paragraphs are unchanged and
%%  stay where they are.
%%
%%  Structured after Part~I: run-in headings for the problem, the contributions,
%%  the relation to existing work, the outline and the notation.
%%
%%  CORRECTIONS CARRIED IN
%%
%%  1.  The overclaim.  "the one mechanism in the survey whose eventual
%%      inactivity requires no hypothesis involving a constant that depends on
%%      the solution" is false as stated, $\Rdelta$ and $\Rstep$ requiring none
%%      either.  The claim now carries its qualifier.
%%  2.  $\kbar = 4/m$, not $4/\lambda^*_{\min}$.
%%  3.  The sinc example is one-dimensional, $f(x) = \sin(x)/x$.
%%  4.  The \fabian note and every \mfabian wrapper are gone.
%%%%%%%%%%%%%%%%%%%%%%%%%%%%%%%%%%%%%%%%%%%%%%%%%%%%%%%%%%%%%%%%%%%%%%%%%%%%%%%

\section{Introduction} \label{sec:introduction}

Trust-region methods for the unconstrained problem $\min_{x \in \Rp} f(x)$ with $f$ smooth are
built from four components: a model of the objective, a rule for solving the subproblem restricted to a ball, a test deciding whether the trial step is accepted, and a rule for updating the radius of that ball. For the first two there are standard choices and a settled theory~\cite{ConnGoulToin00, NoceWrig06}. The last two are coupled, both reading the ratio $\rho_k$ of actual to predicted reduction, and it is the fourth that this paper is about.

\noindent\textbf{The radius update problem.}
Radius update mechanisms are documented individually and in detail~\citep[ch.~17]{ConnGoulToin00}, \citep{YuanFan01, Toint13, GrapigliaYuanYuan16}. What is missing is comparative treatment. Each rule is introduced with its own convergence proof, its own constants and its own standing assumptions, so it is hard to say which properties are intrinsic to the trust-region framework, which belong to a particular rule, and what is lost or gained by exchanging one for another.
\PartI{} argued that the variety conceals a common structure. Three
abstract conditions on the sequence $\{\Delta_k\}$, a lower bound relating the radius to the
smallest gradient norm seen so far, a contraction on unsuccessful iterations and a bounded
expansion on successful ones, suffice for $\lim_{k\to\infty}\norm{g_k} = 0$ together with the
worst-case evaluation complexity $\calO(\varepsilon^{-2})$. Five mechanisms drawn from the
literature satisfy them: 
the fixed-factor rule (\ref{update:multiplicative factor}),
the step-driven rule (\ref{update:step-dependent update}),
the retrospective rule of~\citet{BastMalmMoufToinToma10} (\ref{update:retrospective}),
the criticality-anchored rule of derivative-free optimisation~\cite{ConnScheVice09_introduction} (\ref{update: rdfo generic}),
and the gradient-scaled rule originating with~\citet{YuanFan01} (\ref{update: rgrad generic}). 

First-order global convergence is a weak requirement, met by mechanisms whose radii behave in qualitatively different ways: some drive $\Delta_k$ to zero, others keep it bounded away from zero, and the difference is invisible to the
first-order theory. 
It becomes visible in three places. 
In derivative-free optimisation the radius controls model accuracy, so $\Delta_k \to 0$ is necessary and its rate governs how quickly the model becomes reliable. 
In the stochastic setting the radius plays the same role for sampling accuracy, and analyses through martingales and renewal processes~\cite{BandScheVice14, ChenMeniSche18, BlanCartMeniSche19, CartSche18, CurtSche20, WangYuan2022, PaquSche2020, ShashaaniHashemiPasupathy2018, JinScheinbergXie2024, JinScheinbergXie2025} tie convergence rates
to lower bounds on the stepsize parameter. 
And near a minimiser the constraint must eventually stop binding if the method is to take full quasi-Newton steps and recover a fast local rate.

\noindent\textbf{Contributions and outline.}
Part~I leaves open what the radius does asymptotically, whether the structural framework extends
to second-order criticality, and under what conditions the constraint becomes inactive. We answer
all three, and the answers separate the mechanisms along a single axis, namely whether the radius
is \emph{tied to a criticality measure}.

Section~\ref{sec:asymptotic radius} identifies three asymptotic regimes. The criticality-anchored
rules give $\sum_k \Delta_k^2/M_k < +\infty$, hence $\liminf_k \Delta_k = 0$, and
$\Delta_k \to 0$ once the model Hessians are uniformly bounded, the step-driven rules inherit
the stronger summability $\sum_k \Delta_k < +\infty$ in the local regime by way of Powell's
theorem on the step series \cite{Powe75}, and the fixed-factor rules keep $\Delta_k$ bounded away from zero. 
The example $f(x) = e^{-x}$ separates the two groups: the criticality-anchored rules can drive the radius to zero, while the step-driven and the fixed-factor rules keep it bounded away from zero. Further properties follow for convergence to a first-order critical point $x^*$ with positive definite Hessian, Section~\ref{sec:local dynamics}. Convergence of a subsequence forces convergence of the full sequence as soon as the step is controlled by the gradient, see Condition~\ref{cond: s_k bounded by g_k}, independently of the radius rule (Theorem~\ref{thm: step bounded by gradient norm implies convergence}).

Sections~\ref{sec:second-order convergence} and~\ref{sec:second-order mechanism survey} extend the
structural framework to second-order criticality. Replacing the criticality measure $\norm{g_k}$
by $\tau_k = \max\{\norm{g_k}, -\lambda_k^-\}$ yields a second-order lower-bound condition that
underwrites convergence to second-order critical points together with the worst-case bound
of~\citet{CartGoulToin12}, and the mechanisms are verified against it one by one. The upgrade of
the measure is indispensable rather than cosmetic: with $\norm{g_k}$ retained in its place the
gradient-scaled rule could halt outright at an exact saddle and the criticality-anchored rule contracts
geometrically to zero, so both fail at precisely the points the second-order theory exists to
escape.

Section~\ref{sec:convergence rate} establishes eventual inactivity of the trust-region constraint
and the superlinear rate that follows. The analysis isolates two independent properties of the
radius rule, and shows that the mechanism enters the argument only through these
two inequalities. The gradient-scaled rule satisfies neither, yet admits a stronger argument of
its own. Its inactivity reduces to whether the multiplier $\mu_k$ crosses a threshold, which is dependent on the solution. Among the rules whose radius is tied to a criticality measure, it and its retrospective variant, both taken with an unbounded multiplier, are the only ones whose eventual inactivity requires no hypothesis involving a constant that depends on the solution. The criticality-anchored and capped gradient-scaled rules instead require $\zeta > \kbar$ and $\overline{\mu} > \kbar$, which cannot be arranged in advance.
Section~\ref{subsec:discussion on the convergence rate} shows that these side conditions
are necessary, and not an artefact of the route taken to them. On the single $C^\infty$ function
$f(x) = \sin(x)/x$, with bounded Hessian and Lipschitz gradient, $\kbar$ is unbounded over the
minimisers, so no finite $\zeta$ or $\overline{\mu}$ satisfies the condition at all of them; and at
a minimiser where it fails, the constraint is active at every sufficiently large iteration. The
existing analyses of these rules establish that they work; what is shown here is that the others
cannot dispense with a constant of the solution.
Once inactivity is established the rate theorem no longer refers to the mechanism at all. Along the way we show that the three formulations of the quasi-Newton consistency condition used across the literature~\cite{DennMore74, DennMore77, Stei83} are equivalent under the inexactness hypothesis on the subproblem solver.

The two halves of the paper are in tension on one point. The asymptotic
statements of Section~\ref{sec:asymptotic radius} for the gradient-scaled family assume a bounded
multiplier, whereas the variant that emerges best from Section~\ref{sec:convergence rate} is the
unbounded one, for which $\Delta_k = \mu_k\norm{g_k}$ is a product of a divergent and a vanishing
sequence. That rule is robust for the local question and not covered by the global asymptotics.
Section~\ref{sec:conclusion} returns to it.%

\noindent\textbf{What is new here.}
A comparative treatment earns its place by what it reorganises, and it is worth saying at the outset which
statements are ours and which are restated. The asymptotic regime of
Section~\ref{sec:asymptotic radius} is new: the summability
$\sum_k \Delta_k^2/M_k < +\infty$ for the criticality-anchored rules holds with no hypothesis
whatever on the iterates, where the existing analyses of those rules work in a neighbourhood of a
limit point. Condition~\ref{condition: lower bound Delta (2nd order)} and the five verifications of
Section~\ref{sec:second-order mechanism survey} are new for the step-driven, criticality-anchored
and retrospective families, and restate known results for the other two. The two properties of
Section~\ref{subsec: mechanism-free elements}, and with them the mechanism-independence of the rate,
are new. So are the necessity results of
Section~\ref{subsec:discussion on the convergence rate}, which have no counterpart we are aware of.
The local theory of Section~\ref{sec:local dynamics}, the second-order convergence and complexity
of Section~\ref{sec:second-order convergence}, and the ceiling on the multiplier of
Section~\ref{subsec:second-order gradient-scaled inactivity} are restated from the literature, with
attribution at each point; they are here because the comparison needs them in a common frame, not
because they are ours.

\noindent\textbf{Relation to existing work.}
The closest predecessor remains the survey of~\citet{Yuan15}.
Its Section~2 covers the present subject directly: it gives the step-driven rule
of~\citet{Hei03}, the gradient-scaled rule of~\citet{YuanFan01} together with the variant
$\Delta_k = \mu_k\norm{g_k}^\alpha$, and the retrospective rule
of~\citet{BastMalmMoufToinToma10}, and it observes that a radius bounded away from zero is
sufficient but not necessary for the superlinear rate, so that rules driving $\Delta_k$ to zero are
worth studying. That is where we part company. Organising the mechanisms by whether the radius
vanishes does not separate them: the step-driven rule has a vanishing radius in the local regime
and pays neither of the two prices the criticality-anchored rules pay. The separation is by whether
the radius is tied to a criticality measure, and this paper is an argument for that axis rather
than a complement to the earlier one.

The local theory we build on is classical. \citet{Stei83} and~\citet[ch.~6]{ConnGoulToin00}
establish convergence of the whole sequence from a convergent subsequence, and the superlinear
rate rests on the consistency conditions of~\citet{DennMore74, DennMore77}. These analyses fix a
radius rule. We instead isolate the two inequalities a rule must supply and prove the rate theorem
without reference to any rule, which is what allows the mechanisms to be compared rather than
merely listed. The same holds at second order, where~\citet{CartGoulToin12} obtain the complexity
bound for a particular update and we obtain it for any mechanism satisfying a condition, and in
the nonlinear stepsize control framework of~\citet{Toint13, GrapigliaYuanYuan15,
GrapigliaYuanYuan16}, where the radius follows a prescribed function of two quantities while our
conditions leave the rule free.

The gradient-scaled rule $\Delta_k = \mu_k\norm{g_k}$ is due to~\citet{YuanFan01}, with the
retrospective variant of~\citet{FanPan16}, and~\citet{Fan06, FanLu15, FanPan11} show
that a radius vanishing at the rate $\calO(\norm{x_k - x^*})$ is compatible with both global and
superlinear convergence. Their analyses establish the rate for that rule. We place it inside the
classification and identify what makes it exceptional, namely that its multiplier is the
radius-to-criticality ratio itself, which is why its inactivity result carries no side condition
while the other criticality-anchored rules do.%

%=======================================================================
%   SECTION 2: BACKGROUND AND STANDING ASSUMPTIONS
%=======================================================================
\section{Background and standing assumptions} \label{sec: Background}

This paper builds directly on Part~I~\cite{Jeremy2026-I}, from which we inherit the generic trust-region framework (Algorithm~1 of~\cite{Jeremy2026-I}, recalled in Appendix~\ref{app: BTR algorithm}), the notation, and a set of standing assumptions used throughout without further comment: boundedness below of $f$ (\ref{as: Bounded below}), Lipschitz continuity of the gradient with constant $L$ (\ref{as: Lipschitz continuous gradient}), first-order model agreement $m_k(0) = f(x_k)$ and $\nabla m_k(0) = \nabla f(x_k)$ (\ref{as: model agreement with function}), and the Cauchy decrease condition with constant $\mykappa{mdc}$ (\ref{as: Cauchy decrease}). For convenience, these are reproduced in Appendix~\ref{app: assumptions Part I}.

We also draw on the convergence theory of Part~I. Under the uniform Hessian bound \ref{as: model hessian} and weak admissibility (Conditions~\ref{cond: PartI C1} and~\ref{cond: PartI C2} of Part~I, recalled in Appendix~\ref{app: assumptions Part I}), the iterates satisfy $\lim_{k\to\infty} \|g_k\| = 0$: Theorem~3.2 of~\cite{Jeremy2026-I} yields $\liminf_{k\to\infty}\|g_k\| = 0$, which Theorem~2.4 of~\cite{Jeremy2026-I} upgrades to the full limit. Under the linear-growth assumption~\ref{as: Linear growth bound model Hessian}, strong admissibility (Conditions~\ref{cond: PartI C1} and~\ref{cond: PartI C3}) still guarantees $\liminf_{k\to\infty}\|g_k\| = 0$ (Theorem~3.4 of~\cite{Jeremy2026-I}), and yields the optimal worst-case complexity $\calO(\varepsilon^{-2})$ (Theorem~4.1 of~\cite{Jeremy2026-I}). Throughout, the radius obeys the lower bound
\begin{equation}
    \Delta_k \geq \frac{\mykappa{lbd}}{M_k} \min_{0 \leq i \leq k} \norm{g_i},
    \label{eqn:radius lower bound Part I}
\end{equation}
where $\{M_k\}$ is a non-decreasing sequence upper-bounding the model Hessian norm, typically $M_k = L + \max_{0 \leq i \leq k} \norm{H_i}$ (equation~(3.2) of~\cite{Jeremy2026-I}). 
Finally, we make repeated use of the five update mechanisms shown admissible in Part~I. Four are verified in its Section~6, the fixed-factor rule (R-delta), the step-driven rule (R-step), the criticality-anchored or DFO-like rule (R-DFO) and the gradient-scaled rule (R-grad), and its Section~5 analyses the retrospective update (R-RTR).
    
\subsection{Inexactness condition on the subproblem solver} \label{subsec:inexactness}

    Several of our results rely on a standard inexactness hypothesis for the trust-region subproblem. We state it here once and refer back throughout.

\begin{cond}{C.SubIn}[subproblem inexactness]
\label{cond:inexactness}
    
    Whenever the trust-region constraint is inactive, the trial step $s_k$ satisfies
    \begin{equation}
        \|H_k s_k + g_k\| \;\leq\; \xi_k\, \|g_k\|, \qquad \xi_k \in [0,1), \quad \xi_k \to 0 \text{ as } k \to \infty.
        \label{eqn:inexactness}
    \end{equation}
\end{cond}
Condition~\ref{cond:inexactness} is the trust-region counterpart of the inexact-Newton residual condition of~\citet{DembEiseStei82}, in which the forcing sequence $\xi_k$ controls the rate of convergence. When the Hessian is Lipschitz near the solution, a vanishing $\xi_k$ yields a superlinear rate, and $\xi_k = O(\|g_k\|)$ a quadratic one. We use it here to establish eventual inactivity of the trust-region constraint and superlinear convergence of the iterates. The condition enters in Section~\ref{sec: Background-qnewton}, for the equivalence of the three quasi-Newton consistency conditions, and in Section~\ref{sec:convergence rate}, for superlinear convergence. It is implied in practice by the truncated conjugate-gradient (Steihaug--Toint) method with a vanishing forcing sequence; see~\cite{Stei83, NoceWrig06}. The condition measures how accurately $s_k$ solves the Newton system $H_k s = -g_k$ relative to the current criticality $\|g_k\|$. The limiting case $\xi_k = 0$ corresponds to the exact (quasi-)Newton step $s_k = -H_k^{-1} g_k$, available when $H_k \succ 0$ and the step is feasible; when the model is indefinite or the constraint is active, $s_k$ is instead the corresponding constrained subproblem solution.

\subsection{Quasi-Newton consistency conditions} \label{sec: Background-qnewton}

The following assumption is a standard consistency condition for convergence of quasi-Newton methods~\cite{ConnGoulToin00}. 
\begin{assum}{H.OnCV}[operator-norm convergence]
\label{as: naive model Hessian CV}
    \[
        \lim_{k \to \infty} \norm{\nabla^2 m_k(x_k) - \nabla^2 f(x_k)} = 0 \quad \text{whenever} \quad \lim_{k \to \infty} \norm{g_k} = 0.
    \]
    
\end{assum}
It requires that the model Hessian $H_k$ approximates the true Hessian $\nabla^2 f(x_k)$ in operator norm whenever $\norm{g_k} \to 0$. This is a strong condition. It requires that $H_k$ approximates $\nabla^2 f(x_k)$ well in all directions.
As explained in~\citet{DennMore74}, this assumption supports $Q$-superlinear convergence by ensuring $\{H_k\} \to \nabla^2 f(x^*)$. 
However, this consistency condition is sufficient but not necessary. Superlinear convergence requires only that $H_k$ approximates $\nabla^2 f(x^*)$ accurately along the directions $s_k$. 
The following conditions on the model Hessian $H_k$ are standard weaker formulations in the literature on superlinear convergence~\cite{DennMore74, DennMore77, DembEiseStei82, Toin79, CartGoulToin12, FanPan16}. They are stated separately because they appear in different works under different forms. We show in Theorem~\ref{thm: equivalence quasi-Newton conditions} that they are equivalent whenever $\{x_k\} \to x^*$ with $\nabla^2 f(x^*)$ positive definite and Condition~\ref{cond:inexactness} holds.

\begin{assum}{H.Gr}[gradient residual condition]
\label{as: Gradient residual condition}
    
    \begin{equation}
        \lim_{k \to \infty} \frac{\|g_k + \nabla^2 f(x_k)\, s_k\|}{\|s_k\|} = 0
        \quad \text{whenever} \quad \lim_{k \to \infty} \|g_k\| = 0.
        \label{eqn: Gradient residual condition}
    \end{equation}
\end{assum}

\begin{assum}{H.Hs}[Hessian--step residual condition]
\label{as: Hessian-step residual condition}
    
    \begin{equation}
        \lim_{k \to \infty} \frac{\|(H_k - \nabla^2 f(x_k))\, s_k\|}{\|s_k\|} = 0
        \quad \text{whenever} \quad \lim_{k \to \infty} \|g_k\| = 0.
        \label{eqn: Hessian-step residual condition}
    \end{equation}
\end{assum}

\begin{assum}{H.Fd}[finite-difference consistency]
\label{as: Finite-difference Hessian consistency}
    
    \begin{equation}
        \lim_{k \to \infty} \frac{\|g(x_k + s_k) - g_k - H_k s_k\|}{\|s_k\|} = 0
        \quad \text{whenever} \quad \lim_{k \to \infty} \|g_k\| = 0.
        \label{eqn:Finite-difference Hessian consistency}
    \end{equation}
\end{assum}

The three are equivalent whenever $\{x_k\} \to x^*$ with $\nabla^2 f(x^*)$ positive definite,
$\norm{s_k} \to 0$ and Condition~\ref{cond:inexactness} holds; we use them interchangeably below
on that basis. The statement and its proof are deferred to
Appendix~\ref{app: quasi-Newton equivalence}, since nothing in the argument concerns the radius
rule. \ref{as: Gradient residual condition} is the formulation used
by~\citet{DennMore74,DennMore77} for quasi-Newton methods, \ref{as: Finite-difference Hessian consistency}
that of~\citet{Stei83}, and all three are weaker than the condition
$\|H_k - \nabla^2 f(x^*)\| \to 0$ used in~\cite{ConnGoulToin00}: superlinear convergence requires
only that $H_k$ approximate $\nabla^2 f(x^*)$ accurately \emph{along the search directions
$s_k$}, not in operator norm.

% \begin{remark}
%     \label{rem: AH B C does not need inexactness}
%     The Theorem and proof are deferred to Appendix~\ref{app: quasi-Newton equivalence} because they do not concern the radius rule.
%     The equivalence \ref{as: Hessian-step residual condition} $\Leftrightarrow$ \ref{as: Finite-difference Hessian consistency} of Step~1 does \emph{not} depend on the inexactness Condition~\ref{cond:inexactness} nor on the quasi-Newton structure of $s_k$. It holds for any step $s_k$ along which $\nabla^2 f$ is continuous and $\norm{s_k} \to 0$. The equivalence \ref{as: Hessian-step residual condition} $\Leftrightarrow$ \ref{as: Gradient residual condition} of Step~2, by contrast, requires control on the inexactness residual $H_k s_k + g_k$; the quasi-Newton identity $H_k s_k = -g_k$ used in~\cite{DennMore74,DennMore77} is the special case $\xi_k = 0$.
% \end{remark}

%=======================================================================
%   SECTION: ASYMPTOTIC BEHAVIOUR OF THE RADIUS
%=======================================================================
\section{Asymptotic behaviour of the trust-region radius: weak assumption} \label{sec:asymptotic radius}

    Part~I settled the question of \emph{whether} the gradient norm converges: under the structural conditions \ref{cond: PartI C1}, \ref{cond: PartI C2} and~\ref{cond: PartI C3} on the radius update rule (recalled in Appendix~\ref{app: assumptions Part I}), every admissible mechanism drives $\norm{g_k}$ to zero. It left open the complementary question, on which the local theory of Section~\ref{sec:local dynamics} depends, of what becomes of the radius itself. Two mechanisms may share the same first-order guarantee and yet differ entirely in their asymptotic regime, one letting $\Delta_k$ vanish, the other holding it away from zero, with opposite consequences for the local rate.

    Without assumptions on the iterates $\{x_k\}$, no asymptotic property of $\{\Delta_k\}$ follows from the trust-region framework alone. Each follows, when it follows at all, from the update mechanism, and the mechanisms divide accordingly into two groups.

    The first group is the criticality-anchored one: R-DFO ties the radius to the criticality measure through the test $\Delta_k \lessgtr \zeta \norm{g_k}$, and R-grad enforces the proportionality $\Delta_k = \mu_k \norm{g_k}$ outright. For these two rules, Propositions~\ref{prop: behaviour DFO-like radius} and~\ref{prop: CV rate gradient-scaled radius} establish
    \[
        \sum_{k=0}^{+\infty} \frac{\Delta_k^2}{M_k} < +\infty,
        \qquad\text{whence}\qquad
        \liminf_{k \to +\infty} \Delta_k = 0,
    \]
    and $\Delta_k \to 0$ when the model Hessians are uniformly bounded. These conclusions require \emph{no} assumption on the iterates: no boundedness of $\{x_k\}$, no existence of accumulation points.
    R-grad carries this one step further: since the radius and the gradient norm are proportional by construction, the summability travels in both directions, and Corollary~\ref{cor:gradient-scaled-liminf} recovers $\lim_k \norm{g_k} = 0$ as a by-product, {a first-order guarantee obtained, when the model Hessians are uniformly bounded, from the mechanism's own structure rather than from the admissibility machinery of Part~I.}

    The second group, the rules whose radius is driven by the previous radius (R-delta) or the realised step (R-step), admits no such conclusion at this level of generality. Their update carries no information about criticality, so nothing in the mechanism prevents $\Delta_k$ from remaining bounded away from zero; and, as the next subsections make precise, nothing forces it to. Establishing an asymptotic regime for them requires assumptions on the iterates, which is the subject of Section~\ref{sec:local dynamics}.

    \subsection{Criticality-anchored update (DFO-like)} \label{subsec:asymptotic DFO-like}

        The R-DFO rule (Appendix~\ref{app: R-DFO}) enforces the criticality anchoring explicitly:
        the test $\Delta_k \leq \zeta \norm{g_k}$ decides whether the radius is expanded or
        contracted, so a radius large relative to the current criticality is corrected at once.
        \citet[Lem.~2.9]{GratRoyeViceZhan17} prove square-summability for a derivative-free
        algorithm carrying the same test in the form $\norm{g_k} \geq \alpha \Delta_k$, under a
        uniform bound $\norm{H_k} \leq \mykappa{umh}$ and $\zeta = \alpha^{-1}$, and with an explicit constant. Their
        statement is part~(ii) below with $M_k$ constant, and the two proofs partition the iterations the same way.  
        
        The following proposition shows that, under \ref{as: Linear growth bound model Hessian} the summability survives in the form $\sum_k \Delta_k^2 / M_k < +\infty$, with no uniform bound on the model Hessian and no assumption on the iterates.

    \begin{proposition}
    \label{prop: behaviour DFO-like radius}
    Let \ref{as: Bounded below}, \ref{as: Lipschitz continuous gradient},
    \ref{as: model agreement with function}, \ref{as: Linear growth bound model Hessian}
    and \ref{as: Cauchy decrease} hold. Then the update R-DFO satisfies
    \begin{equation}
        \sum_{k=0}^{+\infty} \frac{\Delta_k^2}{M_k} \;<\; +\infty .
        \label{eqn: dfo radius summability}
    \end{equation}
    Consequently,
    \begin{enumerate}[(i)]
        \item $\displaystyle \liminf_{k \to +\infty} \Delta_k = 0$;
        \item if in addition $\norm{H_k} \leq \mykappa{umh}$ for every $k$
              (\ref{as: model hessian}), then
              $\displaystyle \sum_{k=0}^{+\infty} \Delta_k^2 < +\infty$ and
              $\displaystyle \lim_{k \to +\infty} \Delta_k = 0$.
    \end{enumerate}
\end{proposition}

\begin{proof}
    Partition the iterations into
    \begin{equation*}
        \calA = \big\{ k \geq 0 \;:\; \rho_k \geq \eta_1 \ \text{ and } \ \Delta_k \leq \zeta \norm{g_k} \big\}
        \qquad \text{and} \qquad
        \calR = \NN \setminus \calA ,
    \end{equation*}
    so that $\calA$ collects the iterations on which the R-DFO rule expands the radius. On $\calR$
    the rule contracts it: if $\rho_k < \eta_1$ then $\Delta_{k+1} \leq \gamma_2 \Delta_k$,
    and if $\rho_k \geq \eta_1$ with $\Delta_k > \zeta \norm{g_k}$ then
    $\Delta_{k+1} = \gamma_2 \Delta_k$, the criticality step of
    \ref{update: rdfo generic} being deterministic.
    In either case
    \begin{equation}
        \Delta_{k+1} \leq \gamma_2 \Delta_k , \qquad k \in \calR .
        \label{eqn: dfo contraction on R}
    \end{equation}

    Suppose first that $\calA$ is finite, and let $K$ be such that $k \in \calR$ for every
    $k \geq K$. Iterating~\eqref{eqn: dfo contraction on R} gives
    $\Delta_{K+j} \leq \gamma_2^{\,j} \Delta_K$ for $j \geq 0$, so that $\Delta_k \to 0$. As
    $\{M_k\}$ is positive and non-decreasing,
    \begin{equation*}
        \sum_{k=0}^{+\infty} \frac{\Delta_k^2}{M_k}
        \;\leq\; \sum_{k=0}^{K-1} \frac{\Delta_k^2}{M_k}
                 + \frac{\Delta_K^2}{M_K} \sum_{j=0}^{+\infty} \gamma_2^{\,2j}
        \;=\; \sum_{k=0}^{K-1} \frac{\Delta_k^2}{M_k}
              + \frac{1}{1 - \gamma_2^2} \, \frac{\Delta_K^2}{M_K}
        \;<\; +\infty ,
    \end{equation*}
    which is~\eqref{eqn: dfo radius summability}.

        Suppose now that $\calA$ is infinite, and let $k \in \calA$, so that
    $\norm{g_k} \geq \Delta_k/\zeta$ and $\rho_k \geq \eta_1$. Using the first of these twice in
    \ref{as: Cauchy decrease}, with $\norm{H_k} \leq M_k$,
    \begin{equation}
        f(x_k) - f(x_{k+1})
        \;\geq\; \eta_1 \frac{\mykappa{mdc}}{2} \norm{g_k}
                 \min\!\left( \Delta_k, \frac{\norm{g_k}}{M_k} \right)
        \;\geq\; \eta_1 \frac{\mykappa{mdc}}{2 \zeta^2}
                 \min\!\left( \zeta, \frac{1}{M_k} \right) \Delta_k^2
        \;\geq\; c \, \frac{\Delta_k^2}{M_k} , \qquad k \in \calA ,
        \label{eqn: dfo decrease on A}
    \end{equation}
    with $c \defeq \eta_1 \mykappa{mdc} \min\{1, \zeta M_0\} / (2 \zeta^2) > 0$. The last step is
    $\min\{\zeta, 1/M_k\} \geq \min\{1, \zeta M_0\}/M_k$. For $1/M_k \leq \zeta$ it reads
    $\min\{1, \zeta M_0\} \leq 1$, and otherwise it follows from $\zeta \geq \zeta M_0/M_k$,
    the sequence $\{M_k\}$ being non-decreasing.
    The sequence $\{f(x_k)\}$ is non-increasing and bounded below by $\mykappa{lbf}$ under
    \ref{as: Bounded below}, whence
    \begin{equation}
        c \sum_{k \in \calA} \frac{\Delta_k^2}{M_k}
        \;\leq\; \sum_{k \in \calA} \big( f(x_k) - f(x_{k+1}) \big)
        \;\leq\; f(x_0) - \mykappa{lbf}
        \;<\; +\infty .
        \label{eqn: dfo summable on A}
    \end{equation}

    It remains to account for the iterations of $\calR$ lying between two consecutive elements of
    $\calA$. Let $k_0 \in \calA$ and put $j^{+} = \min\{ j \geq 1 : k_0 + j \in \calA \}$, which is
    finite because $\calA$ is infinite. As $k_0 \in \calA$ the rule expands, so
    $\Delta_{k_0+1} \leq \gamma_3 \Delta_{k_0}$; and for $1 \leq j \leq j^{+} - 1$ we have
    $k_0 + j \in \calR$, so that~\eqref{eqn: dfo contraction on R} gives
    \begin{equation*}
        \Delta_{k_0+j} \;\leq\; \gamma_2^{\,j-1} \Delta_{k_0+1}
        \;\leq\; \gamma_2^{\,j-1} \gamma_3 \Delta_{k_0} .
    \end{equation*}
    Using once more that $\{M_k\}$ is non-decreasing, and with the convention
    $\sum_{j=1}^{0} a_j = 0$,
    \begin{equation}
        \sum_{j=0}^{j^{+}-1} \frac{\Delta_{k_0+j}^2}{M_{k_0+j}}
        \;\leq\; \frac{\Delta_{k_0}^2}{M_{k_0}}
                 \left( 1 + \gamma_3^2 \sum_{j=1}^{j^{+}-1} \gamma_2^{\,2(j-1)} \right)
        \;\leq\; \left( 1 + \frac{\gamma_3^2}{1 - \gamma_2^2} \right) \frac{\Delta_{k_0}^2}{M_{k_0}} .
        \label{eqn: dfo block bound}
    \end{equation}
    The blocks $\{ k_0, \dots, k_0 + j^{+} - 1 \}$, indexed by $k_0 \in \calA$, are pairwise
    disjoint and cover $\NN$ up to the finitely many indices preceding the first element of $\calA$.
    Summing~\eqref{eqn: dfo block bound} over $k_0 \in \calA$ and
    invoking~\eqref{eqn: dfo summable on A} therefore gives
    \begin{equation*}
        \sum_{k=0}^{+\infty} \frac{\Delta_k^2}{M_k}
        \;\leq\; \sum_{k=0}^{k_1-1} \frac{\Delta_k^2}{M_k}
                 + \left( 1 + \frac{\gamma_3^2}{1 - \gamma_2^2} \right)
                   \sum_{k \in \calA} \frac{\Delta_k^2}{M_k}
        \;<\; +\infty ,
    \end{equation*}
    where $k_1 = \min \calA$. This proves~\eqref{eqn: dfo radius summability} in both cases.

    To prove~(i), assume $\liminf_{k \to +\infty} \Delta_k = c_0 > 0$. There is then $K$ with
    $\Delta_k \geq c_0/2$ for every $k \geq K$, so that
    \begin{equation*}
        \sum_{k \geq K} \frac{\Delta_k^2}{M_k}
        \;\geq\; \frac{c_0^2}{4} \sum_{k \geq K} \frac{1}{M_k} \;=\; +\infty
    \end{equation*}
    by \ref{as: Linear growth bound model Hessian}, contradicting
    \eqref{eqn: dfo radius summability}.

    Finally, let $\norm{H_k} \leq \mykappa{umh}$ for every $k$. Then $M_k \leq L + \mykappa{umh}$
    for every $k$, and~\eqref{eqn: dfo radius summability} gives
    \begin{equation*}
        \sum_{k=0}^{+\infty} \Delta_k^2
        \;\leq\; \big( L + \mykappa{umh} \big) \sum_{k=0}^{+\infty} \frac{\Delta_k^2}{M_k}
        \;<\; +\infty ,
    \end{equation*}
    so that $\Delta_k \to 0$. This proves~(ii) and completes the proof.
\end{proof}

    \subsection{Gradient-scaled update} \label{subsec:asymptotic gradient-scaled}

    The R-grad rule (Appendix~\ref{app: R-grad}) anchors the radius to criticality in the strongest possible sense. Rather than testing $\Delta_k$ against $\zeta\norm{g_k}$, it sets $\Delta_k = \mu_k \norm{g_k}$ identically and updates the scaling coefficient $\mu_k$ in its place. 
    The next proposition exploits the connection in both directions.  Over the successful
    iterations it bounds $\sum_{k \in \calS} \norm{g_k}^2/M_k$ whether or not $\mu_k$ is bounded
    above, which Proposition~\ref{prop: behaviour DFO-like radius} does not deliver; under a cap on
    the multiplier it then recovers the conclusions of that proposition for
    $\sum_k \Delta_k^2/M_k$.

    \begin{proposition}
        \label{prop: CV rate gradient-scaled radius}
        Let \ref{as: Bounded below}, \ref{as: Lipschitz continuous gradient},
        \ref{as: model agreement with function}, \ref{as: Linear growth bound model Hessian}
        and \ref{as: Cauchy decrease} hold, and let $\eta > 0$. Then the gradient-scaled update
        R\nobreakdash-grad satisfies
        \begin{equation}
            \sum_{k \in \calS} \frac{\norm{g_k}^2}{M_k} < +\infty ,
            \label{eqn: grad successful summability}
        \end{equation}
        where $\calS = \{k \geq 0 : \rho_k \geq \eta\}$. If, in addition, there exists
        $\overline{\mu} > 0$ such that $\mu_k \leq \overline{\mu}$ for every $k$, then
        \begin{enumerate}[(i)]
            \item $\displaystyle \sum_{k=0}^{+\infty} \frac{\Delta_k^2}{M_k} < +\infty$ and $\displaystyle \liminf_{k \to +\infty} \Delta_k = 0$; 
            \item if moreover $\norm{H_k} \leq \mykappa{umh}$ for every $k$, then
                $\displaystyle \sum_{k=0}^{+\infty} \Delta_k^2 < +\infty$ and
                $\displaystyle \lim_{k \to +\infty} \Delta_k = 0$.
        \end{enumerate}
    \end{proposition}
    \begin{proof}
    Proposition~6.6 of Part~I provides a constant $\mykappa{lbd} > 0$ with
    $\mu_k \geq \mykappa{lbd}/M_k$ for every $k$, so that
    $\Delta_k = \mu_k \norm{g_k} \geq (\mykappa{lbd}/M_k) \norm{g_k}$. Let $k \in \calS$. Since
    $\norm{H_k} \leq M_k$, assumption~\ref{as: Cauchy decrease} yields
    \begin{equation*}
        f(x_k) - f(x_{k+1})
        \;\geq\; \eta \, \frac{\mykappa{mdc}}{2} \, \norm{g_k}
                 \min\!\left(\frac{\norm{g_k}}{\norm{H_k}}, \Delta_k\right)
        \;\geq\; \eta \, \frac{\mykappa{mdc} \, \mykappa{lbd}}{2} \, \frac{\norm{g_k}^2}{M_k} .
    \end{equation*}
    The sequence $\{f(x_k)\}$ is non-increasing and bounded below by $\mykappa{lbf}$ under
    \ref{as: Bounded below}, whence
    \begin{equation*}
        \eta \, \frac{\mykappa{mdc} \, \mykappa{lbd}}{2} \sum_{k \in \calS} \frac{\norm{g_k}^2}{M_k}
        \;\leq\; \sum_{k \in \calS} \big( f(x_k) - f(x_{k+1}) \big)
        \;\leq\; f(x_0) - \mykappa{lbf} \;<\; +\infty ,
    \end{equation*}
    which is~\eqref{eqn: grad successful summability}.

    Assume henceforth $\mu_k \leq \overline{\mu}$ for every $k$, so that
    $\Delta_k \leq \overline{\mu} \norm{g_k}$ and, by~\eqref{eqn: grad successful summability},
    \begin{equation}
        \sum_{k \in \calS} \frac{\Delta_k^2}{M_k}
        \;\leq\; \overline{\mu}^{\,2} \sum_{k \in \calS} \frac{\norm{g_k}^2}{M_k} \;<\; +\infty .
        \label{eqn: grad successful radius summability}
    \end{equation}

    The set $\calS$ is infinite. Suppose otherwise: there is an index $k_0$
    such that every iteration $k_0 + j$, $j \geq 1$, is unsuccessful. The update rule then gives
    $\mu_{k_0+j} \leq {\gamma_2^{\,j}} \mu_{k_0}$, while
    $\mu_{k_0+j} \geq \mykappa{lbd}/M_{k_0+j}$, so
    that%
    \begin{equation*}
        \sum_{j \geq 1} \frac{1}{M_{k_0+j}}
        \;\leq\; \frac{\mu_{k_0}}{\mykappa{lbd}} \sum_{j \geq 1} {\gamma_2^{\,j}}
        \;=\; \frac{\mu_{k_0}}{\mykappa{lbd}} \, {\frac{\gamma_2}{1 - \gamma_2}} \;<\; +\infty ,
    \end{equation*}
    contradicting \ref{as: Linear growth bound model Hessian}. The same argument shows that no
    run of consecutive unsuccessful iterations is infinite.

    It remains to bound the radius on the unsuccessful runs. If all iterations are eventually successful,
    \eqref{eqn: grad successful radius summability} already gives~(i). Otherwise, let $k_0 \in \calS$
    be such that $k_0 + 1 \notin \calS$, and set $j^{+} = \min\{ j \geq 1 : k_0 + j \in \calS \}$,
    which is finite by the preceding paragraph. For $1 \leq j \leq j^{+} - 1$ the iterate is
    unchanged, so $\norm{g_{k_0+j}} = \norm{g_{k_0+1}}$, and the rule contracts the scaling
    coefficient, $\mu_{k_0+j} \leq {\gamma_2^{\,j-1}} \mu_{k_0+1}$; hence
    \begin{equation*}
        \Delta_{k_0+j} \;=\; \mu_{k_0+j} \norm{g_{k_0+j}}
        \;\leq\; {\gamma_2^{\,j-1}} \mu_{k_0+1} \norm{g_{k_0+1}} .
    \end{equation*}
    By \ref{as: Lipschitz continuous gradient} and $\norm{s_{k_0}} \leq \Delta_{k_0} \leq
    \overline{\mu} \norm{g_{k_0}}$,
    \begin{equation*}
        \norm{g_{k_0+1}} \;\leq\; \norm{g_{k_0}} + L \norm{s_{k_0}}
        \;\leq\; \big( 1 + \overline{\mu} L \big) \norm{g_{k_0}} ,
    \end{equation*}
    and $\mu_{k_0+1} \leq \gamma_3 \mu_{k_0}$ on every branch of
    \ref{update: rgrad generic}, the expansion one included, so that
    $\Delta_{k_0+j} \leq {\gamma_2^{\,j-1}} \gamma_3 (1 + \overline{\mu} L) \Delta_{k_0}$ for
    $1 \leq j \leq j^{+} - 1$. As $\{M_k\}$ is non-decreasing, it follows that
    \begin{equation*}
        \sum_{j=0}^{j^{+}-1} \frac{\Delta_{k_0+j}^2}{M_{k_0+j}}
        \;\leq\; \left( 1 + \frac{\gamma_3^2 \, (1 + \overline{\mu} L)^2}{{1 - \gamma_2^2}} \right)
                 \frac{\Delta_{k_0}^2}{M_{k_0}} ,
    \end{equation*}
    with the convention $\sum_{j=1}^{0} a_j = 0$. The blocks
    $\{k_0, \dots, k_0 + j^{+} - 1\}$, indexed by $k_0 \in \calS$, cover all but finitely many
    iterations, so summing the last display over $k_0 \in \calS$ and
    invoking~\eqref{eqn: grad successful radius summability} gives
    \begin{equation*}
        \sum_{k=0}^{+\infty} \frac{\Delta_k^2}{M_k}
        \;\leq\; \sum_{k=0}^{k_1-1} \frac{\Delta_k^2}{M_k} \;+\;
                 \left( 1 + \frac{\gamma_3^2 \, (1 + \overline{\mu} L)^2}{{1 - \gamma_2^2}} \right)
                 \sum_{k \in \calS} \frac{\Delta_k^2}{M_k} \;<\; +\infty ,
    \end{equation*}
    where $k_1 = \min\calS$, which is~(i).

    To prove~the second half of~(i), suppose $\liminf_{k \to +\infty} \Delta_k = c > 0$. Then $\Delta_k \geq c/2$ for
    all $k$ sufficiently large, and~(i) forces
    $\sum_k 1/M_k \leq (4/c^2) \sum_k \Delta_k^2/M_k < +\infty$, contradicting
    \ref{as: Linear growth bound model Hessian}.

    Finally, let $\norm{H_k} \leq \mykappa{umh}$ for every $k$. Then $M_k \leq L + \mykappa{umh}$,
    and~(i) gives $\sum_k \Delta_k^2 \leq (L + \mykappa{umh}) \sum_k \Delta_k^2/M_k < +\infty$, so
    that $\Delta_k \to 0$. This is~(ii), and the proof is complete.
\end{proof}

    \begin{remark}
        In Proposition~\ref{prop: CV rate gradient-scaled radius}, we work under the assumption $\eta > 0$.
        In Part~I, we explained that it is possible to consider the case $\eta < \eta_1$ with a strong condition on $\gamma_2$ to decouple the acceptance of the step and the update of the radius, i.e., $\gamma_2 \, (L \overline{\mu}  + 1 ) < 1$.
        However, the Lipschitz constant of the gradient is often unknown and the $\gamma_2$ condition is strong, imposing a conservative decrease of the radius.

        In practice, it is easier to consider $\eta >0$. 
        This implies that when
        \( \rho_k < \eta \), we have \( \mu_{k+1} \in [\gamma_1 \mu_k, \gamma_2 \mu_k) \) and $x_{k+1} = x_k $. Then, 
        $
            \Delta_{k+1} = \mu_{k+1} \| g_{k+1} \| \leq \gamma_2 \mu_k \| g_{k} \| \leq \gamma_2 \Delta_k
        $.
        \label{remark: eta_dec and eta_inc relative gradient}
    \end{remark}

    From the previous proposition, when $\mu_k$ is bounded above and the model Hessian is uniformly bounded, $\sum_k \norm{g_k}^2$ converges. This gives a concise route to first-order convergence.

    \begin{corollary}
        \label{cor:gradient-scaled-liminf}
        Under the assumptions of
        Proposition~\ref{prop: CV rate gradient-scaled radius}, suppose there exists
        $\overline{\mu} > 0$ such that $\mu_k \leq \overline{\mu}$ for all $k$ and that $\norm{H_k} \leq \mykappa{umh}$ for all $k$. Then $\sum_k \norm{g_k}^2 < +\infty$ and $\lim_{k \to +\infty} \norm{g_k} = 0$.
    \end{corollary}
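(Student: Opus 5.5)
The plan is to transport the summability of the radius back to the gradient through the identity $\Delta_k = \mu_k\norm{g_k}$ and the lower bound on the multiplier. Two ingredients of Proposition~\ref{prop: CV rate gradient-scaled radius} are needed. The first is part~(ii): under the cap $\mu_k \leq \overline{\mu}$ and the uniform bound $\norm{H_k} \leq \mykappa{umh}$ it gives $\sum_k \Delta_k^2 < +\infty$. The second is the lower bound $\mu_k \geq \mykappa{lbd}/M_k$ inherited from Proposition~6.6 of Part~I. Since $M_k \leq L + \mykappa{umh}$ under \ref{as: model hessian}, this lower bound becomes uniform:
\begin{equation*}
    \mu_k \;\geq\; \underline{\mu} \defeq \frac{\mykappa{lbd}}{L + \mykappa{umh}} > 0 .
\end{equation*}

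With $\underline{\mu}$ in hand, I would write $\norm{g_k} = \Delta_k/\mu_k \leq \Delta_k/\underline{\mu}$ for every $k$. Squaring and summing then gives
\begin{equation*}
    \sum_{k=0}^{+\infty} \norm{g_k}^2 \;\leq\; \underline{\mu}^{-2} \sum_{k=0}^{+\infty} \Delta_k^2 \;<\; +\infty .
\end{equation*}
The terms of a convergent series tend to zero, so $\norm{g_k} \to 0$ follows immediately. This is the full limit, not merely the $\liminf$.

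A second route reaches the same conclusion without part~(ii). Inequality~\eqref{eqn: grad successful summability} with bounded $M_k$ gives $\sum_{k\in\calS}\norm{g_k}^2<+\infty$. On an unsuccessful run the iterate does not move, so each $\norm{g_k}$ there equals $\norm{g_{k_0+1}}$. One then bounds $\norm{g_{k_0+1}} \leq (1+\overline{\mu}L)\norm{g_{k_0}}$ and controls the length of the run, which is bounded because $\mu$ contracts by $\gamma_2$ on the run while staying above $\underline{\mu}$ and below $\overline{\mu}$. The first route avoids this bookkeeping, so I would use it.

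The only delicate point is that the lower bound on $\mu_k$ is stated with $M_k$. It becomes uniform only because \ref{as: model hessian} keeps $M_k$ bounded, which is why that hypothesis is needed here and not just in part~(ii). Apart from checking that the bound from Part~I holds for every $k$, and not only on successful iterations, the argument is immediate.
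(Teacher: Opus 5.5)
Your argument is correct, but it is not the one the paper gives. The paper's proof is essentially your second route. It uses the uniform floor $\underline{\mu} = \mykappa{lbd}/(L+\mykappa{umh})$ to bound every run of unsuccessful iterations by a fixed length $J$. It notes that $\norm{g_k}$ stays frozen at $\norm{g_{k_0+1}} \leq (1+\overline{\mu}L)\norm{g_{k_0}}$ along such a run, and sums the blocks against \eqref{eqn: grad successful summability}.

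Your first route instead reads the result off part~(ii) of Proposition~\ref{prop: CV rate gradient-scaled radius}, through the two-sided comparison $\underline{\mu}\norm{g_k} \leq \Delta_k \leq \overline{\mu}\norm{g_k}$. In effect, the geometric contraction of the radius on unsuccessful runs, already exploited in the block argument for~(i), does the work that the run-length bound $J$ does in the paper. The paper needs $J$ because $\norm{g_k}$ itself does not decay along a run.

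What your route buys is brevity. It also gives a transparent reason why the summability transfers in both directions, which is exactly how the paper motivates the corollary. What the paper's route buys is the extra information that unsuccessful runs have uniformly bounded length, and a sharper constant. The paper's factor is $1 + J(1+\overline{\mu}L)^2$, with $J$ only logarithmic in $\overline{\mu}/\underline{\mu}$. Multiplying by $\overline{\mu}$ in \eqref{eqn: grad successful radius summability} and dividing by $\underline{\mu}$ afterwards costs a factor $(\overline{\mu}/\underline{\mu})^2$ instead. Neither difference affects the qualitative conclusion.
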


    \begin{proof}
        Assume $\norm{H_k} \leq \mykappa{umh}$, so that
        $M_k \leq L + \mykappa{umh}$ and, by Proposition~6.6 of \PartI,
        \begin{equation*}
            \mu_k \;\geq\; \frac{\mykappa{lbd}}{M_k}
            \;\geq\; \frac{\mykappa{lbd}}{L + \mykappa{umh}} \;=:\; \underline{\mu} \;>\; 0
            \qquad \text{for every } k .
        \end{equation*}
        Let $k_0 \in \calS$ with $k_0 + 1 \notin \calS$ and let $j^{+}$ be as in the proof of
        Proposition~\ref{prop: CV rate gradient-scaled radius}. For
        $1 \leq j \leq j^{+}-1$ the contraction branch gives
        $\mu_{k_0+j} \leq \gamma_2^{\,j-1}\mu_{k_0+1} \leq \gamma_2^{\,j-1}\gamma_3\overline{\mu}$,
        so $\mu_{k_0+j} \geq \underline{\mu}$ forces the run to carry at most $J$
        unsuccessful iterations,
        \begin{equation*}
           j^{+} - 1 \;\leq\; J \;\defeq\; 1 +
            \left\lceil \frac{\ln\bigl(\gamma_3\overline{\mu}/\underline{\mu}\bigr)}
                             {\ln(1/\gamma_2)} \right\rceil ,
        \end{equation*}
        a bound independent of $k_0$. On such a run the iterate is unchanged, so
        $\norm{g_{k_0+j}} = \norm{g_{k_0+1}} \leq (1 + \overline{\mu}L)\norm{g_{k_0}}$ by
        \ref{as: Lipschitz continuous gradient} exactly as above, and $\{M_k\}$ is non-decreasing;
        hence
        \begin{equation*}
            \sum_{j=0}^{j^{+}-1} \norm{g_{k_0+j}}^2
            \;\leq\; \Bigl( 1 + J\,(1 + \overline{\mu}L)^2 \Bigr) \norm{g_{k_0}}^2 .
        \end{equation*}
        Summing over $k_0 \in \calS$ and invoking~\eqref{eqn: grad successful summability}
        together with $M_k \leq L + \mykappa{umh}$, which turns the weighted sum over
        $\calS$ into $\sum_{k \in \calS}\norm{g_k}^2 < +\infty$, gives
        $\sum_{k} \norm{g_k}^2 < +\infty$, whence $\norm{g_k} \to 0$.
    \end{proof}

    The gradient-scaled rule delivers $\lim_{k\to\infty}\norm{g_k} = 0$ under a uniform bound on the model Hessian, with no assumption on the iterates. The proportionality $\Delta_k = \mu_k\norm{g_k}$ and the lower bound $\mu_k \geq \mykappa{lbd}/M_k$ of Part~I suffice by themselves. The admissibility machinery of \PartI{} is needed only for the $\liminf$ statement of~(i), under the weaker growth assumption.

    The conclusion nevertheless says less about the iterates than intuition suggests. Neither $\norm{g_k} \to 0$ nor $\Delta_k \to 0$ forces $\{x_k\}$ to converge, or even to possess a single accumulation point. The next example is a one-dimensional exponential. It satisfies every standing assumption, and all five mechanisms handle it successfully, yet the iterates escape to infinity while the gradient vanishes.

    \subsection{An example with no accumulation points} \label{subsec:Example with no accumulation points}

    Consider $f(x) = e^{-x}$ with $x_0 = 0$. By monotonic decrease of function values, we can restrict to $[0, +\infty)$. On this interval, the gradient $g(x) = -e^{-x}$ is Lipschitz continuous with constant $L = 1$, and the Hessian $H(x) = e^{-x}$ is uniformly bounded by $\mykappa{umh} = 1$. We build the model using the true Hessian $H_k = e^{-x_k}$,
    \begin{equation}    
        m_k(s) = \left( 1 - s + \tfrac{1}{2} s^2 \right) e^{-x_k}.
        \label{eqn: model example}
    \end{equation}

    The unconstrained minimum is attained at $s_k^* = 1$, so the constrained Newton step is
    $s_k^N = \min\{1, \Delta_k\}$. The acceptance ratio
    $
        \rho_k(s) = \tfrac{1 - e^{-s}}{s(1 - s/2)}
    $
    satisfies $\rho_k(s) \geq 1$ for every $s \in (0, 1]$, with $\rho_k(s) \to 1$ as $s \to 0^+$.
    Every iteration is therefore successful and $x_{k+1} = x_k + \min\{1, \Delta_k\}$. Since
    $\norm{H_k} = \norm{g_k} = e^{-x_k} > 0$ and $s_k \in (0,1]$, we have
    $1 - s_k/2 \geq 1/2$, so for any $\mykappa{mdc} \in (0,1)$,
    \[
        m_k(0) - m_k(s_k)
        = \left( s_k - \tfrac{1}{2} s_k^2 \right) e^{-x_k}
        = s_k \left( 1 - \tfrac{1}{2} s_k \right) e^{-x_k}
        \geq \tfrac{1}{2} s_k \, e^{-x_k}
        \geq \tfrac{\mykappa{mdc}}{2} \norm{g_k}
             \min\!\left(\tfrac{\norm{g_k}}{\norm{H_k}}, \Delta_k\right),
    \]
    where the last inequality uses $\norm{g_k}/\norm{H_k} = 1$ and $s_k = \min\{1,\Delta_k\}$, so
    that $\min(\norm{g_k}/\norm{H_k}, \Delta_k) = s_k$. The Cauchy decrease condition
    \ref{as: Cauchy decrease} is therefore satisfied.

    \subsubsection*{Convergence of the gradient norm holds for all four mechanisms}

    For all mechanisms that satisfy Conditions~\ref{cond: PartI C1}, \ref{cond: PartI C2} and~\ref{cond: PartI C3} of Part~I (verified in Section~6 of~\cite{Jeremy2026-I}), together with \ref{as: Bounded below}, \ref{as: Lipschitz continuous gradient}, \ref{as: model agreement with function}, \ref{as: Cauchy decrease}, and the uniform bound $\norm{H_k} \leq \mykappa{umh} = 1$ in this example, Theorems~3.2 and~2.4 of Part~I give
    \begin{equation}
        \lim_{k \to \infty} \norm{g_k} = 0
        \label{eqn:example gradient to zero}
    \end{equation}
    for each of the four mechanisms, without any further argument specific to the example.

    From~\eqref{eqn:example gradient to zero} and $\norm{g_k} = e^{-x_k}$, $x_k = -\ln \norm{g_k} \xrightarrow[k \to \infty]{} +\infty$, and $\sum_{j=0}^{k-1} s_j = x_k - x_0 = -\ln \norm{g_k} \xrightarrow[k \to \infty]{} +\infty$. Since $s_k \leq \Delta_k$ for all $k$, we have
    \begin{equation}
        \sum_{k=0}^{+\infty} \Delta_k \;\geq\; \sum_{k=0}^{+\infty} \norm{s_k} \;=\; +\infty .
        \label{eqn:example sum delta diverges}
    \end{equation}
    In other words, no matter how the radius is chosen, the algorithm must take infinitely many steps of total length $+\infty$ to follow the gradient to zero on $f(x) = e^{-x}$, since the function has no critical point at finite distance.
    The four mechanisms differ only in the \emph{rate} at which $\Delta_k$ shrinks (or fails to). The four mechanisms are treated in turn below.

    \subsubsection*{The criticality-anchored rules: R-DFO}

    Since $\norm{H_k} \leq 1$ for every $k$, Proposition~\ref{prop: behaviour DFO-like radius}(ii) applies and yields $\sum_k \Delta_k^2 < +\infty$ and $\Delta_k \to 0$. Combined with~\eqref{eqn:example sum delta diverges},
    \begin{equation*}
        \sum_{k=0}^{+\infty} \Delta_k = +\infty
        \qquad \text{and} \qquad
        \sum_{k=0}^{+\infty} \Delta_k^2 < +\infty ,
    \end{equation*}
    so the radius shrinks fast enough to be square-summable, yet slowly enough for its sum to diverge. This is the criticality anchoring at work: the test $\Delta_k \lessgtr \zeta \norm{g_k}$ ties the radius to the gradient norm up to the constant $\zeta$, and $\Delta_k$ therefore inherits the exponential decay of $\norm{g_k} = e^{-x_k}$.

    \subsubsection*{The criticality-anchored rules: R-grad}

    For the gradient-scaled rule the outcome depends on whether the scaling coefficient remains bounded, and the example separates the two cases sharply.

    \paragraph{Unbounded $\mu_k$.}
    Here the rule detaches from the gradient, and the radius stays bounded away from zero. Every iteration is very successful with $\rho_k \geq 1 > \eta_2$, so only the last two branches of Update~\ref{update: rgrad uncapped rule} are ever active, and the choice between them is decided by the test $\norm{s_k} \lessgtr \tfrac{1}{2} \Delta_k$. Since $s_k = \min\{1, \Delta_k\}$, this test reduces to one on $\Delta_k$ alone:
    \begin{equation}
        \norm{s_k} > \tfrac{1}{2} \Delta_k
        \quad \Longleftrightarrow \quad
        \Delta_k < 2 .
        \label{eqn:example grad test}
    \end{equation}
    Indeed, if $\Delta_k \leq 1$ then $s_k = \Delta_k > \tfrac{1}{2}\Delta_k$; if $1 < \Delta_k < 2$ then $s_k = 1 > \tfrac{1}{2}\Delta_k$; and if $\Delta_k \geq 2$ then $s_k = 1 \leq \tfrac{1}{2}\Delta_k$. Using $\Delta_k = \mu_k e^{-x_k}$ and $x_{k+1} = x_k + s_k$, Update~\ref{update: rgrad uncapped rule} together with~\eqref{eqn:example grad test} gives
    \begin{equation}
        \Delta_{k+1} = \varphi(\Delta_k),
        \qquad
        \varphi(\Delta) \defeq
        \begin{cases}
            \Delta \, e^{\ln \gamma_3 - \min\{1, \Delta\}} ,
              & \text{if } \Delta < 2 \quad (\mu_{k+1} = \gamma_3 \mu_k) , \\[4pt]
            \Delta \, e^{-1} ,
              & \text{if } \Delta \geq 2 \quad (\mu_{k+1} = \mu_k) .
          \end{cases}
        \label{eqn:example grad map}
    \end{equation}
    The radius therefore follows an autonomous one-dimensional dynamical system. 
    % and its asymptotic behaviour is entirely governed by the position of $\ln \gamma_3$ relative to~$1$.
    The region $\{\Delta \geq 2\}$ is transient. There the second branch
    of~\eqref{eqn:example grad map} contracts by the fixed factor $e^{-1}$ irrespective of
    $\gamma_3$, since the multiplier stalls while the iterate still advances by the full unit
    step, so every trajectory enters $[2e^{-1}, 2)$ in finitely many iterations.

    On $\{\Delta < 2\}$ the growth factor is $\varphi(\Delta)/\Delta = e^{\ln\gamma_3 - \Delta}$ for
    $\Delta \leq 1$ and $\gamma_3 e^{-1}$ for $1 < \Delta < 2$, so the asymptotics are decided by
    the position of $\ln\gamma_3$ relative to $1$.  
    For $\gamma_3 < e$, the region $\{1 < \Delta < 2\}$ is contractive, leading to the region $\{0 < \Delta < 1\}$, where the map is increasing with a unique fixed point $\ln\gamma_3$, which attracts the whole interval. The expansion branch then fires at every iteration and $\mu_k$ diverges geometrically.
    For $\gamma_3 = e$ every point of $[1,2)$ is fixed, and on $\{0 < \Delta < 1\}$, similarly, the map is increasing with a unique fixed point $\ln\gamma_3 = 1$.
    Finally, for $\gamma_3 > e$ the growth factor exceeds $1$ throughout, the radius crosses the threshold in finitely many iterations and oscillates persistently in the invariant band $\calI \defeq [2e^{-1}, 2\gamma_3 e^{-1}]$. Here $\mu_k$ still diverges, but no longer geometrically, since it stalls at every iteration with $\Delta_k \geq 2$.

    In all three regimes $\liminf_k \Delta_k > 0$, the radius being convergent to $\ln\gamma_3$, eventually constant
    in $[1,2)$, or oscillating in $\calI$. The unbounded growth of $\mu_k$ severs the link between
    the radius and the criticality measure, and the conclusions of
    Proposition~\ref{prop: CV rate gradient-scaled radius} become unavailable. The hypothesis
    $\mu_k \leq \overline{\mu}$ is not a technical convenience but the precise dividing line
    between the two regimes.

    \paragraph{Bounded $\mu_k$ with the decay rate of the radius: $\Delta_k = \Theta(1/k)$}
    If $\mu_k \leq \overline{\mu}$ for every $k$, then Proposition~\ref{prop: CV rate gradient-scaled radius}(ii) applies, since $\norm{H_k} \leq 1$, and the conclusion matches that of R-DFO: $\Delta_k \leq \overline{\mu} \norm{g_k} \to 0$ with $\sum_k \Delta_k^2 < +\infty$, while $\sum_k \Delta_k = +\infty$ by~\eqref{eqn:example sum delta diverges}. This gives a concrete example of the general statement that the criticality-anchored rules drive $\Delta_k \to 0$ while $\sum_k \Delta_k = +\infty$. The two statements are not contradictory, but they do not specify the rate at which $\Delta_k$ decays. The next proposition shows that the decay is harmonic, not exponential: the iterates reach only $x_k = \ln k + \mathcal{O}(\ln \ln k)$, and the exponential decay in $x$ becomes a harmonic decay in $k$.
    \begin{proposition}
        \label{prop: example harmonic decay}
        Consider the example above run with R-grad, and suppose that $\mu_k = \overline{\mu}$ for every $k \geq k_0$ and that $\Delta_{k_0} \leq 1$. Then the radius obeys the autonomous recursion
        \begin{equation}
            \Delta_{k+1} = \Delta_k \, e^{-\Delta_k} , \qquad k \geq k_0 ,
            \label{eqn: example exact recursion}
        \end{equation}
        the sequence $\{\Delta_k\}_{k \geq k_0}$ decreases strictly to zero, and $\lim_{k \to +\infty} k \, \Delta_k = 1$, i.e., $\Delta_k = 1/k + o(1/k)$.
    \end{proposition}

    \begin{proof}
        Such an index $k_0$ exists. The acceptance ratio computed above gives $\rho_k \geq 1 \geq \eta_2$
        at every iteration, so no branch of Update~\ref{update: rgrad generic} contracts the
        multiplier and $\mu_k \geq \mu_0$ throughout. With $\mu_k \leq \overline{\mu}$
        and~\eqref{eqn:example gradient to zero}, the radius $\Delta_k = \mu_k \norm{g_k}$ tends to
        zero, so there is $k_1$ with $\Delta_k \leq 1$ for every $k \geq k_1$. Beyond $k_1$ the
        step is $\norm{s_k} = \Delta_k > \tfrac{1}{2}\Delta_k$, the expansion branch fires at every
        iteration and $\mu_{k_1+j} = \min\{\gthree^{\,j}\mu_{k_1}, \overline{\mu}\}$. The cap is
        therefore attained after at most
        $\bigl\lceil \ln(\overline{\mu}/\mu_0) / \ln\gthree \bigr\rceil$ iterations, and any later
        index serves as $k_0$.

        We show by induction that $\Delta_k \leq 1$ for every $k \geq k_0$, the case $k = k_0$ being the hypothesis. Suppose $\Delta_k \leq 1$. Then $s_k = \min\{1, \Delta_k\} = \Delta_k$, so that $x_{k+1} = x_k + \Delta_k$. Using $\Delta_k = \mu_k \norm{g_k} = \overline{\mu} \, e^{-x_k}$ twice,
        \begin{equation*}
            \Delta_{k+1}
            = \overline{\mu} \, e^{-x_{k+1}}
            = \overline{\mu} \, e^{-x_k} e^{-\Delta_k}
            = \Delta_k \, e^{-\Delta_k} ,
        \end{equation*}
        which is~\eqref{eqn: example exact recursion}. As $e^{-u} < 1$ for $u > 0$, the recursion gives $0 < \Delta_{k+1} < \Delta_k \,\leq 1$, which closes the induction, so $\{\Delta_k\}_{k \geq k_0}$ is strictly decreasing and bounded below by $0$; it therefore converges to some $L \geq 0$, and passing to the limit in~\eqref{eqn: example exact recursion} yields $L = L e^{-L}$, that is $L \left( 1 - e^{-L} \right) = 0$, whence $L = 0$.
        Seek $\alpha \in \mathbb{R}$ such that $\Delta_{k+1}^{\alpha} - \Delta_k^{\alpha}$ has a nonzero finite limit. By~\eqref{eqn: example exact recursion},
        \begin{equation*}
            \Delta_{k+1}^{\alpha} - \Delta_k^{\alpha}
            = \Delta_k^{\alpha} \left( e^{-\alpha \Delta_k} - 1 \right)
            = \Delta_k^{\alpha} \left( - \alpha \Delta_k + \mathcal{O}\!\left( \Delta_k^2 \right) \right)
            = - \alpha \, \Delta_k^{\alpha + 1} \left( 1 + \mathcal{O}\!\left( \Delta_k \right) \right) .
        \end{equation*}
        Since $\Delta_k \to 0$, selecting $\alpha = -1$ gives $\Delta_{k+1}^{-1} - \Delta_k^{-1} = 1 + \mathcal{O}(\Delta_k) \to 1$, which is the desired limit.
        By the Ces\`aro mean theorem, applied from $k_0$ on because
        \eqref{eqn: example exact recursion} holds only there,
        \begin{equation*}
            \frac{1}{n - k_0} \sum_{k=k_0}^{n-1}
                \left( \frac{1}{\Delta_{k+1}} - \frac{1}{\Delta_k} \right)
            = \frac{1}{n - k_0}
              \left( \frac{1}{\Delta_n} - \frac{1}{\Delta_{k_0}} \right)
            \xrightarrow[n \to +\infty]{} 1 ,
        \end{equation*}
        and since $k_0$ and $\Delta_{k_0}$ are fixed this gives $n\Delta_n \to 1$.
        Therefore, $\Delta_n \sim 1/n$ and the proof is complete.%
    \end{proof}
    
    \subsubsection*{The criticality-blind rules: R-delta and R-step}

    For $\Rdelta$, every iteration is very successful, so $\Delta_{k+1} \geq \Delta_k$ and therefore $\Delta_k \geq \Delta_0 > 0$ for every $k$. For $\Rstep$, every iteration is very successful, so $\Delta_{k+1} \geq \norm{s_k}$ and therefore $\Delta_k \geq \norm{s_0} = \min\{1, \Delta_0\} > 0$. In both cases the radius is bounded below by a positive constant, in agreement with the general behaviour established in Section~\ref{subsec:asymptotic fixed-factor}.

    \subsection{Towards the local regime}

    The example exhibits, on a single function satisfying all the standing assumptions, the
    dichotomy announced at the beginning of this section. The criticality-anchored rules R-DFO and
    R-grad, the latter provided $\mu_k$ stays bounded, force $\Delta_k \to 0$ at the rate dictated
    by the gradient, even though the iterates possess no accumulation point. 
    Their vanishing radius asks nothing of the iterates and is what derivative-free and
    stochastic variants need to keep their models accurate. However, it could forbid the full
    quasi-Newton step and leave the trust-region constraint active. 
    In contrast, R-delta and R-step read only the previous radius and the realised step, so their radius carries no information about the distance to stationarity. It stays bounded below, which eventually makes the constraint inactive and the superlinear rate available, and that conclusion is unreachable without hypotheses on where the iterates go. 
    The sharper rate $\sum_k \Delta_k < +\infty$ promised by the step-driven analysis
    (Proposition~\ref{prop:step-driven-summable}) is therefore local rather than global, requiring
    the iterates to converge to a point $x^*$ with positive definite Hessian, $\nabla^2 f(x^*) \succ 0$, and it is in that regime
    that the summability of the step series (Theorem~\ref{thm: series of steps converges})
    propagates to the radius. Section~\ref{sec:local dynamics} works under those hypotheses.
    Figure~\ref{fig:asymptotic-regimes} sets the three regimes side by side, and the
    hypotheses each of them needs.
    
    \begin{figure}[tbp]
        \centering
        \begin{tikzpicture}[
        font=\small,
        box/.style={draw, rounded corners=2pt, align=center, inner sep=5pt},
        mech/.style={box, fill=white, minimum width=30mm, minimum height=9mm},
        concl/.style={box, fill=black!2, align=left, inner sep=7pt, text width=62mm},
        lab/.style={font=\scriptsize\itshape, text=black!60},
        ar/.style={-{Latex[length=2.2mm]}, thick},
        ]

        % ---- top group: criticality-anchored
        \node[mech] (dfo)  at (0,0)     {R-DFO\\[-1pt]{\scriptsize test $\Delta_k \lessgtr \zeta\norm{g_k}$}};
        \node[mech] (grad) at (0,-1.45) {R-grad\\[-1pt]{\scriptsize $\Delta_k = \mu_k\norm{g_k}$}\\[-1pt]{\scriptsize $\mu_k \le \overline{\mu}$}};

        \begin{scope}[on background layer]
        \node[draw, dashed, rounded corners=3pt, fit=(dfo)(grad), inner sep=8pt, fill=black!4] (gA) {};
        \end{scope}
        \node[lab, above=2pt of gA] {criticality-anchored};

        \node[concl, right=30mm of gA] (cA)
        {\centering
            $\displaystyle\sum_{k} \frac{\Delta_k^2}{M_k} < +\infty$ \\[4pt]
        {\scriptsize under \ref{as: Linear growth bound model Hessian}: $\displaystyle\liminf_{k} \Delta_k = 0$}\\[3pt]
        {\scriptsize under \ref{as: model hessian}: $\displaystyle \norm{H_k} \le \mykappa{umh} < +\infty$: $\Delta_k \to 0$}
        };

        \draw[ar] (gA.east) -- node[lab, above, align=center, pos=0.5]
        {no hypothesis\\[-2pt]on $\{x_k\}$} (cA.west);

        % ---- bottom group: criticality-blind
        \node[mech] (delta) at (0,-4.6)  {R-delta\\[-1pt]{\scriptsize $\Delta_{k+1}$ from $\Delta_k$}};
        \node[mech] (step)  at (0,-6.05) {R-step\\[-1pt]{\scriptsize $\Delta_{k+1}$ from $\norm{s_k}$}};

        \begin{scope}[on background layer]
        \node[draw, dashed, rounded corners=3pt, fit=(delta)(step), inner sep=8pt, fill=black!4] (gB) {};
        \end{scope}
        \node[lab, below=2pt of gB] {criticality-blind};

        \node[concl, right=30mm of gB] (cB)
        {no conclusion on $\{\Delta_k\}$ at this generality\\[4pt]
        {\scriptsize Example (\S\,\ref{subsec:Example with no accumulation points}): $\Delta_k \ge \Delta_0 > 0$ while $\norm{g_k}\to 0$}\\[3pt]
        {\scriptsize local rates require $x_k \to x^\ast$, $\nabla^2 f(x^\ast)\succ 0$}};

        \draw[ar] (gB.east) -- node[lab, above, align=center, pos=0.5]
        {requires\\[-2pt]Section~\ref{sec:local dynamics}} (cB.west);

        % ---- common bar
        \node[box, fill=black!7, text width=118mm, align=center,
            below=8mm of $(gB.south)!0.5!(cB.south)$, anchor=north] (common)
        {\small common to all mechanisms:\;
        $\displaystyle\lim_{k\to\infty}\norm{g_k} = 0$\;\;
        yet\;\; $\displaystyle\sum_{k}\Delta_k = +\infty$ is possible};

        \end{tikzpicture}
        \caption{Asymptotic regime of the trust-region radius under the standing assumptions of
        Section~\ref{sec: Background}. Anchoring the radius to the criticality measure (top) determines
        the regime of $\{\Delta_k\}$ with no hypothesis on the iterates, whereas rules that ignore
        criticality (bottom) admit no such conclusion until the local hypotheses of
        Section~\ref{sec:local dynamics} are imposed.}
        \label{fig:asymptotic-regimes}
        \end{figure}

%=======================================================================
%   SECTION: LOCAL DYNAMICS NEAR A FIRST-ORDER CRITICAL POINT
%=======================================================================
\section{Asymptotic behaviour: near a first-order critical point} \label{sec:local dynamics}

    This section studies the local dynamics of trust-region methods near a first-order critical point $x^*$ at which the Hessian is positive definite. Our starting point is a condition on the step, rather than on the model: we show that if a subsequence of iterates converges to $x^*$ and the step length is controlled by the gradient norm (Condition~\ref{cond: s_k bounded by g_k}), then the full sequence converges to $x^*$. For the criticality-anchored rules R-grad and R-DFO this conclusion requires no assumption on the model Hessian, in contrast with the classical rules, for which it holds only under $H_k \succeq \mykappa{lbm} I$. We then derive, in this local regime, convergence rates for the radius generated by the step-driven and fixed-factor rules, and we close with the difficulties raised by saddle points.

    Condition~\ref{cond: s_k bounded by g_k} recurs throughout trust-region analysis. It prevents the step from growing large relative to the local first-order information, and it underwrites both the full-sequence convergence of Theorem~\ref{thm: step bounded by gradient norm implies convergence} and the eventual inactivity of the trust-region constraint established in Section~\ref{sec:convergence rate}.
    \begin{cond}{C.Sg}[step bounded by the gradient]
\label{cond: s_k bounded by g_k}
        There exist constants $\delta^{(s)} > 0$ and $\overline{\kappa} > 0$, and an index $k_\kappa \geq 0$, such that for all $k \geq k_\kappa$,
        \[
            \norm{x_k - x^*} \leq \delta^{(s)} \quad \Longrightarrow \quad \norm{s_k} \leq \overline{\kappa}\, \norm{g_k}.
        \]
    \end{cond}

    The requirement is \emph{eventual}. The implication is imposed only from the index
    $k_\kappa$ on, not from the start of the run. This is what
    Lemma~\ref{lem: step bounded by gradient norm pointwise} delivers, since
    \ref{as: Hessian-step residual condition} controls the model Hessian only asymptotically, and
    it is the reading already used in Section~\ref{sec:convergence rate}, where every statement
    resting on the condition is prefaced by $k \geq k_\kappa$. The gradient-scaled rule
    attains it with $k_\kappa = 0$ and $\overline{\kappa} = \overline{\mu}$ as soon as its
    multiplier is capped; $\Rdfo$ does not satisfy it, and reaches the same conclusions by
    another route. See Corollary~\ref{cor: convergence criticality anchored saddle}.

    The section closes with saddle points and with the model-Hessian assumptions that tie $H_k$ to Condition~\ref{cond: s_k bounded by g_k}.

    \subsection{Local strong convexity and convergence from a subsequence} \label{sec: local strong convexity}

         If a subsequence converges to $x^*$ and Condition~\ref{cond: s_k bounded by g_k} holds, then the whole sequence converges.
        
        \begin{assum}{f.Pd}[positive definite Hessian at $x^*$]
\label{as: positive definite Hessian}
            The function $f$ is twice continuously differentiable in a neighbourhood $B(x^*, \Delta_*)$ of a first-order critical point $x^*$, and the Hessian $\nabla^2 f(x^*)$ is positive definite. 
            
        \end{assum}

        We derive inequalities showing that $f$ is locally strongly convex around $x^*$. These relate the Hessian, gradient and objective in a neighbourhood of $x^*$.

     \begin{proposition}\label{prop: inequalities for positive definite Hessian}
            Let \ref{as: positive definite Hessian} hold and write
            $\lambda^*_{\min} \leq \lambda^*_{\max}$ for the extreme eigenvalues of
            $\nabla^2 f(x^*)$. Then there are constants $0 < m \leq M$ and a neighbourhood
            $B(x^*, \Delta_*)$ such that $m I \preceq \nabla^2 f(x) \preceq M I$ for all
            $x \in B(x^*, \Delta_*)$, and
            \begin{enumerate}[(i)]
                \item $m \norm{s}^2 \leq s^\top (g(x+s) - g(x)) \leq M \norm{s}^2$, for all $s$ with $x + s \in B(x^*, \Delta_*)$;
                \item $m \norm{s} \leq \norm{g(x+s) - g(x)} \leq M \norm{s}$, for all $s$ with $x + s \in B(x^*, \Delta_*)$;
                \item $m \norm{x - x^*} \leq \norm{g(x)} \leq M \norm{x - x^*}$;
                \item $\frac{1}{2} m \norm{x - x^*}^2 \leq \abs{f(x) - f(x^*)} \leq \frac{1}{2} M \norm{x - x^*}^2$.
            \end{enumerate}
        \end{proposition}
        \begin{proof}
            See~\citet[Thm.~2]{Powe75}.
        \end{proof}

        \begin{remark}[$m$ and $M$ against the eigenvalues at $x^*$]
        \label{rem: m M versus eigenvalues}
            The constants $m$ and $M$ bound the Hessian uniformly on a neighbourhood and are
            therefore not the eigenvalues at $x^*$ themselves, satisfying only
            $0 < m \leq \lambda^*_{\min} \leq \lambda^*_{\max} \leq M$. Continuity of
            $\nabla^2 f$ makes the gap as small as desired at the cost of a smaller ball: for
            every $\epsilon > 0$ there is $\Delta_*$ on which $m \geq \lambda^*_{\min} - \epsilon$
            and $M \leq \lambda^*_{\max} + \epsilon$. A convenient concrete choice, used wherever
            a numerical value is wanted below, is
            \begin{equation}
                m = \tfrac12\lambda^*_{\min} ,
                \qquad
                M = 2\lambda^*_{\max} .
                \label{eqn: m M concrete}
            \end{equation}
        \end{remark}
        For $\alpha > 0$, write $L_\alpha = \{ x \in \Rp : f(x) - f(x^*) \leq \alpha \}$ for the corresponding level set of $f$.

        \begin{proposition}\label{prop: level set inclusion}
            For any $0 < \overline{\Delta} < \Delta_*$, there exist $\alpha > 0$ and $0 < \underline{\Delta} \leq \overline{\Delta}$ such that
            \[
                B(x^*, \underline{\Delta}) \subseteq L_\alpha \cap B(x^*, \Delta_*) \subseteq B(x^*, \overline{\Delta}).
            \]
        \end{proposition}
        \begin{proof}
            From Proposition~\ref{prop: inequalities for positive definite Hessian}(iv), for $x \in B(x^*, \Delta_*)$, $\frac{1}{2} m \norm{x - x^*}^2 \leq \abs{f(x) - f(x^*)} \leq \frac{1}{2} M \norm{x - x^*}^2$. Take $0 < \alpha \leq m \overline{\Delta}^2 / 2$. For $\Delta_* \geq \norm{x - x^*} > \overline{\Delta}$, we have $f(x) - f(x^*) > \frac{1}{2} m \overline{\Delta}^2 \geq \alpha$, so by contraposition $L_\alpha \cap B(x^*, \Delta_*) \subseteq B(x^*, \overline{\Delta})$. Take $0 < \underline{\Delta} \leq \sqrt{2\alpha/M}$. Then $\norm{x - x^*} \leq \underline{\Delta}$ implies $f(x) - f(x^*) \leq \frac{1}{2} M \norm{x - x^*}^2 \leq \alpha$, so $B(x^*, \underline{\Delta}) \subseteq L_\alpha$.
        \end{proof}

        \begin{remark}
            We use the triplet
            $\calC_{\overline{\Delta}} = \left(\alpha = \frac{m \overline{\Delta}^2}{2},\, \underline{\Delta} = \sqrt{m/M}\, \overline{\Delta},\, \overline{\Delta}\right)$, specifying only $0 < \overline{\Delta} < \Delta_*$.
        \end{remark}

        \begin{theorem}\label{thm: step bounded by gradient norm implies convergence}
            Let $\{x_k\}$ be generated by Algorithm~\ref{alg:generic BTR} under
            \ref{as: positive definite Hessian}, and suppose that a subsequence $\{x_{k_j}\}$
            converges to a first-order critical point $x^*$ at which $\nabla^2 f(x^*)$ is positive
            definite. If Condition~\ref{cond: s_k bounded by g_k} holds, then the whole sequence
            $\{x_k\}$ converges to $x^*$.
        \end{theorem}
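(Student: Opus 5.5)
The plan is a level-set trapping argument built on Proposition~\ref{prop: level set inclusion} and the monotonicity of $\{f(x_k)\}$. Since iterates change only on successful iterations, where $\rho_k \geq \eta_1 > 0$ and the model decrease is nonnegative by \ref{as: Cauchy decrease}, the sequence $\{f(x_k)\}$ is non-increasing. Continuity of $f$ along the subsequence then gives $f(x_k) \downarrow f(x^*)$, so $f(x_k) - f(x^*) \geq 0$ for all $k$.

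Fix an arbitrary $0 < \overline{\Delta} < \Delta_*$ and take the triplet $\calC_{\overline{\Delta}} = (\alpha, \underline{\Delta}, \overline{\Delta})$. Shrink $\overline{\Delta}$ if necessary so that $\overline{\Delta} \leq \delta^{(s)}$ and $\overline{\Delta} + \overline{\kappa} M \overline{\Delta} < \Delta_*$. This second constraint is what keeps a single step from jumping out of the ball on which Proposition~\ref{prop: inequalities for positive definite Hessian} applies. By convergence of the subsequence, choose $j$ with $k_{j} \geq k_\kappa$ and $\norm{x_{k_j} - x^*} \leq \underline{\Delta}$, so that $x_{k_j} \in L_\alpha \cap B(x^*, \Delta_*)$.

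The induction claim is that every $k \geq k_j$ satisfies $x_k \in L_\alpha \cap B(x^*, \Delta_*)$, hence $\norm{x_k - x^*} \leq \overline{\Delta}$. Suppose this holds at step $k$. Because $\norm{x_k - x^*} \leq \overline{\Delta} \leq \delta^{(s)}$ and $k \geq k_\kappa$, Condition~\ref{cond: s_k bounded by g_k} together with Proposition~\ref{prop: inequalities for positive definite Hessian}(iii) give
\[
\norm{s_k} \leq \overline{\kappa} \norm{g_k} \leq \overline{\kappa} M \norm{x_k - x^*} \leq \overline{\kappa} M \overline{\Delta} .
\]
Hence $\norm{x_{k+1} - x^*} \leq \overline{\Delta}(1 + \overline{\kappa} M) < \Delta_*$. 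This holds whether the step is accepted or not, since a rejected step leaves $x_{k+1} = x_k$. Monotonicity gives $f(x_{k+1}) - f(x^*) \leq f(x_k) - f(x^*) \leq \alpha$, so $x_{k+1} \in L_\alpha \cap B(x^*, \Delta_*) \subseteq B(x^*, \overline{\Delta})$, which closes the induction. Since $\overline{\Delta}$ can be made arbitrarily small, $x_k \to x^*$.

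The main obstacle is precisely this one-step escape. The level set $L_\alpha$ need not be connected, so a decrease in $f$ alone cannot keep $x_{k+1}$ near $x^*$; the iterate could land in another component of $L_\alpha$. Condition~\ref{cond: s_k bounded by g_k} is what closes this gap: it bounds the step by a multiple of the distance to $x^*$, keeping $x_{k+1}$ inside $B(x^*, \Delta_*)$, where the inclusion of Proposition~\ref{prop: level set inclusion} applies. The bound via (iii) requires $x_k$ itself to lie in that ball, which the induction hypothesis guarantees. The eventual index $k_\kappa$ is handled simply by starting the induction at some $k_j \geq k_\kappa$.
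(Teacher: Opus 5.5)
Your proof is correct and is the same level-set trapping argument the paper uses (after Steihaug and Conn--Gould--Toint, Thm.~6.5.2). As in the paper, you enter $B(x^*,\underline{\Delta})$ at an index $k_j \geq k_\kappa$ and induct on membership in $L_\alpha \cap B(x^*,\Delta_*)$. The only difference is cosmetic: you bound the one-step excursion by $(1+\overline{\kappa}M)\overline{\Delta}$ directly, whereas the paper routes $\norm{x_k - x^*}$ through $\norm{g_k}/m$ and so uses the slightly larger factor $\theta = (1/m + \overline{\kappa})M$. Also, monotonicity of $\{f(x_k)\}$ only needs the acceptance test $\rho_k \geq \eta \geq 0$, not $\eta_1 > 0$.
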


        \begin{proof}
            The argument is that of~\citet{Stei83}, in the form of
            \citet[Thm.~6.5.2]{ConnGoulToin00}; we give it because the constants are used
            again in Section~\ref{sec:convergence rate}. Let $m$, $M$ and $\Delta_*$ be given by
            Proposition~\ref{prop: inequalities for positive definite Hessian}, and
            $\delta^{(s)}$, $\overline{\kappa}$ and $k_\kappa$ by
            Condition~\ref{cond: s_k bounded by g_k}. Fix
            $\delta \in (0,\Delta_*]$ and set
            \begin{equation}
                \theta = \left( \frac{1}{m} + \overline{\kappa} \right) M ,
                \qquad
                \overline{\Delta} = \frac{\min\{\delta, \delta^{(s)}\}}{\theta} ,
                \label{eqn: local radius choice}
            \end{equation}
            with the triplet $\calC_{\overline{\Delta}}$ of
            Proposition~\ref{prop: level set inclusion}. Since $\theta \geq 1$,
            \begin{equation}
                \overline{\Delta} \leq \min\{\delta, \delta^{(s)}\} \leq \delta^{(s)} ,
                \label{eqn: Dbar below deltas}
            \end{equation}
            so Condition~\ref{cond: s_k bounded by g_k} applies on $B(x^*,\overline{\Delta})$.
            The subsequence puts some $x_{k_{j_0}}$ in
            $B(x^*,\underline{\Delta}) \subseteq L_\alpha \cap B(x^*,\Delta_*)$, and
            since $\{x_{k_j}\}$ has infinitely many terms in that ball, $j_0$ may be chosen so that
            in addition $k_{j_0} \geq k_\kappa$, and we show by
            induction that no later iterate leaves that set, the trapping argument of
            Figure~\ref{fig:trapping}. If $x_k$ lies in it for some $k \geq k_{j_0}$ then
            $\norm{x_k - x^*} \leq \overline{\Delta}$, and $k \geq k_{j_0} \geq k_\kappa$, so
            Condition~\ref{cond: s_k bounded by g_k} gives
            $\norm{s_k} \leq \overline{\kappa}\norm{g_k}$ and
            Proposition~\ref{prop: inequalities for positive definite Hessian}(iii), used first as
            $\norm{x_k - x^*} \leq \norm{g_k}/m$ and then as
            $\norm{g_k} \leq M\norm{x_k - x^*}$, yields
            \begin{equation*}
                \norm{x_{k+1} - x^*}
                \;\leq\; \norm{x_k - x^*} + \norm{s_k}
                \;\leq\; \left( \frac{1}{m} + \overline{\kappa} \right) \norm{g_k}
                \;\leq\; \theta\,\overline{\Delta}
                \;=\; \min\{\delta, \delta^{(s)}\} \;\leq\; \Delta_* .
            \end{equation*}
            With $f(x_{k+1}) \leq f(x_k)$ this puts $x_{k+1}$ back in
            $L_\alpha \cap B(x^*,\Delta_*)$, closing the induction. As $\delta$ was arbitrary,
            $\{x_k\}$ converges to $x^*$.
        \end{proof}

        \begin{corollary}\label{cor:single-iterate-in-ball}
            Let $\{x_k\}$ be generated by Algorithm~\ref{alg:generic BTR} under the hypotheses of
            Theorem~\ref{thm: step bounded by gradient norm implies convergence}, with
            $\overline{\Delta}$ and $\calC_{\overline{\Delta}}$ as defined there. If some iterate
            of index $k \geq k_\kappa$ satisfies
            $x_k \in L_\alpha \cap B(x^*, \Delta_*)$, then $\{x_k\}$ converges to $x^*$.
        \end{corollary}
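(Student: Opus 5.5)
The plan is to reuse the trapping induction from the proof of Theorem~\ref{thm: step bounded by gradient norm implies convergence}, replacing the convergent subsequence by the single iterate the hypothesis supplies. Let $K \geq k_\kappa$ be the index with $x_K \in L_\alpha \cap B(x^*,\Delta_*)$, where $\alpha$ and $\overline{\Delta}$ come from $\calC_{\overline{\Delta}}$ for the $\delta$ fixed in~\eqref{eqn: local radius choice}.

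The first step is localisation. By Proposition~\ref{prop: level set inclusion}, $x_K$ lies in $B(x^*,\overline{\Delta})$. By~\eqref{eqn: Dbar below deltas} this ball sits inside $B(x^*,\delta^{(s)})$, so Condition~\ref{cond: s_k bounded by g_k} is available at index $K$.

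The second step is the induction. Suppose $x_k \in L_\alpha \cap B(x^*,\Delta_*)$ for some $k \geq K$. As in the theorem, combining $\norm{s_k} \leq \overline{\kappa}\norm{g_k}$ with Proposition~\ref{prop: inequalities for positive definite Hessian}(iii) gives
\begin{equation*}
\norm{x_{k+1}-x^*} \leq \theta\overline{\Delta} \leq \Delta_* .
\end{equation*}
Monotonicity $f(x_{k+1}) \leq f(x_k)$ then keeps $x_{k+1}$ in $L_\alpha$. The set is therefore invariant from $K$ on, and every later iterate satisfies $\norm{x_k - x^*} \leq \overline{\Delta} \leq \delta$.

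The step I expect to be the real obstacle is turning this into convergence, because $\delta$ is no longer free once the ball has been fixed by the given iterate. A smaller $\delta$ yields smaller $\overline{\Delta}$ and $\alpha$, and there is no reason for the given iterate to lie in the smaller set. To handle this I would first show that the trapped iterates have $x^*$ as an accumulation point, and then invoke Theorem~\ref{thm: step bounded by gradient norm implies convergence} directly. The iterates stay in a compact neighbourhood where $x^*$ is the only critical point, because $\norm{g(x)} \geq m\norm{x-x^*}$ there. Combined with $\liminf_k \norm{g_k} = 0$ from the first-order theory of Part~I, recalled in Section~\ref{sec: Background}, this produces a subsequence with $\norm{x_{k_j}-x^*} \leq \norm{g_{k_j}}/m \to 0$. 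That is precisely the hypothesis of the theorem, which then gives convergence of the whole sequence.
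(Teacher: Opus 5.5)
Your first two steps are the paper's whole proof. That proof consists only of the remark that the trapping induction of Theorem~\ref{thm: step bounded by gradient norm implies convergence} needs just one iterate of index $k \geq k_\kappa$ in $L_\alpha \cap B(x^*,\Delta_*)$. Your third step is where you depart from it, and the obstacle you name is real. For the one $\delta$ at hand, the induction only confines the tail of the sequence to $B(x^*,\overline{\Delta})$. The theorem's closing ``as $\delta$ was arbitrary'' relies on the convergent subsequence to supply, for every smaller $\delta$, a fresh iterate past $k_\kappa$ in the corresponding smaller set, and a single given iterate does not do this. So the paper's one-line argument is mechanism-free but, as written, proves confinement rather than convergence. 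Your repair closes the argument: confinement, together with $\liminf_k\norm{g_k}=0$ and Proposition~\ref{prop: inequalities for positive definite Hessian}(iii), gives $\norm{x_{k_j}-x^*}\leq\norm{g_{k_j}}/m\to 0$, and the theorem then applies. If Part~I supplies the full limit $\norm{g_k}\to 0$, the same inequality gives $x_k \to x^*$ directly, and the appeal to the theorem becomes unnecessary.

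What you must make explicit is the hypothesis behind $\liminf_k\norm{g_k}=0$, which the corollary does not list. Part~I obtains it from Conditions~\ref{cond: PartI C1} and~\ref{cond: PartI C2} under \ref{as: model hessian}, or from Conditions~\ref{cond: PartI C1} and~\ref{cond: PartI C3} under \ref{as: Linear growth bound model Hessian}. Some such hypothesis on the mechanism cannot be avoided.

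Here is a counterexample without it. Take $f(x)=\tfrac12 x^2$, $x_0=1$, $H_k\equiv\tfrac13$, and a rule that keeps $\Delta_k\equiv 3$ after a rejection.
\begin{itemize}
\item The trial step $s_k=-3$ gives $\rho_k=-1$ at every iteration.
\item Condition~\ref{cond: s_k bounded by g_k} holds with $\overline{\kappa}=3$ and $k_\kappa=0$.
\item \ref{as: Cauchy decrease} holds, as does the lower bound of Condition~\ref{cond: PartI C1}.
\item $x_0$ lies in $L_\alpha\cap B(x^*,\Delta_*)$ for the triplet built from $\delta=\delta^{(s)}=\Delta_*=10$.
\end{itemize}
Yet $x_k\equiv 1$. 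Only the contraction on unsuccessful iterations fails, and the conclusion fails with it.
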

        \begin{proof}
            The induction in the proof of
            Theorem~\ref{thm: step bounded by gradient norm implies convergence} uses only that some
            iterate at which Condition~\ref{cond: s_k bounded by g_k} is in force belongs
            to $L_\alpha \cap B(x^*, \Delta_*)$, and not that it arises from a
            convergent subsequence.
        \end{proof}

        \begin{figure}[tbp]
            \centering
            \begin{tikzpicture}[font=\small]

            \def\Rstar{3.15}
            \def\Rbar{2.3}
            \def\Rund{1.15}

            \draw[dashed, thick, black!50] (0,0) circle (\Rstar);
            \draw[dashed, thick, black!40] (0,0) circle (\Rbar);
            \begin{scope}
            \clip (0,0) circle (\Rstar);
            \filldraw[fill=black!8, draw=black!70, thick]
            plot[smooth cycle, tension=0.85] coordinates {
                (0:2.1) (45:1.75) (90:2.0) (135:1.65) (180:2.02)
                (225:1.7) (270:1.95) (315:1.6)};
            \end{scope}
            \filldraw[fill=black!20, draw=black!65, thick] (0,0) circle (\Rund);
            \fill (0,0) circle (1.6pt);
            \node[below right=0pt and 1pt] at (0,0) {$x^*$};

            % iterates: enter the small ball, then stay trapped and converge
            \coordinate (a) at (152:2.95);
            \coordinate (b) at (128:2.35);
            \coordinate (c) at (100:1.02);   % x_{k_{j_0}} enters B(x*,Dunder)
            \coordinate (d) at (40:1.55);
            \coordinate (e) at (-15:1.12);
            \coordinate (f) at (-70:0.7);
            \coordinate (g) at (200:0.4);
            \coordinate (h) at (30:0.2);

            \draw[-{Latex[length=1.5mm]}, black!45, thick] (a) -- (b);
            \draw[-{Latex[length=1.5mm]}, black!45, thick] (b) -- (c);
            \draw[-{Latex[length=1.7mm]}, thick] (c) -- (d);
            \draw[-{Latex[length=1.7mm]}, thick] (d) -- (e);
            \draw[-{Latex[length=1.7mm]}, thick] (e) -- (f);
            \draw[-{Latex[length=1.7mm]}, thick] (f) -- (g);
            \draw[-{Latex[length=1.7mm]}, thick] (g) -- (h);

            \foreach \p in {a,b} \fill[black!45] (\p) circle (1.5pt);
            \foreach \p in {c,d,e,f,g,h} \fill (\p) circle (1.7pt);

            \node[font=\scriptsize, black!45, above left=0pt and 0pt] at (a) {$x_k$, $k$ small};
            \node[font=\scriptsize, left=1pt] at (c) {$x_{k_{j_0}}$};
            \node[font=\scriptsize, right=1pt] at (d) {$x_{k_{j_0}+1}$};

            \node[font=\scriptsize, black!50] at (255:2.95) {$B(x^*,\Delta_*)$};
            \node[font=\scriptsize, fill=black!8, inner sep=1pt] at (108:2.66) {$L_\alpha$};

            \end{tikzpicture}
            \caption{The trapping argument of
            Theorem~\ref{thm: step bounded by gradient norm implies convergence}. Once a single iterate
            of index $k \geq k_\kappa$ enters $L_\alpha \cap B(x^*,\Delta_*)$, the step bound of
            Condition~\ref{cond: s_k bounded by g_k} keeps the next iterate inside $B(x^*,\Delta_*)$, while the
            monotonicity of $\{f(x_k)\}$ keeps it inside $L_\alpha$. No iterate can escape, and since
            $\overline{\Delta} \le \delta$ with $\delta > 0$ arbitrary, the whole sequence converges to $x^*$.}
            \label{fig:trapping}
        \end{figure}

        We also have summability of the steps in this regime, which is a classical result of Powell~\cite{Powe75}.

        \begin{theorem}\label{thm: series of steps converges}
            Consider a sequence $\{x_k\}$ generated by Algorithm~\ref{alg:generic BTR} under \ref{as: Cauchy decrease}, with $\{f(x_k)\}$ non-increasing, and converging to a first-order critical point $x^*$ with $\nabla^2 f(x^*)$ positive definite. Suppose the Hessian approximation satisfies \ref{as: Hessian-step residual condition}. Then
            \[
                \sum_{k \geq 0} \norm{s_k} < +\infty.
            \]
        \end{theorem}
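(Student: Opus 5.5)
The plan is Powell's argument: turn the decrease on successful iterations into a geometric decay of $f(x_k)-f(x^*)$, and then bound the step lengths by the square roots of consecutive function gaps. Unsuccessful iterations have $x_{k+1}=x_k$ and contribute nothing to $\sum_k\norm{x_{k+1}-x_k}$. Throughout I read $s_k$ in the sum as the accepted displacement $x_{k+1}-x_k$; on unsuccessful iterations the trial steps are discarded.

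\emph{Localisation.} Since $x_k\to x^*$, all iterates from some $k_1$ on lie in $B(x^*,\Delta_*)$. There Proposition~\ref{prop: inequalities for positive definite Hessian} gives
\[
\tfrac12 m\norm{x-x^*}^2\le f(x)-f(x^*)\le \tfrac12 M\norm{x-x^*}^2,
\qquad
m\norm{x-x^*}\le\norm{g(x)}\le M\norm{x-x^*}.
\]
Also $\norm{g_k}\to0$ and the accepted $\norm{s_k}\to0$.

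\emph{Decrease on successful iterations.} For a successful $k$, the step is accepted with $\rho_k\ge\eta_1$. Assumption \ref{as: Cauchy decrease} gives
\[
f(x_k)-f(x_{k+1})\ge \eta_1\tfrac{\mykappa{mdc}}{2}\norm{g_k}\min\{\norm{g_k}/\norm{H_k},\Delta_k\}.
\]
I expect a cleaner route that avoids $\Delta_k$ and $\norm{H_k}$ altogether. Use the Taylor expansion
\[
f(x_k)-f(x_{k+1}) = -g_k^\top s_k-\tfrac12 s_k^\top\nabla^2 f(\cdot)s_k,
\]
combined with \ref{as: Hessian-step residual condition}. Together these give
\[
m_k(0)-m_k(s_k)=f(x_k)-f(x_{k+1})+o(\norm{s_k}^2),
\]
so eventually $\rho_k\to1$. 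A strictly positive model decrease, with a successful step, already gives a decrease in $f$.

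The key inequality to aim for is
\[
f(x_k)-f(x_{k+1})\ge c\,\norm{s_k}^2 .
\]
To get it, write $f(x_k)-f(x_{k+1}) = -g(x_{k+1})^\top s_k+\tfrac12 s_k^\top \nabla^2 f(\xi)s_k$, expanding at $x_{k+1}$. If $-g(x_{k+1})^\top s_k$ has the wrong sign, use instead the descent property plus strong convexity along the segment, via Proposition~\ref{prop: inequalities for positive definite Hessian}(i). The segment lies in the convex ball, so
\[
f(x_k)-f(x_{k+1})\ge -g_{k+1}^\top s_k+\tfrac m2\norm{s_k}^2 \quad\text{and}\quad f(x_{k+1})-f(x_k)\ge g_k^\top s_k+\tfrac m2\norm{s_k}^2 .
\]

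\emph{Main obstacle: the ratio $\norm{s_k}/\norm{g_k}$.} What I expect to be the main difficulty is bounding $\norm{s_k}$ by the function gap without Condition~\ref{cond: s_k bounded by g_k}. I would use \ref{as: Hessian-step residual condition} for this: $\norm{(H_k-\nabla^2 f(x_k))s_k}=o(\norm{s_k})$, so along $s_k$ the model curvature is eventually at least $m/2$. Take $s_k$ to be an approximate minimiser along its own direction, as the Cauchy-type decrease requires. A model with curvature at least $m/2$ along $s_k$ and positive decrease then forces $\norm{s_k}\le (4/m)\norm{g_k}$ up to $o(\norm{s_k})$ terms. This holds because $m_k(s_k)<m_k(0)$ implies $-g_k^\top s_k>\tfrac12 s_k^\top H_k s_k\ge \tfrac m4\norm{s_k}^2$.

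\emph{Summation.} This yields $\norm{s_k}\le C\norm{x_k-x^*}\le C'\sqrt{f(x_k)-f(x^*)}$. It also yields $f(x_k)-f(x_{k+1})\ge c\norm{s_k}^2\ge c''\,(f(x_k)-f(x^*))$. To see the last step, use $\norm{s_k}\gtrsim\norm{g_k}$, which follows from $\norm{H_k s_k+g_k}$ being small or from the Cauchy bound $\norm{g_k}\min\{\norm{g_k}/M,\Delta_k\}$ on successful steps.

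Setting $e_k=f(x_k)-f(x^*)$, successful iterations therefore contract $e_{k+1}\le(1-c'')e_k$. Over the successful subsequence $e$ decays geometrically, and so $\sum\norm{s_k}\le C'\sum\sqrt{e_k}<\infty$. If the lower bound $\norm{s_k}\gtrsim\norm{g_k}$ fails because $\Delta_k$ is small, I fall back on Powell's telescoping estimate
\[
\norm{s_k}\le C\,\frac{e_k-e_{k+1}}{\sqrt{e_k}}\le 2C\left(\sqrt{e_k}-\sqrt{e_{k+1}}\right).
\]
This follows from $e_k-e_{k+1}\ge c\norm{s_k}\norm{g_k}$, via Cauchy decrease with $\norm{s_k}\le\Delta_k$, together with $\norm{g_k}\ge m\norm{x_k-x^*}\gtrsim\sqrt{e_k}$. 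The sum then telescopes to at most $2C\sqrt{e_{k_1}}$. I would make this telescoping bound the main line, since it needs only $e_k-e_{k+1}\ge c\norm{s_k}\norm{g_k}$.
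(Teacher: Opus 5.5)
Your route matches the proof the paper cites. The telescoping bound $\norm{s_k}\le 2C(\sqrt{e_k}-\sqrt{e_{k+1}})$, fed by $e_k-e_{k+1}\ge c\,\norm{g_k}\norm{s_k}$ and $\norm{g_k}\ge m\sqrt{2e_k/M}$, is Powell's argument, to which the paper defers. Your derivation of $\norm{s_k}\le(4/m)\norm{g_k}$ from \ref{as: Hessian-step residual condition} is Lemma~\ref{lem: step bounded by gradient norm pointwise}.

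The gap is the key inequality itself. Assumption~\ref{as: Cauchy decrease} delivers $\tfrac12\mykappa{mdc}\norm{g_k}\min\{\norm{g_k}/\norm{H_k},\Delta_k\}$, and $\norm{s_k}\le\Delta_k$ handles only the second entry. The first entry is bounded below by a multiple of $\norm{s_k}$ only if, on top of the step bound, $\norm{H_k}$ is uniformly bounded. You use such a bound silently (the $M$ in your Cauchy estimate), but it is not a hypothesis. Nor can \ref{as: Hessian-step residual condition} supply it, since that assumption constrains $H_k$ only along $s_k$. Your intermediate claims $\rho_k\to1$ and $e_k-e_{k+1}\ge c\norm{s_k}^2$ rest on the same bound, because an $o(\norm{s_k}^2)$ discrepancy is negligible only against a model decrease of order $\norm{s_k}^2$.

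The omission is not cosmetic: without the bound the conclusion fails. Take the following data on $\R^2$:
\begin{itemize}
\item $f(x)=\tfrac12\norm{x}^2$ and $x_k=(k+1)^{-1}\bigl(\cos(k\pi/3),\sin(k\pi/3)\bigr)$, with $s_k=x_{k+1}-x_k$;
\item any $\Delta_k\ge\norm{s_k}$;
\item $H_k=P_k+(k+1)(I-P_k)$, with $P_k$ the orthogonal projector onto $\operatorname{span}\{s_k\}$.
\end{itemize}
Then $H_ks_k=s_k$, so \ref{as: Hessian-step residual condition} holds with zero residual. The model decrease equals $f(x_k)-f(x_{k+1})=\tfrac12\bigl((k+1)^{-2}-(k+2)^{-2}\bigr)>0$, so $\rho_k=1$. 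With $\mykappa{mdc}=\tfrac12$ this dominates the Cauchy requirement, which never exceeds $\tfrac14\norm{g_k}^2/\norm{H_k}=\tfrac14(k+1)^{-3}$, and even \ref{as: Linear growth bound model Hessian} holds. Yet $x_k\to0$ while $\norm{s_k}\ge\tfrac{\sqrt3}{2}(k+1)^{-1}$, so $\sum_k\norm{s_k}=+\infty$. The decrease is of order $k^{-3}$, against $\norm{g_k}\norm{s_k}$ of order $k^{-2}$.

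Under $\norm{H_k}\le\mykappa{umh}$, which the paper adopts in Remark~\ref{rem: assumptions on the model Hessian} and in Section~\ref{sec:convergence rate}, your main line closes:
\begin{itemize}
\item the step bound gives $\norm{g_k}/\norm{H_k}\ge\tfrac{m}{4\mykappa{umh}}\norm{s_k}$;
\item the argument of Theorem~\ref{thm: rho converges to 1} needs only $\norm{s_k}\le(4/m)\norm{g_k}\to0$, not the present theorem, so there is no circularity, and it gives $\rho_k\ge\tfrac12$ for large $k$;
\item hence $e_k-e_{k+1}\ge\tfrac14\mykappa{mdc}\min\{1,\tfrac{m}{4\mykappa{umh}}\}\norm{g_k}\norm{s_k}$, and your telescoping finishes.
\end{itemize}

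The eventual bound $\rho_k\ge\tfrac12$ also settles two loose ends. First, the acceptance threshold $\eta$ of Algorithm~\ref{alg:generic BTR} may be $0$, so acceptance alone guarantees no decrease. Second, the paper applies the theorem to trial steps (for instance in Proposition~\ref{prop:step-driven-summable}), whereas you sum only accepted displacements. Once every iteration is eventually successful, the two sums differ by finitely many terms.
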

        \begin{proof}
            See~\citet[Thm.~2]{Powe75}.
        \end{proof}

        \begin{remark}\label{rem: assumptions on the model Hessian}
            Therefore, when considering sequences $\{x_k\}$ generated by the trust-region method that converge to second-order critical points with positive definite Hessian, we assume that $\sum_{k \geq 0} \norm{s_k} < +\infty$ and that the Hessian approximation is uniformly bounded: $\norm{H_k} \leq \mykappa{umh} < +\infty$ for all $k$.
        \end{remark}

        \subsection{Assumption on the model Hessian}\label{sec: assumptions on the model hessian}

        \begin{lemma}\label{lem: step bounded by gradient norm pointwise}
            Let \ref{as: positive definite Hessian} hold, and let $m$ and
            $\Delta_*$ be given by
            Proposition~\ref{prop: inequalities for positive definite Hessian}. If
            \ref{as: Hessian-step residual condition} holds and
            $\lim_{k\to\infty}\norm{g_k} = 0$, then there is an index $k_\kappa$ such
            that every iterate of
            Algorithm~\ref{alg:generic BTR} with $k \geq k_\kappa$ satisfies,
            \begin{equation}
                x_k \in B(x^*, \Delta_*)
                \qquad \Longrightarrow \qquad
                \norm{s_k} \leq \overline{\kappa} \, \norm{g_k} ,
                \qquad\text{where}\quad \overline{\kappa} = \frac{4}{m} .
                \label{eqn: pointwise step bound}
            \end{equation}
            In particular, Condition~\ref{cond: s_k bounded by g_k} holds with
            $\delta^{(s)} = \Delta_*$, $k_\kappa$ as above, and $\overline{\kappa} = 4/m$.
        \end{lemma}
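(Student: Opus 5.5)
The plan is to turn the decrease the model achieves along $s_k$ into a quadratic inequality in $\norm{s_k}$, using the local curvature bound of Proposition~\ref{prop: inequalities for positive definite Hessian} and the asymptotic accuracy of $H_k$ along $s_k$ supplied by \ref{as: Hessian-step residual condition}.

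\textbf{Step 1: model decrease gives a sign condition.} With the quadratic model $m_k(s) = f(x_k) + g_k^\top s + \tfrac12 s^\top H_k s$ and \ref{as: Cauchy decrease}, we have $m_k(0) - m_k(s_k) \geq 0$. Hence
\begin{equation*}
    g_k^\top s_k + \tfrac12\, s_k^\top H_k s_k \;\leq\; 0 .
\end{equation*}
If $s_k = 0$ the claimed bound is trivial, so assume $s_k \neq 0$.

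\textbf{Step 2: split the curvature term.} Write
\begin{equation*}
    s_k^\top H_k s_k = s_k^\top \nabla^2 f(x_k)\, s_k + s_k^\top\bigl(H_k - \nabla^2 f(x_k)\bigr)s_k .
\end{equation*}
When $x_k \in B(x^*,\Delta_*)$, Proposition~\ref{prop: inequalities for positive definite Hessian} gives $\nabla^2 f(x_k) \succeq mI$, so the first term is at least $m\norm{s_k}^2$. By Cauchy--Schwarz the second term is at least $-\epsilon_k\norm{s_k}^2$, where
\begin{equation*}
    \epsilon_k \defeq \frac{\norm{(H_k - \nabla^2 f(x_k))\,s_k}}{\norm{s_k}} .
\end{equation*}
Since $\norm{g_k}\to 0$ by hypothesis, \ref{as: Hessian-step residual condition} gives $\epsilon_k \to 0$. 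Choose $k_\kappa$ so that $\epsilon_k \leq m/2$ for all $k \geq k_\kappa$. This choice is made along the whole sequence, so it does not depend on where $x_k$ lies.

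\textbf{Step 3: conclude.} Combining the two steps with $g_k^\top s_k \geq -\norm{g_k}\norm{s_k}$ yields
\begin{equation*}
    -\norm{g_k}\norm{s_k} + \tfrac12\,(m - \epsilon_k)\norm{s_k}^2 \;\leq\; 0 .
\end{equation*}
Dividing by $\norm{s_k} > 0$ gives $\norm{s_k} \leq 2\norm{g_k}/(m-\epsilon_k)$. For $k \geq k_\kappa$ we have $m - \epsilon_k \geq m/2$, hence $\norm{s_k} \leq (4/m)\norm{g_k}$, which is~\eqref{eqn: pointwise step bound}. Condition~\ref{cond: s_k bounded by g_k} then follows directly with $\delta^{(s)} = \Delta_*$ and $\overline{\kappa} = 4/m$.

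\textbf{Main obstacle.} The delicate point is the quantifier structure. \ref{as: Hessian-step residual condition} controls $H_k$ only asymptotically and only along $s_k$, whereas the curvature bound $\nabla^2 f(x_k) \succeq mI$ needs $x_k$ in the ball. The two are kept separate: $k_\kappa$ is fixed from $\epsilon_k \to 0$ over the whole sequence, and the ball condition is used pointwise. This is why the statement, and Condition~\ref{cond: s_k bounded by g_k}, are eventual rather than holding from $k=0$.

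A second point to check is that the model is the quadratic $m_k$, so that Cauchy decrease really yields $g_k^\top s_k + \tfrac12 s_k^\top H_k s_k \leq 0$. No positive definiteness of $H_k$ is needed.
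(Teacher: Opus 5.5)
Your proof is correct and follows the paper's argument essentially line for line. You split $s_k^\top H_k s_k$ into the true-Hessian term, bounded below by $m\norm{s_k}^2$ on the ball, and a residual controlled by \ref{as: Hessian-step residual condition} once $k \geq k_\kappa$, then combine this with nonnegative model decrease and Cauchy--Schwarz. Keeping $\epsilon_k$ explicit until the end gives the intermediate bound $2/(m-\epsilon_k)$, which is the sharpening the paper records in Remark~\ref{rem: kappa bar arbitrarily close}.
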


        \begin{proof}
            Let $k$ be such that $x_k \in B(x^*, \Delta_*)$. If $s_k = 0$
            then~\eqref{eqn: pointwise step bound} is immediate, so assume $s_k \neq 0$.

            By Proposition~\ref{prop: inequalities for positive definite Hessian},
            $\nabla^2 f(x_k) \succeq m I$, whence
            $s_k^\top \nabla^2 f(x_k) s_k \geq m \norm{s_k}^2$. By the
            Cauchy--Schwarz inequality, and for $k$ large enough that \ref{as: Hessian-step residual condition} gives $\norm{(H_k - \nabla^2 f(x_k)) s_k} \leq \frac{m}{2} \norm{s_k}$,
            \begin{equation*}
                s_k^\top \big( H_k - \nabla^2 f(x_k) \big) s_k
                \;\geq\; - \norm{s_k} \, \norm{\big( H_k - \nabla^2 f(x_k) \big) s_k}
                \;\geq\; - \frac{m}{2} \norm{s_k}^2 ,
            \end{equation*}
            and adding the two bounds gives the uniform model curvature
            \begin{equation}
                s_k^\top H_k s_k
                \;=\; s_k^\top \nabla^2 f(x_k) s_k + s_k^\top \big( H_k - \nabla^2 f(x_k) \big) s_k
                \;\geq\; \frac{m}{2} \norm{s_k}^2 .
                \label{eqn: model curvature lower bound}
            \end{equation}
            The step decreases the model, so that
            $m_k(0) - m_k(s_k) = - g_k^\top s_k - \tfrac{1}{2} s_k^\top H_k s_k > 0$, that is
            $- g_k^\top s_k > \tfrac{1}{2} s_k^\top H_k s_k$. Combining this
            with~\eqref{eqn: model curvature lower bound} and using Cauchy--Schwarz once more,
            \begin{equation*}
                \norm{g_k} \norm{s_k}
                \;\geq\; - g_k^\top s_k
                \;>\; \frac{1}{2} \, s_k^\top H_k s_k
                \;\geq\; \frac{m}{4} \norm{s_k}^2 .
            \end{equation*}
            Dividing by $\norm{s_k} > 0$ gives~\eqref{eqn: pointwise step bound}. The final
            assertion is~\eqref{eqn: pointwise step bound} read as
            Condition~\ref{cond: s_k bounded by g_k} with $\delta^{(s)} = \Delta_*$ and
            with $k_\kappa$ the index beyond which the displayed implication holds; it is here that
            the eventual form of the condition is needed, the residual bound of
            \ref{as: Hessian-step residual condition} being available only for large $k$.
        \end{proof}

        \begin{remark}[how small $\overline{\kappa}$ can be made]
        \label{rem: kappa bar arbitrarily close}
            Neither constant in $\overline{\kappa} = 4/m$ is forced.
            \ref{as: Hessian-step residual condition} makes the residual $o(\norm{s_k})$, so any
            $\epsilon \in (0,m)$ replaces $m/2$ in the proof. The model curvature along the step is
            then bounded below by $(m-\epsilon)\norm{s_k}^2$, arbitrarily close to the
            $m\norm{s_k}^2$ of the true Hessian, and the argument returns
            $\overline{\kappa} = 2/(m-\epsilon)$ at the price of a larger $k_\kappa$.
            Remark~\ref{rem: m M versus eigenvalues} in turn allows
            $m = (1-\epsilon')\lambda^*_{\min}$ for any $\epsilon' \in (0,1)$ at the price of a
            smaller $\Delta_*$, so
            \begin{equation*}
                \overline{\kappa} \;=\; \frac{2}{(1-\epsilon')\lambda^*_{\min} - \epsilon}
            \end{equation*}
            comes as close to $2/\lambda^*_{\min}$ as desired. The convenient choices
            $\epsilon = m/2$ and $\epsilon' = \tfrac12$, the latter
            being~\eqref{eqn: m M concrete}, give
            $\overline{\kappa} = 4/m = 8/\lambda^*_{\min}$.
        \end{remark}

        The lemma discharges the hypothesis of
        Theorem~\ref{thm: step bounded by gradient norm implies convergence}: whenever a
        subsequence approaches $x^*$, the step bound is automatically in force, and the trapping
        argument applies verbatim. The local theory is used below in the following form.

        \begin{corollary}\label{cor: convergence under AH3}
            Let $\{x_k\}$ be generated by Algorithm~\ref{alg:generic BTR} under
            \ref{as: positive definite Hessian} and
            \ref{as: Hessian-step residual condition}, and suppose
            $\lim_{k\to\infty}\norm{g_k} = 0$. If a subsequence $\{x_{k_j}\}$ converges
            to a first-order critical point $x^*$ at which $\nabla^2 f(x^*)$ is positive definite,
            then the whole sequence $\{x_k\}$ converges to $x^*$, and
            \begin{equation*}
                \norm{s_k} \leq \frac{4}{m} \norm{g_k}
                \qquad\text{for all $k$ sufficiently large.}
            \end{equation*}
        \end{corollary}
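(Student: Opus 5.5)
The corollary assembles three earlier results, and I would prove it in that order.

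First, I would obtain Condition~\ref{cond: s_k bounded by g_k} from Lemma~\ref{lem: step bounded by gradient norm pointwise}. Its hypotheses are \ref{as: positive definite Hessian}, \ref{as: Hessian-step residual condition} and $\lim_{k\to\infty}\norm{g_k} = 0$, and all three are assumed in the corollary. The lemma then supplies an index $k_\kappa$ such that, for every $k \geq k_\kappa$, $x_k \in B(x^*,\Delta_*)$ implies $\norm{s_k} \leq (4/m)\norm{g_k}$. In other words, Condition~\ref{cond: s_k bounded by g_k} holds with $\delta^{(s)} = \Delta_*$ and $\overline{\kappa} = 4/m$.

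Second, I would apply Theorem~\ref{thm: step bounded by gradient norm implies convergence}. Its hypotheses are \ref{as: positive definite Hessian}, a subsequence converging to $x^*$ with $\nabla^2 f(x^*) \succ 0$, and Condition~\ref{cond: s_k bounded by g_k}. The first two are assumed and the third was just established, so the whole sequence $\{x_k\}$ converges to $x^*$.

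Third, I would read off the step bound. Since $x_k \to x^*$, there is $K$ with $x_k \in B(x^*,\Delta_*)$ for all $k \geq K$. For every $k \geq \max\{K, k_\kappa\}$ the implication~\eqref{eqn: pointwise step bound} therefore fires, giving $\norm{s_k} \leq (4/m)\norm{g_k}$.

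The argument is essentially bookkeeping, and the only point needing care is the apparent circularity between the two results. The step bound is needed to prove convergence, while the conclusion "for all $k$ sufficiently large" uses convergence. There is no genuine circle, because the lemma's bound is \emph{conditional} on $x_k$ lying in the ball and therefore does not presuppose convergence. It is exactly the input the trapping argument needs, and the index $k_\kappa$ is absorbed by choosing $j_0$ with $k_{j_0} \geq k_\kappa$, as the theorem's proof already does.

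One further check: the constant $m$ used in the lemma and the one used in the theorem must be the same. Both come from Proposition~\ref{prop: inequalities for positive definite Hessian} on the same ball $B(x^*,\Delta_*)$, so they agree.
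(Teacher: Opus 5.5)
Your proposal is correct and follows the paper's proof step for step: Lemma~\ref{lem: step bounded by gradient norm pointwise} gives Condition~\ref{cond: s_k bounded by g_k} with $\delta^{(s)} = \Delta_*$ and $\overline{\kappa} = 4/m$, Theorem~\ref{thm: step bounded by gradient norm implies convergence} then yields $x_k \to x^*$, and the step bound is read off~\eqref{eqn: pointwise step bound} once the iterates remain in $B(x^*,\Delta_*)$. Your point about the apparent circularity is also the paper's: the lemma's bound is eventual and conditional on $x_k$ lying in the ball, and the trapping argument absorbs $k_\kappa$ by choosing $k_{j_0} \geq k_\kappa$.
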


        \begin{proof}
            By Lemma~\ref{lem: step bounded by gradient norm pointwise},
            Condition~\ref{cond: s_k bounded by g_k} holds with $\delta^{(s)} = \Delta_*$,
            some index $k_\kappa$, and
            $\overline{\kappa} = 4/m$. Theorem~\ref{thm: step bounded by gradient norm implies convergence}
            then gives $x_k \to x^*$. Consequently $x_k \in B(x^*, \Delta_*)$ for all $k$ large
            enough, and the displayed bound is~\eqref{eqn: pointwise step bound}.
        \end{proof}

    \subsection{The acceptance ratio converges to one}
    \label{subsec: rho converges to one}
    
    One further element of the local regime is used repeatedly in
    Section~\ref{sec:convergence rate}. The acceptance ratio settles at $1$, so that every
    iteration is eventually very successful, and the proof uses nothing from the radius rule.
    \begin{theorem}[The acceptance ratio converges to one]
    \label{thm: rho converges to 1}
        Let $\{x_k\}$ be generated by Algorithm~\ref{alg:generic BTR} and converge to a
        first-order critical point $x^*$ with $\nabla^2 f(x^*)$ positive definite. Assume
        \ref{as: positive definite Hessian}, \ref{as: Cauchy decrease},
        \ref{as: Hessian-step residual condition}, $\norm{H_k} \leq \mykappa{umh}$, and
        $s_k \neq 0$ for all $k$ sufficiently large. Then
        \begin{equation*}
            \lim_{k \to \infty} \rho_k = 1 .
        \end{equation*}
        In particular, for every $\eta_2 < 1$ there is $k_\rho$ such that $\rho_k \geq \eta_2$ for
        all $k \geq k_\rho$, so that every iteration is very successful and every step accepted.
    \end{theorem}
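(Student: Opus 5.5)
The plan is the classical one: write $1-\rho_k$ as the ratio of the model error to the predicted reduction, bound the numerator by $o(\norm{s_k}^2)$ using a Taylor expansion together with \ref{as: Hessian-step residual condition}, and bound the denominator below by a constant multiple of $\norm{s_k}^2$ using the Cauchy decrease condition and the local curvature. Since $x_k\to x^*$ with $g(x^*)=0$, we have $\norm{g_k}\to 0$ by continuity, so \ref{as: Hessian-step residual condition} is in force. Moreover, by Corollary~\ref{cor: convergence under AH3}, $\norm{s_k}\le (4/m)\norm{g_k}$ for all large $k$, hence $\norm{s_k}\to 0$. It follows that both $x_k$ and $x_k+s_k$ lie in $B(x^*,\Delta_*)$ eventually, where $\nabla^2 f$ is continuous.

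\emph{Numerator.} By the integral form of Taylor's theorem and \ref{as: model agreement with function},
\[
f(x_k+s_k)-m_k(s_k)=\tfrac12 s_k^\top\big(\nabla^2 f(x_k)-H_k\big)s_k+\int_0^1(1-t)\,s_k^\top\big(\nabla^2 f(x_k+ts_k)-\nabla^2 f(x_k)\big)s_k\,dt .
\]
The first term is at most $\tfrac12\norm{s_k}\,\norm{(H_k-\nabla^2 f(x_k))s_k}=o(\norm{s_k}^2)$ by \ref{as: Hessian-step residual condition}. The second term is $o(\norm{s_k}^2)$ by uniform continuity of $\nabla^2 f$ on a closed ball inside $B(x^*,\Delta_*)$, together with $\norm{s_k}\to0$.

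\emph{Denominator.} This is where the uniform bound $\norm{H_k}\le\mykappa{umh}$ enters. \ref{as: Cauchy decrease} gives
\[
m_k(0)-m_k(s_k)\ \ge\ \tfrac{\mykappa{mdc}}2\norm{g_k}\min\{\norm{g_k}/\mykappa{umh},\Delta_k\}.
\]
Since $\norm{s_k}\le\Delta_k$ and $\norm{g_k}\ge (m/4)\norm{s_k}$, both arguments of the minimum are at least a constant multiple of $\norm{s_k}$. This yields $m_k(0)-m_k(s_k)\ge c\norm{s_k}^2$ with $c=\tfrac{\mykappa{mdc}}2\,\tfrac m4\min\{\tfrac{m}{4\mykappa{umh}},1\}>0$. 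The denominator is nonzero because $s_k\ne0$ is assumed.

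\emph{Conclusion.} Combining the two bounds, $|1-\rho_k|=|f(x_k+s_k)-m_k(s_k)|/(m_k(0)-m_k(s_k))=o(\norm{s_k}^2)/(c\norm{s_k}^2)\to0$, so $\rho_k\to1$. The final assertion follows immediately: for any $\eta_2<1$ there is $k_\rho$ with $\rho_k\ge\eta_2$ for all $k\ge k_\rho$.

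The main obstacle is the lower bound on the predicted reduction. The Cauchy decrease only controls it through $\norm{g_k}\min\{\cdot,\Delta_k\}$, and this must be turned into a multiple of $\norm{s_k}^2$. That requires the reverse inequality $\norm{g_k}\gtrsim\norm{s_k}$ from Lemma~\ref{lem: step bounded by gradient norm pointwise}, which is valid only from $k_\kappa$ on, so every estimate must be stated for $k\ge k_\kappa$. An alternative route is to use the model curvature bound~\eqref{eqn: model curvature lower bound} directly. However, it controls only the quadratic part of the model and not $-g_k^\top s_k$ alone, so it needs the descent inequality from the same lemma anyway. I would therefore go through the Cauchy decrease route.
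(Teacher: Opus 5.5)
Your proof is correct and follows the paper's argument essentially step for step. You use the same splitting of $1-\rho_k$, and the same $o(\norm{s_k}^2)$ bound on the numerator via \ref{as: Hessian-step residual condition} and continuity of $\nabla^2 f$ (you take the integral remainder where the paper takes the mean-value form); you also reach the same constant in the quadratic lower bound on the predicted reduction from \ref{as: Cauchy decrease} and $\norm{s_k}\le(4/m)\norm{g_k}$. The only real difference is that you get $\norm{s_k}\to0$ directly from that step bound and $\norm{g_k}\to0$, whereas the paper invokes Powell's step-summability result, Theorem~\ref{thm: series of steps converges}; your route is slightly more economical and shows that appeal is not needed here.
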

    \begin{proof}
        Since $x_k \to x^*$ and $\nabla f$ is continuous with $\nabla f(x^*) = 0$, we have
        $\norm{g_k} \to 0$, so \ref{as: Hessian-step residual condition} is in force. By
        Lemma~\ref{lem: step bounded by gradient norm pointwise},
        $\norm{s_k} \leq \kbar\norm{g_k}$ with $\kbar = 4/m$ for all large $k$, and
        by Theorem~\ref{thm: series of steps converges}, $\norm{s_k} \to 0$. Fix $k$ large enough
        that all three hold and that $s_k \neq 0$.
        \emph{Denominator.} Since $\norm{g_k} \geq \norm{s_k}/\kbar$ and
        $\norm{s_k} \leq \Delta_k$, assumption~\ref{as: Cauchy decrease} gives
        \begin{equation}
            m_k(0) - m_k(s_k)
            \;\geq\; \frac{\mykappa{mdc}}{2}\,\norm{g_k}
                \min\left\{\frac{\norm{g_k}}{\mykappa{umh}},\ \Delta_k\right\}
            \;\geq\; c_0 \norm{s_k}^2 ,
            \qquad
            c_0 \defeq \frac{\mykappa{mdc}}{2\,\kbar}
                \min\left\{\frac{1}{\kbar\,\mykappa{umh}},\ 1\right\} > 0 .
            \label{eqn: pred quadratic}
        \end{equation}
        \emph{Numerator.} By the mean value theorem there is $\xi_k$ on the segment
        $[x_k, x_k + s_k]$ with
        $f(x_k + s_k) - f(x_k) - g_k^\top s_k = \tfrac12 s_k^\top \nabla^2 f(\xi_k) s_k$, so that
        \begin{align*}
            \Bigl| \bigl[m_k(0) - m_k(s_k)\bigr] - \bigl[f(x_k) - f(x_k + s_k)\bigr] \Bigr|
            &= \tfrac12 \Bigl| s_k^\top \bigl(\nabla^2 f(\xi_k) - H_k\bigr) s_k \Bigr| \\
            &\leq \tfrac12 \norm{s_k}\,\norm{\bigl(\nabla^2 f(x_k) - H_k\bigr) s_k}
             + \tfrac12 \norm{\nabla^2 f(\xi_k) - \nabla^2 f(x_k)}\,\norm{s_k}^2 .
        \end{align*}
        The first term is $o(\norm{s_k}^2)$ by \ref{as: Hessian-step residual condition}; the
        second is $o(\norm{s_k}^2)$ because $\norm{\xi_k - x_k} \leq \norm{s_k} \to 0$ and
        $\nabla^2 f$ is continuous. Dividing by~\eqref{eqn: pred quadratic},
        $\abs{1 - \rho_k} \to 0$.
    \end{proof}
    \begin{remark}
    \label{rem: rho to 1 is mechanism free}
        The proof uses only the Cauchy decrease condition, the step bound of
        Lemma~\ref{lem: step bounded by gradient norm pointwise} and the summability of the steps;
        it never reads the radius rule. This is why
        Theorem~\ref{thm: rho converges to 1} appears in the mechanism-independent column of
        Section~\ref{subsec: mechanism-free elements}. The hypothesis $s_k \neq 0$ excludes the
        degenerate case of
        finite termination, in which $\rho_k$ is undefined; it is the same proviso as
        in~\citet[Thm.~3.9]{BastMalmMoufToinToma10}.
    \end{remark}

    \subsection{Step-driven update: a stronger rate in the local regime} \label{subsec:asymptotic step-driven}

        The step-driven update of Part~I~\cite{Jeremy2026-I} (see Appendix~\ref{app: R-step}) ties the trust-region radius to the step length.
        In the local regime, when the iterates converge to a critical point with positive definite Hessian and the model Hessian is sufficiently consistent, Powell's Theorem~2~\cite{Powe75} implies $\sum_k \norm{s_k} < +\infty$. The radius then inherits the summability.

    \begin{proposition}\label{prop:step-driven-summable}
        Suppose $\{x_k\}$ is generated by the step-driven update, that $\{x_k\}$ converges to $x^*$ with $\nabla^2 f(x^*)$ positive definite, and that the Hessian approximation satisfies \ref{as: Hessian-step residual condition}. Suppose furthermore that there is a constant $\gamma_3 > 1$ capping the expansion branch, that is $\Delta_{k+1} \leq \gamma_3\norm{s_k}$ whenever $\rho_k \geq \eta_2$. Then
        \[
            \sum_{k=0}^{+\infty} \Delta_k < +\infty,
            \quad \text{and in particular} \quad \lim_{k \to +\infty} \Delta_k = 0.
        \]
    \end{proposition}

    \begin{proof}
        By Theorem~\ref{thm: series of steps converges}, $\sum_k \norm{s_k} < +\infty$.
        By Appendix~\ref{app: R-step} the first two branches give
        $\Delta_{k+1} < \norm{s_k}$, and the capped expansion branch gives
        $\Delta_{k+1} \leq \gamma_3\norm{s_k}$; since $\gamma_3 > 1$, in all three cases
        $\Delta_{k+1} \leq \gamma_3\norm{s_k}$. Hence
        \[
            \sum_{k=1}^{+\infty} \Delta_k \leq \gamma_3 \sum_{k=0}^{+\infty} \norm{s_k} < +\infty.
        \]
    \end{proof}

    \begin{remark}\label{rem:step-driven faster}
        Proposition~\ref{prop:step-driven-summable} exhibits a \emph{stronger} convergence rate than the criticality-anchored rules of Sections~\ref{subsec:asymptotic DFO-like} and~\ref{subsec:asymptotic gradient-scaled}: those give only $\sum_k \Delta_k^2/M_k < +\infty$, whereas the step-driven rule yields $\sum_k \Delta_k < +\infty$ in the local regime. This is the price of tying the radius rigidly to the step: in the local regime, where iterates converge, $\Delta_k$ inherits the natural summability of $\norm{s_k}$. In the global regime, however, the radius can remain bounded below (see Section~\ref{subsec:asymptotic fixed-factor} below); the strong rate is a \emph{local} phenomenon.
    \end{remark}

    \begin{example}[Strong convexity does not settle the radius]
    \label{ex: strongly convex}
        Let $f(x) = \tfrac12 x^2$, $x_0 = 1$, $g_k = x_k$, and $H_k = c > 1$ constant, a conservative
        overestimate of the curvature; let $\Delta_0 \geq 1/c$. The unconstrained model minimiser is
        $s^{\mathrm{N}}_k = -x_k/c$, and since $1 - 1/c < 1 < \gthree$ an induction gives
        $\abs{s^{\mathrm{N}}_k} \leq \Delta_k$ under either rule below: the constraint is never
        active, $s_k = -x_k/c$, $x_{k+1} = \Bigl(1 - \tfrac{1}{c}\Bigr) x_k$ and $\rho_k = 2 - \tfrac{1}{c} > 1$ for all $k$. The iterates contract to $x^* = 0$.
        The rule $\Rstep$ sets $\Delta_{k+1} = \gthree\abs{s_k} = \gthree \abs{x_{k+1} - x_k} = \gthree \, 1/c \, x_k = \gthree \, 1/c \, (1 - 1/c)^k x_0$, so $\Delta_k \to 0$ and
        $\sum_k \Delta_k < +\infty$. 
        The rule $\Rdelta$ scales by a factor at least $1$ regardless of the step, so $\Delta_{k+1} \geq \Delta_k$ and
        $\sum_k \Delta_k = +\infty$; on the expansion branch $\Delta_k = \gthree^k\Delta_0 \to
        +\infty$ while $x_k \to 0$, and the trust region becomes vacuous.
    \end{example}

    \subsection{Fixed-factor and incremental updates: bounded below} \label{subsec:asymptotic fixed-factor}

        For the incremental fixed-factor rule of Part~I~\cite{Jeremy2026-I} (see Appendix~\ref{app: R-delta}), the radius is contracted only on unsuccessful iterations and is non-decreasing on very successful ones ($\rho_k \geq \eta_2$). When all iterations are eventually very successful, $\Delta_k$ can in fact grow without bound or stabilise above zero.

    \begin{theorem}[The radius is bounded below, and both limits occur] \label{thm:toint-update}
        Let $\{x_k\}$ be generated by the trust-region algorithm under $\Rdelta$. Suppose there
        exists $k_0$ such that $\rho_k \geq \eta_2$ for all $k \geq k_0$. Then
        \begin{enumerate}[(i)]
            \item the sequence $\{\Delta_k\}$ is bounded below by a positive constant: there exists $\underline{\Delta} > 0$ such that $\Delta_k \geq \underline{\Delta}$ for all $k \geq 0$;
            \item both alternatives are realised by admissible selections of
                  Update~\ref{update:multiplicative factor}: either
                  $\Delta_k \to +\infty$, or
                  $\Delta_k \to \Delta > 0 $ a finite positive limit.
        \end{enumerate}
    \end{theorem}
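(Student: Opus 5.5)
Part (i) follows from the structure of Update~\ref{update:multiplicative factor} (Appendix~\ref{app: R-delta}). On very successful iterations the new radius satisfies $\Delta_{k+1} \geq \Delta_k$, on successful ones it is at least $\gtwo\Delta_k$, and on unsuccessful ones at least $\gone\Delta_k$. The hypothesis $\rho_k \geq \eta_2$ for all $k \geq k_0$ puts every iteration from $k_0$ on in the very successful branch, so $\{\Delta_k\}_{k\geq k_0}$ is non-decreasing and $\Delta_k \geq \Delta_{k_0}$ for $k \geq k_0$. Before $k_0$ there are only finitely many radii, each positive, because every branch multiplies by a factor in $[\gone, +\infty)$ with $\gone > 0$ and $\Delta_0 > 0$. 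Setting
\begin{equation*}
    \underline{\Delta} \defeq \min\Bigl\{ \min_{0 \leq k \leq k_0} \Delta_k ,\ \Delta_{k_0} \Bigr\} \;\geq\; \gone^{\,k_0}\Delta_0 \;>\; 0
\end{equation*}
gives the uniform lower bound. The one point to check in the appendix is that the very successful branch never multiplies by a factor below $1$; this is exactly the property recalled at the start of Section~\ref{subsec:asymptotic fixed-factor}.

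For (ii), the same monotonicity shows that the only two possibilities are $\Delta_k \to +\infty$ or convergence to a finite limit $\Delta \geq \Delta_{k_0} > 0$, since a non-decreasing sequence bounded below by a positive constant has one of these two limits. It remains to show that both are realised. The very successful branch allows a factor in some interval $[1, \gthree]$ (or $\Delta_{k+1} \in [\Delta_k, \gthree\Delta_k]$), and I would exhibit two admissible selections on the same run.

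The first selection always takes the factor $\gthree > 1$ for $k \geq k_0$. This gives $\Delta_k = \gthree^{\,k-k_0}\Delta_{k_0} \to +\infty$, exactly as in Example~\ref{ex: strongly convex}. The second selection always takes the factor $1$, giving $\Delta_k = \Delta_{k_0}$ for $k \geq k_0$, a positive finite limit. If the appendix requires a strict expansion, I would instead take factors $1 + 2^{-k}$; their product converges, so $\Delta_k$ still has a finite positive limit.

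The main obstacle is consistency. Changing the radius changes the steps, and so could break the hypothesis $\rho_k \geq \eta_2$. I would handle this by realising both selections on Example~\ref{ex: strongly convex}. There the constraint is never active once $\Delta_k \geq \abs{s^{\mathrm N}_k}$, and $\abs{s^{\mathrm N}_k}$ is decreasing. Both selections keep $\Delta_k \geq \Delta_0 \geq 1/c \geq \abs{s^{\mathrm N}_k}$, so the iterates are identical and $\rho_k = 2 - 1/c \geq \eta_2$ for every $k$.
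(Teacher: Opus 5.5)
Your argument for (i), and for the dichotomy in (ii), is the paper's. The radius is non-decreasing from $k_0$ on the very successful branch, the finitely many earlier radii are positive so their minimum is a positive floor, and a non-decreasing sequence either diverges or converges.

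You differ in how the two alternatives are realised. After its proof the paper takes the factor $\gthree$ for divergence and the factor $\exp\bigl((k+1)^{-2}\bigr)$ for a finite limit. The second choice is admissible only once $\exp\bigl((k+1)^{-2}\bigr) \leq \gthree$. You take the factor $1$. This is admissible outright because the very successful branch $[\Delta_k, \gthree\Delta_k]$ is closed at its left endpoint, so no caveat on $\gthree$ or $k_0$ is needed. The cost is that you exhibit only the degenerate constant limit, not a strictly expanding radius that still converges. That is what the paper's choice illustrates.

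More substantively, you check something the paper leaves implicit. Its selections are described as if the hypothesis $\rho_k \geq \eta_2$ for $k \geq k_0$ held whatever radius is chosen, yet the radius determines the step and hence $\rho_k$. You run both selections on Example~\ref{ex: strongly convex}. There $\Delta_k \geq \Delta_0 \geq 1/c \geq \abs{s_k^{\mathrm N}}$ keeps the constraint inactive, so the iterates and $\rho_k = 2 - 1/c$ do not depend on the selection. This closes that point, with $k_0 = 0$.
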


    \begin{proof}
        Under $\Rdelta$, $\rho_k \geq \eta_2$ gives $\Delta_{k+1} \geq \Delta_k$, so the sequence $\{\Delta_k\}_{k \geq k_0}$ is non-decreasing. Taking $\underline{\Delta} = \min\{\Delta_0, \dots, \Delta_{k_0}\} > 0$ gives (i). 
        For~(ii), the sequence $\{\Delta_k\}_{k \geq k_0}$ is non-decreasing, so it either diverges to $+\infty$ or converges to a finite limit. 
    \end{proof}

    The two alternatives are realised by the two admissible selections of Update~\ref{update:multiplicative factor}.
    On very successful iterations take $\Delta_{k+1} = \gthree \Delta_k$, the radius then diverges to $+\infty$. For the second case, take $\Delta_{k+1} = \exp\bigl((k+1)^{-2}\bigr) \Delta_k$, which converges to a finite limit, because $\Delta_k = \Delta_{k_0}\exp\bigl(\sum_{j=k_0}^{k-1}(j+1)^{-2}\bigr)$ and its exponent increases to a limit bounded by $\sum_{j \geq 0}(j+1)^{-2} = \pi^2/6$.
    The second selection is admissible only where
    $\exp\bigl((k+1)^{-2}\bigr) \leq \gthree$; the left-hand side decreases to $1$, so this holds
    for every $k \geq k_0$ as soon as $\gthree \geq \exp\bigl((k_0+1)^{-2}\bigr)$, and for all
    large $k$ whatever $\gthree > 1$.%

    The criticality-anchored mechanisms \emph{fail outright} in the second-order setting when they
    keep $\norm{g_k}$ as their criticality measure: their radius collapses at a saddle
    (Section~\ref{subsec:Danger of saddle points}). Replacing $\norm{g_k}$ by $\tau_k$ repairs them,
    for the following reason. Near a saddle $\norm{g_k} \to 0$, but with a
    Hessian approximation satisfying \ref{as: Approximation of the Hessian} the quantity
    $-\lambda_k^-$ stays bounded away from zero, so $\tau_k$ does too and the radius inherits that
    floor. It is then Theorem~\ref{thm: second-order convergence}, not the mechanism, that converts
    the floor into escape from the saddle. We write $\Rdfo(\tau)$ and $\Rgrad(\tau)$ for the
    re-anchored rules.

    \subsection{Discussion on saddle points} \label{subsec:Danger of saddle points}

     A first-order critical point need not be a local minimum. Consider $f(x, y) = x^4 - x^2 + y^2$: it has a saddle at $(0, 0)$ and two local minima at $(\pm 1, 0)$. A trust-region algorithm may converge to any first-order critical point, depending on the model Hessian. We first state the classical result for nonsingular $\nabla^2 f(x^*)$, then show that criticality-anchored rules force convergence to $x^*$ without imposing conditions on the model Hessian.

        \begin{theorem}\label{thm: convergence to potential saddle point}
            Suppose a subsequence $\{x_{k_j}\}$ converges to a first-order critical point $x^*$ where $\nabla^2 f(x^*)$ is nonsingular. Assume that $f$ is twice continuously differentiable in a neighbourhood of $x^*$, bounded below, the gradient is Lipschitz continuous, and the model Hessian is uniformly bounded, $\norm{H_k} \leq \mykappa{umh} < +\infty$. If Condition~\ref{cond: s_k bounded by g_k} holds, then $\{x_k\}$ converges to $x^*$.
        \end{theorem}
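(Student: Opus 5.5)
The plan is to replace the level-set trapping used for Theorem~\ref{thm: step bounded by gradient norm implies convergence}, which is not available here because $x^*$ may be a saddle and $L_\alpha$ need not localise the iterates, by an \emph{annulus} argument. Two ingredients are combined. First, nonsingularity of $\nabla^2 f(x^*)$ makes $x^*$ an isolated critical point with a local error bound on the gradient. Second, the full limit $\lim_k \norm{g_k} = 0$ rules out iterates in any annulus around $x^*$, and, through Condition~\ref{cond: s_k bounded by g_k}, rules out steps long enough to jump over it.

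\emph{Step 1: a local error bound.} Since $f$ is $C^2$ near $x^*$, $g(x^*) = 0$ and $\nabla^2 f(x^*)$ is nonsingular, write $g(x) = \nabla^2 f(x^*)(x - x^*) + o(\norm{x - x^*})$. This gives $r > 0$ and $\sigma > 0$ (for instance $\sigma = \tfrac12 \norm{\nabla^2 f(x^*)^{-1}}^{-1}$) such that
\begin{equation*}
\norm{g(x)} \geq \sigma \norm{x - x^*} \quad \text{for all } x \in B(x^*, r).
\end{equation*}
This replaces Proposition~\ref{prop: inequalities for positive definite Hessian}(iii); only the lower bound is needed.

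\emph{Step 2: the full gradient limit.} Bounded below, Lipschitz gradient and $\norm{H_k} \leq \mykappa{umh}$ are the hypotheses under which, for the admissible mechanisms, Theorems~3.2 and~2.4 of Part~I give $\lim_k \norm{g_k} = 0$, as recalled in Section~\ref{sec: Background}. I expect this to be the delicate point. The statement is made for the generic algorithm, so it must be read as inheriting the standing admissibility (Conditions~\ref{cond: PartI C1}--\ref{cond: PartI C2}) under which that limit holds. A mere $\liminf$ would not suffice, because the argument below needs small gradients at \emph{every} large index.

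\emph{Step 3: trapping in a ball.} Fix $\varepsilon \in (0, \min\{r, \delta^{(s)}\}]$ and consider the annulus $A_\varepsilon = \{x : \varepsilon/2 \leq \norm{x - x^*} \leq \varepsilon\}$. By Step~1, every $x \in A_\varepsilon$ has $\norm{g(x)} \geq \sigma\varepsilon/2$. By Step~2, choose $K \geq k_\kappa$ such that for all $k \geq K$
\begin{equation*}
\norm{g_k} < \sigma\varepsilon/2 \quad \text{and} \quad \overline{\kappa}\,\norm{g_k} < \varepsilon/2 .
\end{equation*}
Then no $x_k$ with $k \geq K$ lies in $A_\varepsilon$. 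Pick $k_{j_0} \geq K$ with $x_{k_{j_0}} \in B(x^*, \varepsilon/2)$, which the convergent subsequence provides.

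Now induct. Suppose $x_k \in B(x^*, \varepsilon/2)$ with $k \geq K$. Since $\varepsilon \leq \delta^{(s)}$ and $k \geq k_\kappa$, Condition~\ref{cond: s_k bounded by g_k} gives $\norm{s_k} \leq \overline{\kappa}\norm{g_k} < \varepsilon/2$. Whether the step is accepted or rejected, this yields $\norm{x_{k+1} - x^*} < \varepsilon$. Because $x_{k+1} \notin A_\varepsilon$, it follows that $x_{k+1} \in B(x^*, \varepsilon/2)$, which closes the induction. Hence $\norm{x_k - x^*} < \varepsilon/2$ for all $k \geq k_{j_0}$, and since $\varepsilon$ was arbitrary, $x_k \to x^*$.

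The uniform bound $\mykappa{umh}$ enters only through Step~2. Monotonicity of $f$ is not used at all, which is why no definiteness of $\nabla^2 f(x^*)$ is required.
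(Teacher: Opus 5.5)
Your proof is correct and is essentially the paper's own. The paper defers to the argument of \citet[Thm.~6.5.2]{ConnGoulToin00}, which is exactly this annulus trap: inner radius $\delta_1 = \delta/(\overline{\kappa}\delta_0+1)$ and outer radius $\delta$, an error bound from the smallest singular value $\sigma$ of $\nabla^2 f(x^*)$, the full limit $\lim_k\norm{g_k}=0$ used in the form $\norm{g_k}\le\delta_0\delta_1$ for all large $k$, and the entry index taken with $k_{j_0}\ge k_\kappa$. Your annulus $[\varepsilon/2,\varepsilon]$ is only a different normalisation of those radii, and your two remarks also match the paper: the full gradient limit, inherited from admissibility, is indispensable, and monotonicity of $f$ plays no role.
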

        \begin{proof}
            See the proof of Theorem~6.5.2 in~\citet{ConnGoulToin00}. The proof is adapted by considering
            \[
                \delta_0 = \frac{1}{4} \min\{1, \sigma, 1/\overline{\kappa}\}, \qquad \delta_1 = \frac{\delta}{\overline{\kappa} \delta_0 + 1},
            \]
            where $\sigma > 0$ is the smallest singular value of $\nabla^2 f(x^*)$ and $\delta$ is a small positive number. The iterates stay in $B(x^*, \delta_1)$ for all $k \geq k_{j_0}$ once $\norm{g_k} \leq \delta_0 \delta_1$, the entry index being chosen as in the proof of Theorem~\ref{thm: step bounded by gradient norm implies convergence}, that is with $k_{j_0} \geq k_\kappa$, which is possible because $\{x_{k_j}\}$ has infinitely many terms in the ball.%
        \end{proof}

        \begin{remark}
            \citet{ConnGoulToin00} obtain Condition~\ref{cond: s_k bounded by g_k} (with $\overline{\kappa} = 2/\mykappa{lbm}$) by imposing $H_k \succeq \mykappa{lbm} I$ for $x_k$ close to $x^*$ (Lemma~6.5.1 of~\cite{ConnGoulToin00}). A poor model Hessian that changes the sign of the model curvature can therefore force convergence to a saddle point.
        \end{remark}

        The capped gradient-scaled rule obtains Condition~\ref{cond: s_k bounded by g_k}
        for free, and $\Rdfo$ supplies directly the radius bound that the proof of the theorem
        consumes; both therefore avoid this failure mode without any hypothesis on the model
        Hessian.

        \begin{corollary}\label{cor: convergence criticality anchored saddle}
            Under the hypotheses of Theorem~\ref{thm: convergence to potential saddle point},
            $\{x_k\}$ converges to $x^*$ if the mechanism is $\Rgrad$ with $\mu_k \leq
            \overline{\mu}$ for all $k$, or if it is $\Rdfo$.
        \end{corollary}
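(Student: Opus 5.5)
For $\Rgrad$ with $\mu_k \leq \overline{\mu}$ the plan is a direct reduction to Theorem~\ref{thm: convergence to potential saddle point}. Since $\norm{s_k} \leq \Delta_k = \mu_k\norm{g_k} \leq \overline{\mu}\norm{g_k}$ at every iteration, Condition~\ref{cond: s_k bounded by g_k} holds with $k_\kappa = 0$, $\overline{\kappa} = \overline{\mu}$ and any $\delta^{(s)} > 0$. The theorem then applies verbatim, with no property of $H_k$ used beyond the uniform bound already among its hypotheses. In particular, no positive definiteness of $H_k$ near $x^*$ is needed.

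For $\Rdfo$ the condition does not hold pointwise, because an iteration with $\Delta_k > \zeta\norm{g_k}$ may still take a long step. I would therefore re-run the trapping argument of the proof of Theorem~\ref{thm: convergence to potential saddle point} (Conn--Gould--Toint, Thm.~6.5.2, with $\delta_0,\delta_1$ as there), replacing the step bound by a radius bound.

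\emph{First}, on the expansion branch the rule fires only when $\Delta_k \leq \zeta\norm{g_k}$, so there $\norm{s_k} \leq \zeta\norm{g_k}$. \emph{Second}, on every other iteration either the step is rejected ($x_{k+1} = x_k$, nothing moves), or it is accepted with $\Delta_k > \zeta\norm{g_k}$, in which case the radius is contracted by $\gamma_2$. \emph{Third}, I need to control the accepted steps on this second branch. By Proposition~\ref{prop: behaviour DFO-like radius}(ii), available since $\norm{H_k} \leq \mykappa{umh}$, we have $\Delta_k \to 0$. Hence $\norm{s_k} \leq \Delta_k$ is eventually smaller than any prescribed $\delta_0\delta_1$, and the increment $\norm{x_{k+1}-x_k}$ is eventually small regardless of $\norm{g_k}$.

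The trapping argument then proceeds as follows. Choose the entry index $k_{j_0}$ of the subsequence large enough that $\Delta_k \leq \delta_0\delta_1$ for all $k \geq k_{j_0}$, and that $x_{k_{j_0}}$ lies deep inside the ball. Nonsingularity of $\nabla^2 f(x^*)$ gives $\norm{g(x)} \geq \tfrac{\sigma}{2}\norm{x - x^*}$ on a neighbourhood. Hence iterates with small gradient are close to $x^*$, and an escape from $B(x^*,\delta_1)$ would require crossing an annulus on which $\norm{g}$ is bounded below by $\sigma\delta_1/4$. Each successful step across that annulus decreases $f$ by at least $\eta_1\tfrac{\mykappa{mdc}}{2}\norm{g_k}\min\{\norm{g_k}/\mykappa{umh},\Delta_k\}$, which is a constant times $\norm{s_k}$ once $\Delta_k$ is small. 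The total path length across the annulus is therefore bounded by a constant times the decrease of $f$. Since $f(x_k)$ converges, the decrease after $k_{j_0}$ is as small as we like, which contradicts the annulus width; this is the classical argument. Since $\delta$ was arbitrary, $x_k \to x^*$.

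The main obstacle is this last step: converting radius control into a path-length bound across the annulus without Condition~\ref{cond: s_k bounded by g_k}. I would first try the decrease-versus-length estimate above, in which $\Delta_k \to 0$ makes the minimum in the Cauchy bound equal to $\Delta_k \geq \norm{s_k}$ whenever $\norm{g_k}$ is bounded below.
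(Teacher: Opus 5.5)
Your $\Rgrad$ case is exactly the paper's: $\norm{s_k} \leq \mu_k\norm{g_k} \leq \overline{\mu}\norm{g_k}$ gives Condition~\ref{cond: s_k bounded by g_k} with $k_\kappa = 0$ and $\overline{\kappa} = \overline{\mu}$.

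For $\Rdfo$ your route is correct but differs from the paper's in both halves. The paper obtains the radius bound $\norm{s_k} \leq \Delta_k \leq \delta_0\delta_1$ by a case analysis on the expansion set $\calS^+$, feeding in $\lim_k \norm{g_k} = 0$ from Part~I. It then hands that bound to the short induction of \citet[Thm.~6.5.2]{ConnGoulToin00}, which also consumes $\norm{g_k} \to 0$: nonsingularity puts any iterate of the neighbourhood with tiny gradient very close to $x^*$, so a step of length at most $\delta_0\delta_1$ cannot carry it out.

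You get $\Delta_k \to 0$ instead from Proposition~\ref{prop: behaviour DFO-like radius}(ii). That is legitimate here, since the uniform bound on $H_k$ is among the hypotheses, and it is shorter than the paper's case analysis. You then replace the induction by an annulus/path-length argument that uses only sufficient decrease on accepted steps and convergence of $\{f(x_k)\}$, never $\norm{g_k} \to 0$. This makes your proof self-contained within this paper; it is in effect the argument by which the full limit $\norm{g_k} \to 0$ is proved in the first place. The cost is more work than needed. Since $\lim_k \norm{g_k} = 0$ is available, $\norm{s_k} \leq \Delta_k \to 0$ already closes the standard induction, so the obstacle you flag in your last paragraph does not really arise. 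Your ``First'' and ``Second'' observations are likewise never used.

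One correction to the decrease estimate: the constant on accepted steps must be the acceptance threshold $\eta$, not $\eta_1$. $\Rdfo$ also accepts steps with $\eta \leq \rho_k < \eta_1$, contracting the radius there, and on those only $\eta$ times the predicted decrease is guaranteed. Your argument therefore needs $\eta > 0$. That is the same positivity the paper relies on implicitly through Part~I's full-limit theorem.
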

        \begin{proof}
            For $\Rgrad$, Condition~\ref{cond: s_k bounded by g_k} holds with
            $\overline{\kappa} = \overline{\mu}$ and $k_\kappa = 0$ because
            $\norm{s_k} \leq \Delta_k = \mu_k\norm{g_k} \leq \overline{\mu}\norm{g_k}$
            at every iteration, and
            Theorem~\ref{thm: convergence to potential saddle point} applies.

            For $\Rdfo$ we show directly that $\norm{s_k} \leq \Delta_k \leq \delta_0\delta_1$
            for large $k$, with $\delta_1 = \delta/(\delta_0+1)$, which is what the proof
            of~\citet[Thm.~6.5.2]{ConnGoulToin00} consumes. Take $j_0$ with
            $\norm{x_{k_j} - x^*} \leq \delta_1$ for $j \geq j_0$; since
            $\lim_k\norm{g_k} = 0$ (Theorems~3.2 and~2.4 of \PartI), we may also assume
            $\norm{g_k} \leq \min\{1, 1/(\zeta\gamma_3)\}\delta_0\delta_1$ for $k \geq k_{j_0}$.
            Put $\calS^+ = \{k : \Delta_k \leq \zeta\norm{g_k} \text{ and } \rho_k \geq \eta_1\}$
            and $\calS^+_0 = \calS^+ \cap [k_{j_0}, +\infty)$, the elements at which the
            bound on $\norm{g_k}$ just fixed is available.
            If $\calS^+_0$ is finite, then every iteration beyond its largest
            element, or beyond $k_{j_0}$ when it is empty, contracts the radius by $\gtwo$, so
            $\Delta_k \to 0$ and $\Delta_k \leq \delta_0\delta_1$ for all large $k$. If it is
            infinite, let $k$ lie beyond the first element of $\calS^+_0$. Either
            $k \in \calS^+$ and $\Delta_k \leq \zeta\norm{g_k} \leq
            \delta_0\delta_1$, or $k \notin \calS^+$ and, with
            $k^- = \max \calS^+_0 \cap \{1,\dots,k\} \geq k_{j_0}$,
            $\Delta_k \leq \zeta\gamma_3\gamma_2^{\,k-k^--1}\norm{g_{k^-}} \leq \delta_0\delta_1$.%
            
            In every case $\norm{x_k - x^*} \leq \delta_1 \leq \delta$ for all large $k$,
            and $\delta$ is arbitrary.
        \end{proof}

        \begin{remark}
            Whether the iterates escape a saddle is decided by the model Hessian and by the
            starting point, not by the radius rule. \PartIII{} examines this empirically on a
            quartic with a nonsingular saddle and on a family with tunable escape rate.
        \end{remark}

%=======================================================================
%   SECTION 5: SECOND-ORDER CONVERGENCE AND COMPLEXITY
%=======================================================================
\section{Second-order convergence and complexity} \label{sec:second-order convergence}
%=======================================================================

The second-order theory of trust-region methods with quadratic models is settled.
Convergence to second-order critical points is Chapter~6.6 of~\citet{ConnGoulToin00}, and the
worst-case bound $\calO(\max\{\varepsilon_g^{-2}\varepsilon_H^{-1}, \varepsilon_H^{-3}\})$,
sharp for this class, is Theorem~4.7 of~\citet{CartGoulToin12}. This section does not reprove
that theory. What it does is replace the hypotheses under which those results are stated,
which bear on the model and on the algorithm, by a single structural condition on the sequence
$\{\Delta_k\}$, Condition~\ref{condition: lower bound Delta (2nd order)} below, so that
Section~\ref{sec:second-order mechanism survey} can then ask, rule by rule, which mechanisms
supply it. 
\citet{GrapigliaYuanYuan15, GrapigliaYuanYuan16} do comparable work for the first- and
second-order theory within the nonlinear stepsize control framework of~\citet{Toint13}, and the
overlap deserves to be stated precisely rather than waved past. That framework updates a scalar
parameter $\delta_k$ by reading $\rho_k$ alone, and sets the radius to a function
$\Delta(\delta_k, \chi_k)$ of that parameter and a criticality measure $\chi_k$, in some cases
$\norm{g_k}$. At second order the form is restricted to
$\Delta_k = \delta_k^{\alpha}\,\tau_k^{\beta}$, the measure being
$\tau_k = \max\{\chi_k, -\lambda_k^-\}$, which is the measure used here when $\chi_k = \norm{g_k}$. Taking
$(\alpha,\beta) = (1,0)$ gives $\Rdelta$ and $(\alpha,\beta) = (1,1)$ gives $\Rgrad$, so those two
mechanisms are already covered, and the complexity bound of
Theorem~\ref{thm: Second-order worst-case complexity} is theirs for them. The three remaining
families lie outside the prescribed form, each for a different reason. Their stepsize update leaves
$\delta_{k+1} \geq \gone\delta_k$ on every branch, and $\Delta(\cdot,\chi)$ is concave with
$\Delta(0,\chi) = 0$, so $\Delta(\gone\delta,\chi) \geq \gone\Delta(\delta,\chi)$ and the radius
cannot fall below $\gone\Delta_k$ in one iteration. $\Rstep$ admits
$\Delta_{k+1} < \gtwo\norm{s_k}$ at an unsuccessful iteration, and $\norm{s_k}$ may be arbitrarily
small relative to $\Delta_k$ when the step is interior. The criticality test of $\Rdfo$ fires
whenever $\Delta_k > \zeta\omega_k$ ($\omega_k$ being a first- or second-order criticality measure), including at $\rho_k \geq \eta_2$, where the framework requires $\delta_{k+1} \geq \delta_k$ and so forbids the contraction that test demands. And
$\widetilde\rho_{k+1}$, which selects the branch for $\Rrtr$ and $\Rgrtr$, is read off $m_{k+1}$,
a model chosen one step after the parameter has been updated: the quantity does not yet exist at
the moment the framework needs it. The difference is
not one of generality but of kind: their hypothesis prescribes the algebraic form of the rule,
whereas Condition~\ref{condition: lower bound Delta (2nd order)} prescribes a behaviour, and any
rule exhibiting that behaviour is covered whatever its algebra. That is what admits the three
remaining families.
Accordingly, we recall the ingredients and cite their proofs, and prove only the two
statements that the change of hypotheses actually alters: the unified threshold of
Theorem~\ref{thm: successful iteration for low Delta (2nd order)}, which merges the first- and
second-order criticality measures into one, and the constant in
Theorem~\ref{thm: Second-order worst-case complexity}, which now depends on
$\mykappa{lbd}^{(2)}$ rather than on a rule.

\subsection{Second-order decrease}
\label{subsec:second-order decrease conditions}

Write $\lambda_k^-$ for the leftmost eigenvalue of the model Hessian $H_k$. When
$\lambda_k^- < 0$ one can compute a unit direction $u_k$ with
\begin{equation}
    \langle u_k, g_k \rangle \leq 0,
    \qquad \norm{u_k} = 1,
    \qquad \langle u_k, H_k u_k \rangle \leq \mykappa{snc}\, \lambda_k^- ,
    \label{eqn: sufficient negative curvature direction}
\end{equation}
for a constant $\mykappa{snc} \in (0, 1]$; the value $\mykappa{snc} = 1$ corresponds to an exact
leftmost eigenvector, and any Lanczos process applied to $H_k$ produces such a $u_k$ with
$\mykappa{snc}$ bounded away from zero after enough iterations. Since the model restricted to
$\{t u_k : t \geq 0\}$ has non-positive linear and negative quadratic coefficients, it decreases
monotonically in $t$, and the constrained minimiser over $\calB_k$ is the \emph{eigenpoint}
\begin{equation}
    s_k^E \defeq \Delta_k u_k , \qquad \norm{s_k^E} = \Delta_k .
    \label{eqn: eigenpoint}
\end{equation}

\begin{theorem}[Eigenpoint decrease, {\citep[Thm.~6.6.1]{ConnGoulToin00}}]
\label{thm: Eigenpoint decrease}
    If $\lambda_k^- < 0$ and $u_k$ satisfies~\eqref{eqn: sufficient negative curvature direction},
    then
    \begin{equation*}
        m_k(0) - m_k(s_k^E) \;\geq\; \tfrac{1}{2}\,\mykappa{snc}\, \abs{\lambda_k^-}\, \Delta_k^2 .
    \end{equation*}
\end{theorem}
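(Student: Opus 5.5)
The plan is to evaluate the model exactly along the ray $\{t u_k : t \geq 0\}$ at $t = \Delta_k$ and bound each of the two terms separately, with no appeal to the radius rule or to any earlier result beyond the definitions~\eqref{eqn: sufficient negative curvature direction} and~\eqref{eqn: eigenpoint}.

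\textbf{Expansion.} By \ref{as: model agreement with function} the quadratic model is $m_k(s) = f(x_k) + \langle g_k, s\rangle + \tfrac12\langle s, H_k s\rangle$. Substituting $s_k^E = \Delta_k u_k$ gives
\begin{equation*}
    m_k(0) - m_k(s_k^E) \;=\; -\Delta_k \langle u_k, g_k\rangle \;-\; \tfrac12\,\Delta_k^2\,\langle u_k, H_k u_k\rangle .
\end{equation*}

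\textbf{Bounding the two terms.} The linear term is non-negative, since $\langle u_k, g_k\rangle \leq 0$ and $\Delta_k > 0$, so it can simply be dropped. For the quadratic term, the curvature condition gives $\langle u_k, H_k u_k\rangle \leq \mykappa{snc}\lambda_k^-$. Because $\lambda_k^- < 0$, this yields
\begin{equation*}
    -\tfrac12\,\Delta_k^2\,\langle u_k, H_k u_k\rangle \;\geq\; -\tfrac12\,\mykappa{snc}\,\lambda_k^-\,\Delta_k^2 \;=\; \tfrac12\,\mykappa{snc}\,\abs{\lambda_k^-}\,\Delta_k^2 .
\end{equation*}
Combining the two bounds gives the claim.

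There is no genuine obstacle here. The only points needing care are the direction of the inequalities under the sign of $\lambda_k^-$, and the observation that $\norm{u_k} = 1$ makes $s_k^E$ feasible with $\norm{s_k^E} = \Delta_k$. If the statement is meant to cover the actual step $s_k$ rather than the eigenpoint, one further remark is needed: the step must achieve at least a fixed fraction of this decrease, which is the second-order analogue of \ref{as: Cauchy decrease}. That is an assumption to be stated, not a consequence of this theorem.
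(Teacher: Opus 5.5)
Your proof is correct and follows the paper's argument, which is that of Conn, Gould and Toint: expand the quadratic model at $s_k^E = \Delta_k u_k$, drop the linear term $-\Delta_k\langle u_k, g_k\rangle \geq 0$, and use $\langle u_k, H_k u_k\rangle \leq \mykappa{snc}\lambda_k^-$ to bound the quadratic term below by $\tfrac12\mykappa{snc}\abs{\lambda_k^-}\Delta_k^2$. Your closing remark about the actual step $s_k$ also matches the paper, which imposes that fraction of the decrease separately as Assumption~\ref{as: Eigenpoint decrease}.
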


The one-line verification, the linear term of $m_k$ along $u_k$ is non-negative
by~\eqref{eqn: sufficient negative curvature direction} and the quadratic term is at least
$\tfrac12\mykappa{snc}\abs{\lambda_k^-}\Delta_k^2$, is that of
\citet[Thm.~6.6.1]{ConnGoulToin00}, stated there for an exact leftmost eigenvector; the factor
$\mykappa{snc}$ is what an inexact direction costs.
The factor $\mykappa{snc}$ cannot be dropped. With an inexact direction the guaranteed decrease is
$\mykappa{snc}$ times smaller. As in \PartI, only a fraction of this decrease is required.
\begin{assum}{a.Ed}[eigenpoint decrease]
\label{as: Eigenpoint decrease}
    If $\lambda_k^- < 0$, then for some $\mykappa{fed} \in (0, 1]$,
    \begin{equation*}
        m_k(0) - m_k(s_k) \;\geq\; \tfrac{1}{2}\,\mykappa{fed}\, \abs{\lambda_k^-}\, \Delta_k^2 .
    \end{equation*}
\end{assum}

\begin{assum}{a.CdEd}[maximum of Cauchy and eigenpoint decrease]
\label{as: max Cauchy/eigen decrease}
    For all $k$ and some $\mykappa{moc} \in (0, 1]$,
    \begin{equation*}
        m_k(0) - m_k(s_k) \;\geq\; \frac{\mykappa{moc}}{2}
        \max\left\{ \norm{g_k}\min\!\left(\frac{\norm{g_k}}{\norm{H_k}},\, \Delta_k\right),\;
                    \max(0, -\lambda_k^-)\, \Delta_k^2 \right\} .
    \end{equation*}
\end{assum}

Taking the better of the Cauchy point and the eigenpoint satisfies
\ref{as: max Cauchy/eigen decrease} with $\mykappa{moc} = \min\{\mykappa{mdc}, \mykappa{snc}\}$.

\begin{assum}{f.LH}[Lipschitz Hessian]
\label{as: Lipschitz Hessian}
    There is $L_H > 0$ with
    $\norm{\nabla^2 f(x) - \nabla^2 f(y)} \leq L_H \norm{x - y}$ for all $x, y$.
\end{assum}

\begin{assum}{a.BH}[Hessian approximation along the step, \citep{CartGoulToin12}]
\label{as: Approximation of the Hessian}
    For some $\mykappa{BH} > 0$,
    $\norm{\bigl(H_k - \nabla^2 f(x_k)\bigr) s_k} \leq \mykappa{BH} \norm{s_k}^2$.
\end{assum}

\subsection{The second-order lower-bound condition}
\label{subsec:second-order lower bound}

\begin{definition}
    The \emph{second-order criticality measure} is
    \begin{equation}
        \tau_k \defeq \max\{\norm{g_k},\, -\lambda_k^-\} \;\geq\; 0 .
        \label{eqn:tau definition}
    \end{equation}
\end{definition}
The measure $\tau_k$ is a property of the pair $(x_k, H_k)$, not of $x_k$ alone, since
$\lambda_k^-$ is the leftmost eigenvalue of the \emph{model} Hessian.

\begin{theorem}[after {\citep[Lem.~4.3]{CartGoulToin12}}]
\label{thm: second-order dynamics on rho_k}
    Under \ref{as: Lipschitz Hessian}, \ref{as: model agreement with function},
    \ref{as: max Cauchy/eigen decrease} and \ref{as: Approximation of the Hessian}, for any
    $\eta \in [0,1)$ and any $k$ with $\lambda_k^- < 0$,
    \begin{equation*}
        \Delta_k \;\leq\; \frac{3\,\mykappa{moc}\,(1 - \eta)}{L_H + 3\,\mykappa{BH}}\,
        \abs{\lambda_k^-}
        \qquad\Longrightarrow\qquad \rho_k \geq \eta .
    \end{equation*}
\end{theorem}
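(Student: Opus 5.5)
The plan is the standard comparison of actual and predicted reduction. We bound $\abs{1-\rho_k}$ by a quantity of order $\norm{s_k}^3$ divided by the predicted decrease, and then use the eigenpoint part of \ref{as: max Cauchy/eigen decrease} to make the denominator of order $\abs{\lambda_k^-}\Delta_k^2$.

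\emph{Numerator.} By \ref{as: model agreement with function}, $m_k(s_k) - f(x_k+s_k) = f(x_k) + g_k^\top s_k + \tfrac12 s_k^\top H_k s_k - f(x_k+s_k)$. A second-order Taylor expansion with integral remainder under \ref{as: Lipschitz Hessian} gives
\begin{equation*}
    \Bigl| f(x_k+s_k) - f(x_k) - g_k^\top s_k - \tfrac12 s_k^\top \nabla^2 f(x_k) s_k \Bigr| \leq \tfrac{L_H}{6}\norm{s_k}^3 .
\end{equation*}
Next, Cauchy--Schwarz and \ref{as: Approximation of the Hessian} give $\tfrac12\abs{s_k^\top(H_k - \nabla^2 f(x_k))s_k} \leq \tfrac12\mykappa{BH}\norm{s_k}^3$. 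Adding the two bounds and using $\norm{s_k}\leq\Delta_k$,
\begin{equation*}
    \abs{f(x_k+s_k) - m_k(s_k)} \leq \frac{L_H + 3\mykappa{BH}}{6}\,\Delta_k^3 .
\end{equation*}

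\emph{Denominator.} Since $\lambda_k^-<0$, the second term in the maximum of \ref{as: max Cauchy/eigen decrease} gives $m_k(0)-m_k(s_k) \geq \tfrac{\mykappa{moc}}{2}\abs{\lambda_k^-}\Delta_k^2 > 0$ whenever $\Delta_k>0$, so $\rho_k$ is well defined. (If $s_k=0$ there is nothing to prove under the usual convention; I would add a remark on that degenerate case.)

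\emph{Conclusion.} Dividing,
\begin{equation*}
    \abs{1-\rho_k} = \frac{\abs{f(x_k+s_k)-m_k(s_k)}}{m_k(0)-m_k(s_k)} \leq \frac{(L_H+3\mykappa{BH})\Delta_k}{3\mykappa{moc}\abs{\lambda_k^-}} .
\end{equation*}
Under the hypothesis on $\Delta_k$ the right-hand side is at most $1-\eta$, so $\rho_k \geq 1-\abs{1-\rho_k}\geq\eta$. The constant $3\mykappa{moc}/(L_H+3\mykappa{BH})$ in the statement comes out exactly from the factors $\tfrac16$ and $\tfrac12$ above.

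The only delicate point is bookkeeping the constants so that the threshold matches the statement. In particular the cubic Taylor remainder $L_H/6$ and the model-mismatch term $\mykappa{BH}/2$ must be combined over the common denominator $6$, and the predicted decrease must be taken from the eigenpoint term rather than the Cauchy term. No property of the radius rule is used, consistent with the mechanism-free character of the statement.
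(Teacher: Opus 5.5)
Your proposal is correct and follows the paper's proof essentially line by line. It uses the same split of $\abs{1-\rho_k}$ into the Lipschitz-Hessian Taylor remainder $\tfrac{L_H}{6}\norm{s_k}^3$ and the model-mismatch term $\tfrac{\mykappa{BH}}{2}\norm{s_k}^3$, the same eigenpoint lower bound $\tfrac{\mykappa{moc}}{2}\abs{\lambda_k^-}\Delta_k^2$ on the predicted decrease, and the same use of $\norm{s_k}\leq\Delta_k$; the only remark is that your degenerate case $s_k = 0$ cannot occur, since for $\Delta_k > 0$ that lower bound forces $m_k(0) - m_k(s_k) > 0$.
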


\begin{proof}
    Write
    \begin{equation*}
        \abs{1 - \rho_k} \leq
        \frac{\abs{f(x_k + s_k) - f(x_k) - \langle g_k, s_k\rangle
                   - \tfrac12 s_k^\top \nabla^2 f(x_k) s_k}
              + \tfrac12\abs{s_k^\top (\nabla^2 f(x_k) - H_k) s_k}}
             {m_k(0) - m_k(s_k)} .
    \end{equation*}
    \ref{as: Lipschitz Hessian} bounds the first numerator term by
    $\tfrac{L_H}{6}\norm{s_k}^3$; \ref{as: Approximation of the Hessian} with
    Cauchy--Schwarz bounds the second by $\tfrac{\mykappa{BH}}{2}\norm{s_k}^3$;
    \ref{as: max Cauchy/eigen decrease} bounds the denominator below by
    $\tfrac{\mykappa{moc}}{2}\abs{\lambda_k^-}\Delta_k^2$. Using $\norm{s_k} \leq \Delta_k$,
    \begin{equation*}
        \abs{1 - \rho_k}
        \;\leq\; \frac{\bigl(\tfrac{L_H}{6} + \tfrac{\mykappa{BH}}{2}\bigr)\norm{s_k}^3}
                      {\tfrac{\mykappa{moc}}{2}\abs{\lambda_k^-}\Delta_k^2}
        \;\leq\; \frac{(L_H + 3\mykappa{BH})\,\Delta_k}{3\,\mykappa{moc}\,\abs{\lambda_k^-}} ,
    \end{equation*}
    which is at most $1 - \eta$ under the stated bound on $\Delta_k$.
\end{proof}
This is Lemma~4.3 of~\citet{CartGoulToin12}, whose constant $\kappa_{\mathrm{fm}}$ we
have made explicit as $\tfrac{L_H}{6} + \tfrac{\mykappa{BH}}{2}$, since
Theorem~\ref{thm: successful iteration for low Delta (2nd order)} needs its value.

Combining with the first-order analogue (Theorem~2.2 of \PartI, which gives $\rho_k \geq \eta$
whenever $\Delta_k \leq \mykappa{moc}(1-\eta)\norm{g_k}/(L + \norm{H_k})$) yields a single
threshold in $\tau_k$. Set
\begin{equation}
    \widetilde{L}_k \;\defeq\; \frac{1}{\mykappa{moc}}
    \max\left\{ L + \norm{H_k},\; \frac{L_H + 3\,\mykappa{BH}}{3} \right\} .
    \label{eqn: tilde L}
\end{equation}

\begin{theorem}
\label{thm: successful iteration for low Delta (2nd order)}
    Under the assumptions of Theorem~\ref{thm: second-order dynamics on rho_k}, for every
    $\eta \in [0,1)$,
    \begin{equation*}
        \Delta_k \;\leq\; \frac{1 - \eta}{\widetilde{L}_k}\, \tau_k
        \qquad\Longrightarrow\qquad \rho_k \geq \eta .
    \end{equation*}
\end{theorem}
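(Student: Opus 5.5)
The plan is to split according to which term realises the maximum in $\tau_k = \max\{\norm{g_k}, -\lambda_k^-\}$, and in each case to reduce the hypothesis to the threshold of an earlier result. The definition of $\widetilde{L}_k$ in~\eqref{eqn: tilde L} is built so that $(1-\eta)/\widetilde{L}_k$ is simultaneously no larger than the first-order coefficient $\mykappa{moc}(1-\eta)/(L+\norm{H_k})$ and the second-order coefficient $3\mykappa{moc}(1-\eta)/(L_H + 3\mykappa{BH})$. Indeed, $\widetilde{L}_k \geq (L+\norm{H_k})/\mykappa{moc}$ and $\widetilde{L}_k \geq (L_H+3\mykappa{BH})/(3\mykappa{moc})$ follow directly from the maximum.

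\emph{Case $\tau_k = \norm{g_k}$.} The hypothesis gives
\[
\Delta_k \leq \frac{(1-\eta)\norm{g_k}}{\widetilde{L}_k} \leq \frac{\mykappa{moc}(1-\eta)\norm{g_k}}{L+\norm{H_k}} .
\]
Theorem~2.2 of \PartI{} then yields $\rho_k \geq \eta$. One point needs checking: that result is stated with the Cauchy constant $\mykappa{mdc}$ under \ref{as: Cauchy decrease}. Assumption \ref{as: max Cauchy/eigen decrease} implies Cauchy decrease with constant $\mykappa{moc}$, because the maximum dominates its first entry. Together with \ref{as: model agreement with function} and the Lipschitz gradient, this means the first-order result applies with $\mykappa{moc}$ in place of $\mykappa{mdc}$. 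The degenerate subcase $\norm{g_k}=0$ forces $\Delta_k \leq 0$ when $\tau_k=0$, so it is vacuous, since $\Delta_k>0$.

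\emph{Case $\tau_k = -\lambda_k^- > \norm{g_k} \geq 0$.} Here $\lambda_k^- < 0$, so Theorem~\ref{thm: second-order dynamics on rho_k} is available. The hypothesis gives
\[
\Delta_k \leq \frac{(1-\eta)\abs{\lambda_k^-}}{\widetilde{L}_k} \leq \frac{3\mykappa{moc}(1-\eta)}{L_H+3\mykappa{BH}}\abs{\lambda_k^-},
\]
and that theorem concludes $\rho_k \geq \eta$.

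The only step I expect to require care is the constant matching in the first case. I will need to confirm that the first-order threshold of Part~I really holds with the constant from \ref{as: max Cauchy/eigen decrease}, and not only with $\mykappa{mdc}$. If Part~I's Theorem~2.2 is phrased in terms of $\mykappa{mdc}$, I would rerun its two-line argument. The argument bounds $\abs{1-\rho_k}$ by $(L+\norm{H_k})\norm{s_k}^2/2$ over the Cauchy decrease $\tfrac{\mykappa{moc}}{2}\norm{g_k}\min(\norm{g_k}/\norm{H_k},\Delta_k)$. It then notes that $\Delta_k \leq \norm{g_k}/\norm{H_k}$ under the hypothesis, since $\mykappa{moc}(1-\eta) \leq 1$, so the minimum equals $\Delta_k$, and this gives exactly the stated threshold.
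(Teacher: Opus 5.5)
Your proposal is correct and is essentially the paper's proof. The maximum defining $\widetilde{L}_k$ places $(1-\eta)\tau_k/\widetilde{L}_k$ below both individual thresholds, and the case split on which entry realises $\tau_k$ hands off to Theorem~2.2 of \PartI{} or to Theorem~\ref{thm: second-order dynamics on rho_k}; your check that \ref{as: max Cauchy/eigen decrease} yields Cauchy decrease with constant $\mykappa{moc}$, together with your rerun of the first-order argument, makes explicit a step the paper uses silently when it quotes the first-order threshold with $\mykappa{moc}$ in place of $\mykappa{mdc}$.
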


\begin{proof}
    Since $\widetilde{L}_k$ is a maximum, $(1-\eta)\tau_k/\widetilde{L}_k$ lies below each of the
    two individual thresholds. If $\tau_k = \norm{g_k}$, the bound gives
    $\Delta_k \leq \mykappa{moc}(1-\eta)\norm{g_k}/(L+\norm{H_k})$ and Theorem~2.2 of \PartI{}
    applies. Otherwise $\tau_k = \abs{\lambda_k^-} > 0$ and the bound gives
    $\Delta_k \leq 3\mykappa{moc}(1-\eta)\abs{\lambda_k^-}/(L_H + 3\mykappa{BH})$, so
    Theorem~\ref{thm: second-order dynamics on rho_k} applies.
\end{proof}

\begin{corollary}
\label{cor: successful iteration for low Delta (2nd order)}
    For every $\eta \in [0,1)$,
    $\ \rho_k < \eta \;\Longrightarrow\; \Delta_k > \dfrac{1-\eta}{\widetilde{L}_k}\, \tau_k$.
\end{corollary}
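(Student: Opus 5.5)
This corollary is the contrapositive of Theorem~\ref{thm: successful iteration for low Delta (2nd order)}, and the plan is to obtain it in one line from that theorem. Fix $\eta \in [0,1)$ and an iteration $k$ with $\rho_k < \eta$. We must keep the standing hypotheses of Theorem~\ref{thm: second-order dynamics on rho_k}, namely \ref{as: Lipschitz Hessian}, \ref{as: model agreement with function}, \ref{as: max Cauchy/eigen decrease} and \ref{as: Approximation of the Hessian}, together with those of Theorem~2.2 of \PartI{} used inside it. Under these, suppose for contradiction that $\Delta_k \leq (1-\eta)\tau_k/\widetilde{L}_k$. Theorem~\ref{thm: successful iteration for low Delta (2nd order)} then gives $\rho_k \geq \eta$, which contradicts $\rho_k < \eta$. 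Hence $\Delta_k > (1-\eta)\tau_k/\widetilde{L}_k$.

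The only point that needs care is that the theorem is applied on the correct branch. When $\tau_k = \norm{g_k}$, the first-order threshold of \PartI{} is used; note that in this branch one might have $\tau_k = 0$. When $\tau_k = \abs{\lambda_k^-} > 0$, Theorem~\ref{thm: second-order dynamics on rho_k} is used. Both cases are already handled inside the proof of Theorem~\ref{thm: successful iteration for low Delta (2nd order)}, so the contrapositive inherits them without further work.

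Consider the degenerate case $\tau_k = 0$, that is, $g_k = 0$ and $H_k \succeq 0$. The claimed inequality then reads $\Delta_k > 0$, which always holds since radii are positive. In that case $\rho_k$ is undefined or the algorithm has terminated, so the statement is vacuous. I would add a short remark to this effect, consistent with the proviso $s_k \neq 0$ in Remark~\ref{rem: rho to 1 is mechanism free}. There is no real obstacle beyond making sure that the hypotheses of the corollary are read as those of Theorem~\ref{thm: second-order dynamics on rho_k}.
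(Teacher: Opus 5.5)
Your proof is correct and is exactly the paper's: the corollary is obtained there in one line as the contrapositive of Theorem~\ref{thm: successful iteration for low Delta (2nd order)}. Your remark on the degenerate case $\tau_k = 0$ is harmless but not needed, since there the inequality reduces to $\Delta_k > 0$.
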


\begin{proof}
    This is Theorem~\ref{thm: successful iteration for low Delta (2nd order)} by contraposition.
\end{proof}

Corollary~\ref{cor: successful iteration for low Delta (2nd order)} is the engine of
Section~\ref{sec:second-order mechanism survey}: a mechanism can only contract the radius from a
value already proportional to $\tau_k$. It motivates the second-order analogue of Condition~\ref{cond: PartI C1}.

\begin{cond}{C.Lb2}[second-order lower bound on the radius]
\label{condition: lower bound Delta (2nd order)}
    There are a constant $\mykappa{lbd}^{(2)} > 0$ and a non-decreasing sequence
    $\{M_k^{(2)}\}$ dominating the thresholds of~\eqref{eqn: tilde L},
    \begin{equation}
        M_k^{(2)} \;\geq\; \widetilde{L}_k \qquad \text{for all } k \geq 0 ,
        \label{eqn: M2 dominates}
    \end{equation}
    such that
    \begin{equation}
        \Delta_k \;\geq\; \frac{\mykappa{lbd}^{(2)}}{M_k^{(2)}}\, \varsigma_k ,
        \qquad
        \varsigma_k \defeq \min_{0 \leq j \leq k} \tau_j ,
        \qquad \text{for all } k \geq 0 .
        \label{eqn: condition 2nd order}
    \end{equation}
\end{cond}
\begin{remark}[The canonical choice of $M_k^{(2)}$]
\label{rem: canonical M2}
    Requirement~\eqref{eqn: M2 dominates} is what lets
    Corollary~\ref{cor: successful iteration for low Delta (2nd order)} be read against
    \eqref{eqn: condition 2nd order}, and it is met by the obvious candidate. Let $\{M_k\}$ be the
    non-decreasing sequence of Condition~\ref{cond: PartI C1} of \PartI, chosen so that $M_k \geq \norm{H_k}$, and
    put
    \begin{equation}
        M_k^{(2)} \;\defeq\; \frac{1}{\mykappa{moc}}
        \max\left\{ L + M_k,\; \frac{L_H + 3\,\mykappa{BH}}{3} \right\} .
        \label{eqn: canonical M2}
    \end{equation}
    This is non-decreasing because $\{M_k\}$ is, and dominates $\widetilde{L}_k$ termwise because
    $M_k \geq \norm{H_k}$. It is also the exact second-order counterpart of the first-order
    $M_k$, differing only in the outer maximum and the factor $\mykappa{moc}^{-1}$; the
    growth condition on $\{M_k\}$ of \PartI{} transfers to $\{M_k^{(2)}\}$ unchanged, since the
    two sequences differ by an affine map with positive slope.
\end{remark}

\subsection{Global convergence to second-order critical points}
\label{subsec:second-order global cv}

\begin{theorem}[{\citep[Thm.~6.6.4]{ConnGoulToin00}}, under
Condition~\ref{condition: lower bound Delta (2nd order)}]
\label{thm: second-order convergence}
    \emergencystretch=2em  % lets the hypothesis list break without an overfull line
    Suppose \ref{as: Bounded below}, \ref{as: Lipschitz continuous gradient}, \ref{as: Lipschitz Hessian},
    \ref{as: max Cauchy/eigen decrease}, \ref{as: Approximation of the Hessian},
    $\norm{H_k} \leq \mykappa{umh}$, $\eta > 0$, Condition~\ref{cond: PartI C2} of \PartI, and
    Condition~\ref{condition: lower bound Delta (2nd order)} with
    $M^{(2)} \defeq \sup_k M_k^{(2)} < +\infty$. Then
    $\liminf_{k\to\infty} \tau_k = 0$.
\end{theorem}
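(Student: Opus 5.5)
We argue by contradiction, as in the first-order liminf theorem of \PartI. Suppose $\liminf_{k\to\infty}\tau_k > 0$. Then there is $\epsilon > 0$ and an index $k_1$ with $\tau_k \geq \epsilon$ for all $k \geq k_1$. Consequently $\varsigma_k = \min_{0\le j\le k}\tau_j \geq \underline{\varsigma} \defeq \min\{\epsilon, \tau_0,\dots,\tau_{k_1}\}$ for every $k$. Here $\underline{\varsigma} > 0$, since if some $\tau_j$ vanished the algorithm would have reached a second-order critical point and the conclusion is immediate.

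Condition~\ref{condition: lower bound Delta (2nd order)} with $M_k^{(2)} \leq M^{(2)}$ then gives a uniform floor on the radius:
\[
    \Delta_k \;\geq\; \underline{\Delta} \defeq \frac{\mykappa{lbd}^{(2)}\,\underline{\varsigma}}{M^{(2)}} > 0
    \qquad\text{for all } k .
\]
This is the only place the mechanism enters.

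Next we show that infinitely many iterations are successful. By Condition~\ref{cond: PartI C2} of \PartI, unsuccessful iterations contract the radius by at least a fixed factor. If all iterations were eventually unsuccessful, the radius would tend to zero, contradicting $\Delta_k \geq \underline{\Delta}$. Hence $\calS = \{k : \rho_k \geq \eta\}$ is infinite. A cleaner route, which we would use if C2 is stated as a contraction, is the contrapositive Corollary~\ref{cor: successful iteration for low Delta (2nd order)}. Together with $\widetilde{L}_k \leq M^{(2)}$, it shows that an unsuccessful iteration requires $\Delta_k > (1-\eta)\tau_k/M^{(2)}$. So contraction can only occur from values bounded below, which keeps the floor consistent without further work.

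For $k \in \calS$ with $k \geq k_1$, we use \ref{as: max Cauchy/eigen decrease}. Either $\tau_k = \norm{g_k} \geq \epsilon$, and then
\[
    f(x_k) - f(x_{k+1}) \;\geq\; \eta\,\frac{\mykappa{moc}}{2}\,\epsilon\,\min\{\epsilon/\mykappa{umh},\,\underline{\Delta}\}.
\]
Or $\tau_k = -\lambda_k^- \geq \epsilon$, and then
\[
    f(x_k) - f(x_{k+1}) \;\geq\; \eta\,\frac{\mykappa{moc}}{2}\,\epsilon\,\underline{\Delta}^2.
\]
Either way each successful iteration decreases $f$ by at least a fixed positive constant. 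Since $f(x_k)$ is non-increasing and bounded below by $\mykappa{lbf}$ under \ref{as: Bounded below}, summing over the infinite set $\calS$ gives a contradiction. Hence $\liminf_k \tau_k = 0$.

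The main obstacle is the step showing $\calS$ is infinite and that the floor on $\Delta_k$ survives. This depends on exactly what Condition~\ref{cond: PartI C2} guarantees. If it only bounds contraction from above, we must instead rely on the lower bound \eqref{eqn: condition 2nd order}, which holds at every iteration, together with the uniform bound $M^{(2)}$. This is why the hypothesis $\sup_k M_k^{(2)} < +\infty$ (via $\norm{H_k} \leq \mykappa{umh}$) is essential, and we would check that \eqref{eqn: condition 2nd order} alone suffices, making C2 needed only to rule out an eventually unsuccessful tail.
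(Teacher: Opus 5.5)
Your proof is correct and is essentially the paper's argument: a uniform radius floor from Condition~\ref{condition: lower bound Delta (2nd order)} with $M^{(2)} < +\infty$, infinitude of $\calS$ from the contraction in Condition~\ref{cond: PartI C2}, and a uniform decrease on $\calS$ from \ref{as: max Cauchy/eigen decrease}, split according to which term realises $\tau_k$ and summed against boundedness below. The detour through Corollary~\ref{cor: successful iteration for low Delta (2nd order)} and your closing worry are unnecessary: Condition~\ref{cond: PartI C2} is a genuine contraction $\Delta_{k+1} \leq \gamma_2\Delta_k$, the floor is a hypothesis rather than something that must be preserved, and the corollary on its own would not show that $\calS$ is infinite.
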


\begin{proof}
    Suppose not, so that $\liminf_k \tau_k = \varepsilon_0 > 0$. Only finitely many indices
    satisfy $\tau_k < \varepsilon_0/2$, and each of those has $\tau_k > 0$, since $\tau_k = 0$
    means $x_k$ is second-order critical and the method terminates. Hence $\varepsilon \;\defeq\; \inf_{k \geq 0} \tau_k \;>\; 0$, 
    so $\varsigma_k \geq \varepsilon$ for \emph{all} $k$, and
    Condition~\ref{condition: lower bound Delta (2nd order)} gives
    \begin{equation}
        \Delta_k \;\geq\; \frac{\mykappa{lbd}^{(2)}\,\varsigma_k}{M_k^{(2)}}
        \;\geq\; \frac{\mykappa{lbd}^{(2)}\,\varepsilon}{M^{(2)}}
        \;=:\; \Delta_{\min} \;>\; 0
        \qquad \text{for all } k .
        \label{eqn: Delta bounded below}
    \end{equation}

    \emph{The set $\calS$ of successful iterations is infinite.} If it were finite, every
    iteration beyond $\max\calS$ would be unsuccessful, and Condition~\ref{cond: PartI C2} of \PartI{} contracts
    the radius by a factor at most $\gtwo < 1$ at each of them, so $\Delta_k \to 0$, contradicting
    \eqref{eqn: Delta bounded below}.

    \emph{A uniform decrease on $\calS$.} Let $k \in \calS$. Since
    $\tau_k \geq \varepsilon$, at least one of $\norm{g_k} \geq \varepsilon$ and
    $-\lambda_k^- \geq \varepsilon$ holds. In the first case
    \ref{as: max Cauchy/eigen decrease} and~\eqref{eqn: Delta bounded below} give
    \begin{equation*}
        m_k(0) - m_k(s_k) \;\geq\; \frac{\mykappa{moc}}{2}\,\varepsilon
        \min\left\{\frac{\varepsilon}{\mykappa{umh}},\, \Delta_{\min}\right\} ,
    \end{equation*}
    and in the second
    $m_k(0) - m_k(s_k) \geq \tfrac{\mykappa{moc}}{2}\varepsilon\,\Delta_{\min}^2$. Writing
    $\delta > 0$ for the smaller of the two right-hand sides, $\rho_k \geq \eta > 0$ gives
    $f(x_k) - f(x_{k+1}) \geq \eta\,\delta$ at every $k \in \calS$. Summing over the infinite set
    $\calS$ contradicts $f$ being bounded below.
\end{proof}

The argument is that of \citet[Thm.~6.6.4]{ConnGoulToin00}. The only substantive change is the
source of the radius floor~\eqref{eqn: Delta bounded below}. There it follows from their model and
algorithm assumptions AM.5--AM.6 and AA.2, here from
Condition~\ref{condition: lower bound Delta (2nd order)} alone, which is what makes the conclusion
available to any mechanism supplying that condition rather than to one fixed rule.

A good approximation of the leftmost eigenvector therefore suffices for convergence to second-order critical points. \citet{ConnGoulToin00} obtain one from the coherence of the model Hessian with the true Hessian, \ref{as: naive model Hessian CV}.

\subsection{Worst-case complexity}
\label{subsec:second-order worst-case complexity}

The bound below is Lemma~4.5 and Theorem~4.7 of~\citet{CartGoulToin12}, restated under
Condition~\ref{condition: lower bound Delta (2nd order)} in place of their model and algorithm
hypotheses; the eigenpoint branch is already supplied by
\ref{as: max Cauchy/eigen decrease}.

\begin{theorem}[Second-order worst-case complexity]
\label{thm: Second-order worst-case complexity}
    Under the assumptions of Theorem~\ref{thm: second-order convergence}
    together with Condition~\ref{cond: PartI C3} of \PartI, let
    $\varepsilon_g$ and $\varepsilon_H$ lie in $(0,1]$ and set
    \begin{equation}
        \varepsilon \;\defeq\; \min\{\varepsilon_g, \varepsilon_H\} ,
        \qquad
        c \;\defeq\; \frac{\mykappa{lbd}^{(2)}}{M^{(2)}} \;>\; 0 ,
        \qquad M^{(2)} = \sup_k M_k^{(2)} .
        \label{eqn: complexity c}
    \end{equation}
    Then the number of successful iterations before an iterate with
    $\norm{g_k}\leq\varepsilon_g$ and $-\lambda_k^-\leq\varepsilon_H$ is reached is at most
    \begin{equation}
        \frac{2\bigl(f(x_0) - f_{\mathrm{low}}\bigr)}{\eta\,\mykappa{moc}}
        \left[
            \frac{1}{\varepsilon_g \min\{\varepsilon_g/\mykappa{umh},\, c\,\varepsilon\}}
            \;+\;
            \frac{1}{c^2\, \varepsilon_H\, \varepsilon^2}
        \right] ,
        \label{eqn: complexity mixed}
    \end{equation}
    and hence, by Condition~\ref{cond: PartI C3} of \PartI{} bounding unsuccessful iterations in terms of
    successful ones,
    the total satisfies
    \begin{equation*}
        T_{\varepsilon}^{g,H}
        \;=\; \calO\!\left( \max\{\varepsilon_g^{-2}\varepsilon_H^{-1},\,
            \varepsilon_H^{-3}\} \right)
        \;=\; \calO\!\left(\varepsilon^{-3}\right) \text{ when }
            \varepsilon_g = \varepsilon_H = \varepsilon .
    \end{equation*}
\end{theorem}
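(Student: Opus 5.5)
The plan is a counting argument over the successful iterations that precede the first $\varepsilon$-approximate second-order critical point, split by which branch of the measure is large; Condition~\ref{cond: PartI C3} of \PartI{} then converts the count into a bound on the total.

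\textbf{Radius floor.} Let $k_\varepsilon$ be the first index with $\norm{g_k}\leq\varepsilon_g$ and $-\lambda_k^-\leq\varepsilon_H$. For every $j<k_\varepsilon$ either $\norm{g_j}>\varepsilon_g$ or $-\lambda_j^->\varepsilon_H$, so $\tau_j>\min\{\varepsilon_g,\varepsilon_H\}=\varepsilon$. Hence $\varsigma_k\geq\varepsilon$ for all $k<k_\varepsilon$, and Condition~\ref{condition: lower bound Delta (2nd order)} together with $M_k^{(2)}\leq M^{(2)}$ gives
\begin{equation*}
\Delta_k\;\geq\;c\,\varepsilon\qquad\text{for } k<k_\varepsilon ,
\end{equation*}
exactly as in~\eqref{eqn: Delta bounded below}. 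This is the only place the mechanism enters.

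\textbf{Successful iterations.} Partition the successful indices $k<k_\varepsilon$, $k\in\calS$, into two sets. The first, $\calS_g$, collects those with $\norm{g_k}>\varepsilon_g$. For these, \ref{as: max Cauchy/eigen decrease} with $\norm{H_k}\leq\mykappa{umh}$ and the floor give
\begin{equation*}
f(x_k)-f(x_{k+1})\;\geq\;\eta\,\tfrac{\mykappa{moc}}{2}\,\varepsilon_g\min\{\varepsilon_g/\mykappa{umh},\,c\varepsilon\}.
\end{equation*}
The second, $\calS_H$, collects the remaining indices, for which $-\lambda_k^->\varepsilon_H$. The eigenpoint branch of the same assumption gives
\begin{equation*}
f(x_k)-f(x_{k+1})\;\geq\;\eta\,\tfrac{\mykappa{moc}}{2}\,\varepsilon_H\,c^2\varepsilon^2 .
\end{equation*}
The sequence $\{f(x_k)\}$ is non-increasing and bounded below by $f_{\mathrm{low}}$. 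The sum of decreases over each set is therefore at most $f(x_0)-f_{\mathrm{low}}$. Dividing by the per-iteration decrease bounds $|\calS_g|$ and $|\calS_H|$ separately, and adding the two counts yields~\eqref{eqn: complexity mixed}. I would bound each set against the whole budget rather than splitting the budget; this costs only the sum of the two terms.

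\textbf{Order and total.} With $\varepsilon_g,\varepsilon_H\leq1$ and $\varepsilon\leq\varepsilon_g$, the first term is $\calO(\varepsilon_g^{-1}\varepsilon^{-1})$ and the second is $\calO(\varepsilon_H^{-1}\varepsilon^{-2})$.
\begin{itemize}
\item If $\varepsilon=\varepsilon_H$, both terms are at most $\varepsilon_H^{-3}$.
\item If $\varepsilon=\varepsilon_g$, the terms are $\varepsilon_g^{-2}$ and $\varepsilon_g^{-2}\varepsilon_H^{-1}$.
\end{itemize}
In either case the bound is $\calO(\max\{\varepsilon_g^{-2}\varepsilon_H^{-1},\varepsilon_H^{-3}\})$.

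The main obstacle is the passage to the total count. Condition~\ref{cond: PartI C3} of \PartI{} bounds unsuccessful iterations in terms of successful ones, and in Part~I it typically carries an additive term of order $\log(\Delta_0/\Delta_{\min})$ with $\Delta_{\min}$ the radius floor. Here that term becomes $\log(1/(c\varepsilon))$, which is dominated by the polynomial terms, so the order is unchanged. The step I would check carefully is that the hypotheses of Condition~\ref{cond: PartI C3} are met with the second-order floor $c\varepsilon$ in place of the first-order one. I expect this to hold verbatim, since that condition reads only the radius dynamics and the floor.
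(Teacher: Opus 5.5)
Your proposal is correct and follows the paper's proof essentially step for step. It uses the same floor $\Delta_k \geq c\,\varepsilon$ from Condition~\ref{condition: lower bound Delta (2nd order)}, the same split of $\calS$ into $\calS_g$ and $\calS_H$ with the Cauchy and eigenpoint decreases of \ref{as: max Cauchy/eigen decrease}, and bounds each count against the full budget $f(x_0)-f_{\mathrm{low}}$. The differences are cosmetic: you reach the order by a case split on which tolerance is smaller, where the paper uses $\varepsilon^{-2}=\max\{\varepsilon_g^{-2},\varepsilon_H^{-2}\}$ directly, and you make explicit the additive $\log(1/(c\varepsilon))$ term from Condition~\ref{cond: PartI C3} that the paper leaves implicit.
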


% \begin{remark}[Why the mixed term cannot be dropped]
% \label{rem: mixed complexity term}
%     The bound~\eqref{eqn: complexity mixed} is driven by the radius floor
%     $\Delta_k \geq c\,\varsigma_k$ of
%     Condition~\ref{condition: lower bound Delta (2nd order)}, and before termination only one of
%     the two tests need fail: the guarantee available is $\tau_k > \min\{\varepsilon_g,
%     \varepsilon_H\}$, hence $\Delta_k \geq c\,\varepsilon$, and not $\Delta_k \gtrsim
%     \varepsilon_H$. The curvature term therefore carries $\varepsilon^{-2}$ rather than
%     $\varepsilon_H^{-2}$, and no separation of $\calS$ can repair this, since a single sequence
%     $\{\Delta_k\}$ serves both branches. The resulting order is exactly the one obtained
%     by~\citet{CartGoulToin12} for second-order trust-region methods, and it collapses to
%     $\calO(\varepsilon^{-3})$ as soon as the two tolerances are of the same order.
% \end{remark}

\begin{proof}
    The counting argument is that of \citet[Lem.~4.5]{CartGoulToin12} and we do not repeat it; we
    record only the step at which the hypotheses differ. There, the radius floor on which the
    count rests is supplied by their Lemma~4.4, proved for the trust-region rule they fix. Here it
    is supplied by Condition~\ref{condition: lower bound Delta (2nd order)}: before termination at
    least one of the two tests fails at every iteration, so $\tau_k > \varepsilon$ and hence
    $\varsigma_k > \varepsilon$, and since $M_k^{(2)} \leq M^{(2)}$,
    \begin{equation}
        \Delta_k \;\geq\; \frac{\mykappa{lbd}^{(2)}}{M_k^{(2)}}\,\varsigma_k
        \;\geq\; \frac{\mykappa{lbd}^{(2)}}{M^{(2)}}\,\varepsilon \;=\; c\,\varepsilon .
        \label{eqn: complexity floor}
    \end{equation}
    With that floor in hand the count proceeds verbatim. Split $\calS$ into
    $\calS_g = \{k \in \calS : \norm{g_k} > \varepsilon_g\}$ and
    $\calS_H = \calS\setminus\calS_g$, on which $-\lambda_k^- > \varepsilon_H$ because the
    iteration has not terminated. By \ref{as: max Cauchy/eigen decrease},
    \eqref{eqn: complexity floor} and $\rho_k \geq \eta$, each iteration of $\calS_g$ reduces $f$
    by at least
    $\tfrac{\eta\mykappa{moc}}{2}\varepsilon_g\min\{\varepsilon_g/\mykappa{umh}, c\varepsilon\}$
    and each iteration of $\calS_H$ by at least
    $\tfrac{\eta\mykappa{moc}}{2}\varepsilon_H(c\varepsilon)^2$. Boundedness below bounds
    $\abs{\calS_g}$ and $\abs{\calS_H}$ by the corresponding terms of
    \eqref{eqn: complexity mixed}, and $\abs{\calS} = \abs{\calS_g} + \abs{\calS_H}$.

    For the stated order, the second term needs no inequality at all: since
    $\varepsilon^{-2} = \max\{\varepsilon_g^{-2}, \varepsilon_H^{-2}\}$,
    \begin{equation*}
        \frac{1}{c^2\,\varepsilon_H\,\varepsilon^2}
        \;=\; \frac{1}{c^2}
            \max\bigl\{ \varepsilon_g^{-2}\varepsilon_H^{-1},\;
                        \varepsilon_H^{-3} \bigr\} .
    \end{equation*}
    For the first term,
    \begin{equation*}
        \min\{\varepsilon_g/\mykappa{umh},\, c\varepsilon\}^{-1}
        = \max\{\mykappa{umh}\varepsilon_g^{-1},\, c^{-1}\varepsilon^{-1}\} ,
    \end{equation*}
    so it equals
    $\max\{\mykappa{umh}\varepsilon_g^{-2},\, c^{-1}\varepsilon_g^{-1}\varepsilon^{-1}\}$. Here
    $\varepsilon_g^{-2} \leq \varepsilon_g^{-2}\varepsilon_H^{-1}$ because $\varepsilon_H \leq 1$,
    and $\varepsilon_g^{-1}\varepsilon^{-1} = \max\{\varepsilon_g^{-2},
    \varepsilon_g^{-1}\varepsilon_H^{-1}\} \leq \varepsilon_g^{-2}\varepsilon_H^{-1}$ because
    $\varepsilon_g \leq 1$ and $\varepsilon_H \leq 1$. Both terms are therefore
    $\calO(\max\{\varepsilon_g^{-2}\varepsilon_H^{-1}, \varepsilon_H^{-3}\})$, and taking
    $\varepsilon_g = \varepsilon_H = \varepsilon$ gives $\calO(\varepsilon^{-3})$.
\end{proof}

    This bound is sharp for trust-region methods with quadratic models: \citet{CartGoulToin10}
    show the exponent $2$ in $\varepsilon_g^{-2}$ cannot be improved for trust-region and
    Newton-type methods, and \citet[\S4]{CartGoulToin12} exhibit a unidimensional example
    attaining~\eqref{eqn: complexity mixed}. It is not optimal among second-order methods, cubic
    regularisation attaining $\calO(\max\{\varepsilon_g^{-3/2}, \varepsilon_H^{-3}\})$
    \citep{NestPoly06, CartGoulToin11}, which is optimal for the wider
    class~\citep{CartGoulToin22}. The $\varepsilon_H^{-3}$ term is common to both and is sharp.

    \citet{GrapigliaYuanYuan16} obtain second-order complexity bounds of the same order for the
    nonlinear stepsize control framework of \citet{Toint13}, which subsumes several of the radius
    mechanisms surveyed here. Their analysis fixes the stepsize rule inside the framework;
    Section~\ref{sec:second-order mechanism survey} instead asks which of the surveyed mechanisms
    supply Condition~\ref{condition: lower bound Delta (2nd order)}.

%=======================================================================
%  Section 6, rewritten for the iteration-dependent threshold.
%
%  WHAT CHANGED, AND WHY
%
%  1. Every constant and every conversion now uses $M_k^{(2)}$ rather than a
%     fixed $\widetilde{L}$.  With $\widetilde{L}_k$ iteration-dependent, the
%     old inductions on $\min\{\Delta_0, c\varsigma_{k-1}\}$ no longer close.
%
%  2. The four radius-based verifications now share one induction, isolated as
%     Lemma~\ref{lem: radius induction}.  Each proposition supplies only its
%     constant $c$.  The $\varsigma_{-1} \defeq +\infty$ convention is gone.
%
%  3. The claim that truncated CG satisfies \ref{as: boundary step} was
%     false and is corrected; see Remark~\ref{rem: sob solvers}.
%
%  4. The constant of Lemma~\ref{lem: retro pred difference} is carried
%     symbolically as $\mykappa{ret}$ --- see the note there.
%
%  OPEN ITEMS FLAGGED IN PLACE:
%   - Prop.~\ref{prop: so rtr}: expansion branch written $\gthree$; the draft
%     had $\gtwo$ in the proof and $\gthree$ elsewhere.  Check app:~R-RTR.
%   - $\mykappa{ret}$: derive from \ref{as: Approximation of the Hessian}
%     or leave symbolic.
%=======================================================================

%=======================================================================
\section{Verification of the second-order condition for individual mechanisms}
\label{sec:second-order mechanism survey}
%=======================================================================

\PartI{} verified Condition~\ref{cond: PartI C1}, the first-order lower bound
$\Delta_k \geq \mykappa{lbd} M_k^{-1}\min_{j\leq k}\norm{g_j}$, rule by rule. We do the same for
Condition~\ref{condition: lower bound Delta (2nd order)}. 
Throughout, $\{M_k^{(2)}\}$ is the non-decreasing sequence of
Condition~\ref{condition: lower bound Delta (2nd order)}, so that $M_k^{(2)} \geq \widetilde{L}_k$
for every $k$, and $\varsigma_k = \min_{j \leq k}\tau_j$. We assume $\tau_0 > 0$; otherwise $x_0$
is second-order critical and the algorithm terminates at once.

%-----------------------------------------------------------------------
\subsection{One induction, four mechanisms}
\label{subsec:so induction}
%-----------------------------------------------------------------------

All the radius-based verifications below run the same induction, which we isolate once. 

\begin{lemma}[Induction skeleton]
\label{lem: radius induction}
    Let $\{M_k^{(2)}\}$ be as in Condition~\ref{condition: lower bound Delta (2nd order)} and let
    $\tau_0 > 0$. Suppose there is $c > 0$ such that at every $k \geq 0$ at least one of
    \begin{equation}
        \text{(a)}\quad \Delta_{k+1} \geq \Delta_k ,
        \qquad\qquad
        \text{(b)}\quad \Delta_{k+1} \geq \frac{c\,\tau_k}{M_k^{(2)}}
        \label{eqn: skeleton branches}
    \end{equation}
    holds. Then Condition~\ref{condition: lower bound Delta (2nd order)} holds with
    \begin{equation}
        \mykappa{lbd}^{(2)} \;=\; \min\left\{\frac{\Delta_0\, M_0^{(2)}}{\tau_0},\; c\right\}
        \;>\; 0 .
        \label{eqn: skeleton constant}
    \end{equation}
\end{lemma}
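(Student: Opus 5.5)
The plan is a direct induction on $k$ for the statement $P(k)$: $\Delta_k \geq \mykappa{lbd}^{(2)}\varsigma_k/M_k^{(2)}$, with $\mykappa{lbd}^{(2)}$ given by~\eqref{eqn: skeleton constant}. Two structural facts drive every step. The sequence $\{\varsigma_k\}$ is non-increasing, because it is a running minimum. The sequence $\{M_k^{(2)}\}$ is non-decreasing, by Condition~\ref{condition: lower bound Delta (2nd order)}. Together they make the target $\varsigma_k/M_k^{(2)}$ non-increasing in $k$, so a bound that holds at $k$ transfers to $k+1$ whenever the radius does not shrink.

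The base case $k=0$ is immediate. Since $\varsigma_0 = \tau_0 > 0$, the choice $\mykappa{lbd}^{(2)} \leq \Delta_0 M_0^{(2)}/\tau_0$ gives $\mykappa{lbd}^{(2)}\varsigma_0/M_0^{(2)} \leq \Delta_0$. This is exactly why the first entry of the minimum appears in~\eqref{eqn: skeleton constant}.

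For the inductive step, assume $P(k)$ and split according to which branch of~\eqref{eqn: skeleton branches} holds at $k$. In branch (a),
\[
\Delta_{k+1} \geq \Delta_k \geq \mykappa{lbd}^{(2)}\varsigma_k/M_k^{(2)} \geq \mykappa{lbd}^{(2)}\varsigma_{k+1}/M_{k+1}^{(2)},
\]
using $\varsigma_{k+1}\leq\varsigma_k$ and $M_{k+1}^{(2)}\geq M_k^{(2)}$. In branch (b),
\[
\Delta_{k+1} \geq c\tau_k/M_k^{(2)} \geq c\varsigma_{k+1}/M_{k+1}^{(2)} \geq \mykappa{lbd}^{(2)}\varsigma_{k+1}/M_{k+1}^{(2)}.
\]
Here I use $\tau_k \geq \varsigma_k \geq \varsigma_{k+1}$, then the monotonicity of $M^{(2)}$, then $\mykappa{lbd}^{(2)} \leq c$. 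Positivity of $\mykappa{lbd}^{(2)}$ follows from $\Delta_0>0$, $M_0^{(2)}>0$, $\tau_0>0$ and $c>0$. The requirement $M_k^{(2)}\geq\widetilde L_k$ is inherited from the hypothesis and needs no argument.

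The only point needing care is that branch (b) is written with $\tau_k$ and the index $k$, whereas the target at $k+1$ involves $\varsigma_{k+1}$ and $M_{k+1}^{(2)}$. Both mismatches go in the favourable direction, as the chain above shows. Apart from this the argument is routine, so I expect no real obstacle.
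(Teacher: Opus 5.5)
Your proof is correct and follows the paper's argument essentially step for step. Both run the same induction on $\Delta_k \geq \mykappa{lbd}^{(2)}\varsigma_k/M_k^{(2)}$: the base case comes from the first entry of the minimum, and the two branches are closed using $\varsigma_{k+1}\leq\varsigma_k\leq\tau_k$, the monotonicity of $\{M_k^{(2)}\}$, and $\mykappa{lbd}^{(2)}\leq c$.
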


\begin{proof}
    Write $\kappa \defeq \mykappa{lbd}^{(2)}$ and induct on
    $\Delta_k \geq \kappa\,\varsigma_k/M_k^{(2)}$.

    At $k = 0$, $\varsigma_0 = \tau_0$ and $\kappa \leq \Delta_0 M_0^{(2)}/\tau_0$ give
    $\kappa\varsigma_0/M_0^{(2)} \leq \Delta_0$.

    Assume the bound at $k$. Under branch~(a),
    \begin{equation*}
        \Delta_{k+1} \;\geq\; \Delta_k \;\geq\; \frac{\kappa\,\varsigma_k}{M_k^{(2)}}
        \;\geq\; \frac{\kappa\,\varsigma_{k+1}}{M_{k+1}^{(2)}} ,
    \end{equation*}
    using $\varsigma_{k+1} \leq \varsigma_k$ in the numerator and $M_{k+1}^{(2)} \geq M_k^{(2)}$ in
    the denominator. Under branch~(b), $\kappa \leq c$, $\tau_k \geq \varsigma_{k+1}$ and the same
    monotonicity give
    \begin{equation*}
        \Delta_{k+1} \;\geq\; \frac{c\,\tau_k}{M_k^{(2)}}
        \;\geq\; \frac{\kappa\,\varsigma_{k+1}}{M_{k+1}^{(2)}} .
    \end{equation*}
\end{proof}

% \begin{remark}
% \label{rem: skeleton reading}
%     Branch~(a) is ``the rule did not contract'' and branch~(b) is ``the rule contracted, but from a
%     radius already proportional to $\tau_k$'', the latter being exactly what
%     Corollary~\ref{cor: successful iteration for low Delta (2nd order)} supplies. Each verification
%     below therefore reduces to two questions: on which branches does the radius fail to be
%     non-decreasing, and what constant does each of those branches yield? Nothing else about the
%     mechanism enters.
% \end{remark}

%-----------------------------------------------------------------------
\subsection{Fixed-factor update (\texorpdfstring{$\Rdelta$}{R-delta})}
\label{subsec:second-order fixed-factor}
%-----------------------------------------------------------------------

\begin{proposition}
\label{prop: so rdelta}
    Under the hypotheses of Theorem~\ref{thm: successful iteration for low Delta (2nd order)},
    $\Rdelta$ satisfies Condition~\ref{condition: lower bound Delta (2nd order)} with
    \begin{equation*}
        \mykappa{lbd}^{(2)} = \min\left\{
            \frac{\Delta_0\, M_0^{(2)}}{\tau_0},\;
            \gone(1 - \eta_2) \right\} ,
    \end{equation*}
    under no further assumption.
\end{proposition}

\begin{proof}
    Recall Update~\ref{update:multiplicative factor}. The radius is non-decreasing on the third branch and on that branch only; in particular it may \emph{decrease} at a successful iteration with $\eta_1 \leq \rho_k < \eta_2$. We apply Lemma~\ref{lem: radius induction} with the split at $\eta_2$.

    If $\rho_k \geq \eta_2$, branch~(a) of~\eqref{eqn: skeleton branches} holds.

    If $\rho_k < \eta_2$, then $\Delta_{k+1} \geq \gone\Delta_k$: on the first branch directly, on
    the second because $\gtwo \geq \gone$. Theorem~\ref{thm: successful iteration for low Delta
    (2nd order)} holds for every threshold in $[0,1)$; applied at $\eta_2$, its contrapositive is
    \begin{equation}
        \rho_k < \eta_2
        \quad\Longrightarrow\quad
        \Delta_k \;>\; \frac{1-\eta_2}{\widetilde{L}_k}\,\tau_k
        \;\geq\; \frac{1-\eta_2}{M_k^{(2)}}\,\tau_k ,
        \label{eqn: rdelta contrapositive}
    \end{equation}
    the last step by $M_k^{(2)} \geq \widetilde{L}_k$. Hence
    $\Delta_{k+1} > \gone(1-\eta_2)\tau_k/M_k^{(2)}$, which is branch~(b) with
    $c = \gone(1-\eta_2) > 0$, positive because $\eta_2 < 1$. Lemma~\ref{lem: radius induction}
    gives the stated constant.
\end{proof}

% \begin{remark}[Why the split is on $\eta_2$, not on success]
% \label{rem: rdelta dichotomy}
%     The natural dichotomy, successful versus unsuccessful, does not work here, and the reason is
%     specific to $\Rdelta$. At a successful but not very successful iteration,
%     $\eta_1 \leq \rho_k < \eta_2$, the second branch of $\Rdelta$ returns
%     $\Delta_{k+1} < \Delta_k$, so branch~(a) of the skeleton fails; and
%     \eqref{eqn: rdelta contrapositive} is unavailable at threshold $\eta$, since $\rho_k \geq \eta$
%     there, so branch~(b) fails too. That intermediate case belongs to neither.

%     Splitting at $\eta_2$ aligns the dichotomy with the rule's own monotonicity: the radius is
%     non-decreasing exactly when $\rho_k \geq \eta_2$. The price is the constant, which becomes
%     $\gone(1-\eta_2)$ rather than $\gone(1-\eta_1)$, and is smaller because $\eta_1 \leq \eta_2$.
%     This is the dichotomy used for the same family of rules in~\citet[Prop.~6.2]{Jeremy2026-I}.
% \end{remark}

The proof is the first-order one of \PartI{} with $\norm{g_k}$ replaced by $\tau_k$ throughout.
Nothing else changes, because $\Rdelta$ never reads either quantity: it is precisely its blindness
to criticality that makes it insensitive to the order of the theory.

%-----------------------------------------------------------------------
\subsection{Step-driven update (\texorpdfstring{$\Rstep$}{R-step})}
\label{subsec:second-order step-driven}
%-----------------------------------------------------------------------

Verifying Condition~\ref{condition: lower bound Delta (2nd order)} for the step-driven update $\Rstep$ needs one further assumption, a restriction on the subproblem solver.
\begin{assum}{a.Sob}[step on the boundary under negative curvature]
\label{as: boundary step}
    If $\lambda_k^- < 0$, then $\norm{s_k} = \Delta_k$.
\end{assum}
    \ref{as: boundary step} holds for the eigenpoint
    $s_k^E = \Delta_k u_k$ of~\eqref{eqn: eigenpoint}, and for any exact solution of the
    trust-region subproblem when $H_k$ is indefinite, since the constraint is then
    active~\citep{MoreSorensen1983}.
    For a truncated conjugate-gradient step it holds \emph{when the Krylov subspace generated by
    $g_k$ meets the negative-curvature subspace}, and not otherwise. 
     \ref{as: boundary step} is thus a genuine restriction on the subproblem solver rather than a
    formality, and it is needed only by the step-driven rules.
    Consider a two-dimensional example in which $g_k \parallel e_2$ lies in the positive-curvature eigenspace of
    $H_k = \operatorname{diag}(-\lambda, \mykappa{umh})$, so the first conjugate-gradient direction
    is $-g_k \parallel e_2$, the curvature along it is $\mykappa{umh} > 0$, the method steps to the
    minimiser along $e_2$, the residual vanishes, and the Steihaug--Toint iteration terminates at an
    interior point without ever generating $e_1$, although $\lambda_k^- = -\lambda < 0$. The
    data of that construction are therefore realisable by a standard solver, not
    merely admissible in the abstract.

    \begin{remark}[Why the boundary, and not a fixed fraction of it]
\label{rem: sob must be one}
    Suppose \ref{as: boundary step} were relaxed to the apparently harmless
    requirement that there be a constant $\mykappa{sob} \in (0,1)$ with
    $\norm{s_k} \geq \mykappa{sob}\Delta_k$ whenever $\lambda_k^- < 0$, a fixed
    fraction of the radius in place of the boundary itself. That relaxation is
    not available. On the very successful branch $\Rstep$ permits
    $\Delta_{k+1} = \norm{s_k}$ exactly, so $\mykappa{sob} < 1$ admits
    $\Delta_{k+1} = \mykappa{sob}\Delta_k$ at every iteration: with $\lambda_k^-$
    held at a fixed negative value the radius decays geometrically while
    $\tau_k \geq \abs{\lambda_k^-}$ stays bounded away from zero, so
    $\Delta_k \to 0$ with $\varsigma_k$ bounded below and
    Condition~\ref{condition: lower bound Delta (2nd order)} fails.
\end{remark}

\begin{lemma}[Step length under \ref{as: boundary step}]
\label{cor: step length with boundary}
    Under \ref{as: max Cauchy/eigen decrease}, \ref{as: boundary step} and
    $\norm{H_k}\leq\mykappa{umh}$,
    \begin{equation}
        \norm{s_k} \;\geq\; \mykappa{sls}
        \min\left\{\Delta_k,\, \frac{\tau_k}{\mykappa{umh}}\right\} ,
        \label{eqn: step length sob}
    \end{equation}
    with $\mykappa{sls} \defeq \mykappa{moc}/3 \in (0,1]$.
\end{lemma}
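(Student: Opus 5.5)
The plan is a case split on the sign of $\lambda_k^-$. The negative-curvature case follows at once from \ref{as: boundary step}. The positive-semidefinite case follows by comparing the guaranteed model decrease of \ref{as: max Cauchy/eigen decrease} with an elementary upper bound on the model decrease in terms of $\norm{s_k}$. If $\tau_k = 0$ the right-hand side of~\eqref{eqn: step length sob} vanishes and there is nothing to prove, so I assume $\tau_k > 0$ throughout.

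\emph{Case $\lambda_k^- < 0$.} Assumption~\ref{as: boundary step} gives $\norm{s_k} = \Delta_k$. Since $\mykappa{sls} = \mykappa{moc}/3 \leq 1$, this yields $\norm{s_k} \geq \mykappa{sls}\Delta_k \geq \mykappa{sls}\min\{\Delta_k, \tau_k/\mykappa{umh}\}$ directly.

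\emph{Case $\lambda_k^- \geq 0$.} Here $\tau_k = \norm{g_k} > 0$. Put $\mu \defeq \min\{\Delta_k, \norm{g_k}/\mykappa{umh}\}$. Using $\norm{H_k} \leq \mykappa{umh}$ in the Cauchy part of \ref{as: max Cauchy/eigen decrease} gives the lower bound
\begin{equation*}
    m_k(0) - m_k(s_k) \;\geq\; \frac{\mykappa{moc}}{2}\,\norm{g_k}\,\mu .
\end{equation*}
In the other direction, Cauchy--Schwarz and the Hessian bound give the upper bound
\begin{equation*}
    m_k(0) - m_k(s_k) = -g_k^\top s_k - \tfrac12 s_k^\top H_k s_k \;\leq\; \norm{g_k}\norm{s_k} + \tfrac12\mykappa{umh}\norm{s_k}^2 .
\end{equation*}
Positive semidefiniteness would even let me drop the quadratic term, but I keep it so the argument does not depend on it.

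Now argue by contradiction and suppose $\norm{s_k} < \tfrac{\mykappa{moc}}{3}\mu$. Since $\mu \leq \norm{g_k}/\mykappa{umh}$, we get $\mykappa{umh}\norm{s_k} < \tfrac{\mykappa{moc}}{3}\norm{g_k}$. The upper bound then becomes
\begin{equation*}
    \norm{s_k}\bigl(\norm{g_k} + \tfrac12\mykappa{umh}\norm{s_k}\bigr) < \tfrac{\mykappa{moc}}{3}\,\mu\,\norm{g_k}\bigl(1 + \tfrac{\mykappa{moc}}{6}\bigr) \leq \tfrac{7}{18}\,\mykappa{moc}\,\norm{g_k}\,\mu ,
\end{equation*}
where the last step uses $\mykappa{moc} \leq 1$. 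The right-hand side is strictly smaller than $\tfrac{\mykappa{moc}}{2}\norm{g_k}\mu$, which contradicts the lower bound. Hence $\norm{s_k} \geq \mykappa{sls}\mu$, which is~\eqref{eqn: step length sob} in this case.

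The only delicate point is the constant. The quadratic term $\tfrac12\mykappa{umh}\norm{s_k}^2$ must be absorbed using $\mu \leq \norm{g_k}/\mykappa{umh}$. That absorption is exactly why the factor is $\mykappa{moc}/3$ rather than $\mykappa{moc}/2$: the slack $\tfrac{7}{18} < \tfrac12$ is what makes the contradiction go through.
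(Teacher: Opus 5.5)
Your proof is correct and is essentially the paper's: the negative-curvature case is read off \ref{as: boundary step}, and otherwise the Cauchy bound of \ref{as: max Cauchy/eigen decrease} is compared with $\norm{g_k}\norm{s_k} + \tfrac12\mykappa{umh}\norm{s_k}^2$, the quadratic term being absorbed through your bound $\mu \leq \norm{g_k}/\mykappa{umh}$. The paper splits on which term attains $\tau_k$ rather than on the sign of $\lambda_k^-$, and argues directly rather than by contradiction, which changes nothing.

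One correction to your closing remark: the factor $1/3$ is not forced. Under your split $H_k \succeq 0$, so dropping the quadratic term gives $\mykappa{moc}/2$, and even keeping it your contradiction closes for any factor up to $(\sqrt{2}-1)\mykappa{moc}$.
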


\begin{proof}
    If $\tau_k = \abs{\lambda_k^-} > 0$ then \ref{as: boundary step} gives
    $\norm{s_k} = \Delta_k$, which dominates the right-hand side because
    $\mykappa{sls} \leq 1$. 
    If $\tau_k = \norm{g_k}$, write $t \defeq \norm{s_k}$ and
    $m \defeq \min\{\Delta_k,\, \norm{g_k}/\mykappa{umh}\}$, and assume $\norm{g_k} > 0$, the
    bound being trivial otherwise.  Since $\norm{H_k} \leq \mykappa{umh}$,
    \ref{as: max Cauchy/eigen decrease} bounds the model decrease below by
    $\tfrac12\,\mykappa{moc}\norm{g_k}\,m$, while
    $m_k(0) - m_k(s_k) = -g_k^\top s_k - \tfrac12 s_k^\top H_k s_k$ bounds it above by
    $\norm{g_k}\,t + \tfrac12\,\mykappa{umh}\,t^2$.  If $t \geq m$ the claim holds outright.
    Otherwise $t < m \leq \norm{g_k}/\mykappa{umh}$, so $\mykappa{umh}\,t \leq \norm{g_k}$ and the
    upper bound is at most $\tfrac32\norm{g_k}\,t$, whence
    \[
        \tfrac12\,\mykappa{moc}\norm{g_k}\,m \;\leq\; \tfrac32\,\norm{g_k}\,t
        \qquad\text{and}\qquad
        t \;\geq\; \tfrac13\,\mykappa{moc}\,m .
    \]%
    
\end{proof}

\begin{proposition}
\label{prop: so rstep}
    Assume the hypotheses of Theorem~\ref{thm: successful iteration for low Delta (2nd order)},
    that \ref{as: boundary step} holds, and that
    Condition~\ref{cond:inexactness} holds with $\xi \defeq \sup_k \xi_k < 1$. Then $\Rstep$
    satisfies Condition~\ref{condition: lower bound Delta (2nd order)} with
    \begin{equation*}
        \mykappa{lbd}^{(2)} = \min\left\{
            \frac{\Delta_0\, M_0^{(2)}}{\tau_0},\;
            \gone\,\mykappa{sls} \min\left\{1 - \eta_2,\,
                \frac{M_0^{(2)}}{\mykappa{umh}}\right\}\!,\;
            (1-\xi)\frac{M_0^{(2)}}{\mykappa{umh}} \right\}.
    \end{equation*}
\end{proposition}

\begin{proof}
    Recall from Update~\ref{update:step-dependent update} that $\Delta_{k+1} \geq \gone\norm{s_k}$ on every branch
    and $\Delta_{k+1} \geq \norm{s_k}$ on the very successful one. As for $\Rdelta$, the split is
    at $\eta_2$, where the rule's monotonicity changes. We verify the two branches of
    Lemma~\ref{lem: radius induction}.

    \emph{Case $\rho_k < \eta_2$.} By~\eqref{eqn: rdelta contrapositive},
    $\Delta_k > (1-\eta_2)\tau_k/M_k^{(2)}$, and $M_k^{(2)} \geq M_0^{(2)}$ gives
    $\tau_k/\mykappa{umh} \geq (M_0^{(2)}/\mykappa{umh})\,\tau_k/M_k^{(2)}$. Hence, by
    Lemma~\ref{cor: step length with boundary} and monotonicity of the minimum,
    \begin{equation*}
        \Delta_{k+1} \;\geq\; \gone \mykappa{sls}
            \min\left\{\Delta_k,\, \frac{\tau_k}{\mykappa{umh}}\right\}  
            \;\geq\; \gone \mykappa{sls}
            \min\left\{1-\eta_2,\, \frac{M_0^{(2)}}{\mykappa{umh}}\right\}
            \frac{\tau_k}{M_k^{(2)}} ,
    \end{equation*}
    which is branch~(b) with $c_1 \defeq \gone\mykappa{sls}\min\{1-\eta_2,
    M_0^{(2)}/\mykappa{umh}\}$.

    \emph{Case $\rho_k \geq \eta_2$.} We use $\Delta_{k+1} \geq \norm{s_k}$ together with a lower bound on $\norm{s_k}$, and split on the sign of $\lambda_k^-$.
    \begin{itemize}[leftmargin=1.6em]
        \item \emph{$\lambda_k^- < 0$.} \ref{as: boundary step} with $\mykappa{sob} = 1$ gives
              $\norm{s_k} = \Delta_k$, so $\Delta_{k+1} \geq \Delta_k$: branch~(a).
        \item \emph{$\lambda_k^- \geq 0$}, trust region active. Then $\norm{s_k} = \Delta_k$ and again
              $\Delta_{k+1} \geq \Delta_k$: branch~(a).
        \item \emph{$\lambda_k^- \geq 0$}, trust region inactive. Then $-\lambda_k^- \leq 0$, so $\tau_k = \norm{g_k}$. The
              inactive case is the one in which Condition~\ref{cond:inexactness} constrains the
              step, the residual bound being a property of the approximate unconstrained model
              minimiser and not of a step stopped on the boundary; this is why the split is needed.
              It gives $\mykappa{umh}\norm{s_k} \geq \norm{H_k s_k} \geq (1-\xi_k)\norm{g_k}$, so
              \begin{equation*}
                  \Delta_{k+1} \;\geq\; \norm{s_k}
                  \;\geq\; \frac{1-\xi}{\mykappa{umh}}\,\tau_k
                  \;\geq\; (1-\xi)\frac{M_0^{(2)}}{\mykappa{umh}}\cdot\frac{\tau_k}{M_k^{(2)}} ,
              \end{equation*}
              which is branch~(b) with $c_2 \defeq (1-\xi)M_0^{(2)}/\mykappa{umh}$.
    \end{itemize}
    Lemma~\ref{lem: radius induction} applies with $c = \min\{c_1, c_2\}$, giving the stated
    constant.
\end{proof}

%-----------------------------------------------------------------------
\subsection{Criticality-anchored update (\texorpdfstring{$\Rdfo$}{R-DFO})}
\label{subsec:second-order DFO-like}
%-----------------------------------------------------------------------

\begin{proposition}
\label{prop: so rdfo}
    Under the hypotheses of Theorem~\ref{thm: successful iteration for low Delta (2nd order)},
    $\Rdfo(\tau)$ of Update~\ref{update: rdfo generic} satisfies
    Condition~\ref{condition: lower bound Delta (2nd order)} with
    \begin{equation*}
        \mykappa{lbd}^{(2)} = \min\left\{
            \frac{\Delta_0\, M_0^{(2)}}{\tau_0},\;
            \gone(1-\eta_1),\;
            \gtwo\,\zeta\, M_0^{(2)} \right\} ,
    \end{equation*}
    and requires no assumption on the step beyond \ref{as: max Cauchy/eigen decrease}; in
    particular \ref{as: boundary step} is not needed. The unmodified rule, with $\norm{g_k}$ in
    place of $\tau_k$, does not (Corollary~\ref{cor: convergence criticality anchored saddle}).
\end{proposition}

\begin{proof}
    Recall Update~\ref{update: rdfo generic}. The radius decreases only on the first two branches, so the third gives branch~(a) of
    Lemma~\ref{lem: radius induction} at once.

    \emph{First branch, $\rho_k < \eta_1$.} Theorem~\ref{thm: successful iteration for low Delta
    (2nd order)} applied at $\eta_1$ gives, as in~\eqref{eqn: rdelta contrapositive},
    $\Delta_k > (1-\eta_1)\tau_k/M_k^{(2)}$, whence
    $\Delta_{k+1} \geq \gone\Delta_k > \gone(1-\eta_1)\tau_k/M_k^{(2)}$: branch~(b) with
    $c_1 = \gone(1-\eta_1)$.

    \emph{Second branch, $\rho_k \geq \eta_1$ and $\Delta_k > \zeta\tau_k$.} No appeal to the
    theorem is needed: the branch condition is itself a lower bound on $\Delta_k$ in terms of
    $\tau_k$, and with $M_k^{(2)} \geq M_0^{(2)}$,
    \begin{equation*}
        \Delta_{k+1} \;\geq\; \gtwo\Delta_k \;>\; \gtwo\zeta\,\tau_k
        \;\geq\; \gtwo\zeta\, M_0^{(2)}\,\frac{\tau_k}{M_k^{(2)}} ,
    \end{equation*}
    which is branch~(b) with $c_2 = \gtwo\zeta M_0^{(2)}$. Lemma~\ref{lem: radius induction} with
    $c = \min\{c_1, c_2\}$ gives the stated constant.
\end{proof}

\subsection{Gradient-scaled update (\texorpdfstring{$\Rgrad$}{R-grad})}
\label{subsec:second-order gradient-scaled}
%-----------------------------------------------------------------------

\begin{proposition}
\label{prop: so rgrad}
    Under the hypotheses of Theorem~\ref{thm: successful iteration for low Delta (2nd order)},
    \ref{update: rgrad uncapped rule} satisfies
    Condition~\ref{condition: lower bound Delta (2nd order)} with
    \begin{equation*}
        \mykappa{lbd}^{(2)}
          = \min\left\{ \mu_0\, M_0^{(2)},\; \gone(1-\eta_2) \right\} ,
    \end{equation*}
    under no assumption on the step beyond \ref{as: max Cauchy/eigen decrease}.
\end{proposition}

\begin{proof}
    Recall Update~\ref{update: rgrad uncapped rule}.
    The multiplier decreases on the two branches with $\rho_k < \eta_2$, by $\gone$ on the first and
    by $\gtwo \geq \gone$ on the second, and is non-decreasing on the two with
    $\rho_k \geq \eta_2$; the split is therefore at $\eta_2$, as for $\Rdelta$.

    We may assume $\tau_k > 0$ at every iteration reached. If $\tau_k = 0$ then
    $\Delta_k = \mu_k\tau_k = 0$ and the rule halts; but under $\tau$-anchoring $\tau_k = 0$ means
    that $x_k$ is second-order critical, so halting is termination at the sought solution rather
    than failure. This is exactly the contrast with the unmodified rule of
    Corollary~\ref{cor: convergence criticality anchored saddle}, which halts by the same mechanism when
    $\norm{g_k} = 0$ with $\lambda_k^- < 0$, that is, at a point the second-order theory exists to
    escape.

    Rather than invoke Lemma~\ref{lem: radius induction} we induct directly on the multiplier,
    writing $\kappa \defeq \mykappa{lbd}^{(2)}$ and showing
    \begin{equation}
        \mu_k \;\geq\; \frac{\kappa}{M_k^{(2)}}
        \qquad \text{for all } k \geq 0 .
        \label{eqn: rgrad induction}
    \end{equation}
    At $k = 0$ this is $\kappa \leq \mu_0 M_0^{(2)}$, the first entry of the constant.

    If $\rho_k \geq \eta_2$, then $\mu_{k+1} \geq \mu_k \geq \kappa/M_k^{(2)} \geq
    \kappa/M_{k+1}^{(2)}$ by the induction hypothesis and monotonicity of $\{M_k^{(2)}\}$.

    If $\rho_k < \eta_2$, no induction hypothesis is needed. By~\eqref{eqn: rdelta contrapositive},
    $\Delta_k > (1-\eta_2)\tau_k/M_k^{(2)}$; since $\Delta_k = \mu_k\tau_k$ and $\tau_k > 0$,
    dividing by $\tau_k$ gives $\mu_k > (1-\eta_2)/M_k^{(2)}$, whence
    \begin{equation*}
        \mu_{k+1} \;\geq\; \gone\mu_k \;>\; \frac{\gone(1-\eta_2)}{M_k^{(2)}}
        \;\geq\; \frac{\kappa}{M_{k+1}^{(2)}} ,
    \end{equation*}
    using $\kappa \leq \gone(1-\eta_2)$ and $M_{k+1}^{(2)} \geq M_k^{(2)}$.

    Finally $\varsigma_k \leq \tau_k$ and~\eqref{eqn: rgrad induction} give
    $\Delta_k = \mu_k\tau_k \geq \kappa\,\tau_k/M_k^{(2)} \geq \kappa\,\varsigma_k/M_k^{(2)}$,
    which is Condition~\ref{condition: lower bound Delta (2nd order)}.
\end{proof}

    The induction runs on $\mu_k$ rather than on $\Delta_k$, and that is the whole difference.
    Lemma~\ref{lem: radius induction} compares the radius with $\varsigma_k$ at every step, because
    for the other mechanisms the two can drift apart. Here~\eqref{eqn: rgrad induction} is a scalar
    statement in which no criticality measure appears, and $\varsigma_k$ enters only in the final
    line. The reason is that $\Rgrad(\tau)$ controls the ratio $\Delta_k/\tau_k = \mu_k$ itself, so
    Condition~\ref{condition: lower bound Delta (2nd order)} constrains the quantity the rule
    updates.

    The constant has the same origin. Its first entry $\mu_0 M_0^{(2)}$ is the initial-radius entry
    $\Delta_0 M_0^{(2)}/\tau_0$ of the other three propositions written in the rule's own variable,
    and its second, $\gone(1-\eta_2)$, is $\Rdelta$'s. Both rules contract by $\gone$ on the
    branches with $\rho_k < \eta_2$, one applying the factor to $\Delta_k$ and the other to
    $\mu_k = \Delta_k/\tau_k$. Nothing in the argument depends on what $\tau$ denotes, so the rule
    is as strong at second order as at first.

%-----------------------------------------------------------------------
\subsection{Retrospective update (\texorpdfstring{$\Rrtr$}{R-RTR})}
\label{subsec:second-order retrospective}
%-----------------------------------------------------------------------

$\Rrtr$ chooses its branch by the retrospective ratio
$\widetilde{\rho}_{k+1} = \bigl(f(x_k) - f(x_{k+1})\bigr)/
\bigl(m_{k+1}(-s_k) - m_{k+1}(0)\bigr)$, so
Corollary~\ref{cor: successful iteration for low Delta (2nd order)} does not apply directly: it
bounds $\Delta_k$ from below when $\rho_k$ is small, not when $\widetilde{\rho}_{k+1}$ is. The
required substitute is the second-order analogue of \citet[Thm.~3.3]{BastMalmMoufToinToma10}, and
it needs one ingredient the first-order theory does not.

\begin{lemma}
\label{lem: retro pred difference}
    Under \ref{as: Lipschitz Hessian}, \ref{as: Approximation of the Hessian} and
    the quantitative form of \ref{as: retrospective directional},
    \begin{equation}
        \norm{\bigl(H_{k+1} - \nabla^2 f(x_{k+1})\bigr) s_k} \;\leq\; \mykappa{BH}\norm{s_k}^2 ,
        \label{eqn: ah4 quantitative}
    \end{equation}
    there is
    $\mykappa{ret} > 0$, depending only on $L_H$ and $\mykappa{BH}$, such that on a successful
    iteration
    \begin{equation}
        \Bigl| \bigl[m_k(0) - m_k(s_k)\bigr]
             - \bigl[m_{k+1}(-s_k) - m_{k+1}(0)\bigr] \Bigr|
        \;\leq\; \mykappa{ret}\,\norm{s_k}^3 .
        \label{eqn: retro pred difference}
    \end{equation}
\end{lemma}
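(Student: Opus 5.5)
We expand both predicted reductions explicitly and compare them term by term against Taylor expansions of $f$. Under \ref{as: model agreement with function}, $m_k(s) = f(x_k) + g_k^\top s + \tfrac12 s^\top H_k s$ and $m_{k+1}(s) = f(x_{k+1}) + g_{k+1}^\top s + \tfrac12 s^\top H_{k+1} s$. On a successful iteration $x_{k+1} = x_k + s_k$, so
\begin{equation*}
    m_k(0) - m_k(s_k) = -g_k^\top s_k - \tfrac12 s_k^\top H_k s_k ,
    \qquad
    m_{k+1}(-s_k) - m_{k+1}(0) = -g_{k+1}^\top s_k + \tfrac12 s_k^\top H_{k+1} s_k .
\end{equation*}
Their difference is $(g_{k+1} - g_k)^\top s_k - \tfrac12 s_k^\top (H_k + H_{k+1}) s_k$. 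We then insert the true Hessians and split this into three pieces:
\begin{equation*}
    \Bigl[(g_{k+1} - g_k)^\top s_k - \tfrac12 s_k^\top\bigl(\nabla^2 f(x_k) + \nabla^2 f(x_{k+1})\bigr)s_k\Bigr]
    - \tfrac12 s_k^\top\bigl(H_k - \nabla^2 f(x_k)\bigr)s_k
    - \tfrac12 s_k^\top\bigl(H_{k+1} - \nabla^2 f(x_{k+1})\bigr)s_k .
\end{equation*}

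The two model-error pieces are handled as in the proof of Theorem~\ref{thm: second-order dynamics on rho_k}. Cauchy--Schwarz together with \ref{as: Approximation of the Hessian} bounds the first by $\tfrac12\mykappa{BH}\norm{s_k}^3$. Cauchy--Schwarz together with~\eqref{eqn: ah4 quantitative} bounds the second by the same quantity. These pieces contribute $\mykappa{BH}\norm{s_k}^3$ in total.

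The main step is the bracketed term, which is a trapezoidal-rule error. We write $g_{k+1} - g_k = \int_0^1 \nabla^2 f(x_k + t s_k)\,s_k\,dt$. The bracket then equals
\begin{equation*}
    \int_0^1 s_k^\top\Bigl[\nabla^2 f(x_k + t s_k) - \tfrac12\bigl(\nabla^2 f(x_k) + \nabla^2 f(x_{k+1})\bigr)\Bigr]s_k\,dt .
\end{equation*}
We bound the integrand using \ref{as: Lipschitz Hessian}. We split $\tfrac12(\nabla^2 f(x_k+ts_k) - \nabla^2 f(x_k)) + \tfrac12(\nabla^2 f(x_k+ts_k) - \nabla^2 f(x_{k+1}))$, and its norm is at most $\tfrac12 L_H\bigl(t + (1-t)\bigr)\norm{s_k} = \tfrac12 L_H\norm{s_k}$. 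Integrating gives $\tfrac12 L_H\norm{s_k}^3$. One could sharpen this to $\tfrac14 L_H$ by using $\int_0^1 \max\{t,1-t\}\,dt$ on a different split, but the crude bound suffices.

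Collecting the three bounds gives~\eqref{eqn: retro pred difference} with $\mykappa{ret} = \tfrac12 L_H + \mykappa{BH}$, which depends only on $L_H$ and $\mykappa{BH}$ as claimed. No step is genuinely hard. The only care point is the sign convention in $m_{k+1}(-s_k) - m_{k+1}(0)$: the quadratic term enters with a \emph{plus} sign there, so the two model Hessians average rather than cancel. That averaging is what makes the trapezoidal comparison the natural one, and it is why a first-order Taylor bound alone would not reach $\norm{s_k}^3$.
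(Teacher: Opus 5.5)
Your proof is correct and follows the paper's argument: the same expansion of the two predicted reductions into $(g_{k+1}-g_k)^\top s_k - \tfrac12 s_k^\top(H_k+H_{k+1})s_k$, the same two model-error pieces bounded through \ref{as: Approximation of the Hessian} and~\eqref{eqn: ah4 quantitative}, and a Lipschitz-Hessian bound on the remainder. Anchoring at $\tfrac12(\nabla^2 f(x_k)+\nabla^2 f(x_{k+1}))$ rather than at $\nabla^2 f(x_k)$ only makes the constant explicit and slightly smaller, $\mykappa{ret} = \tfrac12 L_H + \mykappa{BH}$ against the $L_H + \mykappa{BH}$ that the paper's route yields.

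There is one slip, in an aside the proof does not use. Since $\int_0^1 \max\{t,1-t\}\,dt = \tfrac34$, that integral cannot give $\tfrac14 L_H$. The sharpening comes instead from symmetrising $t \leftrightarrow 1-t$, which bounds the symmetrised integrand pointwise by $L_H\min\{t,1-t\}\norm{s_k}^3$; integrating gives $\tfrac14 L_H\norm{s_k}^3$.
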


\begin{proof}
    The two predicted reductions are $-g_k^\top s_k - \tfrac12 s_k^\top H_k s_k$ and
    $-g_{k+1}^\top s_k + \tfrac12 s_k^\top H_{k+1} s_k$, so their difference is
    $(g_{k+1} - g_k)^\top s_k - \tfrac12 s_k^\top (H_k + H_{k+1}) s_k$. By Taylor expansion and
    \ref{as: Lipschitz Hessian},
    $(g_{k+1}-g_k)^\top s_k = s_k^\top\nabla^2 f(x_k)s_k + \calO(L_H\norm{s_k}^3)$; by
    \ref{as: Approximation of the Hessian} at $k$ and~\eqref{eqn: ah4 quantitative} at
    $k+1$, together with
    \ref{as: Lipschitz Hessian} to move $\nabla^2 f(x_{k+1})$ to $\nabla^2 f(x_k)$,
    $\tfrac12 s_k^\top(H_k + H_{k+1})s_k = s_k^\top\nabla^2 f(x_k)s_k
    + \calO((L_H + \mykappa{BH})\norm{s_k}^3)$. The two leading terms cancel, leaving a remainder of
    order $\norm{s_k}^3$ with a constant determined by $L_H$ and $\mykappa{BH}$.
\end{proof}

\begin{proposition}
\label{prop: so rtr}
    Under \ref{as: Lipschitz Hessian}, \ref{as: max Cauchy/eigen decrease},
    \ref{as: Approximation of the Hessian}, {$\norm{H_k} \leq \mykappa{umh}$,
    \eqref{eqn: ah4 quantitative}}
    and \ref{as: boundary step} with
    $\mykappa{sob} = 1$, there is a non-decreasing sequence
    $\{M_k^{(2),\mathrm{r}}\}$ such that
    $\Delta_k \leq (1-\widetilde{\eta}_1)\tau_k/M_k^{(2),\mathrm{r}}$ implies
    $\widetilde{\rho}_{k+1} \geq \widetilde{\eta}_1$. Consequently $\Rrtr$ satisfies
    Condition~\ref{condition: lower bound Delta (2nd order)} with
    \begin{equation*}
        \mykappa{lbd}^{(2)} = \min\left\{
            \frac{\Delta_0\, M_0^{(2),\mathrm{r}}}{\tau_0},\;
            \gone\,{\mykappa{sls}}\min\left\{1 - \eta_1,\;
                1 - \widetilde{\eta}_1,\,
                \frac{M_0^{(2),\mathrm{r}}}{\mykappa{umh}}\right\} \right\} .
    \end{equation*}
\end{proposition}

\begin{proof}
    \emph{The threshold.} By Lemma~\ref{lem: retro pred difference} and
    \ref{as: max Cauchy/eigen decrease},
    \begin{equation*}
        m_{k+1}(-s_k) - m_{k+1}(0)
        \;\geq\; \frac{\mykappa{moc}}{2}\abs{\lambda_k^-}\Delta_k^2
                 - \mykappa{ret}\,\Delta_k^3 ,
    \end{equation*}
    which is at least half the first term once
    $\Delta_k \leq \mykappa{moc}\abs{\lambda_k^-}/(4\mykappa{ret})$. The argument of
    Theorem~\ref{thm: second-order dynamics on rho_k} with this denominator, and its first-order
    counterpart on the gradient branch, produce a threshold of the stated form; take
    $M_k^{(2),\mathrm{r}}$ to be any non-decreasing majorant of it
    and of $\widetilde{L}_k$, so that
    Corollary~\ref{cor: successful iteration for low Delta (2nd order)} may be read against it as
    well.

    \emph{The induction.} The expansion branch of $\Rrtr$ is
    $\Delta_{k+1} \geq \max\{\gthree\norm{s_k},\, \Delta_k\}$, which gives
    $\Delta_{k+1} \geq \Delta_k$ outright: branch~(a) of Lemma~\ref{lem: radius induction}, with no
    lower bound on $\norm{s_k}$ required and hence no appeal to
    Condition~\ref{cond:inexactness}. The branch $\Delta_{k+1} = \Delta_k$, fired at
    $\widetilde\rho_{k+1} \in [\widetilde\eta_1, \widetilde\eta_2)$, is branch~(a) as well.
    On the four remaining branches
    $\Delta_{k+1} \geq \gone\norm{s_k}$, and the computation of
    Proposition~\ref{prop: so rstep}, with $M_k^{(2),\mathrm{r}}$ in place
    of $M_k^{(2)}$, gives branch~(b).
    Two thresholds enter, according to what selects the branch. The two rejected-step
    branches are selected by $\rho_k < \eta_1$, and
    Corollary~\ref{cor: successful iteration for low Delta (2nd order)} applied at $\eta_1$ gives
    $\Delta_k > (1-\eta_1)\tau_k/M_k^{(2),\mathrm{r}}$; the two accepted branches with
    $\widetilde\rho_{k+1} < \widetilde\eta_1$ are selected by the retrospective ratio, and the
    threshold of the first part gives
    $\Delta_k > (1-\widetilde\eta_1)\tau_k/M_k^{(2),\mathrm{r}}$. Branch~(b) therefore holds with
    $c = \gone{\mykappa{sls}}\min\{1-\eta_1,\, 1-\widetilde{\eta}_1,\,
    M_0^{(2),\mathrm{r}}/\mykappa{umh}\}$.
    Lemma~\ref{lem: radius induction} concludes.
\end{proof}

\begin{remark}
\label{rem: rtr second order}
    $\Rrtr$ inherits the step-driven family's need for \ref{as: boundary step} and adds a
    dependence on \ref{as: Approximation of the Hessian} that $\Rdelta$ does not have. In
    exchange its expansion branch $\max\{\gthree\norm{s_k}, \Delta_k\}$ is simultaneously
    step-anchored and non-decreasing, so branch~(a) of Lemma~\ref{lem: radius induction} is
    immediate and the rule needs neither Condition~\ref{cond:inexactness} nor the corresponding
    third entry in its constant. It is the only step-driven rule with that property.
\end{remark}

\subsection{Summary of verifications} \label{sec:summary of verifications}

Table~\ref{tab: second-order summary} collects the verifications.
\emph{$\Rdelta$ is indifferent.} Its verification is the first-order one of \PartI{} with
$\norm{g_k}$ replaced by $\tau_k$, and it requires nothing new.
Section~\ref{sec:asymptotic radius} shows that this insensitivity has a price elsewhere, namely
$\liminf_k\Delta_k > 0$, so that the radius carries no asymptotic information about criticality, and
the second-order setting neither adds to that price nor reduces it.
\emph{The constants have a common shape.} By Lemma~\ref{lem: radius induction} each
$\mykappa{lbd}^{(2)}$ is the minimum of $\Delta_0 M_0^{(2)}/\tau_0$, which records the initial
radius and is common to all mechanisms, with one entry per branch on which the rule contracts.
$\Rdelta$, $\Rrtr$ and $\Rgrad(\tau)$ have one such entry, $\Rdfo(\tau)$ and $\Rstep$ two. That
$\Rdelta$ and $\Rgrad(\tau)$ arrive at the same constant is not a coincidence: both contract by
$\gone$ on the branches with $\rho_k < \eta_2$, one applying the factor to $\Delta_k$ and the other
to $\mu_k = \Delta_k/\tau_k$. These constants are not merely bookkeeping: by
Theorem~\ref{thm: Second-order worst-case complexity} the curvature term of the complexity bound
carries $(\mykappa{lbd}^{(2)})^{-2}$, so a mechanism with a smaller constant pays quadratically.

\begin{table}[tbp]
    \centering
    \caption{Verification of Condition~\ref{condition: lower bound Delta (2nd order)}, assuming
        $\tau_0 > 0$ throughout. Every entry in the third column is additionally intersected with
        the initial-radius term $\Delta_0 M_0^{(2)}/\tau_0$, which is common to all mechanisms,
        with $M_0^{(2),\mathrm{r}}$ in place of $M_0^{(2)}$ for $\Rrtr$. For $\Rgrad(\tau)$ that
        term reads $\mu_0 M_0^{(2)}$ in Proposition~\ref{prop: so rgrad}, the two being equal
        since $\Delta_0 = \mu_0\tau_0$.}
    \label{tab: second-order summary}
    \footnotesize
    \begin{tabular}{@{}l c >{\raggedright\arraybackslash}p{80mm}@{}}
        \toprule
        Mechanism & C.~\ref{condition: lower bound Delta (2nd order)}
            & $\mykappa{lbd}^{(2)}$, remaining entries \\
        \midrule

        $\Rdelta$
            & yes
            & $\gone(1-\eta_2)$ \\[2pt]

        $\Rstep$
            & yes
            & $\min\bigl\{\gone{\mykappa{sls}}
                \min\{1-\eta_2,\, M_0^{(2)}/\mykappa{umh}\},\;
                (1-\xi)M_0^{(2)}/\mykappa{umh}\bigr\}$ \\[2pt]

        $\Rrtr$
            & yes
            & $\gone{\mykappa{sls}}
                \min\{1-\eta_1,\; 1-\widetilde{\eta}_1,\,
                M_0^{(2),\mathrm{r}}/\mykappa{umh}\}$ \\

        \midrule

        $\Rdfo(\norm{g})$
            & \textbf{no}
            & --- \\[2pt]

        $\Rdfo(\tau)$
            & yes
            & $\min\{\gone(1-\eta_1),\; \gtwo\zeta M_0^{(2)}\}$ \\[2pt]

        $\Rgrad(\norm{g})$
            & \textbf{no}
            & --- \\[2pt]

        $\Rgrad(\tau)$
            & yes
            & $\gone(1-\eta_2)$, as $\Rdelta$ \\

        \bottomrule
    \end{tabular}
\end{table}

%=======================================================================

\section{Inactivity of the trust-region constraint and superlinear convergence} \label{sec:convergence rate}

    This section establishes the conditions under which the trust-region constraint stops binding, so that the method takes full (quasi-)Newton steps.
    The argument follows three questions in turn: which mechanisms reach eventual
    inactivity and what each one pays for it, whether that price can be avoided, and what rate
    follows once the constraint is gone. The last is answered without reference to the radius rule
    at all.

    Throughout this section $\{x_k\}$ converges to a first-order critical point $x^*$ with
    $\nabla^2 f(x^*)$ positive definite, the model Hessian is uniformly bounded,
    $\norm{H_k} \leq \mykappa{umh}$ (see Remark~\ref{rem: assumptions on the model Hessian} following
    Theorem~\ref{thm: series of steps converges}), and the subproblem solver satisfies
    Condition~\ref{cond:inexactness}, $\norm{H_k s_k + g_k} \leq \xi_k \norm{g_k}$ with
    $\xi_k \to 0$ when the trust-region constraint is inactive.
    Condition~\ref{cond: s_k bounded by g_k} is invoked below in the form
    $\norm{s_k} \leq \kbar\norm{g_k}$, without restating its proximity clause: since
    $x_k \to x^*$, the index $k_\kappa$ it supplies may be enlarged once and for all so that
    $\norm{x_k - x^*} \leq \delta^{(s)}$ for every $k \geq k_\kappa$, and the implication is then
    available at every such $k$.
    
    Two results from the preceding sections are used repeatedly, that
    $\sum_k \norm{s_k} < \infty$, hence $\norm{s_k} \to 0$
    (Theorem~\ref{thm: series of steps converges}) and that
    $\gamma_k \defeq \norm{g(x_k + s_k) - g_k - H_k s_k}/\norm{s_k} \to 0$
    (this is \ref{as: Finite-difference Hessian consistency}, equivalent to
    \ref{as: Hessian-step residual condition} by Step~1 of the proof of
    Theorem~\ref{thm: equivalence quasi-Newton conditions}).%
    We also use that every iteration is
    eventually very successful, $\rho_k \geq \eta_2$ for $k$ large
    (Theorem~\ref{thm: rho converges to 1}), so that $\{f(x_k)\}$ is non-increasing.

    Two further hypotheses are used only where indicated. The first is the side condition on the
    mechanism constants required by the criticality-anchored rules; the second is the secant
    equation, used only for the retrospective family in
    Section~\ref{subsec:retrospective inactivity}.

    \begin{assum}{a.Sc}[side conditions on $\zeta$ and $\overline{\mu}$]
\label{as: conditions on constants criticality related updates}
        For \Rdfo($\cdot$) and \Rgrad($\cdot$) the constants $\zeta$ and $\overline{\mu}$ are chosen so that
        \begin{equation}
            \text{(a)}\quad \zeta > \kbar
            \qquad\qquad 
            \text{(b)}\quad \overline{\mu} > \kbar .
        \end{equation}
            
    \end{assum}

    \begin{assum}{H.Sec}[secant equation]
\label{as: secant condition}
        The model Hessian satisfies the secant equation $H_{k+1} s_k = g_{k+1} - g_k$ on accepted
        steps. This holds by construction for BFGS and for SR1.
    \end{assum}

\subsection{Elements independent of the update mechanism}
\label{subsec: mechanism-free elements}
\label{subsec:elements independent of the update mechanism}
%=======================================================================

We first separate what the argument owes to the radius rule from what it does not. The
mechanism-dependence turns out to localise entirely into two inequalities, stated as
Properties~\ref{prop: no contraction while active} and~\ref{prop: radius dominates step} below;
everything else in this section holds for any rule.

\subsubsection{An explicit inactivity criterion}

The following lemma is used repeatedly below.

\begin{lemma}[Inactivity criterion]
\label{lem: inactivity criterion}
Let $k \geq k_\kappa$ , the index of Condition~\ref{cond: s_k bounded by g_k}, so that
$\norm{s_k} \leq \kbar\norm{g_k}$ by
Condition~\ref{cond: s_k bounded by g_k}. If
\begin{equation}
  \Delta_k > \kbar \norm{g_k},
  \label{eqn: inactivity criterion}
\end{equation}
then $\norm{s_k} < \Delta_k$: the trust-region constraint is inactive at iteration $k$.
Equivalently, if the constraint is active at $k$ then $\Delta_k \leq \kbar\norm{g_k}$.

If moreover $g_k \neq 0$, then writing
\begin{equation}
  \theta_k \defeq \frac{\Delta_k}{\norm{g_k}}
  \label{eqn: theta}
\end{equation}
for the \emph{radius-to-criticality ratio}, the constraint is inactive whenever
$\theta_k > \kbar$, and $\theta_k \leq \kbar$ whenever it is active.
\end{lemma}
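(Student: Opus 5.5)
The argument is a direct chaining of inequalities, and I would organise it in three short steps. First, fix $k \geq k_\kappa$. By the standing convention of this section, the proximity clause of Condition~\ref{cond: s_k bounded by g_k} is in force at every such $k$. The condition therefore supplies $\norm{s_k} \leq \kbar\norm{g_k}$ with no further hypothesis. Under~\eqref{eqn: inactivity criterion} this gives at once the strict chain
\begin{equation*}
    \norm{s_k} \;\leq\; \kbar\norm{g_k} \;<\; \Delta_k ,
\end{equation*}
so $s_k$ lies in the interior of the trust region, which is what \emph{inactive} means here. The inequality is strict because~\eqref{eqn: inactivity criterion} is strict. Note that the case $g_k = 0$ is harmless: there $\norm{s_k} \leq 0 < \Delta_k$, since the radius is positive.

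Second, the ``equivalently'' clause is the contrapositive of the first assertion. If the constraint is active, that is $\norm{s_k} = \Delta_k$, then $\norm{s_k} < \Delta_k$ fails. Hence~\eqref{eqn: inactivity criterion} must fail, which is exactly $\Delta_k \leq \kbar\norm{g_k}$.

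Third, for the ratio form, assume $g_k \neq 0$ and divide both inequalities by $\norm{g_k} > 0$. Then $\Delta_k > \kbar\norm{g_k}$ becomes $\theta_k > \kbar$, and $\Delta_k \leq \kbar\norm{g_k}$ becomes $\theta_k \leq \kbar$. The two statements transfer verbatim.

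There is no real obstacle here. The only point needing care is making precise what ``inactive'' means, namely $\norm{s_k} < \Delta_k$ as opposed to the step being the unconstrained minimiser. I would state explicitly that the lemma uses the geometric definition. I would also note that this is the sense in which Condition~\ref{cond:inexactness} is later invoked, so that no property of the subproblem solver enters the proof.
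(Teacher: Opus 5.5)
Your proof is correct and follows the paper's own argument: the chain $\norm{s_k} \leq \kbar\norm{g_k} < \Delta_k$, then the active case, then division by $\norm{g_k}$. The only difference is cosmetic: you get the active-case bound by contraposition, whereas the paper reads it off directly as $\Delta_k = \norm{s_k} \leq \kbar\norm{g_k}$.
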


\begin{proof}
If~\eqref{eqn: inactivity criterion} holds then
$\norm{s_k} \leq \kbar\norm{g_k} < \Delta_k$. Conversely, if the constraint is active then
$\norm{s_k} = \Delta_k$, so $\Delta_k = \norm{s_k} \leq \kbar\norm{g_k}$. When $g_k \neq 0$,
dividing by $\norm{g_k}$ turns each statement into the corresponding one for $\theta_k$.
\end{proof}

The criterion is sufficient for inactivity and not necessary: the constraint may well be inactive
with $\theta_k \leq \kbar$. Sufficiency is all the argument needs, and since the lemma rests on
Condition~\ref{cond: s_k bounded by g_k} alone it reduces the question, for every mechanism
alike, to whether the single scalar sequence $\{\theta_k\}$ eventually exceeds $\kbar$. The
unnormalised form~\eqref{eqn: inactivity criterion} is the primitive one, being insensitive to
$g_k = 0$; the ratio~\eqref{eqn: theta} is the form in which the mechanisms can be compared.

\subsubsection{Decay of the steps once inactive}

The proposition on strictly decreasing steps is mechanism-independent: its proof invokes the
inexactness condition, the acceptance test, Condition~\ref{cond: s_k bounded by g_k} and
\ref{as: Gradient residual condition}, and at no point the radius rule.

\begin{proposition}[Steps contract once the region is inactive, after~{\citep[Thm.~4.2]{Stei83}}]
\label{prop: steps are strictly decreasing}
Let $k \geq k_\kappa$, so that Condition~\ref{cond: s_k bounded by g_k} is in force at both $k$
and $k+1$, and suppose $s_k \neq 0$,
\begin{equation*}
    \norm{s_k} < \Delta_k , \qquad \rho_k \geq \eta ,
\end{equation*}
and
\begin{equation}
  \kbar\bigl(\gamma_k + \xi_k \mykappa{umh}\bigr) < \gone\bigl(1 - \xi_k\bigr).
  \label{eqn: prop hypothesis}
\end{equation}
Then
\begin{equation}
  \norm{s_{k+1}} < \gone \norm{s_k}.
  \label{eqn: step contraction}
\end{equation}
The constant $\gone$ is borrowed from the radius rule only so that the induction of
Section~\ref{subsec:common inactivity condition} lines up: the same argument gives
$\norm{s_{k+1}} < c\,\norm{s_k}$ for any fixed $c \in (0,1)$, as soon as $k$ is large enough that
$\kbar(\gamma_k + \xi_k\mykappa{umh}) < c\,(1-\xi_k)$, which holds eventually because
$\gamma_k, \xi_k \to 0$.
\end{proposition}

\begin{proof}
The case $s_k = 0$ is excluded for a reason: the inexactness condition would then read
$\norm{g_k} \leq \xi_k\norm{g_k}$ with $\xi_k < 1$, forcing $g_k = 0$, so that $x_k$ is critical
and the method terminates. For $s_k \neq 0$ the ratio $\gamma_k$ is well defined.

\emph{Step 1: $\norm{g_{k+1}} \leq \bigl[(\gamma_k + \xi_k\norm{H_k})/(1-\xi_k)\bigr]\norm{s_k}$.}
Since $\rho_k \geq \eta$ the step is accepted and $x_{k+1} = x_k + s_k$. Because the trust region
is inactive at $k$, Condition~\ref{cond:inexactness} applies. Following~\citet[Thm.~4.2]{Stei83},
split the new gradient into the model's linearisation error and the subproblem residual,
\begin{equation}
  g_{k+1} = \underbrace{\bigl[g(x_k + s_k) - g_k - H_k s_k\bigr]}_{\text{model error}}
          + \underbrace{\bigl[H_k s_k + g_k\bigr]}_{\text{residual}} ,
  \label{eqn: gradient split}
\end{equation}
and, $\norm{g_{k+1}} \leq \gamma_k \norm{s_k} + \xi_k \norm{g_k}$.
Then, using $g_k = (H_k s_k + g_k) - H_k s_k$ with $\gamma_k \leq \gamma_k/(1-\xi_k)$ and the
standing bound $\norm{H_k} \leq \mykappa{umh}$,
\begin{equation}
  \frac{\norm{g_{k+1}}}{\norm{s_k}}
    \;\leq\; \gamma_k + \frac{\xi_k \norm{H_k}}{1 - \xi_k}
    \;\leq\; \frac{\gamma_k + \xi_k \norm{H_k}}{1 - \xi_k}
    \;\leq\; \frac{\gamma_k + \xi_k \mykappa{umh}}{1 - \xi_k} .
  \label{eqn: gk1 final}
\end{equation}

\emph{Step 2: conversion.} Applying Condition~\ref{cond: s_k bounded by g_k} at $k+1$ and
then~\eqref{eqn: gk1 final} along with the hypothesis~\eqref{eqn: prop hypothesis} gives the conclusion~\eqref{eqn: step contraction}, $\norm{s_{k+1}} \leq \kbar\norm{g_{k+1}} < \gone\norm{s_k}$.
\end{proof}

%=======================================================================
\subsection{A common condition for \texorpdfstring{$\Rdelta$}{R-delta},
         \texorpdfstring{$\Rstep$}{R-step} and \texorpdfstring{$\Rdfo$}{R-DFO}}
\label{subsec:common inactivity condition}
%=======================================================================

The three non-gradient-scaled mechanisms share a single pair of conditions under which the inactivity theorem holds.

\subsubsection{The two properties}

We introduce two properties of the mechanism that are sufficient to guarantee that the trust-region constraint eventually becomes inactive.
Property~\ref{prop: no contraction while active} excludes a permanently binding constraint, for on
such a run $\Delta_k = \norm{s_k}$ while~\eqref{eqn: I1} makes the radius non-decreasing, so the
step cannot vanish as Theorem~\ref{thm: series of steps converges} requires.

\begin{property}[No contraction while the region binds]
\label{prop: no contraction while active}
    There is $k_1$ such that, for all $k \geq k_1$ with $\rho_k \geq \eta_2$ and
    $\norm{s_k} = \Delta_k$,
    \begin{equation}
        \Delta_{k+1} \geq \Delta_k .
        \label{eqn: I1}
    \end{equation}
\end{property}
The next property propagates inactivity,
Proposition~\ref{prop: steps are strictly decreasing} having contracted the step by $\gone$ across
an inactive iteration and~\eqref{eqn: I2} keeping the next radius above the contracted step.
\begin{property}[The next radius dominates the current step]
\label{prop: radius dominates step}
    There is $k_2$ such that, for all $k \geq k_2$ with $\rho_k \geq \eta_2$,
    \begin{equation}
        \Delta_{k+1} \geq \gone \norm{s_k} .
        \label{eqn: I2}
    \end{equation}
\end{property}
Neither implies the other, \eqref{eqn: I1} relating $\Delta_{k+1}$ to $\Delta_k$ on active
iterations and~\eqref{eqn: I2} relating it to $\norm{s_k}$ on all of them, where $\norm{s_k}$ may
be far below $\Delta_k$. They are the only demands made on the mechanism and the two properties enter the proof of Theorem~\ref{thm: trust-region radius becomes inactive} independently.

\begin{theorem}[Inactivity from the two properties]
\label{thm: trust-region radius becomes inactive}
    Under the standing assumptions of this section, suppose the mechanism satisfies
    Properties~\ref{prop: no contraction while active} and~\ref{prop: radius dominates step}. Then
    there is $k_\star$ with $\norm{s_k} < \Delta_k$ for all $k \geq k_\star$.
\end{theorem}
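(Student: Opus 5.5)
Fix once and for all an index $K$ beyond which every standing ingredient is in force: $k \geq k_\kappa$, so that Condition~\ref{cond: s_k bounded by g_k} holds; $\rho_k \geq \eta_2$ by Theorem~\ref{thm: rho converges to 1}; $k \geq \max\{k_1,k_2\}$, so that both Properties~\ref{prop: no contraction while active} and~\ref{prop: radius dominates step} are available; and hypothesis~\eqref{eqn: prop hypothesis} of Proposition~\ref{prop: steps are strictly decreasing} holds. The last is possible because $\gamma_k \to 0$ and $\xi_k \to 0$. We may also assume $s_k \neq 0$ for $k \geq K$, since otherwise the method terminates at a critical point. The argument then has two parts: first show that some iteration $k_\star \geq K$ is inactive, then show by induction that inactivity propagates.

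\emph{Existence of an inactive iteration.} Suppose instead that $\norm{s_k} = \Delta_k$ for every $k \geq K$. Every such iteration is very successful and active, so Property~\ref{prop: no contraction while active} gives $\Delta_{k+1} \geq \Delta_k$. Hence $\norm{s_k} = \Delta_k \geq \Delta_K > 0$ for all $k \geq K$. This contradicts $\norm{s_k} \to 0$, which follows from Theorem~\ref{thm: series of steps converges}. Therefore some $k_\star \geq K$ satisfies $\norm{s_{k_\star}} < \Delta_{k_\star}$.

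\emph{Propagation.} Assume $k \geq k_\star$ with $\norm{s_k} < \Delta_k$. Since $\rho_k \geq \eta_2 \geq \eta$, the constraint is inactive, and~\eqref{eqn: prop hypothesis} holds, Proposition~\ref{prop: steps are strictly decreasing} yields $\norm{s_{k+1}} < \gone\norm{s_k}$. Property~\ref{prop: radius dominates step} at $k$ gives $\Delta_{k+1} \geq \gone\norm{s_k}$. Chaining the two,
\[
\norm{s_{k+1}} < \gone\norm{s_k} \leq \Delta_{k+1},
\]
so iteration $k+1$ is inactive. Induction then gives $\norm{s_k} < \Delta_k$ for every $k \geq k_\star$.

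The main point to check is that the proposition's hypotheses hold uniformly from $K$ onward and not only at isolated indices. Condition~\ref{cond: s_k bounded by g_k} must hold at both $k$ and $k+1$, which is true because $K \geq k_\kappa$. The strictness $\norm{s_{k+1}} < \gone\norm{s_k}$ needs $s_k \neq 0$; if $s_{k+1}=0$ the conclusion is trivial. Property~\ref{prop: no contraction while active} is used only in the existence step, and Property~\ref{prop: radius dominates step} only in the propagation step, which matches the claim that the two enter independently.
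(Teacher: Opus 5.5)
Your proof is correct and follows the paper's argument. Property~\ref{prop: no contraction while active} rules out a permanently active tail, because such a tail would contradict $\norm{s_k}\to 0$. Property~\ref{prop: radius dominates step} combined with Proposition~\ref{prop: steps are strictly decreasing} then propagates inactivity by induction. The only difference is cosmetic and equivalent: you fix a single threshold $K$ before the existence step, whereas the paper first shows that infinitely many iterations are inactive and then picks one beyond $k_3$.
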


\begin{proof}
    The argument has two parts, one for each property.

    \emph{Step~1. The region is inactive at infinitely many iterations.} Suppose otherwise, so
    that there is $\ell \geq \max\{k_\rho, k_\kappa, k_1\}$ with $\norm{s_k} = \Delta_k$ for all
    $k \geq \ell$. By Theorem~\ref{thm: rho converges to 1} every such iteration has
    $\rho_k \geq \eta_2$, so Property~\ref{prop: no contraction while active} applies throughout
    and $\{\Delta_k\}_{k\geq\ell}$ is non-decreasing, giving $\Delta_k \geq \Delta_\ell > 0$. But
    $\Delta_k = \norm{s_k} \to 0$ by Theorem~\ref{thm: series of steps converges}, a
    contradiction.

    \emph{Step~2. Inactivity persists.} Since $\gamma_k \to 0$
    by \ref{as: Finite-difference Hessian consistency}, equivalent to
    \ref{as: Hessian-step residual condition} by Step~1 of the proof of
    Theorem~\ref{thm: equivalence quasi-Newton conditions}, and $\xi_k \to 0$ by
    Condition~\ref{cond:inexactness}, fix
    $k_3 \geq \max\{k_\rho, k_\kappa, k_2\}$ beyond which~\eqref{eqn: prop hypothesis} holds, and
    use Step~1 to choose $k_\star \geq k_3$ at which the region is inactive. We show by induction
    that it is inactive at every $k \geq k_\star$.

    Let $\norm{s_k} < \Delta_k$ for some $k \geq k_\star$. Every hypothesis of
    Proposition~\ref{prop: steps are strictly decreasing} is then met, so
    $\norm{s_{k+1}} < \gone\norm{s_k}$, while
    Property~\ref{prop: radius dominates step} gives $\Delta_{k+1} \geq \gone\norm{s_k}$.
    Combining the two,
    \begin{equation*}
        \norm{s_{k+1}} \;<\; \gone \norm{s_k} \;\leq\; \Delta_{k+1} ,
    \end{equation*}
    so the region is inactive at $k+1$ as well.
\end{proof}

\begin{remark}[The rate theorem does not see the mechanism]
\label{rem: inactivity mechanism free}
    Once Theorem~\ref{thm: trust-region radius becomes inactive} holds, the step is the
    unconstrained model minimiser at every large iteration and the rate is that of the underlying
    quasi-Newton iteration. All the mechanism can decide is when that regime is entered, and
    whether it is entered at all.
\end{remark}

\subsubsection{The retrospective ratio converges to one}

$\Rrtr$, recalled in Appendix~\ref{app: R-RTR}, branches on $\widetilde\rho_{k+1}$ rather than
on $\rho_k$, so the goal is $\widetilde\rho_{k+1} \to 1$. The ratio attached to the update of
$\Delta_{k+1}$ is
\begin{equation}
    \widetilde\rho_{k+1}
      \;=\; \frac{f(x_k) - f(x_{k+1})}{m_{k+1}(-s_k) - m_{k+1}(0)} .
    \label{eqn: retrospective ratio shifted}
\end{equation}
\citet[Lem.~3.8]{BastMalmMoufToinToma10} argue from asymptotic second-order coherence,
$\norm{\nabla^2 f(x_k) - \nabla^2 m_k(x_k)} \to 0$ whenever $\norm{g_k} \to 0$, together with a
quadratic model decrease and $\norm{s_k} \to 0$. \citet[Thm.~3.1]{FanPan16} argue instead from an
exactly solved subproblem, so that $(H_k + \lambda_k I)s_k + g_k = 0$ for some $\lambda_k \geq 0$,
and read the required comparison off the multiplier.

We follow the first route, which survives a truncated conjugate-gradient solver, with \ref{as: retrospective directional} below in place of coherence. 
\begin{assum}{H.Rd}[retrospective directional consistency]
\label{as: retrospective directional}
    The model Hessian at $k+1$ is accurate along the step just taken,
    \begin{equation}
        \lim_{k\to\infty}
        \frac{\norm{\bigl(H_{k+1} - \nabla^2 f(x_{k+1})\bigr) s_k}}{\norm{s_k}} = 0
        \qquad\text{whenever}\qquad \lim_{k\to\infty}\norm{g_k} = 0 .
        \label{eqn: retrospective directional}
    \end{equation}
\end{assum}
The substitution is possible because the quantity governing $\widetilde\rho_{k+1}$ is
$f(x_k) - m_{k+1}(-s_k)$, a quadratic form in $s_k$, so only the action of $H_{k+1}$ on $s_k$ ever
enters and the operator norm is never needed. The weaker hypothesis is what the retrospective setting needs.
Writing
$y_k \defeq g_{k+1} - g_k = \int_0^1 \nabla^2 f(x_k + t s_k)\,s_k\,dt$, any secant update therefore
satisfies \ref{as: retrospective directional} with an explicit rate, since
\ref{as: Lipschitz Hessian} gives
\begin{equation}
    \norm{\bigl(H_{k+1} - \nabla^2 f(x_{k+1})\bigr)s_k}
      = \norm{\int_0^1 \bigl[\nabla^2 f(x_k + t s_k) - \nabla^2 f(x_{k+1})\bigr] s_k\, dt}
      \;\leq\; \frac{L_H}{2}\,\norm{s_k}^2 .
    \label{eqn: secant gives ah4}
\end{equation}
\begin{remark}[Which quasi-Newton updates qualify]
\label{rem: ah4 quasi newton}
    Three qualifications bear on where each update may be used.

    First, SR1 satisfies the secant equation only while
    $(y_k - H_k s_k)^\top s_k \neq 0$, and the usual safeguard skips the update when that quantity
    is small relative to $\norm{s_k}\norm{y_k - H_k s_k}$; on a skipped iteration $H_{k+1} = H_k$
    and~\eqref{eqn: secant gives ah4} is unavailable. Since
    \ref{as: retrospective directional} is asymptotic, finitely many skips are harmless.
    Infinitely many are not, unless supplemented by a condition such as the uniform linear
    independence of~\citet{ConnGouldToint1991, KhalfanByrdSchnabel93}, under which
    $\norm{H_k - \nabla^2 f(x^*)} \to 0$ and the hypothesis holds outright.

    Second, BFGS and DFP require the curvature condition $y_k^\top s_k > 0$. Near a minimiser with
    $\nabla^2 f(x^*)$ positive definite this holds for all large $k$, which is the regime of
    Section~\ref{sec:convergence rate}; near a saddle it need not, and the update must be skipped
    or damped. Powell damping replaces $y_k$ by $\theta y_k + (1-\theta)H_k s_k$ and so satisfies
    the secant equation only when $\theta = 1$, which again holds for all large $k$ near a strong
    minimiser.

    Third, BFGS and DFP generate positive definite $H_{k+1}$ by construction and therefore cannot
    approximate an indefinite Hessian. They are consequently unavailable where
    \ref{as: Approximation of the Hessian} is required with $\lambda_k^- < 0$, as in
    Proposition~\ref{prop: so rtr}. Of the standard secant updates only SR1, which admits
    indefinite iterates, serves in the second-order setting.
\end{remark}
The exact Hessian $H_{k+1} = \nabla^2 f(x_{k+1})$ satisfies
\ref{as: retrospective directional} trivially, although it does not satisfy the secant
equation. The hypothesis therefore covers exact-Hessian and secant methods alike, which neither
\ref{as: secant condition} nor operator-norm coherence does on its own. 

The following lemma is the analogue for $\widetilde\rho_{k+1}$ of
\citet[Lem.~3.8]{BastMalmMoufToinToma10} under \ref{as: retrospective directional}, which
by~\eqref{eqn: secant gives ah4} brings the secant methods inside the result.

\begin{lemma}[after {\citet[Lem.~3.8]{BastMalmMoufToinToma10}}]
\label{lem: rho tilde A}
    Let $f \in C^2$ satisfy \ref{as: Lipschitz Hessian}, let the model satisfy
    \ref{as: model agreement with function}, and suppose the step is accepted at every
    sufficiently large $k$, so that $x_{k+1} = x_k + s_k$ and $\widetilde\rho_{k+1}$ is well
    defined by~\eqref{eqn: retrospective ratio shifted}. Under the standing hypotheses of
    Section~\ref{sec:convergence rate} the last requirement holds from $k_\rho$ on, by
    Theorem~\ref{thm: rho converges to 1}. Assume moreover
    \begin{enumerate}[label=(\alph*),leftmargin=2.2em]
      \item \ref{as: Hessian-step residual condition} and
            \ref{as: retrospective directional}: the model Hessians at $k$ and at $k+1$ are
            both accurate \emph{along $s_k$};
      \item \emph{quadratic model decrease}: there is $\mykappa{mqd} > 0$ with
            \begin{equation}
              m_k(0) - m_k(s_k) \;\geq\; \mykappa{mqd}\norm{s_k}^2 \;>\; 0
              \label{eqn: mqd}
            \end{equation}
            for all large $k$;
      \item $\norm{s_k} \to 0$.
    \end{enumerate}
    Then $\widetilde\rho_{k+1} \to 1$; in particular $\widetilde\rho_{k+1} \geq \widetilde\eta_2$
    for all large $k$.
\end{lemma}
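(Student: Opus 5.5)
We plan to write $\widetilde\rho_{k+1} - 1$ as a single quotient and show that its numerator is $o(\norm{s_k}^2)$ while its denominator is bounded below by a positive multiple of $\norm{s_k}^2$. Since the step is accepted for $k \geq k_\rho$, we have $x_{k+1} = x_k + s_k$. By \ref{as: model agreement with function}, $m_{k+1}(-s_k) - m_{k+1}(0) = -g_{k+1}^\top s_k + \tfrac12 s_k^\top H_{k+1} s_k$. Hence
\[
    \widetilde\rho_{k+1} - 1 \;=\; \frac{f(x_k) - f(x_{k+1}) + g_{k+1}^\top s_k - \tfrac12 s_k^\top H_{k+1} s_k}{m_{k+1}(-s_k) - m_{k+1}(0)} .
\]
The numerator equals $f(x_{k+1} - s_k) - m_{k+1}(-s_k)$, the error of the model at $k+1$ evaluated at the previous iterate. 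We treat numerator and denominator separately.

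\emph{Numerator.} Expand $f$ about $x_{k+1}$ along $-s_k$. By Taylor's theorem with \ref{as: Lipschitz Hessian},
\[
    f(x_k) = f(x_{k+1}) - g_{k+1}^\top s_k + \tfrac12 s_k^\top \nabla^2 f(x_{k+1}) s_k + \calO(L_H\norm{s_k}^3).
\]
The numerator therefore reduces to
\[
    \tfrac12 s_k^\top\bigl(\nabla^2 f(x_{k+1}) - H_{k+1}\bigr)s_k + \calO(\norm{s_k}^3).
\]
Cauchy--Schwarz bounds the first term by $\tfrac12\norm{s_k}\,\norm{(H_{k+1}-\nabla^2 f(x_{k+1}))s_k}$. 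This is $o(\norm{s_k}^2)$ by \ref{as: retrospective directional}, which is in force because $\norm{g_k}\to 0$; the latter follows from $x_k\to x^*$ under the standing hypotheses. The cubic remainder is $o(\norm{s_k}^2)$ by (c).

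\emph{Denominator.} This is the main step. We do not bound $m_{k+1}(-s_k)-m_{k+1}(0)$ from below directly. Instead we compare it with $m_k(0)-m_k(s_k)$, which (b) bounds below by $\mykappa{mqd}\norm{s_k}^2$. As in the proof of Lemma~\ref{lem: retro pred difference}, the difference of the two predicted reductions is
\[
    (g_{k+1}-g_k)^\top s_k - \tfrac12 s_k^\top(H_k+H_{k+1})s_k .
\]
We write $(g_{k+1}-g_k)^\top s_k = \int_0^1 s_k^\top\nabla^2 f(x_k+ts_k)s_k\,dt$. We then replace both $\nabla^2 f(x_k+ts_k)$ and $\nabla^2 f(x_{k+1})$ by $\nabla^2 f(x_k)$, at cost $\calO(L_H\norm{s_k}^3)$. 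This leaves
\[
    \tfrac12 s_k^\top(\nabla^2 f(x_k)-H_k)s_k + \tfrac12 s_k^\top(\nabla^2 f(x_k)-H_{k+1})s_k + \calO(\norm{s_k}^3).
\]
The first term is $o(\norm{s_k}^2)$ by \ref{as: Hessian-step residual condition}. The second is $o(\norm{s_k}^2)$ by \ref{as: retrospective directional} after another Lipschitz shift. Consequently
\[
    m_{k+1}(-s_k)-m_{k+1}(0) \;\geq\; \bigl(\mykappa{mqd} - o(1)\bigr)\norm{s_k}^2 ,
\]
which is at least $\tfrac12\mykappa{mqd}\norm{s_k}^2 > 0$ for large $k$. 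In particular $\widetilde\rho_{k+1}$ is well defined. The one point to check carefully is that both residual hypotheses are measured against $\norm{s_k}$ along the same vector $s_k$, so the $o(\cdot)$ terms combine without any operator-norm control.

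Dividing the numerator estimate by this denominator bound gives $\abs{\widetilde\rho_{k+1}-1} \leq o(\norm{s_k}^2)/(\tfrac12\mykappa{mqd}\norm{s_k}^2) \to 0$, so $\widetilde\rho_{k+1}\to 1$. Since $\widetilde\eta_2<1$, it follows that $\widetilde\rho_{k+1}\geq\widetilde\eta_2$ for all large $k$.
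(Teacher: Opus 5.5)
Your proposal is correct and follows the paper's proof essentially step for step. You use the same quotient $\widetilde\rho_{k+1}-1 = \bigl(f(x_k)-m_{k+1}(-s_k)\bigr)/\bigl(m_{k+1}(-s_k)-m_{k+1}(0)\bigr)$, the same numerator bound from a Taylor expansion about $x_{k+1}$ combined with \ref{as: retrospective directional}, and the same denominator bound by comparison with $m_k(0)-m_k(s_k)$ under \ref{as: Hessian-step residual condition}, \ref{as: retrospective directional} and~\eqref{eqn: mqd}; the only difference, purely cosmetic, is that your Lipschitz-Hessian Taylor remainder replaces the paper's mean-value point $z_k \in [x_k,x_{k+1}]$.
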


\begin{proof}
    Every index below is large enough that the step is accepted, so that
    $x_{k+1} - x_k = s_k$ and the segment $[x_k, x_{k+1}]$ has length
    $\norm{s_k}$. 
    Write $\delta_k \defeq \norm{(H_k - \nabla^2 f(x_k))s_k}/\norm{s_k}$ and
    $\delta_k^{+} \defeq \norm{(H_{k+1} - \nabla^2 f(x_{k+1}))s_k}/\norm{s_k}$, both $o(1)$ by~(a),
    and note that \ref{as: Lipschitz Hessian} with $\norm{x_{k+1} - x_k} = \norm{s_k}$ gives
    \begin{equation}
        \norm{\nabla^2 f(y) - \nabla^2 f(y^\prime)} \;\leq\; L_H\norm{s_k}
        \qquad \text{for every } y, \, y^\prime \in [x_k, x_{k+1}] .
        \label{eqn: hessian along segment}
    \end{equation}

    \begin{equation}
        \widetilde\rho_{k+1} - 1
          = \frac{f(x_k) - m_{k+1}(-s_k)}{m_{k+1}(-s_k) - m_{k+1}(0)} .
        \label{eqn: rho tilde minus one}
    \end{equation}

    \emph{Step 1: the numerator is $o(\norm{s_k}^2)$.} Since $m_{k+1}(0) = f(x_{k+1})$, expanding $f$ about $x_{k+1}$ with the mean-value form of the remainder, there is $z_k \in [x_k, x_{k+1}]$ with
    $f(x_k) = f(x_{k+1}) - g_{k+1}^\top s_k + \tfrac12 s_k^\top\nabla^2 f(z_k)s_k$, while
    $m_{k+1}(-s_k) = f(x_{k+1}) - g_{k+1}^\top s_k + \tfrac12 s_k^\top H_{k+1}s_k$. Subtracting,
    \begin{equation}
        f(x_k) - m_{k+1}(-s_k)
          = \tfrac12\, s_k^\top\bigl(\nabla^2 f(z_k) - H_{k+1}\bigr) s_k ,
        \label{eqn: numerator identity}
    \end{equation}
    By Cauchy--Schwarz, \eqref{eqn: hessian along segment} and the definition of $\delta_k^{+}$,
    {
        \small
        \begin{equation}
        \abs{f(x_k) - m_{k+1}(-s_k)} 
            \;\leq\; \tfrac12\norm{s_k}\Bigl[
            \norm{\bigl(\nabla^2 f(z_k) - \nabla^2 f(x_{k+1})\bigr)s_k}
            + \norm{\bigl(\nabla^2 f(x_{k+1}) - H_{k+1}\bigr)s_k} \Bigr] 
            \;\leq\; \tfrac12\bigl(L_H\norm{s_k} + \delta_k^{+}\bigr)\norm{s_k}^2 .
    \end{equation}
    }

    \emph{Step 2: the denominator is at least $\tfrac12\mykappa{mqd}\norm{s_k}^2$.} By arguments similar to those of Lemma~\ref{lem: retro pred difference},
    \begin{align*}
        \bigl[m_{k+1}(-s_k) - m_{k+1}(0)\bigr] - \bigl[m_k(0) - m_k(s_k)\bigr] & = -(g_{k+1} - g_k)^\top s_k + \tfrac12 s_k^\top (H_k + H_{k+1}) s_k \\
        & = \calO\bigl((\delta_k + \delta_k^{+} + L_H\norm{s_k})\norm{s_k}^2\bigr)
          = o(\norm{s_k}^2).
    \end{align*}
    Here \ref{as: Lipschitz Hessian} gives $(g_{k+1}-g_k)^\top s_k = s_k^\top\nabla^2 f(x_k)s_k + \calO\bigl(L_H\norm{s_k}^3\bigr)$, \ref{as: Hessian-step residual condition} gives $s_k^\top H_k s_k = s_k^\top\nabla^2 f(x_k)s_k + \calO\bigl(\delta_k\norm{s_k}^2\bigr)$, and \ref{as: retrospective directional} with \ref{as: Lipschitz Hessian} gives
    $s_k^\top H_{k+1} s_k = s_k^\top\nabla^2 f(x_{k})s_k + \calO\bigl((\delta_k^{+} + L_H\norm{s_k})\norm{s_k}^2\bigr)$.
    Therefore, 
    \begin{equation*}
        m_{k+1}(-s_k) - m_{k+1}(0)
          \;\geq\; \mykappa{mqd}\norm{s_k}^2 - o(\norm{s_k}^2)
          \;\geq\; \tfrac12\,\mykappa{mqd}\,\norm{s_k}^2
    \end{equation*}
    for all large $k$; in particular the denominator of~\eqref{eqn: rho tilde minus one} is
    positive and $\widetilde\rho_{k+1}$ is well defined.

    \emph{Step 3.} Combining the two steps,
    \begin{equation*}
        \abs{\widetilde\rho_{k+1} - 1}
          \;\leq\; \frac{\tfrac12\bigl(L_H\norm{s_k} + \delta_k^{+}\bigr)\norm{s_k}^2}
                        {\tfrac12\,\mykappa{mqd}\,\norm{s_k}^2}
          \;=\; \frac{L_H\norm{s_k} + \delta_k^{+}}{\mykappa{mqd}}
          \;\xrightarrow[k\to\infty]{}\; 0 ,
    \end{equation*}
    by~(c) and \ref{as: retrospective directional}. The final assertions follow since
    $\widetilde\eta_2 < 1$.
\end{proof}

\begin{remark}[On the quadratic model decrease]
\label{rem: mqd pointer}
    Hypothesis~\eqref{eqn: mqd} is a condition on the subproblem solver rather than on the radius
    rule, and it holds for the solvers in common use, for example the truncated conjugate-gradient method~\citep{Stei83,Toint81}.
    Ongoing work treats the question, the constants involved being properties of the solver and of the model rather than of the mechanisms surveyed here.
\end{remark}

\subsubsection{Verification for each mechanism}

\begin{proposition}
    \label{prop: verification}
    For $k$ beyond the thresholds of Theorem~\ref{thm: rho converges to 1} and
    Condition~\ref{cond: s_k bounded by g_k}, that is for $k \geq \max\{k_\rho, k_\kappa\}$:
        \begin{enumerate}[label=(\roman*),leftmargin=2.2em]
            \item $\Rdelta$ satisfies Properties~\ref{prop: no contraction while active}
                    and~\ref{prop: radius dominates step} unconditionally;
            \item $\Rstep$ satisfies Properties~\ref{prop: no contraction while active}
                    and~\ref{prop: radius dominates step} unconditionally;
            \item $\Rdfo$ satisfies Property~\ref{prop: radius dominates step} unconditionally, and
                    Property~\ref{prop: no contraction while active} if the side condition $\zeta > \kbar$
                    of \ref{as: conditions on constants criticality related updates}(a) holds.
            \item $\Rgrad$ satisfies neither in general.
        \end{enumerate}
    The side condition in~(iii) is not an artefact of the argument:
    Theorem~\ref{thm: sinc rdfo} exhibits a function and a run on which, below the threshold, the
    constraint is active at every sufficiently large iteration.
\end{proposition}

\begin{proof}
    Throughout, $\rho_k \geq \eta_2$ by Theorem~\ref{thm: rho converges to 1}, and
    $\norm{s_k} \leq \Delta_k$.

    \emph{(i) $\Rdelta$.} The very successful branch gives $\Delta_{k+1} \geq \Delta_k$, which
    is~\eqref{eqn: I1}; and then $\Delta_{k+1} \geq \Delta_k \geq \norm{s_k} \geq \gone\norm{s_k}$,
    which is~\eqref{eqn: I2}.

    \emph{(ii) $\Rstep$.} The very successful branch gives $\Delta_{k+1} \geq \norm{s_k}$. If the
    region is active then $\norm{s_k} = \Delta_k$, so $\Delta_{k+1} \geq \Delta_k$, which
    is~\eqref{eqn: I1}; and $\Delta_{k+1} \geq \norm{s_k} \geq \gone\norm{s_k}$ on every iteration,
    which is~\eqref{eqn: I2}. Neither uses the value of the expansion factor.

    \emph{(iii) $\Rdfo$.} For~\eqref{eqn: I2}: with $\rho_k \geq \eta_2 \geq \eta_1$ the unsuccessful
    branch is excluded, so $\Delta_{k+1} \geq \gtwo\Delta_k \geq \gtwo\norm{s_k} \geq \gone\norm{s_k}$.
    For~\eqref{eqn: I1}: the radius decreases only on the middle branch, which requires
    $\Delta_k > \zeta\norm{g_k}$. If the region is active then
    $\Delta_k = \norm{s_k} \leq \kbar\norm{g_k}$ by Condition~\ref{cond: s_k bounded by g_k}, so
    $\zeta > \kbar$ gives $\Delta_k \leq \kbar\norm{g_k} < \zeta\norm{g_k}$: the middle branch cannot
    fire, the expansion branch does, and $\Delta_{k+1} \geq \Delta_k$.

    \emph{(iv) $\Rgrad$.} Here $\Delta_k = \mu_k\norm{g_k}$, so
    \begin{equation}
    \frac{\Delta_{k+1}}{\Delta_k}
        = \frac{\mu_{k+1}}{\mu_k}\cdot\frac{\norm{g_{k+1}}}{\norm{g_k}} .
    \label{eqn: rgrad ratio}
    \end{equation}
    Suppose that on active very successful iteration the rule gives $\mu_{k+1} = \gthree\mu_k$, and that $\{x_k\}$ is eventually a superlinearly convergent sequence. 
    Therefore, the product is eventually below $1$ and~\eqref{eqn: I1} fails. The same factor defeats~\eqref{eqn: I2}, which would require $\mu_{k+1} \, \norm{g_{k+1}} \geq \gone\norm{s_k}$ with no available lower bound on $\norm{g_{k+1}}$ in terms of $\norm{s_k}$.
\end{proof}

\begin{proposition}[$\Rrtr$ supplies both properties]
\label{prop: rtr verification}
    Assume the hypotheses of Theorem~\ref{thm: rho converges to 1} and of
    Lemma~\ref{lem: rho tilde A}, and let $\{\Delta_k\}$ be generated by $\Rrtr$
    (Update~\ref{update:retrospective}). Then, for all $k$ beyond the thresholds of
    Theorem~\ref{thm: rho converges to 1} and Condition~\ref{cond: s_k bounded by g_k}:
    \begin{enumerate}[label=(\roman*),leftmargin=2.2em]
        \item Property~\ref{prop: radius dominates step} holds on every branch, with no hypothesis
              on $\widetilde\rho_{k+1}$;
        \item at an iteration at which the trust region is active,
              Property~\ref{prop: no contraction while active} holds if and only if
              $\widetilde\rho_{k+1} \geq \widetilde\eta_1$, and
              Lemma~\ref{lem: rho tilde A} makes this the case for all large $k$.
    \end{enumerate}
    Theorem~\ref{thm: trust-region radius becomes inactive} therefore applies and the trust-region
    constraint is eventually permanently inactive. No user parameter is constrained; the price is
    \ref{as: retrospective directional} and the quadratic model decrease~\eqref{eqn: mqd}, both
    carried by Lemma~\ref{lem: rho tilde A}.
\end{proposition}

\begin{proof}
    Let $k \geq \max\{k_\rho, k_\kappa\}$. By Theorem~\ref{thm: rho converges to 1} we have
    $\rho_k \geq \eta_2 \geq \eta_1$, so the step is accepted and $\Delta_{k+1}$ is set by the
    accepted-step case of Update~\ref{update:retrospective}.
    Throughout we use $\norm{s_k} \leq \Delta_k$ and $0 < \gone \leq \gtwo < 1 < \gthree$.

    \emph{(i).} On the first branch $\Delta_{k+1} \geq \gthree\norm{s_k} \geq \gone\norm{s_k}$; on
    the second $\Delta_{k+1} = \Delta_k \geq \norm{s_k} \geq \gone\norm{s_k}$; on the third
    $\Delta_{k+1} = \gtwo\norm{s_k} \geq \gone\norm{s_k}$. On the fourth both members of the
    minimum are bounded below by $\gone\norm{s_k}$: the first because $\gtwo \geq \gone$, the
    second because
    $\max[\gone,\widetilde\vartheta_{k+1}]\,\Delta_k \geq \gone\Delta_k \geq \gone\norm{s_k}$. Hence
    \eqref{eqn: I2} holds on all four, whatever the value of $\widetilde\rho_{k+1}$.

    \emph{(ii).} The first branch gives $\Delta_{k+1} \geq \Delta_k$ by the maximum and the second
    gives $\Delta_{k+1} = \Delta_k$, so \eqref{eqn: I1} holds on both, whether or not the region is
    active. Suppose now that the region is active at $k$, so that $\norm{s_k} = \Delta_k$. The
    third branch then gives $\Delta_{k+1} = \gtwo\Delta_k < \Delta_k$ and the fourth
    $\Delta_{k+1} \leq \gtwo\Delta_k < \Delta_k$, so \eqref{eqn: I1} fails on both. Since the
    branch is selected by $\widetilde\rho_{k+1}$ alone, \eqref{eqn: I1} holds at an active $k$
    exactly when $\widetilde\rho_{k+1} \geq \widetilde\eta_1$. By Lemma~\ref{lem: rho tilde A},
    $\widetilde\rho_{k+1} \geq \widetilde\eta_2 \geq \widetilde\eta_1$ for all large $k$, which
    supplies the threshold $k_1$ of Property~\ref{prop: no contraction while active}.
\end{proof}

% \subsubsection{A single condition, if one insists}

% Properties~\ref{prop: no contraction while active} and~\ref{prop: radius dominates step} can be packaged into one statement, at the cost of
% some transparency. In terms of the ratio $\theta_k$ of~\eqref{eqn: theta}, both say that the rule
% must not let $\theta$ stall below $\kbar$ while the region binds. The following is equivalent to
% the pair for the three mechanisms considered, and is stated for completeness rather than for use.

% \begin{cond}{C.Ia}[inactivity-admissible mechanism]
% \label{cond: inactivity admissible}
% 
% A mechanism is \emph{inactivity-admissible} if there exist $\bar{k}$ and $c \in (0,1]$ such that,
% for all $k \geq \bar{k}$ with $\rho_k \geq \eta_2$,
% \begin{equation}
%   \Delta_{k+1} \geq \max\bigl\{\,\mathbb{1}[\,\norm{s_k} = \Delta_k\,]\,\Delta_k,\;
%                                c\,\norm{s_k} \,\bigr\}.
%   \label{eqn: admissible}
% \end{equation}
% \end{cond}

% We record~\eqref{eqn: admissible} for completeness but do not use it. The indicator makes it
% awkward to verify, and the two halves are used at different points and for different reasons;
% keeping them separate is both easier to check and more informative about what each mechanism
% does and does not provide.

%=======================================================================
\subsection{\texorpdfstring{$\Rgrad$}{R-grad}: a mechanism that controls the ratio directly}
\label{subsec:second-order gradient-scaled inactivity}
%=======================================================================

Section~\ref{sec:second-order mechanism survey} analysed $\Rgrad(\tau)$, whose radius is
$\Delta_k = \mu_k\tau_k$ with $\tau_k = \max\{\norm{g_k}, -\lambda_k^-\}$.
The standing assumptions place $x_k \to x^*$ with $\nabla^2 f(x^*)$ positive
definite, and as soon as $\lambda_{\min}(H_k) > 0$ one has $-\lambda_k^- < 0$, hence
$\tau_k = \norm{g_k}$, so $\Rgrad(\tau)$ and $\Rgrad(\norm{g})$ generate the same iterates. The
requirement costs nothing in the cases of interest, holding for all large $k$ whenever
$\norm{H_k - \nabla^2 f(x_k)} \to 0$, and at every iteration for BFGS and DFP, whose updates are
positive definite by construction. We therefore write $\Rgrad$ and take
$\Delta_k = \mu_k\norm{g_k}$ throughout this section.

The argument for $\Rgrad$ takes a different route. Rather than showing that $\mu_k$ reaches a
prescribed threshold, it shows that $\mu_k$ cannot grow indefinitely, and obtains inactivity from
the failure of the growth mechanism.

    \subsubsection{Uncapped \texorpdfstring{$\Rgrad(\omega)$}{R-grad}: the geometry bounds the multiplier} 

        The ceiling is due to \citet{YuanFan01}, whose Theorem~4.1 derives it by
        contradiction from the local geometry, and the explicit threshold is that
        of~\citet{FanPan16}, who carry the same argument over to the retrospective ratio. In both
        cases $\mu_k$ is governed by the rule under study and the bound serves a single theorem.
        The argument needs only the local geometry, so we state it for an arbitrary sequence
        $\{\mu_k\}$, with no rule assumed. That is all the extraction adds, and it is what allows
        one lemma to serve the uncapped rule, the capped rule, the quantitative ceiling of
        Corollary~\ref{cor: mu ceiling} and the retrospective variant, in place of four separate
        arguments; the constant $C$ below is theirs.

    \begin{lemma}[The local geometry bounds the multiplier, after {\citep{YuanFan01,FanPan16}}]
    \label{lem: mu at expansion}
        Let $\{x_k\} \to x^*$ with $\nabla^2 f(x^*) \succ 0$ and assume the hypotheses of
        Theorem~\ref{thm: rho converges to 1}. Suppose the radius has the form
        $\Delta_k = \mu_k \norm{g_k}$ for some sequence $\{\mu_k\}$ of positive numbers, and put
        \begin{equation}
            C \;\defeq\; \frac{2}{m}\left(1 + \sqrt{M/m}\right) .
            \label{eqn: mu ceiling}
        \end{equation}
        Then at every $k \geq k_\rho$ with $x_k$, $x_{k+1}$ in the neighbourhood $B(x^*,\Delta_*)$ of
        Proposition~\ref{prop: inequalities for positive definite Hessian} and
        $\norm{s_k} > \tfrac12\Delta_k$, one has
        \begin{equation}
            \mu_k \;\leq\; C .
            \label{eqn: mu ceiling bound}
        \end{equation}
        No rule for $\{\mu_k\}$ is assumed. The conclusion refers to no acceptance threshold, no
        expansion threshold and no ratio, and therefore holds for every mechanism of the form
        $\Delta_k = \mu_k\norm{g_k}$, whatever drives the update of $\mu$.
    \end{lemma}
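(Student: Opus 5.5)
The plan is to compare the two sides of $\Delta_k = \mu_k\norm{g_k} < 2\norm{s_k}$ through the local geometry of Proposition~\ref{prop: inequalities for positive definite Hessian}. I bound $\norm{s_k}$ above by a multiple of $\norm{g_k}$ whose constant is exactly $C/2$, and then divide by $\norm{g_k}$. No property of the rule for $\{\mu_k\}$ is used; the only algorithmic input is that the step is accepted and decreases $f$. Both hold for $k \geq k_\rho$ by Theorem~\ref{thm: rho converges to 1}.

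\emph{Step 1 (nondegeneracy).} First I dispose of $g_k = 0$. Then $\Delta_k = \mu_k\norm{g_k} = 0$, so $s_k = 0$, which contradicts $\norm{s_k} > \tfrac12\Delta_k \geq 0$. Hence $\norm{g_k} > 0$, and division by it at the end is legitimate.

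\emph{Step 2 (the next iterate is no farther than a fixed multiple).} Since $k \geq k_\rho$, the step is accepted, so $x_{k+1} = x_k + s_k$ and $f(x_{k+1}) \leq f(x_k)$. On $B(x^*,\Delta_*)$ we have $\nabla^2 f \succeq mI$ and $\nabla f(x^*) = 0$, so $f(x) - f(x^*) \geq 0$ there. The absolute value in Proposition~\ref{prop: inequalities for positive definite Hessian}(iv) can therefore be dropped. Applying (iv) at $x_{k+1}$ from below and at $x_k$ from above gives
\begin{equation*}
  \tfrac12 m\norm{x_{k+1}-x^*}^2 \;\leq\; f(x_{k+1}) - f(x^*) \;\leq\; f(x_k) - f(x^*) \;\leq\; \tfrac12 M\norm{x_k - x^*}^2 ,
\end{equation*}
that is, $\norm{x_{k+1}-x^*} \leq \sqrt{M/m}\,\norm{x_k - x^*}$. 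Both iterates lie in the ball by hypothesis, so both applications of (iv) are valid.

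\emph{Step 3 (step bounded by the gradient).} By the triangle inequality and Proposition~\ref{prop: inequalities for positive definite Hessian}(iii) in the form $\norm{x_k - x^*} \leq \norm{g_k}/m$,
\begin{equation*}
  \norm{s_k} \;\leq\; \norm{x_k - x^*} + \norm{x_{k+1}-x^*} \;\leq\; \bigl(1 + \sqrt{M/m}\bigr)\norm{x_k - x^*} \;\leq\; \frac{1}{m}\bigl(1+\sqrt{M/m}\bigr)\norm{g_k} .
\end{equation*}

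\emph{Step 4 (conclusion).} Combining with the hypothesis $\norm{s_k} > \tfrac12\Delta_k$ yields
\[
  \mu_k\norm{g_k} = \Delta_k < 2\norm{s_k} \leq C\norm{g_k}.
\]
Dividing by $\norm{g_k} > 0$ gives $\mu_k < C$, which is \eqref{eqn: mu ceiling bound}.

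I expect no serious obstacle. The only points needing care are the sign in Step~2, where one must justify removing the absolute value in (iv), and checking that $x_{k+1}$ is genuinely $x_k + s_k$. The latter is why the restriction $k \geq k_\rho$ appears; without acceptance, $x_{k+1} = x_k$ and the bound on $\norm{s_k}$ would not follow from the iterates.
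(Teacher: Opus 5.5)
Your proof is correct and uses the same ingredients as the paper's: acceptance and monotone decrease of $f$ for $k \geq k_\rho$, Proposition~\ref{prop: inequalities for positive definite Hessian}(iii)--(iv), and a triangle inequality, arriving at the same constant $C$. The difference is only in how the argument is arranged. The paper bounds $\norm{x_{k+1}-x^*}$ from below through the reverse triangle inequality and then squares, which forces a preliminary case split on $\mu_k > 2/m$ to keep the squared quantity positive. Your upper bound $\norm{x_{k+1}-x^*} \leq \sqrt{M/m}\,\norm{x_k-x^*}$, combined with the forward triangle inequality, needs no such split and gives directly the (strict) bound $\mu_k < C$.
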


\begin{proof}
    We may assume $x_k \neq x^*$, since $x_k = x^*$ gives $g_k = 0$ and the method terminates.
    Since $M \geq m$ we have $C \geq 4/m > 2/m$, so if $\mu_k \leq 2/m$ there is nothing to prove;
    assume therefore $\mu_k > 2/m$, that is $\tfrac12\mu_k m > 1$.

    By Theorem~\ref{thm: rho converges to 1} we have $\rho_k \geq \eta_2 > 0$ for $k \geq k_\rho$,
    so the step is accepted, $x_{k+1} = x_k + s_k$ and $f(x_{k+1}) \leq f(x_k)$. By
    Proposition~\ref{prop: inequalities for positive definite Hessian}(iii),
    $\norm{g_k} \geq m \norm{x_k - x^*}$, whence
    \begin{equation}
        \norm{s_k} \;>\; \tfrac12 \Delta_k \;=\; \tfrac12 \mu_k \norm{g_k}
        \;\geq\; \tfrac12 \mu_k m \norm{x_k - x^*} \;>\; \norm{x_k - x^*} \;>\; 0 ;
        \label{eqn: sk dominates}
    \end{equation}
    in particular every quantity appearing in the chain below is positive, so each squaring
    preserves its inequality. Using the reverse triangle inequality in the form
    $\norm{x_{k+1} - x^*} \geq \norm{s_k} - \norm{x_k - x^*}$ together with
    Proposition~\ref{prop: inequalities for positive definite Hessian}(iv),
    \begin{align}
        \tfrac12 M \norm{x_k - x^*}^2
            &\;\geq\; f(x_k) - f(x^*)
             \;\geq\; f(x_{k+1}) - f(x^*)
             \;\geq\; \tfrac12 m \norm{x_{k+1} - x^*}^2
             \;\geq\; \tfrac12 m \bigl(\norm{s_k} - \norm{x_k - x^*}\bigr)^2 \notag \\[2pt]
            &\;\geq\; \tfrac12 m \Bigl(\tfrac12 \mu_k \norm{g_k} - \norm{x_k - x^*}\Bigr)^2
             \;\geq\; \tfrac12 m \norm{x_k - x^*}^2 \Bigl(\tfrac12 \mu_k m - 1\Bigr)^2 .
        \label{eqn: rgrad chain}
    \end{align}
    Dividing by $\tfrac12\norm{x_k - x^*}^2 > 0$ gives $M \geq m\bigl(\tfrac12\mu_k m - 1\bigr)^2$,
    and $\tfrac12\mu_k m - 1 > 0$ then yields $\mu_k \leq C$.
\end{proof}

\begin{theorem}[Uncapped $\Rgrad$]
\label{thm: rgrad uncapped}
    Under the hypotheses of Lemma~\ref{lem: mu at expansion}, $\norm{s_k} > \tfrac12\Delta_k$ at
    only finitely many $k$, so there is $k_\star$ with
    \begin{equation}
        \norm{s_k} \leq \tfrac12 \Delta_k < \Delta_k \qquad \text{for all } k \geq k_\star ,
        \label{eqn: rgrad half}
    \end{equation}
    and the trust region is permanently inactive. No side condition relating a user parameter to
    $\kbar$ is required.
\end{theorem}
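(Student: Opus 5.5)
The plan is to argue by contradiction on the multiplier. Lemma~\ref{lem: mu at expansion} bounds $\mu_k \leq C$ at every late iteration with $\norm{s_k} > \tfrac12\Delta_k$. The rule, on the other hand, expands $\mu$ exactly at those iterations. If they recur infinitely often, the multiplier must eventually exceed $C$ at one of them.

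First I fix the late regime. Since $x_k \to x^*$, there is an index beyond $k_\rho$ after which $x_k$ and $x_{k+1}$ both lie in $B(x^*,\Delta_*)$. Beyond it, Theorem~\ref{thm: rho converges to 1} gives $\rho_k \geq \eta_2$ at every iteration. The uncapped rule (Update~\ref{update: rgrad uncapped rule}) therefore only fires one of its two very successful branches:
\begin{itemize}
\item if $\norm{s_k} > \tfrac12\Delta_k$, then $\mu_{k+1} = \gthree\mu_k$;
\item if $\norm{s_k} \leq \tfrac12\Delta_k$, then $\mu_{k+1} = \mu_k$.
\end{itemize}
This is the same dichotomy used in the $e^{-x}$ example. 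Hence $\{\mu_k\}$ is non-decreasing from that index on, and it is multiplied by $\gthree > 1$ at each index of
\[
\calK = \{k : \norm{s_k} > \tfrac12\Delta_k\}.
\]

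Next, suppose $\calK$ is infinite. After $j$ late elements of $\calK$ we have $\mu \geq \gthree^{\,j}\mu_{\bar k} \to +\infty$, because $\mu$ never decreases in between. Take an element $k \in \calK$ large enough that $\mu_k > C$. All hypotheses of Lemma~\ref{lem: mu at expansion} hold at this $k$, so $\mu_k \leq C$, a contradiction. So $\calK$ is finite.

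Finally, let $k_\star$ exceed $\max\calK$ and the thresholds above. Then $\norm{s_k} \leq \tfrac12\Delta_k < \Delta_k$ for every $k \geq k_\star$, using $\Delta_k > 0$. The case $g_k = 0$ is excluded by termination. This is \eqref{eqn: rgrad half}, and the trust region is inactive from $k_\star$ on.

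No comparison with $\kbar$ enters the argument. The only constant involved is $C$, which bounds $\mu_k$ from the geometry alone and is never a user parameter.

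The main obstacle is confirming from the appendix statement of Update~\ref{update: rgrad uncapped rule} that the very successful branches are exactly ``multiply by $\gthree$'' and ``keep $\mu$'', with no contraction of $\mu$ possible once $\rho_k \geq \eta_2$. If the expansion branch only guarantees $\mu_{k+1} \geq \gthree\mu_k$, or $\mu_{k+1} \in [\mu_k,\gthree\mu_k]$ on the non-expanding branch, the argument is unchanged, since monotonicity and geometric growth along $\calK$ are all it uses.
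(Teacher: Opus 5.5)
Your proposal is correct and follows the paper's own proof essentially step for step. You restrict to $k \geq k_\rho$ with $x_k, x_{k+1} \in B(x^*,\Delta_*)$, note that only the two very successful branches remain so that $\{\mu_k\}$ is non-decreasing and multiplied by $\gthree$ at each $k$ with $\norm{s_k} > \tfrac12\Delta_k$, and contradict the ceiling $\mu_k \leq C$ of Lemma~\ref{lem: mu at expansion} if such $k$ recur infinitely often.
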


\begin{proof}
    For $k \geq k_\rho$ every step is accepted and only the two branches of
    \ref{update: rgrad uncapped rule} with $\rho_k \geq \eta_2$ are available, so
    $\{\mu_k\}$ is non-decreasing there. Since $x_k \to x^*$ and $\norm{s_k} \to 0$, all but
    finitely many iterates lie in the neighbourhood of
    Proposition~\ref{prop: inequalities for positive definite Hessian}.

    Were $\norm{s_k} > \tfrac12\Delta_k$ at infinitely many such $k$, the expansion branch would
    fire at each, giving $\mu_{k+1} = \gthree\mu_k$ with $\gthree > 1$, and $\mu_k$ would tend to
    $+\infty$. But Lemma~\ref{lem: mu at expansion} bounds $\mu_k$ by $C$ at every one of them, a
    contradiction.
\end{proof}

\begin{corollary}[The multiplier converges, and where]
\label{cor: mu ceiling}
    Let $k_N \geq k_\rho$ be an index past which the iterates remain in the neighbourhood of
    Proposition~\ref{prop: inequalities for positive definite Hessian}. Then
    $\{\mu_k\}_{k \geq k_N}$ is non-decreasing and eventually constant, with
    \begin{equation}
        \mu_k \;\leq\; \max\bigl\{\mu_{k_N},\ \gthree C\bigr\}
        \qquad \text{for all } k \geq k_N .
        \label{eqn: mu limit bound}
    \end{equation}
\end{corollary}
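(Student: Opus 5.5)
The plan is to read off the behaviour of $\{\mu_k\}_{k\ge k_N}$ from the branch structure of \ref{update: rgrad uncapped rule} in the very successful regime, and then to bound it using Lemma~\ref{lem: mu at expansion} at the last expansion. First, for $k \geq k_N \geq k_\rho$, Theorem~\ref{thm: rho converges to 1} gives $\rho_k \geq \eta_2$. Only the two branches with $\rho_k \geq \eta_2$ can then fire. One sets $\mu_{k+1} = \gthree\mu_k$ when $\norm{s_k} > \tfrac12\Delta_k$. The other leaves $\mu_{k+1} = \mu_k$ otherwise, or at least gives $\mu_{k+1} \geq \mu_k$ and no more than $\mu_k$ on the stalling branch. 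I will check this against the appendix statement of the rule. This already gives monotonicity of $\{\mu_k\}_{k\ge k_N}$.

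Second, I will obtain eventual constancy from Theorem~\ref{thm: rgrad uncapped}. The expansion branch fires only at indices with $\norm{s_k} > \tfrac12\Delta_k$, and there are finitely many of those. Beyond the last one, every iteration takes the stalling branch, so $\mu_k$ is constant from that point on.

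Third, for the bound \eqref{eqn: mu limit bound}, I will split into two cases. If no expansion occurs at any $k \geq k_N$, then $\mu_k = \mu_{k_N}$ for all $k\ge k_N$ and the bound is immediate. Otherwise, fix $k\ge k_N$ and let $j$ be the last expansion index with $k_N \leq j < k$, if there is one. The iterates $x_j, x_{j+1}$ lie in $B(x^*,\Delta_*)$ by the choice of $k_N$, and $\norm{s_j} > \tfrac12\Delta_j$ holds there. Lemma~\ref{lem: mu at expansion} therefore applies and gives $\mu_j \leq C$. Hence $\mu_k = \mu_{j+1} = \gthree\mu_j \leq \gthree C$, since only stalling steps occur between $j+1$ and $k$. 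If there is no such $j$, then $\mu_k = \mu_{k_N}$. In either case $\mu_k \le \max\{\mu_{k_N}, \gthree C\}$.

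The only delicate point is making sure the stalling branch of \ref{update: rgrad uncapped rule} really keeps $\mu$ fixed, rather than merely non-decreasing, once $\norm{s_k}\le\tfrac12\Delta_k$. If it only guarantees $\mu_{k+1}\in[\mu_k,\gthree\mu_k]$, then eventual constancy would fail and should be replaced by convergence of the monotone bounded sequence. The bound would still hold, because any increase can occur only at a branch where Lemma~\ref{lem: mu at expansion} applies. The R-grad example in Section~\ref{subsec:Example with no accumulation points} used $\mu_{k+1}=\mu_k$ on that branch, so I expect constancy to hold.
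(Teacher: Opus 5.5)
Your proof is correct and follows essentially the paper's argument. Monotonicity and eventual constancy come from the two very successful branches of Update~\ref{update: rgrad uncapped rule} together with the finiteness of expansions given by Theorem~\ref{thm: rgrad uncapped}, and the bound comes from Lemma~\ref{lem: mu at expansion}, which forces every expansion to start from $\mu_j \leq C$ and hence land at $\gthree\mu_j \leq \gthree C$. The worry in your last paragraph does not arise: the appendix definition of the uncapped rule sets $\mu_{k+1} = \mu_k$ exactly on the stalling branch, so you get eventual constancy and not merely convergence.
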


\begin{proof}
    Monotonicity and eventual constancy are Theorem~\ref{thm: rgrad uncapped}, the multiplier
    changing only on the expansion branch and that branch firing finitely often. If $\mu_k > C$
    then Lemma~\ref{lem: mu at expansion} forbids the expansion branch at $k$, so
    $\mu_{k+1} = \mu_k$: the multiplier never grows once past $C$. Hence every growth step starts
    from $\mu_k \leq C$ and lands at $\mu_{k+1} = \gthree\mu_k \leq \gthree C$, and the sequence
    never exceeds the larger of its initial value $\mu_{k_N}$ and $\gthree C$.
\end{proof}
Corollary~\ref{cor: mu ceiling} is the precise sense in which the uncapped rule is implicitly capped. The
quantity $C$ of~\eqref{eqn: mu ceiling} is a barrier the multiplier cannot pass by growth,
supplied by the local geometry rather than by the user, and $\gthree$ is the overshoot of a
single expansion started just below it. It is not a bound on $\mu_k$ for every $k$. Nothing
constrains the iterations before the iterates settle near $x^*$, and a multiplier already
above $C$ on arrival stays there.

\subsubsection{Capped \texorpdfstring{$\Rgrad(\omega, \overline{\mu})$}{R-grad}}
\label{subsec: rgrad discussion}

The parameterisation of Update~\ref{update: rgrad generic} makes the two choices explicit, the
criticality measure $\omega_k \in \{\norm{g_k}, \tau_k\}$ and the cap
$\overline{\mu} \in (0,+\infty]$, highlighting the trade-off between the two, discussed in Section~\ref{subsec: rgrad balance}.

By Lemma~\ref{lem: inactivity criterion}, if $\mu_k$ is above $\kbar$ then the region is inactive. Setting $\overline{\mu} > \kbar$ is therefore sufficient for eventual inactivity.
But $\kbar$ is a property of the solution, and the user cannot know it in advance. 

\begin{corollary}[Capped $\Rgrad$ above the threshold]
\label{cor: rgrad capped}
    Under the hypotheses of Lemma~\ref{lem: mu at expansion}, run $\Rgrad$ with a finite cap
    $\overline{\mu} > \kbar$ (\ref{as: conditions on constants criticality related updates}(b)).
    Then the trust region is eventually permanently inactive.
\end{corollary}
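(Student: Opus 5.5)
The plan is to argue exactly as for Theorem~\ref{thm: rgrad uncapped}, with the cap $\overline{\mu}$ taking the role played there by the geometric ceiling of Lemma~\ref{lem: mu at expansion}. The conclusion then comes from the inactivity criterion of Lemma~\ref{lem: inactivity criterion}, read in its ratio form. For $\Rgrad$ the radius-to-criticality ratio is $\theta_k = \Delta_k/\norm{g_k} = \mu_k$, so inactivity at $k$ follows as soon as $\mu_k > \kbar$.

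First I fix the index $k_0 \defeq \max\{k_\rho, k_\kappa\}$. For $k \geq k_0$, Theorem~\ref{thm: rho converges to 1} gives $\rho_k \geq \eta_2$, so only the two very successful branches of Update~\ref{update: rgrad generic} can fire, and Condition~\ref{cond: s_k bounded by g_k} holds in the form $\norm{s_k} \leq \kbar\norm{g_k}$. On these two branches the multiplier satisfies $\mu_{k+1} = \mu_k$ or $\mu_{k+1} = \min\{\gthree\mu_k, \overline{\mu}\}$. Hence $\{\mu_k\}_{k\geq k_0}$ is non-decreasing, and it stays at or below $\overline{\mu}$ once it has been capped; I take $\mu_k \leq \overline{\mu}$ throughout, as in the capped rule. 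We may also assume $g_k \neq 0$, since otherwise the method terminates at $x^*$.

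I then split on how often the expansion branch, selected by $\norm{s_k} > \tfrac12\Delta_k$, fires after $k_0$.
\begin{enumerate}[(i)]
    \item If it fires only finitely often, there is $k_\star$ beyond which $\norm{s_k} \leq \tfrac12\Delta_k < \Delta_k$, and the region is inactive. This is the same as~\eqref{eqn: rgrad half}.
    \item If it fires infinitely often, then at each firing either $\mu_{k+1} = \gthree\mu_k$ or the cap is reached. Since $\gthree > 1$ and $\mu_{k_0} > 0$, after at most $\lceil \ln(\overline{\mu}/\mu_{k_0})/\ln\gthree\rceil$ firings we have $\mu_k = \overline{\mu}$. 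By monotonicity and the cap, $\mu_k = \overline{\mu}$ for every later $k$. Then $\theta_k = \overline{\mu} > \kbar$ by \ref{as: conditions on constants criticality related updates}(b), and Lemma~\ref{lem: inactivity criterion} gives $\norm{s_k} \leq \kbar\norm{g_k} < \overline{\mu}\norm{g_k} = \Delta_k$ for all such $k$.
\end{enumerate}

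In both cases the region is eventually permanently inactive.

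The only delicate point is the bookkeeping of the capped rule's branches. I must check that for $k \geq k_0$ no branch can lower $\mu_k$ once it has reached $\overline{\mu}$, and that the expansion branch saturates at $\overline{\mu}$ rather than staying strictly below it. If the appendix writes the capped update in that saturating form, the argument closes as above.

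It is worth recording that case~(ii) can in fact occur only when $\overline{\mu} \leq \gthree C$ or so, by Lemma~\ref{lem: mu at expansion}. The argument does not need this: the dichotomy alone suffices, and the side condition $\overline{\mu} > \kbar$ is used only in case~(ii).
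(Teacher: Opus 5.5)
Your proposal is correct and follows the paper's proof essentially step for step. Past $k_\rho$ only the two very successful branches of Update~\ref{update: rgrad generic} are available, so $\{\mu_k\}$ is non-decreasing; either the expansion branch fires finitely often, giving inactivity through $\norm{s_k}\leq\tfrac12\Delta_k$, or it fires infinitely often, saturates $\mu_k$ at $\overline{\mu} > \kbar$, and Lemma~\ref{lem: inactivity criterion} concludes. Your closing aside is also right, and sharper than you state: infinitely many firings occur after saturation, each with $\mu_k = \overline{\mu}$, so Lemma~\ref{lem: mu at expansion} forces $\overline{\mu} \leq C$ in case~(ii).
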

\begin{proof}
    For $k \geq k_\rho$ only the last two branches of Update~\ref{update: rgrad generic} are
    available, so $\{\mu_k\}$ is non-decreasing. If $\norm{s_k} > \tfrac12\Delta_k$ at only
    finitely many $k$, the region is inactive for all large $k$ and there is nothing to prove.
    Otherwise the expansion branch fires infinitely often; each firing gives
    $\mu_{k+1} \geq \min\{\gthree\mu_k, \overline{\mu}\}$, so $\mu_k = \overline{\mu}$ after
    finitely many of them. Then $\mu_k = \overline{\mu} > \kbar$ for all large $k$,
    and Lemma~\ref{lem: inactivity criterion} gives inactivity.
\end{proof}

When $\overline{\mu} \leq \kbar$ the trust-region constraint may remain active indefinitely. 
In Section~\ref{subsec:discussion on the convergence rate}, we present an example of a $C^\infty$ function with infinitely many minimisers, for which $\kbar$ is unbounded over the set of minimisers.
Therefore, no finite cap can be guaranteed to exceed $\kbar$ for all minimisers of a single function, and the side condition $\overline{\mu} > \kbar$ is not verifiable a priori. 
We also show that for any finite cap $\overline{\mu}$ there is a minimiser of the function at which, if the iterates converge to it, the region remains active indefinitely.

\subsubsection{Balance} \label{subsec: rgrad balance}

This is the central difficulty and it has no resolution in the present paper. 
Finite $\overline{\mu}$ is what Section~\ref{sec:asymptotic radius} requires for $\Delta_k \to 0$ and
$\sum_k \Delta_k^2/M_k < +\infty$, and those conclusions are the reason for anchoring the radius
to a criticality measure at all, derivative-free and stochastic variants needing the radius to
control model or sampling accuracy.
When the iterates converge to a strong minimiser, the multiplier is implicitly bounded by the geometry, and we recover the asymptotic results. 
Therefore, when the iterates are known to converge to a strong minimiser, the uncapped variant
is the best-founded rule among those surveyed here.
Otherwise, as the $\sin(x)/x$ example of Section~\ref{subsec:discussion on the convergence rate} shows, the multiplier may be unbounded, the region may remain active indefinitely, and no asymptotic results can be guaranteed.

%=======================================================================
\subsection{The retrospective gradient-scaled variant}
\label{subsec:retrospective inactivity}
%=======================================================================

\citet{FanPan16} combine the two ideas, tying the radius to the criticality measure as
$\Rgrad$ does and reading the branch off the retrospective ratio as $\Rrtr$ does.

\begin{update}{$\Rgrtr(\omega, \overline{\mu})$}[radius tied to the measure, branch selected by the
retrospective ratio]
\label{update: grtr}
    Let $0 < \gamma_1 \leq \gamma_2 < 1 < \gamma_3$,
    $\eta \leq \eta_1 \leq \eta_2 < 1$ and
    $0 < \widetilde\eta_1 \leq \widetilde\eta_2 < 1$. The radius is
    $\Delta_k = \mu_k\,\omega_k$ with $\mu_0 \leq \overline{\mu}$ and
    \begin{equation}
        \mu_{k+1} \in
        \begin{cases}
            [\gamma_1\mu_k,\ \gamma_2\mu_k], & \rho_k < \eta_1, \\[2pt]
            [\gamma_1\mu_k,\ \gamma_2\mu_k], & \rho_k \geq \eta_1
                \text{ and } \widetilde\rho_{k+1} < \widetilde\eta_1, \\[2pt]
            [\gamma_2\mu_k,\ \mu_k), & \rho_k \geq \eta_1
                \text{ and } \widetilde\rho_{k+1} \in [\widetilde\eta_1, \widetilde\eta_2), \\[2pt]
            \bigl[\min\{\gamma_3\mu_k,\ \overline{\mu}\},\ \overline{\mu}\bigr],
                & \rho_k \geq \eta_1,\ \widetilde\rho_{k+1} \geq \widetilde\eta_2
                \text{ and } \norm{s_k} > \tfrac12\Delta_k, \\[2pt]
            \{ \mu_k \}, & \text{otherwise},
        \end{cases}
        \label{eqn: grtr rule}
    \end{equation}
    the measure $\omega_k$ and the cap $\overline{\mu} \in (0, +\infty]$ being as in
    Update~\ref{update: rgrad generic}.
\end{update}

\begin{theorem}[Inactivity for $\Rgrtr$]
\label{thm: grtr inactivity}
    Let $\{x_k\}$ be generated by Algorithm~\ref{alg:generic BTR} with the rule
    $\Rgrtr(\omega, \overline{\mu})$ of Update~\ref{update: grtr}, under the hypotheses of
    Lemma~\ref{lem: mu at expansion} and of Lemma~\ref{lem: rho tilde A}. Assume moreover that
    $\lambda_{\min}(H_k) > 0$ for all large $k$, so that $\omega_k = \norm{g_k}$ and
    $\mu_k$ is the ratio $\Delta_k/\norm{g_k}$. Then the trust-region constraint is eventually permanently inactive,
    in either of the two regimes:
    \begin{enumerate}[label=(\roman*),leftmargin=2.2em]
        \item if $\overline{\mu} = +\infty$, then $\norm{s_k} \leq \tfrac12\Delta_k < \Delta_k$
              for all large $k$, and no side condition relating a user parameter to $\kbar$ is
              required;
        \item if $\overline{\mu} < +\infty$ and
              \ref{as: conditions on constants criticality related updates}(b) holds,
              $\overline{\mu} > \kbar$, then $\norm{s_k} < \Delta_k$ for all large $k$.
    \end{enumerate}
\end{theorem}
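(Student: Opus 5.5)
The plan is to reduce both regimes to the arguments already given for $\Rgrad$, by first showing that, for all large $k$, the retrospective rule behaves like $\Rgrad$ restricted to its non-decreasing branches. By Theorem~\ref{thm: rho converges to 1}, $\rho_k \geq \eta_2 \geq \eta_1$ for $k \geq k_\rho$, so every step is accepted and the first branch of~\eqref{eqn: grtr rule} never fires. By Lemma~\ref{lem: rho tilde A}, whose hypotheses are assumed, $\widetilde\rho_{k+1} \to 1$. Hence $\widetilde\rho_{k+1} \geq \widetilde\eta_2$ for all $k$ beyond some $\widetilde k$, and the second and third branches are excluded as well. Beyond $\max\{k_\rho, \widetilde k\}$ only two branches remain. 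The expansion branch, fired exactly when $\norm{s_k} > \tfrac12\Delta_k$, gives $\mu_{k+1} \geq \min\{\gthree\mu_k, \overline{\mu}\}$. The branch $\{\mu_k\}$ fires otherwise. In particular $\{\mu_k\}$ is eventually non-decreasing. Since $\lambda_{\min}(H_k) > 0$ eventually, $\omega_k = \norm{g_k}$, so $\Delta_k = \mu_k\norm{g_k}$ with $\mu_k$ positive, which is the form required by Lemma~\ref{lem: mu at expansion}.

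For regime~(i), I would repeat the proof of Theorem~\ref{thm: rgrad uncapped}. Since $x_k \to x^*$ and $\norm{s_k} \to 0$, both $x_k$ and $x_{k+1}$ lie in $B(x^*,\Delta_*)$ for all large $k$. Suppose $\norm{s_k} > \tfrac12\Delta_k$ at infinitely many large $k$. With $\overline{\mu} = +\infty$ the expansion branch gives $\mu_{k+1} \geq \gthree\mu_k$ at each of them, and $\mu$ never decreases in between. So $\mu_k \to +\infty$, and in particular $\mu_k > C$ at some such $k$. That contradicts Lemma~\ref{lem: mu at expansion}, which bounds $\mu_k \leq C$ at every such index and assumes no rule for $\mu$. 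Hence $\norm{s_k} \leq \tfrac12\Delta_k < \Delta_k$ eventually. No comparison with $\kbar$ enters the argument.

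For regime~(ii), I would follow Corollary~\ref{cor: rgrad capped}. If the expansion branch fires only finitely often, then $\norm{s_k} \leq \tfrac12\Delta_k$ eventually and we are done. Otherwise each firing gives $\mu_{k+1} \geq \min\{\gthree\mu_k,\overline{\mu}\}$, the upper end of the interval being $\overline{\mu}$, and non-firing iterations keep $\mu$ fixed. After finitely many firings, therefore, $\mu_k = \overline{\mu}$, and it stays there because $\mu$ never decreases and never exceeds the cap. Then $\theta_k = \mu_k = \overline{\mu} > \kbar$. Lemma~\ref{lem: inactivity criterion}, applied with $k \geq k_\kappa$, gives $\norm{s_k} < \Delta_k$ for all large $k$.

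The main obstacle is the first paragraph: ensuring that the two contracting retrospective branches are eventually silent. This is exactly where Lemma~\ref{lem: rho tilde A} is needed. Two points must be checked. First, its hypothesis (c), $\norm{s_k} \to 0$, is available from Theorem~\ref{thm: series of steps converges}. Second, the quadratic decrease~\eqref{eqn: mqd} is being assumed and is not derived here. A further point to verify is that the geometric argument of Lemma~\ref{lem: mu at expansion} needs only acceptance of the step and monotone $f$, and not the identity of the branch. Its statement makes this explicit, so the lemma transfers to $\Rgrtr$ unchanged.
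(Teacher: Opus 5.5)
Your proposal is correct and follows the paper's proof essentially step for step. Theorem~\ref{thm: rho converges to 1} and Lemma~\ref{lem: rho tilde A} eventually silence the three contracting branches, so $\{\mu_k\}$ is non-decreasing; the dichotomy on how often $\norm{s_k} > \tfrac12\Delta_k$ occurs is then resolved by the ceiling $C$ of Lemma~\ref{lem: mu at expansion} when $\overline{\mu} = +\infty$, and by saturation at $\overline{\mu} > \kbar$ together with Lemma~\ref{lem: inactivity criterion} when the cap is finite, and your observation that Lemma~\ref{lem: mu at expansion} assumes no rule for $\mu$, and so transfers to $\Rgrtr$ unchanged, is exactly what the paper relies on.
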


\begin{proof}
    Fix $k$ large enough that $\rho_k \geq \eta_2 \geq \eta_1$
    (Theorem~\ref{thm: rho converges to 1}), that $\widetilde\rho_{k+1} \geq \widetilde\eta_2$
    (Lemma~\ref{lem: rho tilde A}), that $k \geq k_\kappa$ so that
    Condition~\ref{cond: s_k bounded by g_k} is in force, and that $x_k, x_{k+1} \in
    B(x^*,\Delta_*)$; the last is possible because $x_k \to x^*$. The first three branches
    of~\eqref{eqn: grtr rule} are then excluded, so $\{\mu_k\}$ is non-decreasing from that index
    on, and the fourth branch fires exactly at those $k$ with $\norm{s_k} > \tfrac12\Delta_k$.

    If $\norm{s_k} > \tfrac12\Delta_k$ at only finitely many $k$, then
    $\norm{s_k} \leq \tfrac12\Delta_k < \Delta_k$ for all large $k$ and the region is inactive;
    this proves both~(i) and~(ii). Suppose therefore that $\norm{s_k} > \tfrac12\Delta_k$ at
    infinitely many $k$, so that the expansion branch fires at each of them.

    \emph{(i) $\overline{\mu} = +\infty$.} The expansion branch reads
    $\mu_{k+1} \geq \gthree\mu_k$ with $\gthree > 1$, and $\{\mu_k\}$ is non-decreasing between
    firings, so $\mu_k \to +\infty$. But Lemma~\ref{lem: mu at expansion} gives $\mu_k \leq C$ at
    every one of those iterations, a contradiction. The alternative therefore does not occur, and
    the first paragraph applies.

    \emph{(ii) $\overline{\mu} < +\infty$.} The expansion branch reads
    $\mu_{k+1} \geq \min\{\gthree\mu_k, \overline{\mu}\}$, so $j$ firings from a value $\mu$ give
    $\mu \mapsto \min\{\gthree^{\,j}\mu,\ \overline{\mu}\}$ at least; since $\gthree > 1$,
    finitely many firings bring $\mu_k$ to $\overline{\mu}$, and the two available branches then
    leave it there. Hence $\mu_k = \overline{\mu} > \kbar$ for all large $k$, and
    Lemma~\ref{lem: inactivity criterion} gives $\norm{s_k} < \Delta_k$ at every such $k$.
\end{proof}

\subsection{Summary of the extended classification}

\begin{table}[tbp]
  \centering
  \caption{Which route to eventual inactivity each mechanism takes, and what it costs beyond the
           standing hypotheses of Section~\ref{sec:convergence rate}. P1 and P2 abbreviate
           Properties~\ref{prop: no contraction while active}
           and~\ref{prop: radius dominates step}. ``Unknowable'' marks a hypothesis involving
           $\kbar = 4/m$, a property of the solution that cannot be checked in advance.}
  \label{tab: extended classification}
  \small
  \begin{tabular}{@{}l c c l >{\raggedright\arraybackslash}p{44mm}@{}}
    \toprule
    Mechanism & P1 & P2 & Route & Cost beyond the standing hypotheses \\
    \midrule
    $\Rdelta$
      & yes & yes
      & Thm.~\ref{thm: trust-region radius becomes inactive}
      & --- \\[3pt]
    $\Rstep$
      & yes & yes
      & Thm.~\ref{thm: trust-region radius becomes inactive}
      & --- \\[3pt]
    $\Rdfo$
      & if $\zeta > \kbar$ & yes
      & Thm.~\ref{thm: trust-region radius becomes inactive}
      & \textbf{unknowable}: $\zeta > \kbar$ \\[3pt]
    $\Rrtr$
      & if $\widetilde\rho_{k+1} \geq \widetilde\eta_1$ & yes
      & Thm.~\ref{thm: trust-region radius becomes inactive}
      & \ref{as: Lipschitz Hessian}, \ref{as: retrospective directional} and the quadratic model
        decrease~\eqref{eqn: mqd}, all through Lemma~\ref{lem: rho tilde A} \\
    \midrule
    $\Rgrad$ uncapped
      & no & no
      & Thm.~\ref{thm: rgrad uncapped}
      & --- \\[3pt]
    $\Rgrad$ capped
      & no & no
      & Cor.~\ref{cor: rgrad capped}
      & \textbf{unknowable}: $\overline{\mu} > \kbar$ \\[3pt]
    $\Rgrtr$ \eqref{eqn: grtr rule}
      & no & no
      & Thm.~\ref{thm: grtr inactivity}
      & as $\Rrtr$, and $\overline{\mu} > \kbar$ when the cap is finite \\
    \bottomrule
  \end{tabular}
\end{table}

%%  Insert immediately after the table, before the paragraph beginning
%%  "Table~\ref{tab: extended classification} is the compact answer ...":

The entries of the first two columns are established by
Proposition~\ref{prop: verification} for $\Rdelta$, $\Rstep$ and $\Rdfo$, and by
Proposition~\ref{prop: rtr verification} for $\Rrtr$.
The three gradient-scaled rules reach inactivity by a different route, and for them
neither property is available.
The two entries naming
\ref{as: retrospective directional} may be read as a hypothesis on the model: it holds for the
exact Hessian outright, and for any secant update with the explicit
rate~\eqref{eqn: secant gives ah4}.

Table~\ref{tab: extended classification} is the compact answer to the classification question.
Three mechanisms reach eventual inactivity with no hypothesis beyond the standing ones,
$\Rdelta$, $\Rstep$ and $\Rrtr$, but Section~\ref{sec:asymptotic radius} shows they pay elsewhere, their
radius not tending to zero, so it carries no asymptotic information about criticality. 
Among the mechanisms whose radius \emph{is} tied to criticality, namely $\Rdfo$, $\Rgrad$ and $\Rgrtr$, those
with an unbounded multiplier are exactly the ones that
avoid a hypothesis involving $\kbar$: uncapped $\Rgrad$, which pays nothing, and uncapped
$\Rgrtr$, which pays in structural assumptions on the model rather than in an
unknowable constant.

%%%%%%%%%%%%%%%%%%%%%%%%%%%%%%%%%%%%%%%%%%%%%%%%%%%%%%%%%%%%%%%%%%%%%%%%%%%%%%%
%%  Replacement for Section \ref{subsec:discussion on the convergence rate},
%%  worked directly in one dimension.
%%
%%  The invariant-line and reduction lemmas disappear; $\lambda^*_{\min}$ is
%%  $g''(\chi_j)$ either way, so the $y^2$ term bought nothing.  Critical points
%%  are $\chi_j$, reserving $x_k$ for iterates.  The $\Rdfo$ dynamics, the
%%  sharpness remark and the numerical table belong in the numerical study.
%%%%%%%%%%%%%%%%%%%%%%%%%%%%%%%%%%%%%%%%%%%%%%%%%%%%%%%%%%%%%%%%%%%%%%%%%%%%%%%

\subsection{\texorpdfstring{Necessity of the side conditions on $\zeta$ and $\overline{\mu}$}{Necessity of the side conditions}}
\label{subsec:discussion on the convergence rate}

The constant $\kbar$ of Condition~\ref{cond: s_k bounded by g_k} governs both side conditions of
\ref{as: conditions on constants criticality related updates}, and a candidate value, $\kbar = 4/m$, is given by
Lemma~\ref{lem: step bounded by gradient norm pointwise}.
One $C^\infty$ function with bounded Hessian admits no $\zeta$ or $\overline{\mu}$ satisfying the side conditions at all its minimisers, and the trust-region constraint remains active indefinitely at some of them.
Let
\begin{equation}
    f(x) \defeq \frac{\sin x}{x} , \qquad f(0) \defeq 1 ,
    \label{eqn: sinc f}
\end{equation}
an even entire function on $\R$ with
\begin{equation}
    f'(x) = \frac{x\cos x - \sin x}{x^2} ,
    \qquad
    f''(x) = -\frac{\sin x}{x} - \frac{2\cos x}{x^2} + \frac{2\sin x}{x^3} .
    \label{eqn: sinc derivs}
\end{equation}
The standing assumptions hold with explicit constants: $\sup_\R\abs{f''} = \abs{f''(0)} = 1/3$,
so $f'$ is Lipschitz and $\mykappa{umh} = 1/3$ serves for the exact Hessian, and $f$ is bounded
below by its global minimum $-0.2172\ldots$. Throughout, $y_j$ denotes the $j$-th positive
critical point, $x_k$ being reserved for iterates.

\begin{lemma}[Critical points]
\label{lem: sinc critpoints}
    The nonzero critical points of $f$ are the roots of $\tan y = y$. The positive ones form
    a sequence $\{y_j\}_{j\geq1}$ with exactly one in each interval
    $\bigl(j\pi,\, (j+\tfrac12)\pi \bigr)$, therefore 
    \begin{equation}
        y_j \to \infty, 
        \qquad
        f''(y_j) \;=\; -\frac{\sin y_j}{y_j}
                    \;=\; \frac{(-1)^{j+1}}{\sqrt{1 + y_j^2}} \to 0.
        \label{eqn: sinc roots}
    \end{equation}
\end{lemma}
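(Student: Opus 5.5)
Since $f'(x) = (x\cos x - \sin x)/x^2$, for $x \neq 0$ the condition $f'(x) = 0$ is equivalent to $x\cos x = \sin x$. If $\cos x = 0$ this would force $\sin x = 0$ as well, which is impossible, so we may divide and obtain $\tan x = x$. By evenness it suffices to treat $x > 0$.

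For existence and uniqueness I work with $h(x) \defeq \tan x - x$ on each branch $(j\pi - \tfrac12\pi,\, j\pi + \tfrac12\pi)$. Its derivative is $h'(x) = \tan^2 x \geq 0$, vanishing only at isolated points, so $h$ is strictly increasing on each branch and has at most one root there. On the first branch $(-\tfrac12\pi, \tfrac12\pi)$ the only root is $0$, which is excluded. For $j \geq 1$, on $(j\pi - \tfrac12\pi, j\pi]$ we have $\tan x \leq 0 < x$, so there is no root. At $x = j\pi$, $h(j\pi) = -j\pi < 0$, while $h(x) \to +\infty$ as $x \to (j+\tfrac12)\pi^-$. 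The intermediate value theorem then gives exactly one root $y_j \in (j\pi, (j+\tfrac12)\pi)$. Since $j\pi < y_j$, we get $y_j \to \infty$.

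For the second derivative I substitute $\cos y_j = \sin y_j / y_j$, which is valid because $\cos y_j \neq 0$, into the formula for $f''$ in~\eqref{eqn: sinc derivs}. This gives
\begin{equation*}
f''(y_j) = -\frac{\sin y_j}{y_j} - \frac{2\sin y_j}{y_j^3} + \frac{2\sin y_j}{y_j^3} = -\frac{\sin y_j}{y_j}.
\end{equation*}
From $\tan y_j = y_j$ we have $\sin^2 y_j = y_j^2/(1+y_j^2)$, hence $\abs{\sin y_j}/y_j = 1/\sqrt{1+y_j^2}$.

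The only delicate step is fixing the sign. On $(j\pi, (j+\tfrac12)\pi)$ the sign of $\sin$ is $(-1)^j$, so $-\sin y_j$ has sign $(-1)^{j+1}$. This gives $f''(y_j) = (-1)^{j+1}/\sqrt{1+y_j^2}$, which tends to $0$ because $y_j \to \infty$.

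The odd-indexed points are therefore strict local minimisers with $m$ of order $y_j^{-1}$. This is what makes $\kbar$ unbounded, but that consequence is used later rather than here.
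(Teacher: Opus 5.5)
Your proof is correct and follows the paper's argument essentially step for step. Both reduce to $\tan y = y$, get strict monotonicity of $\tan y - y$ from its derivative $\tan^2 y$, and apply the intermediate value theorem on $\bigl(j\pi, (j+\tfrac12)\pi\bigr)$; both then cancel the last two terms of $f''$ using $\cos y_j = \sin y_j/y_j$ and fix the sign from $\sin$ on that interval. Your explicit treatment of the branch $\bigl(-\tfrac12\pi, \tfrac12\pi\bigr)$ and of the points $j\pi$ also covers the interval $\bigl(0, \tfrac12\pi\bigr)$, which the paper's proof leaves implicit.
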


\begin{proof}
    For $y \neq 0$ the equation $f'(y) = 0$ reads $y\cos y = \sin y$. If
    $\cos y = 0$ then $\sin y = \pm1$ and the equation fails, so every nonzero root satisfies
    $\tan y = y$. On $\bigl((j-\tfrac12)\pi,\, j\pi\bigr)$ one has $\tan y \leq 0 < y$
    and there is no root, while on $\bigl(j\pi,\, (j+\tfrac12)\pi\bigr)$ the map
    $y \mapsto \tan y - y$ has derivative $\tan^2 y \geq 0$ and increases from $-j\pi$
    to $+\infty$, so it vanishes exactly once. Hence $y_j > j\pi \to \infty$.

    For the curvature, $\sin y_j = y_j\cos y_j$ gives $\cos y_j = \sin y_j/y_j$, so
    the last two terms of~\eqref{eqn: sinc derivs} cancel and
    $f''(y_j) = -\sin y_j/y_j$. Squaring $\tan y_j = y_j$ yields
    $\sin^2 y_j\,(1+y_j^2) = y_j^2$, whence
    $\abs{f''(y_j)} = (1+y_j^2)^{-1/2}$, and
    $y_j \in \bigl(j\pi, (j+\tfrac12)\pi\bigr)$ gives
    $y_j \in \bigl(j\pi, (j+\tfrac12)\pi\bigr)$ gives
    $\operatorname{sign}\sin y_j = (-1)^j$. \PartIII{} computes the roots and their expansion.
\end{proof}
\subsubsection{Small \texorpdfstring{$\overline{\mu}$}{mu-bar} traps \texorpdfstring{$\Rgrad$}{R-grad}}

For the model $m_k(s) = f(x_k) + g_k s + \tfrac12 a_k s^2$ with $a_k \defeq f''(x_k) > 0$, the
subproblem $\min_{\abs{s}\leq\Delta_k} m_k(s)$ is solved exactly by
\begin{equation}
    s_k = \begin{cases}
        -g_k/a_k , & \abs{g_k}/a_k \leq \Delta_k \quad\text{(inactive)} ,\\
        -\Delta_k\operatorname{sign}(g_k) , & \text{otherwise} \quad\text{(active)} .
    \end{cases}
    \label{eqn: sinc 1d step}
\end{equation}
Hence, with $\theta_k = \Delta_k/\abs{g_k}$ as in~\eqref{eqn: theta},
\begin{equation}
    \text{the trust region is active at } k
    \quad\Longleftrightarrow\quad
    \theta_k < 1/a_k .
    \label{eqn: sinc activity}
\end{equation}
As $x_k \to y_j$, $a_k \to f''(y_j) = \lambda^*_{j}$. A small cap $\overline{\mu}$ traps $\Rgrad$.

\begin{theorem}[Capped $\Rgrad$ below the threshold]
\label{thm: sinc rgrad}
    Let $\{x_k\}$ converge to a minimiser $y_j$, and run capped $\Rgrad$ with the exact
    Hessian and the exact subproblem solution, so that $\lambda^*_{j} = f''(y_j)$. If
    \begin{equation}
        \overline{\mu} \;<\; \frac{1}{\lambda^*_{j}} ,
        \label{eqn: sinc mu threshold}
    \end{equation}
    then the trust region is active at \emph{every} sufficiently large $k$; in particular the
    conclusion of Theorem~\ref{thm: trust-region radius becomes inactive} fails.
\end{theorem}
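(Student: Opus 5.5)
The argument is a direct comparison of the ratio $\theta_k = \Delta_k/\abs{g_k}$ with $1/a_k$ through the one-dimensional activity criterion~\eqref{eqn: sinc activity}. For capped $\Rgrad$ we have $\Delta_k = \mu_k\abs{g_k}$, so $\theta_k = \mu_k$ exactly whenever $g_k \neq 0$. The cap gives $\mu_k \leq \overline{\mu}$ at every iteration. The case $g_k = 0$ means $x_k = y_j$ and finite termination, which the hypothesis that the sequence converges with infinitely many iterations excludes, as in Remark~\ref{rem: rho to 1 is mechanism free}. Hence $\theta_k \leq \overline{\mu}$ for all $k$, and nothing about the branches of the rule is needed beyond the cap itself.

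Next I would control $a_k$. Since $y_j$ is a minimiser, Lemma~\ref{lem: sinc critpoints} gives $\lambda^*_j = f''(y_j) = (1+y_j^2)^{-1/2} > 0$, so $1/\lambda^*_j$ is finite and positive. Since $f''$ is continuous and $x_k \to y_j$, we have $a_k = f''(x_k) \to \lambda^*_j$. By the strict inequality~\eqref{eqn: sinc mu threshold}, choose $\epsilon > 0$ with $\overline{\mu} < 1/(\lambda^*_j + \epsilon)$. There is then $K$ such that $0 < a_k < \lambda^*_j + \epsilon$ for all $k \geq K$. In particular $a_k > 0$, so the exact step formula~\eqref{eqn: sinc 1d step} is valid there. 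Consequently $1/a_k > 1/(\lambda^*_j+\epsilon) > \overline{\mu} \geq \theta_k$ for all $k \geq K$.

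By~\eqref{eqn: sinc activity}, $\theta_k < 1/a_k$ means the constraint is active, that is $\abs{s_k} = \Delta_k$, at every $k \geq K$. This contradicts the conclusion $\norm{s_k} < \Delta_k$ for large $k$ of Theorem~\ref{thm: trust-region radius becomes inactive}.

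The only delicate point I expect is justifying~\eqref{eqn: sinc activity} with the strict inequality on the correct side. The interior step $-g_k/a_k$ is feasible iff $\abs{g_k}/a_k \leq \Delta_k$, i.e.\ iff $\theta_k \geq 1/a_k$. So activity, meaning the interior minimiser lies strictly outside the ball, is equivalent to $\theta_k < 1/a_k$. This matches our strict bound. I would also remark that convergence to $y_j$ is a hypothesis of the statement, not something derived, so no argument about whether the run actually reaches $y_j$ is required.
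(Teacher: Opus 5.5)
Your proposal is correct and follows the paper's proof essentially verbatim: $\theta_k = \mu_k \leq \overline{\mu}$ by construction, $a_k \to \lambda^*_j$ by continuity of $f''$, an $\epsilon$-margin from the strict inequality~\eqref{eqn: sinc mu threshold}, and then the activity criterion~\eqref{eqn: sinc activity}. You place the margin on $a_k$ rather than on $1/a_k$, and you check explicitly that $a_k > 0$ so that the step formula~\eqref{eqn: sinc 1d step} applies; both are harmless refinements of the same argument.
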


\begin{proof}
    By construction $\theta_k = \mu_k \leq \overline{\mu}$. Since $x_k \to y_j$ and $f''$ is
    continuous, $a_k \to \lambda^*_{j}$, so $1/a_k \to 1/\lambda^*_{j}$. Taking
    $\epsilon = \tfrac12(1/\lambda^*_{j} - \overline{\mu}) > 0$ gives
    $\theta_k \leq \overline{\mu} < 1/\lambda^*_{j} -
    \epsilon < 1/a_k$ for all large $k$, and~\eqref{eqn: sinc activity} applies.
\end{proof}

%  Compare Lemma~\ref{lem: inactivity criterion}, which gives inactivity when $\theta_k > \kbar$.

\subsubsection{Small \texorpdfstring{$\zeta$}{zeta} traps
              \texorpdfstring{$\Rdfo$}{R-DFO}}

$\Rdfo$ does not fix the ratio $\theta_k$, so the argument must follow its dynamics. We
work with $f$ itself on an interval around $y_j$ on which the curvature is positive and bounded.

\begin{theorem}[$\Rdfo$ below the threshold]
\label{thm: sinc rdfo}
    Let $y_j$ be a minimiser, let $N = [y_j - r, y_j + r]$ be an interval on which
    $f'' > 0$, and set $\overline{a} \defeq \max_{x \in N} f''(x)$. Run $\Rdfo$ with the
    exact Hessian, the exact subproblem solution and any admissible selection on the
    expansion branch. If
    \begin{equation}
        \zeta \;\leq\; \frac{1 - \gtwo}{{\overline{a}}\,(\gthree + 1 - \gtwo)}
        \qquad\Bigl(= \tfrac{1}{5 {\overline{a}}} \ \text{ for } \gtwo = \tfrac12,\ \gthree = 2\Bigr),
        \label{eqn: sinc zeta threshold}
    \end{equation}
    and $x_{k_1} \in N$ with $\theta_{k_1} \leq (1-\gtwo)/{\overline{a}}$ at some
    $k_1$, then $x_k \in N$ and
    $\theta_k \leq (1-\gtwo)/\overline{a} \mathrel{<} 1/\overline{a}$
    for all $k \geq k_1$, and the trust region is active at every
    $k \geq k_1$.
\end{theorem}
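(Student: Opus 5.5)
The plan is a single induction on $k \geq k_1$ carrying the joint invariant
\[
x_k \in N \qquad\text{and}\qquad \theta_k \leq \frac{1-\gtwo}{\overline{a}} ,
\]
with $g_k \neq 0$ throughout, since $g_k = 0$ means $x_k = y_j$ and the run terminates. Activity is the easy consequence of the invariant. Since $a_k = f''(x_k) \leq \overline{a}$ on $N$, we get $\theta_k \leq (1-\gtwo)/\overline{a} < 1/\overline{a} \leq 1/a_k$. Then \eqref{eqn: sinc activity} says the constraint is active, and by \eqref{eqn: sinc 1d step} the step is $s_k = -\Delta_k\operatorname{sign}(g_k)$. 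So everything reduces to propagating the invariant from $k$ to $k+1$, and I would split on the three branches of Update~\ref{update: rdfo generic}.

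The geometric step is done first. It shows that an accepted active step stays in $N$ and does not lose too much gradient. By the mean value theorem, $g_{k+1} = g_k + f''(\xi_k)s_k$ for some $\xi_k$ between $x_k$ and $x_{k+1}$. As long as the segment lies in $N$, we have $0 < f''(\xi_k) \leq \overline{a}$, hence $\abs{f''(\xi_k)s_k} \leq \overline{a}\Delta_k = \overline{a}\theta_k\abs{g_k} \leq (1-\gtwo)\abs{g_k} < \abs{g_k}$. Two consequences follow. First, $g_{k+1}$ has the sign of $g_k$. Second,
\[
\gtwo\abs{g_k} \;\leq\; (1 - \overline{a}\theta_k)\abs{g_k} \;\leq\; \abs{g_{k+1}} \;\leq\; \abs{g_k} .
\]
Because the step points against $g_k$ and $f'$ cannot change sign along it, the step moves toward $y_j$ without crossing it. So $x_{k+1}$ lies between $x_k$ and $y_j$, hence in $N$. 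To avoid circularity in "the segment lies in $N$", I would argue by continuity along the segment: $f'$ is monotone on $N$, so its unique zero in $N$ is $y_j$, and $f'$ does not vanish strictly between $x_k$ and $x_{k+1}$. This is the step I expect to need the most care, though it is one-dimensional and elementary.

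The branch cases come next. If $\rho_k < \eta_1$, the iterate is unchanged and $\Delta_{k+1} \leq \gtwo\Delta_k$, so $\theta_{k+1} \leq \theta_k$. On the middle branch, $\rho_k \geq \eta_1$ and $\Delta_k > \zeta\abs{g_k}$, so $\Delta_{k+1} = \gtwo\Delta_k$, and the lower bound on $\abs{g_{k+1}}$ gives $\theta_{k+1} \leq \gtwo\theta_k/(1-\overline{a}\theta_k) \leq \gtwo\theta_k/\gtwo = \theta_k$. On the expansion branch, $\theta_k \leq \zeta$ and $\Delta_{k+1} \leq \gthree\Delta_k$, so $\theta_{k+1} \leq \gthree\zeta/(1 - \overline{a}\zeta)$. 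Write $t \defeq \overline{a}\zeta$. Hypothesis \eqref{eqn: sinc zeta threshold} says $t \leq (1-\gtwo)/(\gthree+1-\gtwo)$, which is exactly $\gthree t/(1-t) \leq 1-\gtwo$. Hence $\theta_{k+1} \leq (1-\gtwo)/\overline{a}$, whatever admissible selection is made on that branch.

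This closes the induction, and activity at every $k \geq k_1$ follows as in the first paragraph. The only delicate points are the strict inequality $(1-\gtwo)/\overline{a} < 1/\overline{a}$, which uses $\gtwo > 0$, and the observation that the threshold on $\zeta$ is precisely what makes the expansion branch close.
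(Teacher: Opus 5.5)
Your proof is correct and follows the paper's. It uses the same joint induction on $x_k \in N$ and $\theta_k \leq (1-\gtwo)/\overline{a}$, activity from $\theta_k < 1/\overline{a} \leq 1/a_k$, and the same three branch bounds, with the expansion branch closed by exactly the rearrangement $\gthree t/(1-t) \leq 1-\gtwo$ of the hypothesis on $\zeta$. The one place you work harder than needed is the containment $x_{k+1} \in N$: the paper avoids your continuity argument by applying the mean value theorem between $y_j$ and $x_k$, both already in $N$, which gives $\abs{g_k} \leq \overline{a}\abs{x_k - y_j}$ and hence $\Delta_k \leq (1-\gtwo)\abs{x_k - y_j} < \abs{x_k - y_j}$, so the step is known to stop strictly short of $y_j$ before the mean value theorem is applied on $[x_k, x_{k+1}]$.
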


\begin{proof}
    Write $\theta^\dagger \defeq (1-\gtwo)/\overline{a}$. Since
    $\gthree > \gtwo$, \eqref{eqn: sinc zeta threshold} gives
    $\zeta {\overline{a}} \leq (1-\gtwo)/(\gthree+1-\gtwo) {< 1 - \gtwo}$,
    so $\zeta < \theta^\dagger < 1/\overline{a}$ and both branches of the $\zeta$ test
    occur on $[0,\theta^\dagger]$.

    We show by induction that $x_k \in N$ and $\theta_k \leq \theta^\dagger$ for every
    $k \geq k_1$, the case $k = k_1$ being the hypothesis. Let $x_k \in N$ and
    $\theta_k \leq \theta^\dagger$. Since $a_k = f''(x_k) \leq \overline{a}$,
    \begin{equation*}
        \theta_k \;\leq\; \theta^\dagger \;=\; \frac{1-\gtwo}{\overline{a}}
        \;<\; \frac{1}{\overline{a}} \;\leq\; \frac{1}{a_k} ,
    \end{equation*}
    so~\eqref{eqn: sinc activity} makes the trust region active at $k$
    and~\eqref{eqn: sinc 1d step} gives $s_k = -\Delta_k\operatorname{sign}(g_k)$.

    The step does not overshoot $y_j$. Applying the mean value theorem to $g$ between
    $y_j$ and $x_k$ gives $g_k = f''(\eta_k)(x_k - y_j)$ for some $\eta_k \in N$, so
    $\operatorname{sign}(g_k) = \operatorname{sign}(x_k - y_j)$ and
    $\abs{g_k} \leq \overline{a}\abs{x_k - y_j}$, whence
    \begin{equation*}
        \Delta_k \;=\; \theta_k \abs{g_k}
        \;\leq\; \theta^\dagger \overline{a} \abs{x_k - y_j}
        \;=\; (1 - \gtwo)\abs{x_k - y_j}
        \;<\; \abs{x_k - y_j} .
    \end{equation*}
    The iterate $x_{k+1} = x_k + s_k$ therefore lies strictly between $y_j$ and $x_k$, and in
    particular $x_{k+1} \in N$. Applying the mean value theorem once more, this time between $x_k$
    and $x_{k+1}$, gives $g_{k+1} - g_k = b_k s_k$ with $b_k \defeq f''(\xi_k) \in (0,\overline{a}]$
    for some $\xi_k \in N$, so that
    \begin{equation}
        \abs{g_{k+1}} \;=\; \abs{g_k}\bigl(1 - \theta_k b_k\bigr) ,
        \label{eqn: sinc gradient recursion}
    \end{equation}
    the factor being positive because
    $\theta_k b_k \leq \theta^\dagger \overline{a} = 1 - \gtwo < 1$.

    Three branches of Update~\ref{update: rdfo generic} are available at $k$, and each
    leaves $\theta_{k+1} \leq \theta^\dagger$. If $\rho_k < \eta_1$ the step is rejected, so
    $\abs{g_{k+1}} = \abs{g_k}$ and $\theta_{k+1} \leq \gtwo\theta_k \leq \theta^\dagger$. If
    $\rho_k \geq \eta_1$ and $\theta_k > \zeta$ the radius contracts, and
    $1 - \theta_k b_k \geq \gtwo$ with~\eqref{eqn: sinc gradient recursion} gives
    \begin{equation*}
        \theta_{k+1} \;=\; \frac{\gtwo\,\theta_k}{1 - \theta_k b_k}
        \;\leq\; \theta_k \;\leq\; \theta^\dagger .
    \end{equation*}
    If $\rho_k \geq \eta_1$ and $\theta_k \leq \zeta$ the radius expands, say
    $\Delta_{k+1} = \gamma\Delta_k$ with $\gamma \in [1, \gthree]$, and $b_k \leq \overline{a}$
    with $h(\theta) \defeq \gthree\theta/(1-\theta\overline{a})$ increasing on
    $[0,1/\overline{a})$ gives
    \begin{equation*}
        \theta_{k+1} \;=\; \frac{\gamma\,\theta_k}{1 - \theta_k b_k}
        \;\leq\; h(\theta_k) \;\leq\; h(\zeta)
        \;=\; \frac{\gthree\zeta}{1 - \zeta\overline{a}}
        \;\leq\; \theta^\dagger ,
    \end{equation*}
    the last inequality being
    $\gthree\zeta\overline{a} \leq (1-\gtwo)(1 - \zeta\overline{a})$, which
    is~\eqref{eqn: sinc zeta threshold} rearranged. The induction closes, and the trust region is
    active at every $k \geq k_1$ by its first step.
\end{proof}
    The endpoint is not arbitrary. On the contraction branch the map
    $\theta \mapsto \gtwo\theta/(1-\theta\overline{a})$ has $\theta^\dagger$ as its unique positive
    fixed point, and that point repels. Below it the ratio decreases at every contraction until it
    drops under $\zeta$, whereupon one expansion returns it to at most $h(\zeta)$, still below
    $\theta^\dagger$. Above it the same branch increases the ratio, so the hypothesis
    $\theta_{k_1} \leq \theta^\dagger$ states a property of the run rather than one the dynamics
    supply.

    Finally $\overline{a} \to f''(y_j) = \lambda^*_{j}$ as $r \to 0$, and
    $\lambda^*_{j} = (1+y_j^2)^{-1/2}$ vanishes as $j$ grows
    by~\eqref{eqn: sinc roots}, so the threshold
    in~\eqref{eqn: sinc zeta threshold} is unbounded over the minimisers of $f$. Given any
    $\zeta > 0$, pick $j$ with
    \begin{equation*}
        \frac{1}{\sqrt{1 + y_j^2}}
        \;<\; \frac{1 - \gtwo}{\zeta\,(\gthree + 1 - \gtwo)} ,
    \end{equation*}
    which is possible since the left-hand side tends to zero, and then $r$ small enough that
    $\overline{a}$ satisfies the same bound, which is possible since $f''$ is continuous. The
    theorem applies at that $y_j$, so any run entering the trap there keeps the trust region
    active at every later iteration. No positive $\zeta$ escapes this on one fixed $C^\infty$
    function, which is the $\Rdfo$ counterpart of the failure of $\overline{\mu} > \kbar$
    recorded in Theorem~\ref{thm: sinc rgrad}.

%=======================================================================
\subsection{Superlinear convergence} \label{subsec:superlinear}

    Once the trust-region
    constraint stops binding, the step is the unconstrained model minimiser and the method
    inherits the local rate of the underlying quasi-Newton iteration. The mechanism drops out of the analysis at this point.
    \begin{theorem}\label{thm: superlinear convergence of the trust-region method}
        Let $\{x_k\}$ be generated by the trust-region method with any mechanism for which the
        trust-region constraint is eventually permanently inactive, and suppose the Hessian
        approximation satisfies \ref{as: Gradient residual condition}. Then $\{x_k\}$ converges
        superlinearly to $x^*$:
        \begin{equation*}
            \lim_{k \to \infty} \frac{\norm{x_{k+1} - x^*}}{\norm{x_k - x^*}} = 0 .
        \end{equation*}
    \end{theorem}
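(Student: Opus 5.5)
The plan is the Dennis--Moré argument, run only on the iterations where the constraint is inactive. Let $k_\star$ be the index beyond which $\norm{s_k} < \Delta_k$, enlarged so that $k \geq k_\rho$ (Theorem~\ref{thm: rho converges to 1}), giving every step accepted and $x_{k+1} = x_k + s_k$, and so that $x_k \in B(x^*,\Delta_*)$ for all $k \geq k_\star$. I also take the standing hypotheses of this section: Condition~\ref{cond:inexactness}, $\norm{H_k} \leq \mykappa{umh}$, $\norm{s_k} \to 0$ (Theorem~\ref{thm: series of steps converges}) and $\norm{g_k} \to 0$, so that \ref{as: Gradient residual condition} is in force. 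For $k \geq k_\star$ the inexactness residual $r_k \defeq H_k s_k + g_k$ then satisfies $\norm{r_k} \leq \xi_k \norm{g_k}$ with $\xi_k \to 0$.

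\emph{Step 1: bound the new gradient by $o(\norm{s_k})$.} I would split
\[
g_{k+1} = \bigl[g(x_k+s_k) - g_k - \nabla^2 f(x_k) s_k\bigr] + \bigl[g_k + \nabla^2 f(x_k) s_k\bigr].
\]
The first bracket is $o(\norm{s_k})$ because $\nabla^2 f$ is continuous near $x^*$ and $\norm{s_k} \to 0$; write it as an integral of $\nabla^2 f(x_k + t s_k) - \nabla^2 f(x_k)$. The second bracket is $o(\norm{s_k})$ exactly by \ref{as: Gradient residual condition}. Hence $\norm{g_{k+1}}/\norm{s_k} \to 0$. The alternative route uses \ref{as: Finite-difference Hessian consistency} together with the residual, $g_{k+1} = [g_{k+1} - g_k - H_k s_k] + r_k$. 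It needs $\xi_k \norm{g_k} = o(\norm{s_k})$, which follows from $\norm{g_k} \leq \norm{r_k} + \mykappa{umh}\norm{s_k}$, giving $\norm{g_k} \leq \mykappa{umh}\norm{s_k}/(1-\xi_k)$. Both routes are available, and I would use the first because it is the hypothesis named in the statement.

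\emph{Step 2: convert to distances.} By Proposition~\ref{prop: inequalities for positive definite Hessian}(iii), $\norm{x_{k+1} - x^*} \leq \norm{g_{k+1}}/m$. Also $\norm{s_k} \leq \norm{x_{k+1} - x^*} + \norm{x_k - x^*}$. Writing $e_k = \norm{x_k - x^*}$, Step 1 gives $e_{k+1} \leq \epsilon_k (e_{k+1} + e_k)$ with $\epsilon_k \to 0$. For large $k$ this yields $e_{k+1} \leq \epsilon_k e_k/(1-\epsilon_k)$, so $e_{k+1}/e_k \to 0$.

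\emph{Main obstacle.} The delicate points are two. First, $s_k$ may vanish or $x_k$ may equal $x^*$, which makes the ratios undefined; that case is finite termination, excluded as in Remark~\ref{rem: rho to 1 is mechanism free}. Second, one must check that \ref{as: Gradient residual condition} and Condition~\ref{cond:inexactness} actually apply, which needs inactivity and acceptance. The radius rule enters only through $k_\star$, which is what makes the theorem mechanism-free.
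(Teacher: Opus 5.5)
Your proof is correct, but it takes a different route from the paper's.

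\textbf{The paper's route.} It starts from Step~1 of Proposition~\ref{prop: steps are strictly decreasing}, which is your \emph{alternative route}: \ref{as: Finite-difference Hessian consistency} plus the residual of Condition~\ref{cond:inexactness} give $\norm{g_{k+1}}/\norm{s_k} \leq (\gamma_k + \xi_k\mykappa{umh})/(1-\xi_k)$. Condition~\ref{cond: s_k bounded by g_k}, $\norm{s_k} \leq \kbar\norm{g_k}$, then turns this into $\norm{g_{k+1}}/\norm{g_k} \to 0$. Both halves of Proposition~\ref{prop: inequalities for positive definite Hessian}(iii) finally give $e_{k+1}/e_k \leq (M/m)\,\norm{g_{k+1}}/\norm{g_k}$.

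\textbf{Your route.} You obtain $\norm{g_{k+1}} = o(\norm{s_k})$ directly from \ref{as: Gradient residual condition} and continuity of $\nabla^2 f$. You then close with the Dennis--Mor\'e absorption $e_{k+1} \leq \epsilon_k(e_{k+1} + e_k)$. This uses only the lower bound $m\norm{x - x^*} \leq \norm{g(x)}$ and the triangle inequality $\norm{s_k} \leq e_k + e_{k+1}$.

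\textbf{What each buys.} Your argument is leaner. It needs neither Condition~\ref{cond: s_k bounded by g_k} (so no $\kbar$) nor $M$. Its first step needs neither the equivalence theorem nor Condition~\ref{cond:inexactness}. It thus shows that, once steps are accepted, \ref{as: Gradient residual condition} alone carries the rate. Inactivity enters only indirectly: Theorem~\ref{thm: rho converges to 1} supplies acceptance, and it needs \ref{as: Hessian-step residual condition}, which is recovered from \ref{as: Gradient residual condition} through the equivalence that consumes Condition~\ref{cond:inexactness}. For the same reason, your closing remark that \ref{as: Gradient residual condition} \emph{needs inactivity} to apply is slightly off: it is an unconditional hypothesis on the sequence. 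The paper's version buys something different. It reuses the very inequality that drove the inactivity induction and yields superlinear convergence of $\{\norm{g_k}\}$ as an intermediate. It also keeps the model error $\gamma_k$ separate from the solver tolerance $\xi_k$, which makes explicit where the inactivity hypothesis is spent.
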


    \begin{proof}
        By hypothesis there is $k_1$ with $\norm{s_k} < \Delta_k$ for all $k \geq k_1$, so the
        step is the unconstrained minimiser of $m_k$ to within the tolerance of
        Condition~\ref{cond:inexactness}. Step~1 of the proof of
        Proposition~\ref{prop: steps are strictly decreasing} gives
        \begin{equation*}
            \frac{\norm{g_{k+1}}}{\norm{s_k}}
            \;\leq\; \frac{\gamma_k + \xi_k \, \mykappa{umh}}{1 - \xi_k} ,
        \end{equation*}
        and Condition~\ref{cond: s_k bounded by g_k}, applied at $k \geq k_\kappa$,
        converts the left-hand side into a ratio of
        successive gradients,
        \begin{equation*}
            \frac{\norm{g_{k+1}}}{\norm{g_k}}
            \;\leq\; \kbar\,\frac{\gamma_k + \xi_k \,\mykappa{umh}}{1 - \xi_k}
            \;\xrightarrow[k \to \infty]{}\; 0 ,
        \end{equation*}
        since $\gamma_k \to 0$ by \ref{as: Gradient residual condition} with
        Theorem~\ref{thm: equivalence quasi-Newton conditions}, and $\xi_k \to 0$ by
        Condition~\ref{cond:inexactness}. Finally
        Proposition~\ref{prop: inequalities for positive definite Hessian} bounds the iterate
        error by the gradient,
        \begin{equation*}
            \frac{\norm{x_{k+1} - x^*}}{\norm{x_k - x^*}}
            \;\leq\; \frac{M}{m}\cdot\frac{\norm{g_{k+1}}}{\norm{g_k}}
            \;\longrightarrow\; 0 .
        \end{equation*}
    \end{proof}

    The hypothesis is discharged for every mechanism surveyed:
    by Theorem~\ref{thm: trust-region radius becomes inactive} for $\Rdelta$ and $\Rstep$
    (Proposition~\ref{prop: verification}(i)--(ii)), for $\Rdfo$ under
    \ref{as: conditions on constants criticality related updates}(a)
    (Proposition~\ref{prop: verification}(iii)) and for $\Rrtr$
    (Proposition~\ref{prop: rtr verification});
    by Theorem~\ref{thm: rgrad uncapped} for uncapped $\Rgrad$;
    and by Theorem~\ref{thm: grtr inactivity} for $\Rgrtr$, in either variant under
    \ref{as: conditions on constants criticality related updates}(b) when the cap is finite.

    \begin{remark}
        The rate is the same for every mechanism in the list, and the proof never refers to the
        radius rule. This is the sense in which the mechanism governs \emph{when} the asymptotic
        regime is entered and not \emph{how fast} convergence is once it has been: all the
        mechanism-dependence of the section is spent in reaching
        Theorem~\ref{thm: trust-region radius becomes inactive}, and none of it survives into the
        rate.
    \end{remark}

%%%%%%%%%%%%%%%%%%%%%%%%%%%%%%%%%%%%%%%%%%%%%%%%%%%%%%%%%%%%%%%%%%%%%%%%%%%%%%%
%%  Replacement for \section{Conclusion}.
%%
%%  Worked from the conclusion in Survey-Part-II-v3.tex, which was not pasted
%%  into the message.
%%
%%  CORRECTIONS
%%
%%  1.  The overclaim.  The draft says uncapped $\Rgrad$ is "the only rule
%%      surveyed" needing no solution-dependent constant.  The itemised summary
%%      three paragraphs earlier says $\Rdelta$ and $\Rstep$ satisfy both
%%      properties unconditionally, and Table~\ref{tab: extended classification}
%%      agrees.  The claim now carries its qualifier.
%%
%%  2.  The example is one-dimensional.  The draft still cites
%%      $f(x,y) = \sin(x)/x + y^2$ with critical points $x_k$.  It is now
%%      $f(x) = \sin(x)/x$ with critical points $\chi_j$, and $x_k$ denotes the
%%      iterates.
%%
%%  3.  The \mfabian paragraph repeated the "single axis" of the paragraph
%%      above it.  The two are merged.
%%%%%%%%%%%%%%%%%%%%%%%%%%%%%%%%%%%%%%%%%%%%%%%%%%%%%%%%%%%%%%%%%%%%%%%%%%%%%%%

\section{Conclusion} \label{sec:conclusion}

    Where Part~I~\cite{Jeremy2026-I} asked which structural conditions on $\{\Delta_k\}$ suffice for
    first-order global convergence, Part~II has asked what the radius does: how it behaves
    asymptotically, how it behaves near a critical point, whether the framework extends to
    second-order criticality, and when the trust-region constraint stops binding. The four answers
    separate the mechanisms along one axis, and it is not the one the first-order theory suggests:
    what decides a rule's behaviour is not whether its radius vanishes but what that radius is
    anchored to.

    A rule anchored to a criticality measure, $\Rdfo$ and $\Rgrad$, inherits that measure's
    asymptotics for free, which is what derivative-free and stochastic variants need, and pays for
    it twice. At a saddle it must upgrade the measure from $\norm{g_k}$ to $\tau_k$, failing which
    its radius collapses at precisely the points the second-order theory exists to escape. Near a
    minimiser, where the radius and the step both vanish, establishing that the constraint stops
    binding means comparing their rates, and that comparison runs through $\kbar$, a constant of
    the solution. A rule anchored instead to the previous radius or to the realised step,
    $\Rdelta$, $\Rstep$ and $\Rrtr$, reads only quantities the algorithm itself controls, so both
    comparisons become internal and no such constant appears; what it gives up is any tie between
    the radius and the distance to stationarity. The rule $\Rstep$ has a vanishing radius in the local
    regime and still pays neither price, which is why the axis is not the vanishing of
    $\{\Delta_k\}$. Every result above is an instance of that trade, and the
    mechanism-independence of the rate theorem is its sharpest form: the rule decides \emph{when}
    the asymptotic regime is entered, never how fast convergence is thereafter.

    Among the rules tied to a criticality measure the ones carrying no side condition are
    exactly those with an unbounded multiplier, the gradient-scaled rule and its retrospective
    variant, whose multiplier grows past any threshold and
    so crosses $\kbar$ whatever $\lambda^*_{\min}$ turns out to be. The DFO-like rule and the
    capped gradient-scaled rule instead require $\zeta > \kbar$ and $\overline{\mu} > \kbar$, and
    Section~\ref{subsec:discussion on the convergence rate} shows that neither can be met a
    priori: for $f(x) = \sin(x)/x$ the constant is $\kbar = 4/m \geq 4\sqrt{1 + y_j^2}$ at the minimiser
    $y_j$, unbounded over the minimisers of a single fixed $C^\infty$ function with bounded
    Hessian and Lipschitz gradient. Violating the condition is a change of regime rather than a
    slowdown. For $\overline{\mu} < 1/\lambda^*_{\min}$ in the first case and
    $\zeta < 1/(5\lambda^*_{\min})$ in the second, with the usual constants, the constraint is active at every
    sufficiently large iteration (Theorems~\ref{thm: sinc rgrad} and~\ref{thm: sinc rdfo}), so the
    side conditions are necessary up to a constant. The two rules fail differently, which is worth
    recording for implementers: $\Rgrad$ recomputes $\Delta_k = \mu_k\norm{g_k}$ at every
    iteration and forgets $\Delta_0$ after one step, so its behaviour is decided by its parameter
    alone, whereas $\Rdfo$ propagates the radius multiplicatively and a generous initial radius
    can rescue an otherwise fatal $\zeta$.

    \paragraph{Rules that update the ratio.}
    Lemma~\ref{lem: inactivity criterion} reduces eventual inactivity to a threshold crossing for
    the single scalar $\theta_k = \Delta_k/\norm{g_k}$, so the ratio, and not the radius, is what
    the local theory is about; and of the rules surveyed only $\Rgrad$ updates that object
    directly, its multiplier $\mu_k$ being $\theta_k$ itself. Three consequences follow that no
    other rule enjoys: its inactivity result is unconditional among the criticality-anchored
    rules; its proof reads inactivity off $\norm{s_k} \leq \tfrac12\Delta_k$ rather than off
    the threshold crossing, although $\kbar$ still enters through
    Theorem~\ref{thm: rho converges to 1}; and \eqref{eqn: mu ceiling bound}
    supplies from the local geometry the ceiling on the ratio that the user would otherwise have
    to guess. Stated as a principle, a mechanism that maintains an explicit estimate of
    $\Delta_k/\omega_k$ and updates it by the outcome of the iteration performs a geometric search
    for a constant of the solution without being told that constant, and the search terminates
    because the problem stops it rather than a parameter. The ceiling is, however, a conclusion
    drawn under convergence to a strong minimiser and not a property of the mechanism: where that
    hypothesis fails, as on the example of
    Section~\ref{subsec:Example with no accumulation points}, the multiplier diverges and the
    asymptotic conclusions of Section~\ref{sec:asymptotic radius} are lost. A cap growing slowly
    enough with $k$ to sit eventually above $\kbar$ while $\mu_k\omega_k$ still vanishes would
    secure both halves at once, and constructing one is the most concrete open problem this paper
    leaves. The retrospective rule of~\citet{FanPan16} is a second realisation of the same
    principle, driving the ratio with $\widetilde\rho_{k+1}$ in place of $\rho_k$, and
    Theorem~\ref{thm: grtr inactivity} shows the argument indifferent to which of the two is used,
    requiring only that it tend to one. Whether other quantities can drive the ratio to the same
    effect is open, and the family obtained by varying the driver looks to us the most promising
    direction for new mechanisms.

    \paragraph{The numerical study.}
    Four of the questions raised here are empirical. The first is whether the separation the
    theory draws is visible in practice, the classification of
    Table~\ref{tab: extended classification} being
    tested across problem classes, dimensions and subproblem solvers, and in particular the claim
    that the rate after inactivity does not depend on the mechanism. The second is how wide the
    interval is
    between the sufficient conditions $\zeta, \overline{\mu} > \kbar$ and the constructions of
    Section~\ref{subsec:discussion on the convergence rate}, which are necessary only up to a
    factor, and what operating below the threshold costs. The third is whether the iterates are
    detained at a
    saddle, which Section~\ref{subsec:Danger of saddle points} attributes to the model Hessian
    rather than to the radius rule. The fourth is the extension to the stochastic setting, for which the
    summability conditions $\sum_k \Delta_k^2/M_k < +\infty$ and $\sum_k \Delta_k < +\infty$ are
    the natural point of contact, the radius there controlling the accuracy required of the
    gradient and Hessian estimators and its rate of decrease setting the sampling effort. Whether
    anchoring the radius to a criticality measure that is itself estimated with error stabilises
    or destabilises such a method does not follow from the deterministic theory, and the existing
    probabilistic-model analyses are carried out for fixed-factor rules.

    \section*{Acknowledgements}
    This research was funded by a Discovery Grant of the Natural Sciences and Engineering
    Research Council of Canada (NSERC), held by the second author.

    \bibliography{References}

%=======================================================================
%   APPENDIX
%=======================================================================
\appendix

\section{Assumptions and conditions from Part~I} \label{app: assumptions Part I} 

    \begin{assum}{f.Bb}[bounded below]
\label{as: Bounded below}
        $f$ is bounded below on $\R^p$. Formally,
                $\exists\, \mykappa{lbf} \in \R$ such that $\forall x \in \R^p$, $f(x) \geq \mykappa{lbf}$.
        
    \end{assum}

    \begin{assum}{f.Lg}[Lipschitz gradient]
\label{as: Lipschitz continuous gradient}
        $f$ is differentiable with Lipschitz continuous gradient with constant $L > 0$:
        \[
            \norm{\nabla f(x) - \nabla f(y)} \leq L \norm{x - y}, \quad \forall x, y \in \R^p.
        \]
        
    \end{assum}

    \begin{assum}{m.Ex}[model exact to first order at $x_k$]
\label{as: model agreement with function}
        For all $k$, $m_k(0) = f(x_k)$ and $g_k := \nabla_x m_k(0) = \nabla_x f(x_k)$.
        
    \end{assum}

    \begin{assum}{m.Umh}[uniform bound on the model Hessian]
\label{as: model hessian}
        There exists a constant $\mykappa{umh} > 0$ such that $\norm{H_k} \leq \mykappa{umh} < +\infty$ for all $k \in \NN$.
        
    \end{assum}

    \begin{assum}{m.AHg}[admissible growth of the model Hessian]
\label{as: Linear growth bound model Hessian}
        For all $k \in \NN$, $\norm{H_k} \leq M_k < +\infty$, where $\{M_k\}$ is a non-decreasing sequence of positive numbers and
        \begin{equation}
            \sum_{k=0}^\infty \frac{1}{M_k} = +\infty.
            \label{eqn: linear growth bound on model Hessian}
        \end{equation}
    \end{assum}

    \begin{assum}{a.Cd}[Cauchy decrease]
\label{as: Cauchy decrease}
        For all $k$,
        \[
            m_k(0) - m_k(s_k) \geq \frac{\mykappa{mdc}}{2} \norm{g_k} \min\!\left[\frac{\norm{g_k}}{\norm{H_k}}, \Delta_k\right]
        \]
        for some constant $\mykappa{mdc} \in (0, 1)$.
        
    \end{assum}
    We also recall the three structural conditions on the radius update rule that drive the convergence analysis of Part~I~\cite{Jeremy2026-I}. A mechanism is \emph{weakly admissible} if it satisfies Conditions~\ref{cond: PartI C1} and~\ref{cond: PartI C2}, and \emph{strongly admissible} if it satisfies Conditions~\ref{cond: PartI C1} and~\ref{cond: PartI C3}.

    \begin{cond}{C.LbR}[lower bound on the radius]
\label{cond: PartI C1}
 
        There exists a constant $0 < \mykappa{lbd} \leq 1$ such that the trust-region radius $\Delta_k$ satisfies, for all $k \geq 0$,
        \[
            \Delta_k \geq \frac{\mykappa{lbd}}{M_k} \min_{0 \leq i \leq k} \norm{g_i},
        \]
        where $\{M_k\}$ is a sequence of non-decreasing positive numbers that upper bound the model Hessian norm.
    \end{cond}

    \begin{cond}{C.DecR}[contraction on unsuccessful iterations]
\label{cond: PartI C2}
 
        There exist $\gamma_2 \in (0,1)$ and $\eta_{\mathrm{dec}} > 0$ such that, for all $k$, if $\rho_k < \eta_{\mathrm{dec}}$, then
        \[
            \Delta_{k+1} \leq \gamma_2 \Delta_k.
        \]
    \end{cond}

    \begin{cond}{C.Exp/DecR}[bounded expansion on successful iterations]
\label{cond: PartI C3}
 
        There exist $\gamma_2 \in (0,1)$, $\gamma_3 > 1$ and $\eta_{\mathrm{inc}} > 0$ such that, for all $k$,
        \[
            \text{(decrease)} \quad \Delta_{k+1} \leq \gamma_2 \Delta_k \ \text{ if } \rho_k < \eta_{\mathrm{inc}}; \qquad
            \text{(increase)} \quad \Delta_{k+1} \leq \gamma_3 \Delta_k \ \text{ otherwise}.
        \]
    \end{cond}

\section{Basic trust-region algorithm} \label{app: BTR algorithm}

\begin{algorithm}[H]
    \small
    \SetAlgoLined
    \DontPrintSemicolon

    \textbf{Step 0: Initialisation.} Choose $\eta \geq 0$. Set $x_0 \in \R^p$, $\Delta_0 \in (0, \infty)$. Compute $f(x_0)$ and set $k = 0$. \; ~\\

    \textbf{Step 1: Model definition and step computation.} Define a local model $m_k$ of $f$ in the neighbourhood $\calB_k$. \; ~\\

    \textbf{Step 2: Model minimisation.} Compute a step $s_k$ that sufficiently reduces $m_k$ with $x_k + s_k \in \calB_k$, and define $x_k^{\mathrm{cand}} = x_k + s_k$. \; ~\\

    \textbf{Step 3: Acceptance of the trial point.} Compute $f(x_k^{\mathrm{cand}})$ and define
    \begin{equation}
        \rho_k = \frac{f(x_k) - f(x_k^{\mathrm{cand}})}{m_k(0) - m_k(s_k)}.
        \label{eqn:definition rho}
    \end{equation}
    If $\rho_k \geq \eta$, set $x_{k+1} = x_k + s_k$; otherwise $x_{k+1} = x_k$. \; ~\\

    \textbf{Step 4: Trust-region radius update.} Compute $\Delta_{k+1}$ according to some update rule. \; ~\\

    \textbf{Step 5: Next iteration.} Increment $k$ by 1 and repeat \textbf{Steps 1 to 5} while some stopping criterion is not met. \;

    \caption{Basic trust-region algorithm (BTR).}
    \label{alg:generic BTR}
\end{algorithm}

    \section{The update mechanisms studied in this paper} \label{app: update mechanisms}

    For convenience, we collect here the five update mechanisms whose asymptotic and local behaviour is analysed in the main text: four radius rules and the retrospective variant. They correspond to the mechanisms surveyed in Part~I~\cite{Jeremy2026-I}; we recall the first-order form of each rule and adopt the following abbreviations, used throughout the paper:
    \begin{center}
    \renewcommand{\arraystretch}{1.3}
    \begin{tabular}{l l l}
    \toprule
    \textbf{Label} & \textbf{Mechanism} & \textbf{Defining feature} \\
    \midrule
    R-grad  & gradient-scaled update      & $\Delta_k = \mu_k \norm{g_k}$, multiplicative factor $\mu_k$ scaled on $\norm{g_k}$ \\
    R-delta & multiplicative-factor update & $\Delta_{k+1}$ obtained from $\Delta_k$ by a fixed multiplicative factor \\
    R-step  & step-dependent update        & $\Delta_{k+1}$ tied to the realised step length $\norm{s_k}$ \\
    R-DFO   & DFO-like update              & $\Delta_{k+1}$ controlled relative to the criticality measure $\norm{g_k}$ \\
    R-RTR & retrospective update         & $\Delta_{k+1}$ updated using the retrospective ratio $\widetilde{\rho}_{k+1}$ \\
    \bottomrule
    \end{tabular}
    \end{center}

    %Throughout, $0 < \gamma_1 \leq \gamma_2 < 1 < \gamma_3$ are the contraction/expansion factors, and $0 \leq \eta \leq \eta_1 \leq \eta_2 < 1$ the acceptance thresholds, as in Algorithm~\ref{alg:generic BTR}. A successful iteration satisfies $\rho_k \geq \eta$; a very successful one $\rho_k \geq \eta_2$.

    \subsection{Multiplicative-factor update} \label{app: R-delta}

        The most classical mechanism multiplies the radius by a fixed factor depending only on the value of $\rho_k$~\cite{ConnGoulToin00, NoceWrig06}.

        \begin{update}{\Rdelta}[R-delta: multiplicative factor on $\Delta$] \label{update:multiplicative factor}
            \begin{equation}
                \Delta_{k+1} \in
                \begin{cases}
                    [\gamma_1 \Delta_k,\, \gamma_2 \Delta_k), & \text{if } \rho_k < \eta_1 \quad \text{(unsuccessful)}, \\
                    [\gamma_2 \Delta_k,\, \Delta_k), & \text{if } \eta_1 \leq \rho_k < \eta_2 \quad \text{(successful)}, \\
                    [\Delta_k,\, \gamma_3 \Delta_k], & \text{if } \rho_k \geq \eta_2 \quad \text{(very successful)}.
                \end{cases}
                \label{eqn:appendix R-delta}
            \end{equation}
        \end{update}

    \subsection{Step-dependent update} \label{app: R-step}

        The step-dependent mechanism ties the next radius directly to the realised step length, rather than to the previous radius~\cite{Hei03, FanPan16}.
        \begin{update}{\Rstep}[R-step: step-dependent update]
            Let \( 0 < \gamma_1 \leq \gamma_2 < 1 < \gamma_3 \) and
            \( \eta \leq \eta_1 \leq \eta_2 < 1 \) with $\eta_1 \neq 0$. Then
            \begin{equation}
                \Delta_{k+1} \in
                \begin{cases}
                    [\gamma_1 \norm{s_k},\, \gamma_2 \norm{s_k}), & \text{if } \rho_k < \eta_1, \\
                    [\gamma_2 \norm{s_k},\, \norm{s_k}), & \text{if } \eta_1 \leq \rho_k < \eta_2, \\
                    [\norm{s_k},\, \gamma_3 \norm{s_k}], & \text{if } \rho_k \geq \eta_2 .
                \end{cases}
                \label{update: step-dependent}
            \end{equation}
        \label{update:step-dependent update}
        \end{update}
        The expansion branch is capped at $\gamma_3\norm{s_k}$, the step-based counterpart of the
        bounded expansion allowed by Remark~7 of \PartI{}. Every radius this rule returns is one the
        uncapped rule of Part~I may return, so the results of Part~I apply to it unchanged.
        
        \subsection{R-DFO: DFO-like update} \label{app: R-DFO}

        The DFO-like mechanism, originating in derivative-free trust-region
        methods~\cite{ConnScheVice09_introduction}, contracts the radius whenever it exceeds a fixed
        multiple of the criticality measure. As for the gradient-scaled family, write $\omega_k$ for that
        measure, either $\norm{g_k}$ or $\tau_k = \max\{\norm{g_k}, -\lambda_k^-\}$.

        \begin{update}{$\Rdfo(\omega)$}
        \label{update: rdfo generic}
            Let $\eta \leq \eta_1 < 1$, $0 < \gamma_1 \leq \gamma_2 < 1 < \gamma_3$ and $\zeta > 0$. Then
            \begin{equation}
                \Delta_{k+1} \in
                \begin{cases}
                    [\gamma_1 \Delta_k,\ \gamma_2 \Delta_k], & \rho_k < \eta_1, \\[2pt]
                    \bigl\{\gamma_2 \Delta_k\bigr\}, & \rho_k \geq \eta_1
                        \text{ and } \Delta_k > \zeta\,\omega_k, \\[2pt]
                    [\Delta_k,\ \gamma_3 \Delta_k], & \rho_k \geq \eta_1
                        \text{ and } \Delta_k \leq \zeta\,\omega_k.
                \end{cases}
                \label{eqn: rdfo rule}
            \end{equation}
        \end{update}
        The criticality step, the middle branch, is deterministic: a radius exceeding
        $\zeta\omega_k$ is contracted by the fixed factor $\gamma_2$. It differs from the rule of
        \PartI{} only in restricting that branch to the lower endpoint $\gamma_2\Delta_k$ already
        used there, and satisfies Conditions~\ref{cond: PartI C1}--\ref{cond: PartI C3} with the
        constants \PartI{} computes for it, so every result established there applies verbatim.

        A rejected step reports that the model failed at the current radius, whereas the
        middle branch fires at an accepted step and reports only that the radius exceeds
        $\zeta\,\omega_k$. Implementations therefore take $\gamma_1 < \gamma_2$,
        contracting harder after a rejection than after the criticality test. No result in the paper
        needs the strict inequality, the analysis using only $\gamma_1 \leq \gamma_2$.

        \begin{center}
        \begin{tabular}{@{}ll@{}}
            \toprule
            Name & Measure $\omega_k$ \\
            \midrule
            $\Rdfo(\norm{g})$ & $\norm{g_k}$ \\
            $\Rdfo(\tau)$     & $\tau_k$ \\
            \bottomrule
        \end{tabular}
        \end{center}

    The expansion branch may be bounded above, i.e., $\Delta_{k+1} \in \big[\Delta_k, \min\{\gamma_3 \Delta_k, \overline{\Delta}\}\big]$, but this is not needed for the convergence analysis.

    \subsection{R-grad: gradient-scaled update} \label{app: R-grad}

        The gradient-scaled mechanism sets the radius to an explicit multiple of a criticality measure,
        and admits a cap on that multiple~\cite{FanPan16, WangYuan2022, CurtSche20}. Two independent
        choices are therefore involved. Write $\omega_k$ for the measure, either $\norm{g_k}$ or
        $\tau_k = \max\{\norm{g_k}, -\lambda_k^-\}$, and $\overline{\mu} \in (0, +\infty]$ for the cap,
        the value $+\infty$ meaning none.

        \begin{update}{$\Rgrad(\omega, \overline{\mu})$}[Expansion capped at $\overline{\mu}$]
        \label{update: rgrad generic}
            Let $0 < \gamma_1 \leq \gamma_2 < 1 < \gamma_3$ and
            $\eta \leq \eta_1 \leq \eta_2 < 1$. The radius is $\Delta_k = \mu_k\,\omega_k$ with
            $\mu_0 \leq \overline{\mu}$ and
            \begin{equation}
                \mu_{k+1} \in
                \begin{cases}
                    [\gamma_1 \mu_k,\ \gamma_2 \mu_k], & \rho_k < \eta_1, \\[2pt]
                    [\gamma_2 \mu_k,\ \mu_k), & \eta_1 \leq \rho_k < \eta_2, \\[2pt]
                    \bigl\{\min\{\gamma_3 \mu_k,\ \overline{\mu}\}\bigr\},
                        & \rho_k \geq \eta_2 \text{ and } \norm{s_k} > \tfrac12 \Delta_k, \\[2pt]
                    {\mu_k}, & \text{otherwise}.
                \end{cases}
                \label{eqn: rgrad rule}
            \end{equation}
        \end{update}
        The cap is preserved, $\mu_k \leq \overline{\mu}$ for all $k$ by induction from $\mu_0 \leq \overline{\mu}$,
        and the two contraction branches are bounded below by $\gamma_1\mu_k$, which is all the
        convergence arguments require of them.
        The expansion branch is deterministic, and in particular
        $\mu_k \leq \mu_{k+1} \leq \gamma_3\mu_k$ there, so that
        Condition~\ref{cond: PartI C3} of \PartI{} holds for the radius as well.

        \begin{update}{$\Rgrad(\omega)$}[Uncapped variant]
        \label{update: rgrad uncapped rule}
            Update~\ref{update: rgrad generic} with $\overline{\mu} = +\infty$, so that the expansion
            branch reads $\mu_{k+1} = \gamma_3\mu_k$ and the last branch
            $\mu_{k+1} = \mu_k$.
        \end{update}

        The four instances are named as follows, the abbreviations $\Rgrad(\norm{g})$ and $\Rgrad(\tau)$
        being used where the cap is not at issue.

        \begin{center}
        \begin{tabular}{@{}lll@{}}
            \toprule
            Name & Measure $\omega_k$ & Cap $\overline{\mu}$ \\
            \midrule
            $\Rgrad(\norm{g})$, uncapped & $\norm{g_k}$ & $+\infty$ \\
            $\Rgrad(\norm{g})$, capped   & $\norm{g_k}$ & finite \\
            $\Rgrad(\tau)$, uncapped     & $\tau_k$     & $+\infty$ \\
            $\Rgrad(\tau)$, capped       & $\tau_k$     & finite \\
            \bottomrule
        \end{tabular}
        \end{center}

    \subsection{R-RTR: retrospective update} \label{app: R-RTR}

                The retrospective update~\cite{BastMalmMoufToinToma10} judges the quality of the
        step $s_k$ \emph{retrospectively}, from the model built at the point that step
        produced. Its ratio is
        \begin{equation}
            \widetilde{\rho}_{k+1}
              = \frac{f(x_k) - f(x_{k+1})}{m_{k+1}(-s_k) - m_{k+1}(0)},
            \qquad k \geq 0 ,
            \label{eqn:appendix retrospective ratio}
        \end{equation}
        with $x_{k+1} = x_k + s_k$. The standard ratio $\rho_k$ decides whether the step
        succeeds, while $\widetilde\rho_{k+1}$, read off the model $m_{k+1}$ that governs
        the next iteration, sets the new radius. The rule is indexed so that it produces
        $\Delta_{k+1}$ from the data of iteration $k$, as the four rules above do;
        \citet{BastMalmMoufToinToma10} write it with the index shifted by one.

        \begin{update}{\Rrtr}[R-RTR: retrospective update] \label{update:retrospective}
            At iteration $k \geq 0$, with $\widetilde{\rho}_{k+1}$ given
            by~\eqref{eqn:appendix retrospective ratio} and retrospective thresholds
            $0 < \widetilde\eta_1 \leq \widetilde\eta_2 < 1$:
            if $\rho_k < \eta_1$ (step rejected), set
            \[
                \Delta_{k+1} =
                \begin{cases}
                    \min\!\big\{\gamma_2 \norm{s_k},\, \max[\gamma_1, \vartheta_k]\,\Delta_k\big\},
                        & \rho_k < 0,\\[4pt]
                    \gamma_2 \norm{s_k}, & \text{otherwise},
                \end{cases}
            \]
            where $\vartheta_k$ is the safeguard factor of Part~I; if $\rho_k \geq \eta_1$
            (step accepted), set
            \[
                \Delta_{k+1} =
                \begin{cases}
                    \max\{\gamma_3 \norm{s_k},\; \Delta_k\},
                        & \widetilde{\rho}_{k+1} \geq \widetilde\eta_2,\\[4pt]
                    \Delta_k,
                        & \widetilde{\rho}_{k+1} \in [\widetilde\eta_1, \widetilde\eta_2),\\[4pt]
                    \gamma_2 \norm{s_k},
                        & \widetilde{\rho}_{k+1} \in [0, \widetilde\eta_1),\\[4pt]
                    \min\!\big\{\gamma_2\norm{s_k},\,
                        \max[\gamma_1, \widetilde\vartheta_{k+1}]\,\Delta_k\big\},
                        & \widetilde{\rho}_{k+1} < 0.
                \end{cases}
            \]
        \end{update}

\section{Equivalence of the quasi-Newton consistency conditions}
\label{app: quasi-Newton equivalence}

\begin{theorem}[Equivalence of quasi-Newton consistency conditions]
    \label{thm: equivalence quasi-Newton conditions}
    Suppose $f : \Rp \to \R$ is twice continuously differentiable, that the sequence $\{x_k\}$ converges to a point $x^*$ with $\nabla f(x^*) = 0$ and $\nabla^2 f(x^*)$ positive definite, that the model Hessian is uniformly bounded ($\norm{H_k} \leq \mykappa{umh}$), that the step $s_k$ satisfies the inexactness Condition~\ref{cond:inexactness}, and that $\norm{s_k} \to 0$. Then Assumptions~\ref{as: Gradient residual condition}, \ref{as: Hessian-step residual condition} and \ref{as: Finite-difference Hessian consistency} are equivalent.
\end{theorem}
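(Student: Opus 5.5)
The plan is to prove the two equivalences \ref{as: Hessian-step residual condition} $\Leftrightarrow$ \ref{as: Finite-difference Hessian consistency} and \ref{as: Hessian-step residual condition} $\Leftrightarrow$ \ref{as: Gradient residual condition}, in that order. In each case I will show that the difference between the two residual quotients tends to zero. Then one quotient tends to zero exactly when the other does. All three assumptions share the same premise $\norm{g_k}\to 0$, and this premise holds here because $x_k\to x^*$ with $\nabla f(x^*)=0$. Throughout I restrict to indices with $s_k\neq 0$, so that the quotients are defined; if $s_k=0$ the inexactness bound forces $g_k=0$ and the method stops.

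\emph{Step 1: \ref{as: Hessian-step residual condition} $\Leftrightarrow$ \ref{as: Finite-difference Hessian consistency}.} Since $f\in C^2$, I write
\[
g(x_k+s_k)-g_k=\int_0^1\nabla^2 f(x_k+ts_k)\,s_k\,dt .
\]
This gives
\[
g(x_k+s_k)-g_k-H_ks_k=\bigl(\nabla^2 f(x_k)-H_k\bigr)s_k+\int_0^1\bigl[\nabla^2 f(x_k+ts_k)-\nabla^2 f(x_k)\bigr]s_k\,dt .
\]
The norm of the integral term is at most $\sup_{t\in[0,1]}\norm{\nabla^2 f(x_k+ts_k)-\nabla^2 f(x_k)}\,\norm{s_k}$. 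Because $x_k\to x^*$ and $\norm{s_k}\to 0$, every point $x_k+ts_k$ eventually lies in a compact ball around $x^*$. Uniform continuity of $\nabla^2 f$ on that ball then makes the supremum tend to zero. Dividing by $\norm{s_k}$, the two quotients differ by $o(1)$, which gives the equivalence. This step uses neither the inexactness condition nor the bound on $H_k$.

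\emph{Step 2: \ref{as: Hessian-step residual condition} $\Leftrightarrow$ \ref{as: Gradient residual condition}.} I split
\[
g_k+\nabla^2 f(x_k)s_k=\bigl(g_k+H_ks_k\bigr)-\bigl(H_k-\nabla^2 f(x_k)\bigr)s_k ,
\]
so the two quotients differ by at most $\norm{H_ks_k+g_k}/\norm{s_k}$. Condition~\ref{cond:inexactness} bounds this by $\xi_k\norm{g_k}/\norm{s_k}$. It remains to show $\norm{g_k}=\calO(\norm{s_k})$. This follows from the same condition: the triangle inequality gives $\norm{g_k}\le\norm{H_ks_k}+\xi_k\norm{g_k}$, hence $(1-\xi_k)\norm{g_k}\le\mykappa{umh}\norm{s_k}$. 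Therefore the difference is at most $\xi_k\mykappa{umh}/(1-\xi_k)$, which tends to zero because $\xi_k\to 0$. This is the computation already used in Step~1 of Proposition~\ref{prop: steps are strictly decreasing}.

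The main obstacle is in Step~2. As stated in Section~\ref{subsec:inexactness}, Condition~\ref{cond:inexactness} applies only at iterations where the constraint is inactive. The theorem, however, asserts that $s_k$ satisfies the condition, and I will read this as holding at every sufficiently large $k$. That reading is exactly the regime in which the result is invoked, namely after Theorem~\ref{thm: trust-region radius becomes inactive}. If the condition holds only along a subsequence, then Step~2 gives the equivalence only along that subsequence, while Step~1 is unaffected. I will state this caveat explicitly. Positive definiteness of $\nabla^2 f(x^*)$ enters only to keep the setting consistent with the local theory. The argument itself needs only continuity of $\nabla^2 f$ near $x^*$.
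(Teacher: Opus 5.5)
Your proposal is correct and matches the paper's proof step for step: it uses the same integral splitting for \ref{as: Hessian-step residual condition} $\Leftrightarrow$ \ref{as: Finite-difference Hessian consistency}, and for the second equivalence the same identity together with $\norm{g_k} = \calO(\norm{s_k})$. The only difference is that you obtain $\norm{g_k}/\norm{s_k} \leq \mykappa{umh}/(1-\xi_k)$ directly where the paper argues by contradiction, and your caveat that Condition~\ref{cond:inexactness} is imposed only at inactive iterations is a fair point that the paper's statement passes over.
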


\begin{proof}
    We prove \ref{as: Hessian-step residual condition} $\Leftrightarrow$ \ref{as: Finite-difference Hessian consistency} (Step~1, which does not use~\eqref{eqn:inexactness}), and then \ref{as: Hessian-step residual condition} $\Leftrightarrow$ \ref{as: Gradient residual condition} (Step~2, which does).

    \smallskip
    \noindent\textbf{Step 1.} \emph{\ref{as: Hessian-step residual condition} $\Leftrightarrow$ \ref{as: Finite-difference Hessian consistency}.}
    Fix $\varepsilon > 0$. Since $f$ is twice continuously differentiable, $x_k \to x^*$ and $\norm{s_k} \to 0$ by hypothesis, for all sufficiently large $k$ and all $\theta \in [0,1]$,
    \begin{equation}
        \|\nabla^2 f(x_k + \theta s_k) - \nabla^2 f(x_k)\| < \tfrac{\varepsilon}{2}.
        \label{eqn:eqv continuity hessian}
    \end{equation}
    The fundamental theorem of calculus gives the splitting
    \begin{equation}
        g(x_k + s_k) - g_k - H_k s_k = \int_0^1 \bigl(\nabla^2 f(x_k + \theta s_k) - \nabla^2 f(x_k)\bigr) s_k \, d\theta + \bigl(\nabla^2 f(x_k) - H_k\bigr) s_k.
        \label{eqn:eqv splitting B C}
    \end{equation}

    \emph{\ref{as: Hessian-step residual condition} $\Rightarrow$ \ref{as: Finite-difference Hessian consistency}.} For all $k$ sufficiently large, $\|(\nabla^2 f(x_k) - H_k) s_k\| \leq \frac{\varepsilon}{2}\|s_k\|$, and combining with~\eqref{eqn:eqv continuity hessian},
    \[
        \|g(x_k+s_k) - g_k - H_k s_k\| \leq \int_0^1 \tfrac{\varepsilon}{2}\,\norm{s_k}\, d\theta + \tfrac{\varepsilon}{2}\,\norm{s_k} = \varepsilon \norm{s_k}.
    \]

    \emph{\ref{as: Finite-difference Hessian consistency} $\Rightarrow$ \ref{as: Hessian-step residual condition}.} For all $k$ sufficiently large, $\|g(x_k+s_k) - g_k - H_k s_k\| \leq \frac{\varepsilon}{2}\|s_k\|$, and from~\eqref{eqn:eqv splitting B C} together with~\eqref{eqn:eqv continuity hessian},
    \begin{align*}
        \|(H_k - \nabla^2 f(x_k)) s_k\|
          &\leq \|g(x_k+s_k) - g_k - H_k s_k\| \\
          &\qquad + \int_0^1 \|\nabla^2 f(x_k+\theta s_k) - \nabla^2 f(x_k)\|\,\norm{s_k}\, d\theta
          \;\leq\; \varepsilon \norm{s_k}.
    \end{align*}
    Since $\varepsilon > 0$ is arbitrary, Step~1 is complete.

    \smallskip
    \noindent\textbf{Step 2.} \emph{\ref{as: Hessian-step residual condition} $\Leftrightarrow$ \ref{as: Gradient residual condition}.} We use the identity
    \begin{equation}
        g_k + \nabla^2 f(x_k)\, s_k = (\nabla^2 f(x_k) - H_k)\, s_k + (H_k s_k + g_k).
        \label{eqn:eqv identity A B}
    \end{equation}
    The first term is controlled by \ref{as: Hessian-step residual condition}; the second by~\eqref{eqn:inexactness}. We first show that the inexactness term is $o(\norm{s_k})$.

    Suppose for a contradiction that $\{\norm{g_k}/\norm{s_k}\}$ is unbounded. From~\eqref{eqn:inexactness}, $\norm{H_k s_k} \geq \norm{g_k} - \norm{H_k s_k + g_k} \geq (1 - \xi_k) \norm{g_k}$, so along a subsequence $\{k_j\}$, $\norm{H_{k_j}} \geq (1 - \xi_{k_j}) \norm{g_{k_j}}/\norm{s_{k_j}} \to +\infty$, contradicting $\norm{H_k} \leq \mykappa{umh}$. Hence there exists $C > 0$ such that $\norm{g_k}/\norm{s_k} \leq C$ for all $k$ large, and by~\eqref{eqn:inexactness},
    \begin{equation}
        \frac{\|H_k s_k + g_k\|}{\|s_k\|} \;\leq\; \xi_k \, \frac{\|g_k\|}{\|s_k\|} \;\leq\; C \xi_k \;\xrightarrow[k\to\infty]{}\; 0.
        \label{eqn:eqv inexactness vanishes}
    \end{equation}
    Applying~\eqref{eqn:eqv identity A B} and the triangle inequality in both directions,
    \begin{align*}
        \frac{\|g_k + \nabla^2 f(x_k) s_k\|}{\|s_k\|} - \frac{\|H_k s_k + g_k\|}{\|s_k\|}
          &\;\leq\; \frac{\|(\nabla^2 f(x_k) - H_k) s_k\|}{\|s_k\|} \\
          &\;\leq\; \frac{\|g_k + \nabla^2 f(x_k) s_k\|}{\|s_k\|} + \frac{\|H_k s_k + g_k\|}{\|s_k\|}.
    \end{align*}
    Combined with~\eqref{eqn:eqv inexactness vanishes}, this gives the equivalence \ref{as: Hessian-step residual condition} $\Leftrightarrow$ \ref{as: Gradient residual condition}.
\end{proof}

\end{document}

%% file: macros.tex
\newcommand{\R}{{\mathbb{R}}} % reals
\newcommand{\NN}{{\mathbb{N}}} % natural numbers {1, 2, ...}
\newcommand{\reals}{\mathbb{R}}

\DeclareSymbolFont{bbold}{U}{bbold}{m}{n}
\DeclareSymbolFontAlphabet{\mathbbold}{bbold}

\newcommand{\calA}{{\mathcal{A}}}
\newcommand{\calB}{{\mathcal{B}}}
\newcommand{\calC}{{\mathcal{C}}}

\newcommand{\calI}{{\mathcal{I}}}

\newcommand{\calO}{{\mathcal{O}}}

\newcommand{\calR}{{\mathcal{R}}}
\newcommand{\calS}{{\mathcal{S}}}

\newcommand{\defeq}{:=}

\newcommand{\abs}[1]{\left \vert #1 \right \vert}

\newcommand{\norm}[1]{\left \lVert #1 \right \rVert}